\newtheorem{theorem}{Theorem}[section]
\newtheorem{corollary}[theorem]{Corollary}
\newtheorem{proposition}[theorem]{Proposition}
\newtheorem{lemma}[theorem]{Lemma}
\theoremstyle{definition}
\newtheorem{definition}[theorem]{Definition}
\theoremstyle{remark}
\newtheorem{remark}[theorem]{Remark}
\newtheorem{example}[theorem]{Example}
\newcommand{\e}{\mathrm e}
\newcommand{\R}{\mathbb R}
\renewcommand{\S}{\mathbb S}
\newcommand{\N}{\mathbb N}
\newcommand{\Z}{\mathbb Z}
\newcommand{\Span}{\mathrm{Span}}
\newcommand{\st}{\text{ s.t. }}
\newcommand{\K}{\mathcal{K}}
\newcommand{\KR}{\mathcal{K}}
\newcommand{\D}{\mathcal{D}}
\newcommand{\U}{\mathcal{U}}
\renewcommand{\O}{\mathcal{O}}
\newcommand{\C}{\mathcal C}
\newcommand{\eps}{\varepsilon}
\newcommand{\cZ}{\mathcal{Z}}
\newcommand{\SZ}{\mathfrak{Z}}
\newcommand{\Sigmax}{\bar{\Sigma}^x}
\DeclareMathOperator{\sre}{Exp}
\newcommand{\ubar}[1]{\underaccent{\bar}{#1}}
\newcommand{\lp}{\left(}
\newcommand{\rp}{\right)}
\newcommand{\lab}{\left|}
\newcommand{\rab}{\right|}
\newcommand{\lb}{\left[}
\newcommand{\rb}{\right]}
\newcommand{\lc}{\left\{}
\newcommand{\rc}{\right\}}
\newcommand{\ip}[2]{\left\langle#1,#2\right\rangle}
\newcommand{\dist}{d}
\newcommand*\diff{\mathop{}\!\mathrm{d}}
\newcommand{\Lap}{\Delta}
\newcommand{\E}{\mathbb{E}}
\newcommand{\Prob}{\mathbb{P}}
\DeclareMathOperator{\Iso}{Iso}
\DeclareMathOperator{\Var}{Var}
\newcommand{\bR}{\mathbb{R}}
\renewcommand{\mid}{:}
\DeclareMathOperator{\Cov}{Cov}
\DeclareMathOperator{\Cut}{Cut}
\newcommand{\Ind}{\boldsymbol 1}
\begin{document}

\title[sub-Riemannian heat kernel asymptotics]{Uniform, localized asymptotics for sub-Riemannian heat kernels, their logarithmic derivatives, and associated diffusion bridges}
\author{Robert W.\ Neel \and Ludovic Sacchelli}
\address{Department of Mathematics, Chandler-Ullmann Hall, Lehigh University, Bethlehem, Pennsylvania, USA}
\email{robert.neel@lehigh.edu}
\address{Inria, Universit\'e C\^ote d'Azur, CNRS, LJAD, MCTAO team, Sophia Antipolis, France}
\email{ludovic.sacchelli@inria.fr}
\subjclass[2010]{Primary 58J65; Secondary 53C17 58J35 58K55}
\keywords{incomplete manifold, sub-Riemannian, heat kernel, cut locus, small-time asymptotics, diffusion bridge measure}

\begin{abstract} We show that the small-time asymptotics of the sub-Riemannian heat kernel, its derivatives, and its logarithmic derivatives can be localized, allowing them to be studied even on incomplete manifolds, under essentially optimal conditions on the distance to infinity. Continuing, away from abnormal minimizers, we show that the asymptotics are closely connected to the structure of the minimizing geodesics between the two relevant points (which is non-trivial on the cut locus). This gives uniform heat kernel bounds on compacts, and also allows a complete expansion of the heat kernel, and its derivatives, in a wide variety of cases.

The method extends naturally to logarithmic derivatives of the heat kernel, where we again get uniform bounds on compacts and a more precise expansion for any particular pair of points, in most cases. In particular, we determine the measure giving the law of large numbers for the corresponding diffusion bridge, and the leading terms of the logarithmic derivatives are given by the cumulants of geometrically natural random variables with respect to this measure. One consequence is that the non-abnormal cut locus is characterized by the behavior of the log-Hessian of the heat kernel.
\end{abstract}

\maketitle

\tableofcontents

\section{Introduction}

Small-time heat kernel asymptotics, and a variety of related matters, have a long history and a substantial literature, as we (partially) outline below. The object of this paper is to give a systematic development on general, possibly incomplete, sub-Riemannian manifolds of a method originally due to Molchanov \cite{Molchanov}, on compact Riemannian manifolds, to determine heat kernel asymptotics at points in the cut locus  by ``gluing together'' the asymptotics at non-cut points, and to apply this method to a broad spectrum of asymptotic questions. Most of our results are new in the sub-Riemannian context, and a couple are also, to the best of our knowledge, new in the Riemannian case as well. In this section, we state many of our main results, to give an indication of their nature and range. However, a number of interesting results are left to the body of the paper, since including all of them here would be too unwieldy.

\subsection{Sub-Laplacians and heat kernels}
Let $M$ be smooth (connected) manifold of dimension $d$, and let $\mu$ be a smooth volume on $M$. That is, $\mu$ is a measure on $M$ such that, in any (smooth) local coordinates $(u_1,\ldots,u_d)$ on a coordinate patch $U$, $\mu|_U$ has smooth, non-vanishing density with respect to Lebesgue measure $\diff u_1\cdots \diff u_d$ on $U$. The most efficient way to proceed is to introduce the sub-Laplacian and the sub-Riemannian metric together. We let $\Delta$ be a smooth, second-order differential operator on $M$ such that any point is contained in a coordinate patch $U$ on which
\begin{equation}\label{Eqn:Delta}
\Delta =\sum_{i=1}^k \cZ_i^2 + \cZ_0
\end{equation}
where $\cZ_0,\cZ_1,\ldots, \cZ_k$ are smooth vector fields and $\cZ_1,\ldots,\cZ_k$ are bracket-generating (this is the strong H\"ormander condition). In this situation, $\Delta$ induces a sub-Riemannian structure on $M$, which corresponds to the Carnot-Carath\'eodory distance in the older PDE literature. In the case where the distribution is of constant rank $k$ on $U$, with $2\leq k\leq d$, we can choose the $\cZ_1,\ldots,\cZ_k$ in Equation \eqref{Eqn:Delta} to be an orthonormal basis for the distribution at each point of $U$. That is, the span of $\cZ_1,\ldots,\cZ_k$ gives the distribution at each point, and the orthonormality induces an inner product on the distribution. The formalism to accommodate the rank-varying case is more elaborate, and we refer to Chapter 3 of \cite{ABB_2018} for a rigorous treatment of the construction of a sub-Riemannian structure from the principal part of $\Delta$. Nonetheless, for the purposes of this paper, the distribution and the inner product on it are almost never directly referenced; rather, the induced distance $d(\cdot,\cdot)$ and the structure of distance-minimizing curves are the central objects. In particular, the Chow-Rashevskii theorem shows that the distance between any two points of $M$ is finite and $(M,d)$ is a metric space such that the metric topology agrees with the topology of $M$. Moreover, one version of the Hopf-Rinow theorem for sub-Riemannian manifolds gives that if $(M,d)$ is complete as a metric space, there exists a minimizing curve between any two points of $M$ and $M$ is geodesically complete. (Again, we take \cite{ABB_2018} as the canonical reference for the basic results of sub-Riemannian geometry.)

So, going forward, $M$ is a $d$-dimensional (possibly incomplete) sub-Riemannian manifold equip\-ped with a smooth volume and a sub-Laplacian, in the above sense. Turning back to the operator $\Delta$, the corresponding diffusion $X_t$ is given by the Stratonovich SDE
\begin{equation}\label{Eqn:Xt}
\diff X_t = \sqrt{2} \sum_{i=1}^k \cZ_i(X_t)\circ \diff W_t^i + \cZ_0(X_t)\, \diff t
\end{equation}
where the $W^i_t$ are independent, standard Brownian motions and the process is killed upon (possible) explosion. We note explicitly that $X_t$ is a strong Markov process with continuous paths (we refer to Chapter 1 of \cite{Hsu} for the basics of SDEs on manifolds). Starting from $X_0=x$, this diffusion has a transition density $p_t(x,y)$ with respect to $\mu$, which is smooth for $(t,x,y)\in(0,\infty)\times M\times M$ by the celebrated H\"ormander theorem. From an analytic perspective, $p_t(x,y)$ is the (minimal) heat kernel associated to $\Delta$. In particular, $p_t(x,y)$ satisfies the heat equation $\partial_t p_t(x,y)=\Delta_x p_t(x,y)$, where the subscript indicates that the spatial derivatives are applied to the $x$-variable. (Note that we adopt the analyst's convention of using $\Delta$ rather than $(1/2)\Delta$ as the infinitesimal generator of $X_t$, and thus the SDE requires the $\sqrt{2}$-factor. The difference simply amounts to rescaling $t$ by a factor of 2.) We do not assume that $p_t(x,y)$ is symmetric, although that is an important special case, and we will give results specific to the symmetric case as appropriate. Notably, this includes the most important type of sub-Laplacian in sub-Riemannian geometry, namely the case when $\Delta$ is given as the $\mu$-divergence of the horizontal gradient (which generalizes the fact that on a Riemannian manifold, the Laplace-Beltrami operator can be written as the divergence of the gradient). The small-time asymptotics of $p_t(x,y)$ are the central topic for us.

We note that, in the Riemannian case, sub-Laplacians are exactly operators of the form $\Delta_{\mathrm{LB}}+\cZ_0$, where here $\Delta_{\mathrm{LB}}$ is the Laplace-Beltrami operator and $\cZ_0$ is a smooth vector field. Thus the heat kernels we consider in the Riemannian regime are more general than the standard heat kernel (corresponding to $\cZ_0\equiv 0$).

In sub-Riemannian geometry, extremal curves, by which we mean critical points of the length functional, can be normal or abnormal (or both). The Molchanov method is effective for pairs of points such that all minimizing geodesics are strongly normal (meaning no non-trivial subsegment is abnormal). In the properly sub-Riemannian situation, that is, not locally a Riemannian manifold, the trivial geodesic is always non-strictly abnormal, so the method does not apply on the diagonal. For broad classes of sub-Riemannian manifolds, such as contact manifolds, there are no non-trivial abnormals, in which case the diagonal is the only place where the method is ineffective. Of course, on-diagonal heat kernel asymptotics are a natural object of study (see \cite{DavideTrace}, for a sub-Riemannian example), but this requires other approaches, such as perturbation methods. Interpolating between the diagonal and off-diagonal asymptotics, say, to derive ``good'' uniform bounds on the heat kernel in small time, is in general a hard problem. For the case of the Heisenberg group and more general H-type groups, see \cite{Nate} and \cite{HQLi}. In the Riemannian case, there are no abnormals, so the method applies everywhere, including the diagonal.

There are situations where, for more specialized sub-Riemannian structures, one can find expressions for the heat kernel that allow the small-time asymptotics to be extracted in an explicit way. For example, for left-invariant structures on Lie groups, generalized Fourier transforms can be used, as developed in \cite{MashaMalva}. The sub-Riemannian model spaces are especially well studied and have a large literature, but we mention \cite{FabriceJing} and \cite{FabriceMichel} as two examples of explicit computation of the heat kernel and its small-time asymptotics on such spaces.

On the other hand, there are sub-Riemannian (and sub-Riemannian-adjacent) situations that go beyond the framework of this paper. For example, the Grushin plane is a sub-Riemannian structure, but the most natural Laplacian to put on it is only defined up to the singular set, and the first-order term blows up as the singular set is approached. Thus this is not a smooth sub-Laplacian, and indeed, the L\'eandre asymptotics fail dramatically. For recent work in this direction on Grushin and related structures, see \cite{UgoDario, BN-Grushin, Gallone1, Gallone2}.

\subsection{Localization and the Molchanov method}

In the first part of this paper, we rigorously establish the Molchanov method on general (not necessarily complete) sub-Riemannian manifolds, and show that it applies to derivatives of the heat kernel as well as the heat kernel itself. A central ingredient is to prove that the heat kernel can be restricted to appropriate compacts with only an exponentially negligible error, including for its derivatives. This requires several steps, and touches upon some related directions, which we now describe in more detail.

In what follows, $(Z^1,\dots,Z^m)$ denote an arbitrary family of smooth vector fields on $M$ such that at each point $x\in M$, $(Z^1(x),\dots,Z^m(x))$ spans the whole tangent space $T_xM$.
We call multi-index any finite (and possibly empty) sequence of integers $\alpha\in\{1,\dots, m\}^k$, with $k\in \N$. Then for any smooth function $f:M\to \R$, we denote
$$
Z^\alpha f=Z^{\alpha_k}\circ Z^{\alpha_{k-1}}\circ\cdots\circ Z^{\alpha_{1}} f.
$$
If $\alpha=\emptyset$, we intend that $Z^\alpha f=f$.
For $g:M^2\to \R$, $Z^\alpha_xg(x,y)$ and $Z^\alpha_yg(x,y)$  denote the derivatives with respect to the first and second space variable, respectively. The purpose of introducing such families of vector fields is to give coordinate-free statements about derivatives of the heat kernel. Of course, because we work on compacts, statements about derivatives with respect to a family of smooth vector fields can be reduced to statements about partial derivatives in finitely many local coordinate chart, and vice versa, and we will take advantage of this when convenient.

The Molchanov method has three ingredients. One is the Chapman-Kolmogorov equation (or the Markov property of the diffusion). The other two are a ``coarse'' estimate valid globally and a ``fine'' estimate valid away from the cut locus. In the complete sub-Riemannian case, the coarse estimate is essentially due to L\'eandre \cite{leandremaj,leandreminoration}. For a sub-Riemannian structure on $\bR^d$ given by smooth, bounded vector fields with bounded derivatives of all orders, he proved that
\[\begin{split}
\lim_{t\searrow 0} -4t \log p_t(x,y) &= d^2(x,y) \\
\text{and }\limsup_{t\searrow 0} 4t \log \lp \lab \frac{\partial^{\alpha}}{\partial y^{\alpha}} p_t(x,y)  \rab \rp &\leq - d^2(x,y)
\end{split}\]
for any multi-index $\alpha$, uniformly on compacts. (And note that these asymptotics hold without regard to abnormals or the cut locus.) The fine estimate is essentially due to Ben Arous \cite{BenArous}. For the same sub-Riemannian structures as L\'eandre, he proved that there are smooth functions $c_i(x,y)$ with $c_0(x,y)>0$ such that, for any $N$,
\[
p_t(x,y)= t^{-d/2} \e^{-\frac{d(x,y)^2}{4t}} \left(\sum_{k=0}^{N}c_k(x,y)t^k + t^{N+1}r_{N+1}(t,x,y) \right)
\]
where $r_{N+1}$ is an appropriate remainder term, uniformly on compact subsets of $M\times M$ that avoid the cut locus (and abnormals, including the diagonal). Further, this expansion can be differentiated as many times as desired in $t$, $x$, and $y$. Note that both L\'eandre and Ben Arous used the Euclidean volume to define their heat kernel, but it is an exercise in using the product rule to show that if either result holds for one smooth volume, then it holds for any smooth volume.

In reviewing the above results, not to mention those that follow, one might note that the first-order part (or sub-symbol) of the sub-Laplacian is relatively unimportant in the form of the expansion, as is the choice of smooth volume.  Indeed, the distance function, and thus the minimal geodesics, cut locus, etc.\ depend solely on the principal symbol of the sub-Laplacian. The first-order term and the choice of volume only affect the constants $c_k$ in the Ben Arous expansion, which are given by transport equations. This is unsurprising-- if the first order part lies in the distribution, one can think of it as contributing a Girsanov factor, and more generally, its effect is negligible at distant points in small time. Similarly, changing the smooth volume multiplies $p_t$ by a smooth, non-vanishing function. In the Riemannian case, where we write the operator as $\Delta_{\mathrm{LB}}+Z_0$ and use the Riemannian volume, the effect of $Z_0$ relative to the ``standard'' $Z_0=0$ case can be explicitly isolated as an action term, as can be found in the original paper of Molchanov \cite{Molchanov} (and continuing into some of the other references mentioned). Here we follow Ben Arous and allow the $c_i$ to account for matters.

Our first task is to establish L\'eandre asymptotics for $p_t$ and its derivatives on a general sub-Riemannian manifold, along with natural localization results. These results go hand-in-hand. Indeed, the principle of ``not feeling the boundary'' was invoked in \cite{Molchanov} (without proof). That the heat kernel on a complete sub-Riemannian manifold satisfies the L\'eandre and Ben Arous asymptotics has been something of a folk theorem, alluded to in the literature used without elaboration in \cite{BBN-BiHeis, BBCN-IMRN, BBN-JDG}, for example. A general localization result (under what we will call the strong localization condition below) was proven in \cite{HsuLocal}, showing that any diffusion on a manifold satisfying L\'eandre asymptotics for $p_t$ itself on compacts has the property that the asymptotics of $p_t$ are local. A quick (one sentence) reference is made to L\'eandre's result on sub-Riemannian manifolds, but one should not be too casual here. Proving that the L\'eandre asymptotics hold on a general manifold in the first place uses localization, so a careless approach ends up being circular. (The resolution is to build-up the result in stages, a version of which we carry out.) A similarly brief reference to localizing L\'eandre asymptotics (for $p_t$) on possibly incomplete sub-Riemannian manifolds, under the strong condition, is given in \cite{HsuIncomplete}, which explicitly treats the Riemannian case. In fact, the idea of adapting the Riemannian arguments to the sub-Riemannian case is already suggested by Azencott \cite{Azencott}, but this preceded the work of L\'eandre. Recently, Ballieul and Norris \cite{NorrisB} gave a rigorous proof of the L\'eandre asymptotics for $p_t$ itself on a possibly incomplete sub-Riemannian manifold. Their primary focus is working with incomplete manifolds, and especially establishing localization results related to what we will call below the weak localization condition. For this reason, they employ considerable analytic machinery (such as volume doubling estimates, a local Poincare inequality, a parabolic mean-value inequality, etc.), and it is not clear that these extend to derivatives of the heat kernel.

\begin{definition}\label{Def:Local}
We say that a compact subset $\KR$ of $M\times M$ is \emph{localizable} if it satisfies one of the following two conditions 
\begin{itemize}
\item \emph{Strong localization condition:}  For every $(x,y)\in \KR$, we have $d(x,y)< d(x,\infty) +d(y,\infty)$. (Here $d(\cdot,\infty)$ is the distance to infinity; see Section \ref{Sect:Local1}.) 
\item \emph{Weak localization condition:} There exists $\eps>0$ such that, for every $(x,y)\in \KR$, the set $\{z :d(x,z)+d(z,y)< d(x,y)+\eps\}$ has compact closure, and $\Delta$ satisfies the ``sector condition'' of Bailleul-Norris. This is a condition that limits the degree of asymmetry of $p_t(x,y)$ on all of $M$. We describe this in more detail in Section \ref{Sect:Local}, but note already that it includes the case when $p_t(x,y)$ is symmetric.
\end{itemize}
\end{definition}

Note that if $M$ is complete, $d(x,\infty)=\infty$ for all $x$, and thus any compact $\KR$ satisfies the strong localization condition. In particular, for complete $M$, our results hold for any compact. When we compute precise asymptotic expansions, we will consider $p_t(x,y)$ for a fixed pair of points $x$ and $y$. In this case, the associated $\KR$ is the singleton $\{(x,y)\}$, and we will say that $x$ and $y$ are localizable.

As indicated above, we show that the L\'eandre asymptotics for derivatives can be combined with localization results for heat kernel itself to allow for localization of derivatives. Namely, in Section \ref{Sect:Local3}, we prove

\begin{theorem}\label{THM:TrueLocal}
Let $M$ be a possibly-incomplete sub-Riemannian manifold with a smooth volume $\mu$ and a smooth sub-Laplacian $\Lap$, and let $p_t(x,y)$ be the corresponding heat kernel. Let $\KR\subset M\times M$ be compact and localizable. Then there exists an open set $U\subset M$ with compact closure and a $\delta>0$ such that, for any $(x,y)\in \KR$, both $x$ and $y$ are in $U$, and we have that
\begin{subequations}
\begin{align}
\lim_{t\searrow 0} 4t \log p_t(x,y) &= -d^2(x,y) \label{Eqn:1a} \\
\text{and}\quad\limsup_{t\searrow 0} 4t \log p_t\lp x,U^c ,y\rp & \leq -\lp d^2(x,y)+\delta\rp \label{Eqn:1b}
\end{align}
\label{eqn:all-lines1}
\end{subequations}
uniformly for $(x,y)\in \KR$, and, for any multi-index $\alpha$,
\begin{subequations}
\begin{align}
 \limsup_{t\searrow 0} 4t \log \lp \lab  Z_y^{\alpha}  p_t(x,y)  \rab \rp & \leq - d^2(x,y) \label{Eqn:2a} \\ 
\text{and}\quad
\limsup_{t\searrow 0} 4t \log \lp \lab  Z_y^{\alpha}  p_t\lp x,U^c ,y\rp  \rab \rp & \leq -\lp d^2(x,y)+\delta\rp \label{Eqn:2b}
\end{align}
\label{eqn:all-lines2}
\end{subequations}
uniformly for $(x,y)\in \KR$.
\end{theorem}

Here $p_t\lp x,U^c,y\rp$ denotes the contribution to $p_t(x,y)$ from paths that leave $U$; see Section \ref{Sect:Local1}. In \eqref{Eqn:2a} and \eqref{Eqn:2b}, nothing prevents $Z_y^{\alpha}  p_t$ from being zero, and more to the point, nothing prevents the left-hand side of either equation from being $-\infty$. But in this case, the inequality certainly holds.

Note that, in contrast to the work just mentioned, we establish both localization and the L\'eandre asymptotics not only for $p_t$ itself, but also for its derivatives (in $y$), thus extending L\'eandre's original result fully. This is interesting in its own right, but moreover, having bounds on the derivatives of the heat kernel is needed to apply Molchanov's method to heat kernel derivatives and also to study logarithmic derivatives of the heat kernel. In light of the Ben Arous expansion, one might wonder about taking derivatives in $t$ and $x$ as well. This is more complicated. Time derivatives are generally accessible by using the forward Kolmogorav (Fokker-Planck) equation to replace them with spatial derivatives. However, it turns out that, in the symmetric case, time derivatives can be controlled in a way that is compatible with the Molchanov method, and this allows for lower bounds on pure time derivatives, as we see in a moment. This is an interesting phenomenon for the most important special case, so we pursue it in what follows. Of course, in the symmetric case, one can also consider $x$-derivatives in place of $y$-derivatives. We discuss this in more detail below, in the context of the Ben Arous expansion.

\subsection{Uniform bounds and complete expansions for the heat kernel asymptotics}

In the second part of the paper, we consider more refined asymptotics than the log-scale asymptotics just discussed. In particular, with Theorem \ref{THM:TrueLocal} in hand, we show that the Ben Arous expansion holds on a general sub-Riemannian manifold, give uniform bounds for the heat kernel and its derivatives in small time, and show that Molchanov's method supports complete asymptotic expansions for both the heat kernel and its derivatives. 

Similar to the situation described above for the L\'eandre asymptotics, though less widely considered, the Ben Arous expansion, for the heat kernel itself, was assumed to generalize to complete manifolds in earlier works, but here we make this rigorous and extend it to incomplete manifolds. More precisely, we have the following.

\begin{definition}\label{Def:SN-CritSet}
A geodesic $\gamma:[0,T]\to M$ is said to be strongly normal if for every $[s,t]\in[0,T]$, $\gamma _{[s,t]}$ is not abnormal. Then the critical set $\C$ in $M^2$ is the set of pairs of points $(x,y)$ such that either
\begin{itemize}

\item There exists multiple length minimizing curves joining $x$ and $y$.

\item The unique geodesic joining $x$ and $y$ is conjugate.

\item The unique geodesic joining $x$ and $y$ is not strongly normal. Crucially, if $M$ is properly sub-Riemannian, this includes points on the diagonal $\mathcal{D}=\lc (x,x)\rc\subset M^2$, but not if $M$ is Riemannian.
\end{itemize}

\end{definition}

\begin{theorem}[Uniform Ben Arous expansion]\label{T:UnifBenArous}
For any multi-index $\alpha$, and $l$ a non-negative integer in the symmetric case and 0 otherwise,
there exist 
sequences of smooth functions $c_k:M^2\setminus\C\to \R$, $r_{k}: (0,+\infty)\times M^2\setminus\C\to \R$, $k\in \N$, such that for all $n\in \N$, for all $(x,y)\in M^2\setminus\C$, for all $t\in\R^+$
$$
\partial_t^l
Z_y^{\alpha}
p_t(x,y)
=
\frac{\e^{-\frac{d(x,y)^2}{4t}}}{t^{|\alpha|+2 l+d/2}} 
\left(
\sum_{k=0}^{n}c_k(x,y)t^k
+
t^{n+1}r_{n+1}(t,x,y)
\right).
$$
For any localizable compact $\K\subset M^2\setminus\C$, $l'$ which is any non-negative integer in the symmetric case and 0 otherwise, and any multi-index $\alpha'$, there exists $t_0$ such that
$$
\sup_{0<t<t_0}
\sup_{(x,y)\in \K}
\left|
	\partial_t^{l'}	Z_y^{\alpha'}r_{n+1}(t,x,y)
\right|<\infty.
$$

Additionally, if $\alpha=0$, then $c_0(x,y)>0$ on $M^2\setminus \C$.

\end{theorem}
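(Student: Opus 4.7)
The approach is to combine the localization of Theorem~\ref{THM:BasicLocalization} with Ben Arous' original expansion \cite{BenArous}, proved on $\R^d$ for sub-Riemannian structures given by smooth, bounded vector fields with bounded derivatives of all orders. Fix $(x_0, y_0) \in \K$; since $\K \subset M^2 \setminus \C$, the minimizing geodesic from $x_0$ to $y_0$ is unique, strongly normal, and non-conjugate, and these properties persist uniformly on a product neighborhood $U \times V$ of $(x_0, y_0)$ by smooth dependence of the exponential map on its initial covector.

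I would take a compact $K$ containing $\overline{U \cup V}$ together with a tubular neighborhood of the family of minimizers between points of $U$ and $V$, then enlarge $K$ to a compact $K'$ satisfying the hypotheses of Theorem~\ref{THM:BasicLocalization}. That theorem reduces $Z_y^\alpha p_t$ to $Z_y^\alpha p^{K'}_t$ modulo $O(\exp(-(\diam(K)+\eps)^2/(4t)))$ uniformly on $U \times V$. In the symmetric case, time derivatives are controlled via the identity $\partial_t p_t = \Delta_y p_t$ (which uses $\Delta^*=\Delta$ together with symmetry), so each $\partial_t^l$ can be rewritten as a combination of $Z_y^\beta$ with $|\beta| \leq 2l$, inheriting the corresponding bounds.

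The central step is realizing Ben Arous' expansion for the Dirichlet kernel $p^{K'}_t$ itself. I would embed $K'$ into $\R^d$ via a chart (or patchwork of charts) and extend the frame $(Z_0, Z_1, \ldots, Z_k)$ to smooth, bracket-generating vector fields on $\R^d$ with all derivatives bounded. The principal obstacle is arranging the extension so that no shortcuts appear, i.e.\ so that any curve leaving $K'$ is uniformly longer in the extended sub-Riemannian metric than $\diam(K)+\eps$, which guarantees that the extended distance agrees with $d$ on $U \times V$ and the minimizing geodesics (hence the extension's critical set relative to $U \times V$) are unchanged. This can be achieved by letting the extension reduce to a sufficiently stretched Riemannian structure outside a compact containing $K$. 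Ben Arous' theorem then applies to the heat kernel $\tilde{p}_t$ of the extension, giving the stated expansion, differentiable in $y$ and $t$, uniformly on $\overline{U \times V}$. A second application of Theorem~\ref{THM:BasicLocalization}, this time to the extended structure, yields $\tilde{p}_t - p^{K'}_t$ at the same exponentially small rate, since $p^{K'}_t$ depends only on the shared structure inside $K'$.

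Combining, $p_t - \tilde{p}_t$ is of the same exponentially small order, and substituting Ben Arous' expansion for $\tilde{p}_t$, the error is absorbed into $t^{n+1} r_{n+1}$ for every $n$ because $(\diam(K)+\eps)^2 > d^2(x,y)$ on $U \times V$. Covering the compact $\K$ by finitely many such patches produces the uniform expansion on $\K$. The prefactor $t^{-(|\alpha|+2l+d/2)}$ arises because each $Z_y$ contributes a factor $t^{-1}$ by differentiating the exponential and each $\partial_t$ contributes $t^{-2}$ via $\partial_t \e^{-d^2/(4t)} = (d^2/(4t^2))\e^{-d^2/(4t)}$. Finally, the positivity $c_0(x,y) > 0$ for $\alpha = 0$ is inherited from Ben Arous, who identifies $c_0$ with a non-degenerate geometric quantity (essentially the inverse Jacobian of the exponential map, or equivalently the Malliavin covariance) along the non-conjugate minimizer $\gamma$.
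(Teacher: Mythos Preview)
Your approach has a genuine gap at the embedding step. You propose to embed $K'$ into $\R^d$ via ``a chart (or patchwork of charts)'' and then invoke Ben Arous' theorem on the extended structure. But the localization hypothesis of Theorem~\ref{THM:BasicLocalization} forces $d(K,\partial K') > \tfrac{1}{2}\diam(K)+\eps$, and since $x,y\in K$ we have $\diam(K)\geq d(x,y)$. Thus $K'$ must contain metric balls of radius exceeding $\tfrac{1}{2}d(x,y)$ around both $x$ and $y$. For $x$ and $y$ far apart on a general manifold (think of nearly antipodal points on a compact $M$, or any pair whose minimizer cannot sit inside a single coordinate ball), such a $K'$ is not contained in a single chart, and ``patchwork of charts'' does not yield a well-defined diffeomorphism onto a domain in $\R^d$. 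Whitney-type embeddings into higher-dimensional $\R^N$ do not help either, since the pushed-forward vector fields are then not bracket-generating on $\R^N$ and Ben Arous' hypotheses fail. The ``no shortcuts'' issue you flag is real, but it is secondary to the more basic topological obstruction.

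The paper's proof circumvents this precisely by \emph{not} trying to realize the large-distance expansion via a single embedding. Instead it proceeds by iteration of Molchanov's method itself: Lemma~\ref{L:BenArousSmall} establishes the expansion for pairs at distance at most some $\delta_0$ (where a single chart does suffice), and Lemma~\ref{L:Molchanov2BenArous} shows that if the expansion holds uniformly on pairs at distance $\leq\delta$, then applying Chapman--Kolmogorov at the midpoint and computing the resulting Laplace integral yields the expansion on pairs at distance $\leq 3\delta/2$. Finitely many iterations then reach any prescribed diameter. This bootstrapping is the essential new idea, and it is exactly what your single-embedding strategy is missing.
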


\begin{remark}
Note that this is not the complete generalization of the original Ben Arous expansion; here we don't allow derivatives in the $x$ or $t$ variables in general, whereas that was allowed in \cite{BenArous}. Even in the symmetric case, where $y$-derivatives can be replaced by $x$-derivatives by symmetry, we don't allow the mixing of $x$- and $y$-derivatives. Indeed, it isn't obvious whether or not one should expect such results without some global control of the geometry. This occurs also in the Riemannian situation, as discussed in Remark 4 and in Section 6 of \cite{OurAIHP}.
\end{remark}

We next establish the general formula for the heat kernel asymptotics, valid at the non-abnormal cut locus, coming from Molchanov's method. The method is based on gluing together two copies of the Ben Arous expansion. For any two points $x,y\in M$, we denote by $\Gamma(x,y)$ the midpoint set of $(x,y)$, that is the set of points $z$ that lay at the midpoint of length minimizing curves between $x$ and $y$:
$$
\Gamma(x,y)=
\left\{
z\in M\mid  d(x,z)=d(z,y)=\frac{d(x,y)}{2}
\right\}.
$$
For any $\varepsilon>0$, we set
$$
\Gamma_\varepsilon(x,y)=
\left\{
z\in M
\mid  
d(x,z)\leq \frac{d(x,y)+\varepsilon}{2}
\text{ and }
d(y,z)\leq \frac{d(x,y)+\varepsilon}{2}
\right\}.
$$

When the context is clear, we typically write $\Gamma$ and $\Gamma_\varepsilon$ instead of $\Gamma(x,y)$ and $\Gamma_\varepsilon(x,y)$.
For any pair $(x,y)\in M^2$, we let the  \emph{hinged energy functional} be 
$$
h_{x,y}=\frac{d(x,\cdot)^2+d(\cdot,y)^2}{2} .
$$
Again, let $l$ be any non-negative integer in the symmetric case and 0 in the general case, and let $\alpha$ be any multi-index, Now let  $\Sigma^{l,\alpha}:\R^+\times M^2\setminus \C\to \R$ be the Taylor expansion type factor in the Ben Arous expansion of the heat kernel. That is, $\Sigma^{l,\alpha}$ is  the function such that 
$$
\Sigma_t^{l,\alpha}(x,y)=
t^{|\alpha|+2 l+d/2} \e^{\frac{d(x,y)^2}{4t}}
\partial_t^l
Z_y^{\alpha}
p_t(x,y).
$$

Naturally, as a consequence of Theorem~\ref{T:UnifBenArous}, $\Sigma^{l,\alpha}$ is smooth, and for any compact $\K\subset M^2\setminus\C$, $l'$ which is any non-negative integer in the symmetric case and 0 otherwise, and any multi-index $\alpha'$, there exists $t_0$ such that
$$
\sup_{0<t<t_0}
\sup_{(x,y)\in \K}
\left|
	\partial_t^{l'}	Z_y^{\alpha'}\Sigma_t^{l,\alpha}(x,y)
\right|<\infty.
$$

\begin{corollary}\label{C:MolchanovLaplace}
Let $\K$ be a localizable compact subset of $M^2\setminus \D$ such that all minimizers between pairs $(x,y)\in\K$ are strongly normal.
Then for any $\varepsilon>0$ small enough, we have uniformly on $\R^+\times \K$, for all $(t,x,y)\in\R^+\times \K$
$$
\partial_t^l
Z_y^{\alpha}
p_t(x,y)
=
\left(\frac{2}{t}\right)^{|\alpha|+2 l+d}
\int_{\Gamma_\varepsilon} 
\e^{-\frac{h_{x,y}(z)}{t}}
\Sigma_{t/2}^{0,0}(x,z)
\Sigma_{t/2}^{l,\alpha}(z,y)
\diff\mu( z)
+
O\left(\e^{-\frac{d(x,y)^2+\varepsilon^2/2}{4t}}\right).
$$
\end{corollary}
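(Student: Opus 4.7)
The natural starting point is the Chapman--Kolmogorov identity at time $t/2$,
\[
p_t(x,y) = \int_M p_{t/2}(x,z)\, p_{t/2}(z,y)\, d\mu(z).
\]
The $y$-derivative $Z_y^\alpha$ acts only on the second factor; writing $\partial_t p_{t/2} = \tfrac12 (\partial_s p_s)|_{s=t/2}$ and applying the Leibniz rule distributes the $l$ time derivatives between the two factors, producing a sum over $j=0,\dots,l$ with binomial weights of the terms $\partial_s^j p_s(x,z)\cdot \partial_s^{l-j} Z_y^\alpha p_s(z,y)|_{s=t/2}$, together with an overall $(1/2)^l$. Symmetry is essential here: in the symmetric case $\partial_s p_s = \Delta p_s$, so pure time derivatives become spatial derivatives of finite order covered by Theorem \ref{THM:BasicLocalization}, which is what justifies interchanging $\partial_t^l$ with the integral and bounding the integrand. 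Without symmetry one has no such translation, which explains the restriction $l=0$ in the general case.

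Next I would localize using Theorem \ref{THM:BasicLocalization}. Choose a compact $K'\subset M$ containing $\Gamma_\varepsilon(x,y)$ for every $(x,y)\in\K$ with a margin, and replace each $p_{t/2}$ by $p_{t/2}^{K'}$ at the cost of an $O(\e^{-c/t})$ error with $c$ as large as desired; this confines $z$ to a compact for the tail estimate and lets dominated convergence justify differentiation under the integral via the derivative bounds from the same theorem. Split the remaining integration as $\Gamma_\varepsilon \cup (K'\setminus\Gamma_\varepsilon)$. The key geometric observation for the tail is that for $z\notin\Gamma_\varepsilon(x,y)$, the triangle inequality and Cauchy--Schwarz give
\[
h_{x,y}(z) \;=\; \tfrac12\bigl(d(x,z)^2+d(z,y)^2\bigr) \;\geq\; \tfrac14\bigl(d(x,y)^2+\varepsilon^2\bigr),
\]
obtained by minimizing $(a^2+b^2)/2$ under $a+b\geq d(x,y)$ and $\max(a,b)\geq (d(x,y)+\varepsilon)/2$. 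Combined with the upper L\'eandre bounds of Theorem \ref{THM:BasicLocalization} applied to each factor, this yields $|\partial_s^j p_s(x,z)\cdot \partial_s^{l-j} Z_y^\alpha p_s(z,y)||_{s=t/2} \leq C\,\e^{-(h_{x,y}(z)-\delta)/t}$ for any $\delta>0$ and $t$ small, so the tail integral over $K'\setminus\Gamma_\varepsilon$ is $O\bigl(\e^{-(d(x,y)^2+\varepsilon)/(4t)}\bigr)$ once $\delta$ is chosen small enough compared to $\varepsilon^2$.

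On $\Gamma_\varepsilon$ the definition of $\Sigma^{l,\alpha}$ directly gives
\[
\partial_s^j p_s(x,z)\,\partial_s^{l-j} Z_y^\alpha p_s(z,y)\big|_{s=t/2}
\;=\; \lp\tfrac{2}{t}\rp^{|\alpha|+2l+d}\e^{-h_{x,y}(z)/t}\,\Sigma_{t/2}^{j,0}(x,z)\,\Sigma_{t/2}^{l-j,\alpha}(z,y),
\]
so factoring the common exponential and power of $t$ out of the integral and collecting the binomial coefficients (together with the $(1/2)^l$ from the chain rule) into $\Phi$ yields the stated formula. The main obstacle is getting the tail estimate to work in the symmetric case with $l>0$: each $\partial_s$ contributes a $\Delta$, so one must invoke Theorem \ref{THM:BasicLocalization} for spatial multi-indices up to order $2l+|\alpha|$ and then track the $\delta$ carefully against $\varepsilon^2$ to preserve the error $\e^{-(d(x,y)^2+\varepsilon)/(4t)}$; the rest is bookkeeping with the definition of $\Sigma^{l,\alpha}$.
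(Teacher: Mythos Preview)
Your approach is essentially the paper's: the paper first proves the localization to $\Gamma_\varepsilon$ with the stated error (Corollary~\ref{C:MolchanovV1}, whose proof is exactly your Chapman--Kolmogorov plus Leibniz plus L\'eandre-tail argument, including the inequality $h_{x,y}(z)\geq\tfrac14(d(x,y)^2+\varepsilon^2)$ for $z\notin\Gamma_\varepsilon$), and then substitutes the definition of $\Sigma^{l,\alpha}$ into the integrand. One point you pass over is that $\Sigma^{l,\alpha}$ is only defined on $M^2\setminus\C$, so before you can write $\Sigma_{t/2}^{j,0}(x,z)$ and $\Sigma_{t/2}^{l-j,\alpha}(z,y)$ you must know that for $\varepsilon$ small enough the pairs $(x,z)$ and $(z,y)$ with $z\in\Gamma_\varepsilon(x,y)$ avoid $\C$ uniformly over $\K$; this is precisely Lemma~\ref{L:fat_compacts}, and it is where the strong normality hypothesis on minimizers is actually used.
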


It turns out that, because the behavior of the exponential map away from the cut locus is qualitatively the same in sub-Riemannian as in Riemannian geometry, the resulting formula is structurally the same, giving the heat kernel via the Laplace asymptotics of a geometrically-motivated integral, namely a Laplace integral with phase $h_{x,y}$. The proof, however, requires working with sub-Riemannian formalism: accounting for the possibility of abnormal minimizers, defining the exponential map in terms of the Hamiltonian flow on the co-tangent space, and so on. This parallels the fact that the Ben Arous asymptotics on a sub-Riemannian manifold directly generalize the classical Minakshishundaram-Pleijel asmyptotics on a Riemannian manifold, but requires additional work to prove. Given this, the asymptotics of heat kernel are determined by the theory of the asymptotics of Laplace integrals, which is a well-developed subject in its own right.

The application of this theory to Riemannian heat kernel asymptotics began with \cite{Molchanov} and was systematically developed in \cite{CBell}, though only the leading term of the expansion for $p_t$ itself was considered. In \cite{BBN-BiHeis, BBCN-IMRN, BBN-JDG}, the analogous sub-Riemannian situation was considered, including the relationship of the leading term to the geodesic geometry. Here we show that one can always bound the leading term, leading to two-sided estimates on the heat kernel itself, and to upper bounds on its derivatives, uniformly on (localizable) compacts. These uniform bounds on the heat kernel, in the compact Riemannian case, are discussed in Chapter 5 of \cite{Hsu}. More recently, Ludewig \cite{Ludewig1} extended the upper bound to any number of simultaneous derivatives in $x$, $y$, and $t$ for formally self-adjoint Laplace-type operators acting on a vector bundle over a compact Riemannian manifold. This was based on wave parametrix techniques, and, as above, it isn't clear whether or not one should expect such results in a more general context. In the present situation, we have the following.

\begin{proposition}\label{P:universal_bounds}
Let $\K$ be a localizable compact subset of $M^2\setminus \D$ such that all minimizers between pairs $(x,y)\in \K$ are strongly normal. Then for $l$ any non-negative integer in the symmetric case and 0 otherwise, and any multi-index $\alpha$, there exists $C>0$ such that for all $(x,y)\in \K$,
$$
\partial_t^l
Z_y^{\alpha}
p_t(x,y)
\leq
\frac{C}{t^{|\alpha|+2 l}t^{d-1/2}}
 \e^{-\frac{d(x,y)^2}{4t}}.
$$
In the case $\alpha=0$ there also exists $C'>0$ such that for all $(x,y)\in \K$,
$$
\frac{C'}{t^{2 l}t^{d/2}}
 \e^{-\frac{d(x,y)^2}{4t}}
 \leq
\partial_t^l p_t(x,y).
$$
\end{proposition}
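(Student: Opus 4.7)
The strategy is to apply Corollary \ref{C:MolchanovLaplace}, whose super-exponentially small error term is negligible at the scale of both bounds to follow. The amplitude $\Phi(t/2,x,y,z)$ is a fixed bilinear expression in the factors $\Sigma^{j,0}_{t/2}(x,z)$ and $\Sigma^{l-j,\alpha}_{t/2}(z,y)$, each of which extends continuously as $t\to 0$ on a neighborhood of the compact set $\{(x,y,z):(x,y)\in\K,\,z\in\Gamma_{\varepsilon}(x,y)\}$, by Theorem \ref{T:UnifBenArous}. This uses that the half-segments of any strongly normal minimizer through a midpoint $z_{0}$ are themselves strongly normal and free of conjugate and cut points; openness of $M^{2}\setminus\C$ together with compactness of $\K$ then yield $|\Phi|\leq C$ uniformly for small $t$.

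For the upper bound, the essential pointwise estimate is
\[
h_{x,y}(z)\;\geq\;\frac{d(x,y)^{2}}{4}+\Big(d(x,z)-\tfrac{d(x,y)}{2}\Big)^{2},\qquad z\in\Gamma_{\varepsilon},
\]
obtained from the identity $\tfrac{a^{2}+b^{2}}{2}=\tfrac{(a+b)^{2}+(a-b)^{2}}{4}$ combined with the triangle inequality $d(x,z)+d(y,z)\geq d(x,y)$ (the condition $d(x,z)\leq d(x,y)$ being automatic on $\Gamma_{\varepsilon}$ once $\varepsilon<\inf\{d(x,y):(x,y)\in\K\}$, which is positive since $\K$ is a compact subset of $M^{2}\setminus\D$). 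Cover $\Gamma(x,y)$ by finitely many exponential-map charts $(r,\xi)\mapsto\EXP_{x}(r\xi)$, chosen uniformly over $\K$ using compactness and the fact that each midpoint is a regular, non-cut point of $\EXP_{x}$. On each chart the Jacobian is bounded and $r=d(x,\cdot)$, so a Gaussian integration in $r$ produces a factor $\sqrt{t}$ while the transverse $\xi$-integration contributes a bounded factor. The portion of $\Gamma_{\varepsilon}$ outside the cover satisfies $h_{x,y}\geq d(x,y)^{2}/4+\delta$ for some uniform $\delta>0$, contributing only a super-exponentially smaller term. Multiplying by the prefactor $(2/t)^{|\alpha|+2l+d}$ yields the claimed bound.

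For the lower bound ($\alpha=0$), the $t\to 0^{+}$ limit of each $\Sigma^{j,0}_{t/2}(\cdot,\cdot)$ coming from the Ben Arous expansion equals $c_{0}(\cdot,\cdot)\cdot(d(\cdot,\cdot)^{2}/4)^{j}$ (the leading term of $\partial_{t}^{j}p_{t}$ in which all derivatives have hit the Gaussian), so the binomial identity gives
\[
\Phi(t/2,x,y,z)\;\longrightarrow\;c_{0}(x,z)\,c_{0}(z,y)\,\big(h_{x,y}(z)/2\big)^{l}.
\]
Positivity of $c_{0}$ on $M^{2}\setminus\C$ together with the uniform lower bound $h_{x,y}\geq d(x,y)^{2}/4$ on $\K$ make this limit uniformly bounded below by a positive constant on a neighborhood of $\bigcup_{(x,y)\in\K}\Gamma(x,y)$, so $\Phi\geq c>0$ there for all small $t$. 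Pick for each $(x,y)\in\K$ a single midpoint $z_{0}\in\Gamma(x,y)$ and work in a coordinate chart around $z_{0}$; smoothness of $h_{x,y}$ at $z_{0}$ (from $z_{0}$ being non-cut, non-conjugate for both $x$ and $y$) gives $h_{x,y}(z)\leq d(x,y)^{2}/4+C|z-z_{0}|^{2}$, while the $\mu$-volume of the coordinate ball of radius $\sqrt{t}$ is at least $c'\,t^{d/2}$. Restricting the Laplace integral to this ball yields a lower bound of order $e^{-d(x,y)^{2}/(4t)}\,t^{d/2}$, and the prefactor $(2/t)^{2l+d}$ completes the proof.

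The principal technical obstacle throughout is enforcing uniformity over $\K$: the midpoint set $\Gamma(x,y)$ depends only upper-semi-continuously on $(x,y)$ and may vary in cardinality or dimension. The argument sidesteps this by using only (i) one-dimensional quadratic growth of $h_{x,y}$ in the radial direction from $x$ for the upper bound and (ii) a single-midpoint localization for the lower bound—both controlled uniformly by combining strong normality with compactness of $\K$ and a finite-cover argument.
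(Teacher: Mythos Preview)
Your proposal is correct and follows essentially the same approach as the paper: starting from the Molchanov--Laplace integral of Corollary~\ref{C:MolchanovLaplace}, bounding the amplitude $\Phi$ uniformly via the Ben~Arous expansion on the compact midpoint sets, using the triangle-inequality bound $h_{x,y}(z)\geq d(x,y)^2/4+(d(x,z)-d(x,y)/2)^2$ together with a radial/angular decomposition of the exponential chart for the upper bound, and a quadratic Taylor upper bound on $h_{x,y}$ near a single midpoint for the lower bound. Your explicit identification of the $t\to 0$ limit of $\Phi$ as $c_0(x,z)c_0(z,y)(h_{x,y}(z)/2)^l$ via the binomial identity is a nice touch that makes positivity transparent, whereas the paper simply bounds each product $\partial_t^{j}p_{t/2}(x,z)\,\partial_t^{l-j}p_{t/2}(z,y)$ from below term-by-term; both routes arrive at the same Gaussian estimate.
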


In the past several years, there has been interest in ``complete'' expansions, meaning expansions to arbitrary order in $t$, not simply the leading term. For example, \cite{Inahama} uses a distributional form of Malliavin calculus, due to Watanabe, to give (potentially) complete expansions of the heat kernel on the sub-Riemannian cut locus, under some stochastically-motived assumptions, while \cite{Ludewig2,Ludewig1} uses wave parametrix techniques to, among other things, give (potentially) complete expansions of the heat kernel of a self-adjoint Laplace-type operator acting on a vector bundle over a compact Riemannian manifold. Here, it is important to note that writing the heat kernel asymptotics in terms of a Laplace integral, which all of these approaches do, in one way or another, to obtain a complete expansion or even an exact leading term, is only possible if the asymptotics of the integral can be explicitly determined. If the hinged energy functional is real analytic in some coordinates, then the general theory of Laplace asymptotics, as developed by Arnold and collaborators \cite{Arnold_singularities_diff_maps1, Arnold_singularities_diff_maps2} guarantees an expansion in rational powers of $t$ and integer powers of $\log t$. We observe that Corollary \ref{C:MolchanovLaplace} supports complete expansions via this approach, and illustrate by giving them in two typical cases, when $h_{x,y}$ has $A$-type singularities and when it is Morse-Bott, in Sections
\ref{Sect:A_n} and \ref{Sect:MB}.

On the other hand, in the general smooth case, one can have situations in which this theory does not apply, and where even the leading order appears not to be known. In the Riemannian context, C.\ Bella\"iche \cite{CBell} discusses the possibility of non-analytic hinged energy functions and the resulting breakdown in computing the asymptotics of the Laplace integral. This paper, however, seems not to be widely known, and the work on complete expansions just mentioned only explicitly considers the Morse-Bott case (as does the earlier work of \cite{StroockKusuoka}). Constructing a Riemannian metric realizing such non-analytic hinged energy functions (and more generally, arbitrary normal forms), was considered by A.\ Bella\"iche \cite{ABell}, who stated the existence of such Riemannian metrics as a theorem and briefly sketched a construction. Here, we provide complete details of the proof and also consider the properly sub-Riemannian case. That is, in constructing a sub-Riemannian metric with prescribed singularities for the hinged energy function, one must take into account the constraint imposed by the distribution, and in principle one might wonder if that is an obstruction to constructing an arbitrary singularity. Given that there are an infinite number of possible growth vectors, addressing all of them is impractical, but we show that for contact manifolds (which is the most widely-studied class of sub-Riemannian manifolds), a similar construction is possible. (Other possible growth vectors are left to the interested and suitably intrepid reader.) These results are the content of Section \ref{Sect:Prescribed}. This indicates that properly sub-Riemannian manifolds exhibit the same diversity of possible singularities as Riemannian manifolds. (The generic situation in low dimensions is another story, for which one can see \cite{BBCN-IMRN}.) 
 
 \subsection{Asymptotics for log-derivatives and bridges}
In the third part of the paper, we turn to the asymptotics of logarithmic derivatives of the heat kernel and to the law of large numbers for the Brownian bridge, which are closely related and accessible to a natural modification of Molchanov's method. Probabilistically, this is corresponds to considering the bridge process, rather than the underlying diffusion itself. 

We express the leading term of the $n$th logarithmic derivative is given as an $n$th-order joint cumulant. In particular, we can define a family of probability measures $m_t$ on $\Gamma_{\eps}$ in terms of a ratio of Laplace integrals, see Equation \eqref{Eqn:M_tDef}, which are subsequentially compact, see Theorem \ref{THM:Tightness}. In terms of the $m_t$, we have the following expression for the log-derivatives of $p_t$.

\begin{theorem}\label{THM:Cumulants}
Let $x$ and $y$ be localizable and such that all minimal geodesics from $x$ to $y$ are strongly normal, and let $Z^1,\ldots, Z^N$ be smooth vector fields in a neighborhood of $y$ (so that we understand that they act as differential operators in the $y$-variable). Then
\[
Z^N\cdots Z^1 \log p_t(x,y)=
\lp -\frac{1}{t}\rp^N \lc \kappa^{m_t}\lp d(\cdot,y)Z^1d(\cdot,y),\ldots, d(\cdot,y)Z^N d(\cdot,y) \rp +O(t)\rc ,
\]
where $\kappa^{m_t}$ is the joint cumulant (of $N$ random variables) with respect to $m_t$.
\end{theorem}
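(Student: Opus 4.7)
The strategy is to express $\log p_t(x, y)$ via Corollary \ref{C:MolchanovLaplace} as the logarithm of a Laplace integral, reduce the claim to an $N$-fold mixed $s$-derivative, and invoke the standard cumulant-expansion identity for log-integrals, identifying the leading $1/t^N$ term as the asserted joint cumulant under $m_t$. Parametrize a neighborhood of $y$ by $y(s) = \phi^{Z^1}_{s_1} \circ \cdots \circ \phi^{Z^N}_{s_N}(y)$ for $s \in \R^N$ near $0$; by iterated chain rule, $Z^N \cdots Z^1 f(y) = \partial_{s_1} \cdots \partial_{s_N} f(y(s))|_{s=0}$ for every smooth $f$ defined near $y$. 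For $|s|$ small, $(x, y(s))$ stays in a compact of pairs joined only by strongly normal minimizers, so Corollary \ref{C:MolchanovLaplace} with $l = 0$, $\alpha = \emptyset$ gives
\[
p_t(x, y(s)) = \lp \frac{2}{t} \rp^d J_t(s) + O\lp e^{-(d(x,y)^2 + \eps/2)/(4t)} \rp, \qquad J_t(s) := \int_V e^{-h_{x, y(s)}(z)/t}\, \Psi_t(s, z)\, d\mu(z),
\]
with $V$ a fixed compact containing $\Gamma_\eps(x, y(s))$ for small $|s|$ and $\Psi_t(s, z) = \Sigma^{0,0}_{t/2}(x, z)\, \Sigma^{0,0}_{t/2}(z, y(s))$ smooth, uniformly positive, with $s$-derivatives uniformly bounded in $t$ by Theorem \ref{T:UnifBenArous}; the analogous representation for $y$-derivatives (Corollary \ref{C:MolchanovLaplace} with $\alpha \neq \emptyset$) ensures that the remainder stays exponentially small after any number of $s$-derivatives.

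Define $dm_t(z) := J_t(0)^{-1} e^{-h_{x,y}(z)/t} \Psi_t(0, z)\, d\mu(z)$ on $V$, matching \eqref{Eqn:M_tDef}, and
\[
X(s, z, t) := -\frac{h_{x, y(s)}(z) - h_{x, y}(z)}{t} + \log \frac{\Psi_t(s, z)}{\Psi_t(0, z)}, \qquad X(0, \cdot, t) \equiv 0.
\]
Dividing and taking logs, $\log p_t(x, y(s)) = (\text{constants in } s) + \log \E_{m_t}\lb e^{X(s, \cdot, t)} \rb + O\lp e^{-\eps/(8t)} \rp$. Iteration of $\partial_s \log \E_Q[e^Y] = \E_{Q_Y}[\partial_s Y]$, where $Q_Y$ is the $Y$-tilted measure, yields the standard cumulant-partition identity
\[
\partial_{s_1} \cdots \partial_{s_N}\big|_{s=0} \log \E_{m_t}\lb e^{X(s, \cdot, t)} \rb = \sum_{\pi \in \mathcal{P}(\{1, \ldots, N\})} \kappa^{m_t}\lp (\partial_s^B X|_{s=0})_{B \in \pi} \rp,
\]
with $\pi = \{B_1, \ldots, B_k\}$ running over set partitions of $\{1, \ldots, N\}$ and $\partial_s^B$ the mixed partial in $\{s_i\}_{i \in B}$.

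For every nonempty $B$, $\partial_s^B X|_{s=0}(z) = -\partial_s^B h_{x, y(s)}(z)|_{s=0}/t + O(1)$ uniformly on $V$, since $h_{x, y(s)}$ and $\log \Psi_t(s, \cdot)$ are smooth in $s$ with derivatives bounded uniformly in $t$ and $z$. Hence a partition into $k$ blocks yields a cumulant of $k$ quantities each $O(1/t)$, of order $O(1/t^k)$; only the all-singletons partition ($k = N$) reaches order $1/t^N$. For singletons, $\partial_{s_i} X|_{s=0}(z) = -g_i(z)/t + O(1)$ with $g_i(z) := Z^i_y h_{x, y}(z) = d(z, y)\, Z^i_y d(z, y)$, and multilinearity of joint cumulants gives
\[
\kappa^{m_t}\lp \partial_{s_1} X|_{s=0}, \ldots, \partial_{s_N} X|_{s=0} \rp = \lp -\frac{1}{t} \rp^N \kappa^{m_t}(g_1, \ldots, g_N) + O\lp t^{-(N-1)} \rp.
\]
Collecting all partitions and rewriting $O(t^{-(N-1)}) = (-1/t)^N \cdot O(t)$ produces the stated formula, with uniformity on compacts avoiding $\C$ coming from the uniformity in Theorem \ref{T:UnifBenArous}. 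The main technical obstacle is the bookkeeping for the partition identity together with the tracking of the $1/t$-order of each block, which crucially relies on the fact that the amplitude $\log \Psi_t$ contributes no $1/t$ factor, so each block receives at most one $1/t$ factor from the $-h_{x, y(s)}/t$ part of $X$, leaving only the all-singletons partition at the leading order $1/t^N$.
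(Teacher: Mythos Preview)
Your proof is correct and reaches the same conclusion, but it is organized differently from the paper's argument, and the difference is worth noting.

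The paper first applies Fa\`a di Bruno to $\log p_t$, writing $Z^N\cdots Z^1\log p_t$ as a signed sum over partitions $\pi$ of products $\prod_{B\in\pi}(Z^B p_t)/p_t^{|\pi|}$; it then expresses each $Z^B p_t$ via the Molchanov integral (using Lemma~\ref{L:Molchanov_no_remainder} to fold the remainder into $\bar\Sigma$ so that the representation is an exact equality), expands $Z^B[e^{-d^2(z,y)/2t}\bar\Sigma]$ by the product rule and a second Fa\`a di Bruno, and finally recognizes the coefficient of $(-1/t)^N$ as the moment--cumulant formula. You instead pass to the Laplace integral first, take the logarithm, and invoke the identity
\[
\partial_{s_1}\cdots\partial_{s_N}\big|_{s=0}\log\E_{m_t}\!\big[e^{X(s,\cdot)}\big]
=\sum_{\pi}\kappa^{m_t}\!\big((\partial_s^B X|_{s=0})_{B\in\pi}\big),
\]
which packages the paper's two applications of Fa\`a di Bruno and the moment--cumulant inversion into a single step. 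This cumulant-generating-function viewpoint is a genuine reorganization: it makes the probabilistic structure more transparent and the leading-order identification (only the all-singleton partition survives at order $t^{-N}$) almost immediate, at the cost of relying on an identity that, while correct, is less commonly stated than plain Fa\`a di Bruno and deserves at least a brief inductive justification in a full write-up.

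Two small points. First, your $m_t$ is built from $\Sigma_{t/2}(x,z)\Sigma_{t/2}(z,y)$, whereas the paper's $m_t$ in \eqref{Eqn:M_tDef} uses $\Sigma_{t/2}(x,z)\bar\Sigma_{t/2}(z,y)$; by Lemma~\ref{L:Molchanov_no_remainder} these differ by $O(e^{-\eps^2/8t})$, so the resulting cumulants agree to far better than $O(t)$, but ``matching \eqref{Eqn:M_tDef}'' is not literally exact. Second, where the paper eliminates the exponential remainder by folding it into $\bar\Sigma$, you instead carry it and argue it stays negligible after differentiation; this is valid (Corollary~\ref{C:MolchanovV1} with a fixed domain $V\supset\Gamma_\eps(x,y(s))$ and the lower bound on $p_t$ from Proposition~\ref{P:universal_bounds} control the ratio), but in a complete proof you should spell out why derivatives of $\log(1+R_t/((2/t)^dJ_t))$ remain $O(e^{-c/t})$.
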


The logarithmic gradient and logarithmic Hessian for compact Riemannian manifolds were treated in \cite{Neel}, but the higher-order derivatives are new even in the Riemannian case. Moreover,  we show that the (non-abnormal) cut locus is characterized by the blow up of the logarithmic Hessian, which was proven in the Riemannian case in \cite{Neel}; we also note that the proof presented here is much improved over that of \cite{Neel}. This is a differential analogue of the recent result of \cite{LucaDavide} showing that the cut locus is characterized by the square of the distance failing to be semi-convex. We have the following (see Section \ref{Sect:CutChar} for definitions and further details).

\begin{corollary}\label{Cor:CutLocus}
Let $x$ and $y$ be localizable and such that all minimal geodesics from $x$ to $y$ are strongly normal, and let $\SZ$ be a set of vector fields on a neighborhood of $y$ which is $C^1$-bounded and such that $\SZ|_{T_yM}$ contains a neighborhood of the origin. Then $y\not\in \Cut(x)$ if and only if
\[
\limsup_{t\searrow 0} \lb \sup_{Z\in \SZ} t\lab Z_y Z_y \log p_t(x,y)\rab \rb <\infty
\]
and $y\in \Cut(x)$ if and only if 
\[
\lim_{t\searrow 0} \lb \sup_{Z\in \SZ} t Z_y Z_y \log p_t(x,y) \rb =\infty
\]
\end{corollary}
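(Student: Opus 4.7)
The plan is to reduce both directions to Theorem \ref{THM:Cumulants} applied with $N=2$ and $Z^1=Z^2=Z$. Since the joint cumulant of two copies of a single random variable is its variance, this gives
\[
t\, Z_y Z_y \log p_t(x,y) \;=\; \tfrac{1}{t}\,\Var^{m_t}(F_Z) \;+\; O(1),
\qquad F_Z(z) := d(z,y)\,(Z_y d)(z,y),
\]
where $(Z_y d)(z,y)$ means $[Z_{y'} d(z,y')]\big|_{y'=y}$ and the $O(1)$ error is uniform for $Z$ ranging over any $C^1$-bounded family (the uniformity should be extractable by tracking the $C^1$-norm dependence of $Z$ in the proof of Theorem \ref{THM:Cumulants}). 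The corollary thus reduces to controlling $\sup_{Z\in\mathcal{Z}}\Var^{m_t}(F_Z)/t$ as $t\searrow 0$.

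Suppose first that $y\notin\Cut(x)$. Then the unique minimizing geodesic is non-conjugate, $\Gamma(x,y)=\{z_0\}$, and $\mathrm{Hess}\,h_{x,y}(z_0)$ is positive definite. Standard Laplace's method applied to the integral representation in Corollary \ref{C:MolchanovLaplace} shows that $m_t$ is, to leading order, Gaussian centered at $z_0$ with covariance $t\cdot\mathrm{Hess}\,h_{x,y}(z_0)^{-1}$. Hence for any function on a compact neighborhood of $z_0$ with uniformly bounded $C^1$-norm---which includes $\{F_Z\}_{Z\in\mathcal{Z}}$, since $F_Z$ depends on $Z$ only through the vector $Z(y)\in T_yM$---one has $\Var^{m_t}(F_Z)=O(t)$ with constants uniform in $Z$, yielding the stated upper bound.

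For the converse, suppose $y\in\Cut(x)$, so one of the following occurs. (a) There are two distinct minimizers and $\Gamma(x,y)$ contains distinct midpoints $z_1\neq z_2$; near each, Laplace's method (valid since both are non-abnormal and non-conjugate along the corresponding half-geodesics) gives that $m_t$ places positive asymptotic mass on a neighborhood of $z_i$, so any $F_Z$ with $F_Z(z_1)\neq F_Z(z_2)$ has $\Var^{m_t}(F_Z)$ bounded below by a positive constant. (b) The minimizer is unique but conjugate, so $\Gamma=\{z_0\}$ and $K:=\ker\mathrm{Hess}\,h_{x,y}(z_0)$ is nontrivial; a degenerate Laplace analysis shows that $m_t$ concentrates at scale $\sqrt t$ transverse to $K$ but at a strictly coarser scale along $K$, so any $F_Z$ whose gradient at $z_0$ has a nonzero $K$-component has $\Var^{m_t}(F_Z)/t\to\infty$. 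In either case one must realize a suitable $Z\in\mathcal{Z}$: since each relevant midpoint $z$ satisfies $(z,y)\notin\C$, the map $z\mapsto p_z:=d_y d(z,\cdot)\in T_y^*M$ is a local diffeomorphism there, and a direct computation shows that $F_Z(z_i)$ and $\nabla_z F_Z(z_0)$ depend linearly on $Z(y)$ via an invertible transformation. As $Z(y)$ runs over a neighborhood of $0$ in $T_yM$---which it does by the standing hypothesis on $\mathcal{Z}$---both $F_Z(z_1)-F_Z(z_2)$ and the $K$-projection of $\nabla_z F_Z(z_0)$ cover a neighborhood of $0$ and can in particular be made nonzero.

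The main obstacle is the degenerate Laplace asymptotic in case (b): establishing the strict lower bound $\Var^{m_t}(F_Z)/t\to\infty$ in the $K$-direction under only smooth (not analytic) hypotheses on $h_{x,y}$ and without a priori knowledge of the degenerate normal form. This can be approached either by a direct rescaling argument on a tube around the degenerate locus, using that $h_{x,y}$ grows strictly faster than quadratically along $K$, or, where applicable, by the Arnold--Varchenko normal form machinery already invoked in the paper for the Laplace-integral expansion. A secondary technical point is confirming the uniformity in $Z$ of the $O(t)$ remainder in Theorem \ref{THM:Cumulants}, which amounts to verifying that the constants depend only on a finite $C^k$-norm of $Z$ on a compact neighborhood of $y$.
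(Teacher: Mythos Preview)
Your overall reduction to $t\,Z_yZ_y\log p_t = \tfrac{1}{t}\Var^{m_t}(F_Z)+O(1)$ is the same starting point as the paper, and your off-cut argument is essentially correct (though the paper handles it more simply by directly differentiating the Ben Arous expansion, Equation~\eqref{Eqn:BADer}, rather than analyzing $m_t$).

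There are two genuine problems on the cut-locus side.

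First, your case split (a)/(b) is not exhaustive and case (a) is not correctly argued. If $\Gamma$ contains two midpoints $z_1,z_2$ where the geodesic through $z_1$ is non-conjugate and the one through $z_2$ is conjugate, the Laplace contribution near $z_2$ dominates (the integral there is of larger order in $t^{-1}$), so $m_t(U_1)\to 0$ and your lower bound on the variance via $F_Z(z_1)\neq F_Z(z_2)$ fails. The paper avoids this by splitting instead on whether a subsequential limit $m_0$ is a point mass (Lemma~\ref{Lem:PointMass} then guarantees that if it is, the corresponding geodesic is conjugate), which is robust to mixtures of conjugate and non-conjugate geodesics.

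Second, and more seriously, your case (b) is where the real content lies, and your proposed approaches do not obviously work. A rescaling argument or Arnold--Varchenko normal forms would require control of $h_{x,y}$ along $K$ that is simply unavailable in the smooth category (the paper emphasizes exactly this point in Section~\ref{Sect:Prescribed}). The paper's mechanism is quite different: one uses only the crude upper bound $h_{x,y}(z)\le z_1^4+\sum_{i\ge 2} z_i^2$ near a conjugate midpoint to get a lower bound $\zeta(t)\gtrsim t^{d/2-1/4}$ on the partition function, and then the differential entropy inequality $\det Q(\tilde m_t)\ge (2\pi e)^{-d}e^{2H(\tilde m_t)}$ converts this into a lower bound on the determinant of the covariance matrix of $\tilde m_t$. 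Since the determinant decays no faster than $t^{d-1/2}$, at least one eigenvalue must exceed $c\,t^{1-1/(2d)}$, and Lemma~\ref{Lem:Diffeo} transfers this to some $Z\in\mathcal{Z}$. This entropy argument is the missing idea: it sidesteps any normal-form analysis and works under merely smooth hypotheses.
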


Because of the uniformity of our approach (on localizable compacts), as a consequence of the above, we obtain bounds on the logarithmic derivatives on compacts (disjoint from any abnormals), which in turn imply bounds on derivatives of the heat kernel itself. These bounds were proven in the case of compact Riemannian manifolds by \cite{HsuLogDer} and \cite{StroockTuretsky}, via stochastic analysis. In the properly sub-Riemannian case, our compact must avoid the diagonal, so the distance function doesn't explicitly appear. Note that an extension of these bounds to complete (non-compact) Riemannian manifolds was given only recently in \cite{XM-GenComplete}, while the incomplete Riemannian case was established in \cite{OurAIHP}. (Note that the Riemannian results are stronger, somewhat easier to prove, and have a different context and tradition, making a separate treatment natural.)

Finally, we consider the small-time asymptotics of the bridge process, in particular, the law of large numbers.  The law of large numbers in the sub-Riemannian case when there is a single minimizer between $x$ and $y$ was established in \cite{NorrisB}; note that this includes the possibility that the minimizer is abnormal. (Such convergence to a point mass causes one to wonder about a central limit theorem result for the fluctuations, which they pursue in \cite{BMN} for the case of a non-conjugate geodesic. For an on-diagonal central limit theorem, see \cite{Habermann}.) The uniform version of this result serves as the basic ingredient in Molchanov's method, and we see that the small-time limit of the bridge process is also governed by the $m_t$. Let $\mu^{x,y,t}$ be the natural renormalized measure on pathspace of the bridge process from $x$ to $y$ in time $t$, and if $m_0$ is a probability measure on $\Gamma$, let $\tilde{m}_0$ denote its natural lift to pathspace (see Section \ref{Sect:LLN} for details).

\begin{theorem}\label{THM:LLN}
Let $x,y\in M$ be localizable and such that all minimizers from $x$ to $y$ are strongly normal. Then for any sequences of times $t_n\rightarrow 0$, $\mu^{x,y,t_n}$ converges if and only if $m_{t_n}$ does, and if so, letting $m_0$ denote the limit of $m_{t_n}$, we have $\mu^{x,y,t_n}\rightarrow \tilde{m}_0$.
\end{theorem}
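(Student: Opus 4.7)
The plan is to condition the bridge at its midpoint and reduce matters to the single-minimizer LLN of \cite{NorrisB}. By Chapman-Kolmogorov, under $\mu^{x,y,t}$ the marginal law of $X_{t/2}$ has density $p_{t/2}(x,z)p_{t/2}(z,y)/p_t(x,y)$ with respect to $\mu$. Substituting the Ben Arous factorization $p_{t/2}(x,z)=(t/2)^{-d/2}e^{-d(x,z)^2/(2t)}\Sigma^{0,0}_{t/2}(x,z)$ (and symmetrically for $(z,y)$) into the numerator, and Corollary~\ref{C:MolchanovLaplace} with $l=0$, $\alpha=\emptyset$ into the denominator, one sees that this midpoint law coincides with the probability measure $m_t$ of Equation~\eqref{Eqn:M_tDef} up to a multiplicative error of order $O(e^{-\varepsilon/(4t)})$ on $\Gamma_\varepsilon$ and negligible mass outside. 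Thus the midpoint marginal of $\mu^{x,y,t_n}$ converges if and only if $m_{t_n}$ does, and the two weak limits agree.

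Next I verify that $(x,z),(z,y)\in M^2\setminus\C$ for every $z\in\Gamma(x,y)$. Uniqueness of the minimizer from $x$ to $z$ follows from a standard concatenation argument, since a second minimizer from $x$ to $z$ glued with the half from $z$ to $y$ would yield a broken, and hence non-smoothly extremal, length-minimizer from $x$ to $y$, contradicting strong normality of the family of minimizers through $z$; the analogous statement holds for the half from $z$ to $y$. Each half is strongly normal as a subsegment of a strongly normal geodesic, and non-conjugate because a strongly normal minimizing geodesic has no interior conjugate points. Since $\Gamma$ is compact and $M^2\setminus\C$ is open, the same persists on $\Gamma_\varepsilon$ for all $\varepsilon$ sufficiently small.

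Now condition on $X_{t/2}=z\in\Gamma_\varepsilon$. By the Markov property the bridge splits into conditionally independent pieces with laws $\mu^{x,z,t/2}$ and $\mu^{z,y,t/2}$, each in the single-minimizer regime of \cite{NorrisB}. Uniformity of the resulting LLN in $z\in\Gamma_\varepsilon$ follows from the uniform Ben Arous expansion of Theorem~\ref{T:UnifBenArous} together with the heat kernel bounds of Proposition~\ref{P:universal_bounds}, which also supply a Kolmogorov-type moment estimate yielding tightness of $\mu^{x,y,t}$ on $C([0,1],M)$. Integrating any bounded continuous functional $F$ against $\mu^{x,y,t_n}$, decomposing through the midpoint and applying dominated convergence, if $m_{t_n}\to m_0$ then
$$
\int F\,d\mu^{x,y,t_n}\;\longrightarrow\;\int_\Gamma F\bigl(\gamma^{x,z}*\gamma^{z,y}\bigr)\,m_0(dz)=\int F\,d\tilde m_0.
$$
Conversely, weak convergence of $\mu^{x,y,t_n}$ forces convergence of its midpoint marginal, and hence, by the first paragraph, of $m_{t_n}$ to the midpoint marginal of the limit, which must then be $\tilde m_0$.

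The main obstacle is the uniform-in-$z$ LLN on the half-bridges over $\Gamma_\varepsilon$: the result of \cite{NorrisB} is a priori pointwise, but the midpoint decomposition and subsequent dominated convergence step require uniformity over a compact set off $\C$. This uniformity is exactly what Theorem~\ref{T:UnifBenArous} and Proposition~\ref{P:universal_bounds} are designed to supply; once it is in hand, the rest of the argument is a routine marriage of the Markov property with weak convergence on pathspace.
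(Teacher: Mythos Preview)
Your overall strategy---midpoint conditioning, identifying the midpoint marginal with $m_t$, applying a uniform single-minimizer LLN to the two halves, and reading off the converse from the evaluation map---is exactly the paper's. The gap is in the step you yourself flag as the main obstacle. You assert that the uniform-in-$z$ LLN for the half-bridges $\mu^{x,z,t/2}$ and $\mu^{z,y,t/2}$ over $z\in\Gamma_\varepsilon$ ``is exactly what Theorem~\ref{T:UnifBenArous} and Proposition~\ref{P:universal_bounds} are designed to supply,'' but those results control the heat kernel and its derivatives at fixed pairs of points, not the law of the whole path. Passing from uniform heat-kernel asymptotics to a uniform pathspace concentration statement is a genuinely separate argument, and the paper does not obtain it from Theorem~\ref{T:UnifBenArous} or Proposition~\ref{P:universal_bounds}. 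Instead, just before the proof of Theorem~\ref{THM:LLN}, the paper gives an explicit stopping-time decomposition: one defines $\sigma$ as the first time the bridge wanders $\varepsilon'$ away from $g^z$, discretizes $\sigma$ in time and in $d(X_\sigma,x)$, and uses the uniform L\'eandre asymptotics (Theorem~\ref{THM:BasicLocalization}) together with Lemma~\ref{Lem:Calc} to show that the contribution of paths with $\sigma<t$ to $p_t(x,z)$ is exponentially negligible compared to $p_t(x,z)$, uniformly over $z\in\Gamma_\varepsilon$. This is the actual content you are missing.

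Your alternative suggestion---a Kolmogorov-type moment estimate from Proposition~\ref{P:universal_bounds} to get tightness on $C([0,1],M)$---is not obviously viable here: the increment density for the bridge involves $p_{(s'-s)t}(w,w')$ with $w$ close to $w'$, i.e.\ near the diagonal, which in the properly sub-Riemannian case is abnormal and lies outside the domain where Proposition~\ref{P:universal_bounds} applies. The paper's stopping-time argument sidesteps this by relying only on the L\'eandre bound, which holds uniformly on compacts including the diagonal. (A minor point: your ``broken minimizer'' argument for $(x,z)\notin\C$ is roughly right in spirit, but the clean statement is simply that interior points of a strongly normal minimizer are strictly before the cut time, which is what Lemma~\ref{L:midpoint_compact} uses.)
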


The Riemannian version of this result (and the one below in Theorem \ref{THM:RA-LLN}) was established in \cite{HsuBridge} using a large deviation principle. Similar large deviation principles were recently given for some sub-Riemannian manifolds by Bailleul \cite{Bailleul-Bridge} and Inahama \cite{Inahama-LargeDeviations}, but a law of large numbers was not addressed (beyond this single minimizer case as discussed above). Our approach circumvents direct use of large deviations.

We observe that Theorems \ref{THM:Cumulants} and \ref{THM:LLN} are especially appealing when viewed together. The probability measures that govern the leading terms of the log-derivatives of $p_t$ are exactly the measures coming from the asymptotic behavior of the bridge process.

In the real-analytic case, the support of the limiting measure arising in the law of large numbers can be described. In particular, in this case, one can quantify the degree of degeneracy of $h_{x,y}$ at any point $z\in \Gamma$, which by extension we think of as the degree of degeneracy of the exponential map. There is a closed, non-empty subset $\Gamma^m$ of $\Gamma$ which corresponds to those points of ``maximum degeneracy.'' Then we have the following improvement to both the law of large numbers and the limit of the log-derivatives of $p_t$.

\begin{theorem}\label{THM:RA-LLN}
Let $x$ and $y$ be localizable and such that all minimizers between them are strongly normal, and suppose that around any point of $\Gamma$ there is a coordinate chart such that $h_{x,y}$ is real-analytic in these coordinates. (In particular, this holds if $M$ and $\Delta$ themselves are real-analytic.) Then $m_t$ converges to a limit $m_0$ as $t\searrow 0$, and the support of $m_0$ is exactly $\Gamma^m$. Further, the bridge measure $\mu^{x,y,t}$ converges to $\tilde{m}_0$ as $t\searrow 0$, and for $Z^i$ as in Theorem \ref{THM:Cumulants}, we have
\[
\lim_{t\searrow 0} t^N Z_y^N \cdots Z_y^1 \log p_{t}(x,y) = 
 \lp -\frac{d(x,y)}{2} \rp^N   \kappa^{m_0} \Big(Z_y^1d(\cdot,y),\ldots,  Z_y^N d(\cdot,y) \Big)
 \]
\end{theorem}

Finally, the limiting measure can be concretely determined in cases where the exponential map has a simple normal form, analogously to what we see for the asymptotic expansion of the heat kernel itself, and we treat the cases when $h_{x,y}$ has $A$-type singularities and when it is Morse-Bott in Sections \ref{Sect:A_n-LLN} and \ref{Sect:MB-LLN}.

\subsection{Acknowledgements}

The authors thank Ismael Bailleul and Karen Habermann for helpful discussions about the law of large numbers and Ugo Boscain for advice on sub-Riemannian technicalities (of which there are many). This work was partially supported by a grant from the Simons Foundation (\#524713 to Robert Neel).

\section{Localization of heat kernel derivatives}\label{Sect:Local}

Estimates on the small-time behavior of the heat kernel that can be used to localize its asymptotics have a long history, as already described. Localization of the $y$-derivatives of the heat kernel can be accomplished using a method described in Section 3 of \cite{OurAIHP}, which is based on combining earlier results of L\'eandre and Bailleul-Norris.  This section is devoted to describing these localization results.

\subsection{Background results}\label{Sect:Local1}
We begin by clarifying the basic definitions used in the localization conditions. Namely, for a closed set $A\subset M$, we have
\[\begin{split}
d(x,A) &= \inf\lc d(x,z): z\in A\rc \\
\text{and}\quad d(x,\infty) &= \sup\lc d(x, A) : \text{$A$ closed and $M \setminus A$ relatively compact}\rc .
\end{split}\]
We also let
\[
d(x, A, y) = \inf\{d(x, z) + d(z, y) : z \in A\},
\]
denote the distance from $x$ to $y$ via paths that hit $A$. Note that $d(x,\infty)$ is continuous in $x$, and, for any fixed $A$, $d(x,A)$ is continuous in $x$ and $d(x,A,y)$ is continuous in $(x,y)$.

Recall that the diffusion $X_t$ and its transition measure depend only on the operator $\Delta$, but that we choose some smooth reference measure $\mu$ in order to write the transition measure as $p_t(x,y)\, d\mu$. Observe that, in place of \eqref{Eqn:Delta}, we can instead write $\Delta$ in the form
\begin{equation}\label{Eqn:DivForm}
\Delta f=\mathrm{div}_{\mu}\lp \nabla f\rp +\hat{\cZ}_0(f)
\end{equation}
for some smooth vector field $\hat{\cZ}_0$, where $\nabla f$ denotes the horizontal derivative of a smooth function $f$ and $\mathrm{div}_\mu$ is the divergence (of a vector field) with respect to $\mu$. Here $\hat{\cZ}_0$ need not be the same as $\cZ_0$ in \eqref{Eqn:Delta}; indeed, the point is that the divergence of $\nabla$, for any smooth volume, gives an operator with the correct principle symbol, so that $\hat{\cZ}_0$ is whatever first-order term is necessary to reconcile \eqref{Eqn:DivForm} with \eqref{Eqn:Delta}. With this notation, the ``sector condition'' introduced by Bailleul-Norris \cite{NorrisB} is that
\begin{equation}\label{Eqn:SectorCon}
\sup_M \lab \hat{\cZ}_0\rab <\infty.
\end{equation}
Here the length $ \lab \hat{\cZ}_0\rab$ is understood with respect to the inner product on the horizontal distribution $\Span\lc \cZ_1\ldots, \cZ_k\rc$, which, in particular, means that $\hat{\cZ}_0$ must lie everywhere in this span. Note that $\mu$ is independent of $\Delta$, in the sense that it can be any smooth measure on $M$. Thus, the sector condition can be understood essentially as a condition on $\Delta$; namely, that there exists a smooth measure that ``almost symmetrizes'' $\Delta$, in the sense that it differs from a symmetric operator by a bounded vector field. If such a measure exists, one should presumably consider the heat kernel with respect to this measure, if one is interested in the best possible localization condition.

\begin{remark}
The weak localization condition of Definition \ref{Def:Local} is not a condition solely on $\KR$ but also on $\Delta$ on all of $M$, so neither localization condition implies the other in general. However, if we consider only the condition on the distance to infinity on $\KR$ (for example, if we restrict our attention to symmetric operators), then we see that the strong localization condition implies the weak one, which explains the choice of nomenclature.
\end{remark}

If $U$ is an open set, we let $p^U_t(x,y)$ be the heat kernel on $U$ (which should be understood with Dirichlet boundary conditions, corresponding to the associated diffusion being killed upon leaving $U$, which is the first hitting time of the closed set $U^c$). Since we consider possibly-incomplete sub-Riemannian manifolds, we could consider $U$ as a sub-Riemannian manifold in its own right, which is consistent with the above, but when we view $U$ as a subset of $M$, we extend $p_t^U(x,y)$ to be zero whenever $x$ or $y$ is in $U^c$. For a closed set $A$ (which, in light of the previous, can be thought of as $U^c$) we define $p_t(x,A,y) = p_t(x,y)-p^{A^c}_t(x,y)$, so that $p_t(x,A,y)$ gives the contribution to $p_t(x,y)$ from paths that hit $A$. Note that we have the fundamental decomposition $p_t(x,y)= p^U_t(x,y) + p_t(x,U^c,y)$ (which is non-trivial only for $x$ and $y$ both in $U$). 

We are now in a position to describe the conditions under which the heat kernel itself localizes. For the strong localization case, we have the following.
\begin{theorem}\label{THM:StrongCond}
Let $A\subset M$ be closed and suppose $M\setminus A$ has compact closure. Then for any compact subset $K$ of $M\setminus A$
\[
\limsup_{t\searrow 0} 4t \log p_t(x,A,y) \leq - \lp d(x,A)+d(y,A)\rp^2 
\]
uniformly for $x\in K$ and $y\in K$. Also, if $\KR$ is a compact subset of $\{(x,y)\in M\times M: d(x,y)< d(x,\infty) +d(y,\infty)\}$, then we have $4t\log p_t(x,y)\rightarrow -d^2(x,y)$ uniformly for $(x,y)\in \KR$.
\end{theorem}

This was essentially given in two papers of Hsu \cite{HsuIncomplete,HsuLocal} from the 90s, but the focus there was on the Riemannian versions, and complete details in the sub-Riemannian case were not given. A complete proof was given recently by Bailleul-Norris \cite{NorrisB}, but under the additional assumption that $Z_0$ lies in the span of $\cZ_1,\ldots,\cZ_k$. This is an artifact of their approach, which is designed to handle the weak localization condition, as indicated below. For completeness, we give a brief proof along the lines of Hsu in Appendix \ref{App:Local}.

In the weak localization case, we have the following variant of the previous, which combines Theorems 1.1 and 1.2 of \cite{NorrisB} in the case when the sector condition holds.
\begin{theorem}[Bailleul-Norris]\label{THM:WeakCond}
Suppose that $(M,\Delta,\mu)$ satisfies the sector condition \eqref{Eqn:SectorCon}. Let $A\subset M$ be closed such that $M\setminus A$ has compact closure. Then for any compact subset $K$ of $M\setminus A$,
\[
\limsup_{t\searrow 0} 4t \log p_t(x,A,y) \leq - d^2(x,A,y)
\]
uniformly for $x\in K$ and $y\in K$. Also, if $\KR$ is any compact subset of $M\times M$, then we have $4t\log p_t(x,y)\rightarrow -d^2(x,y)$ uniformly for $(x,y)\in \KR$.
\end{theorem}

Our main task now is to show that the $y$-derivatives of the heat kernel and its natural logarithm satisfy the analogous estimates under the same localization conditions. First, we recall a crucial result of L\'eandre \cite{leandremaj}. Let $\cZ_0, \cZ_1,\ldots, \cZ_k$ be smooth vector fields on $\bR^d$, such that $\cZ_1,\ldots, \cZ_k$ are bracket-generating. We also assume that these vector fields, and all of their derivatives (in standard Cartesian coordinates on $\bR^d$) are bounded, and that we have some smooth volume $\mu$.  In particular, we are in the situation described in \eqref{Eqn:Delta}, where the sub-Riemannian structure and hypo-elliptic operator $\Delta$ satisfy additional global conditions. In this situation, L\'eandre proved that, for any multi-index $\alpha$,
\begin{equation}\label{Eqn:LeandreOG}
\limsup_{t\searrow 0} 4t \log \lp \lab \partial_y^{\alpha}p_t(x,y)  \rab \rp \leq - d^2(x,y) \quad\text{uniformly on any compact subset of $\bR^d\times\bR^d$,}
\end{equation}
where $p_t$ and $d(\cdot,\cdot)$ are heat kernel and distance associated to the diffusion and induced sub-Riemannian structure on $\bR^d$, and where $\partial^\alpha_y$ denotes the $\alpha$ partial derivative (in standard Cartesian coordinates on $\bR^d$) acting on the $y$-variable. To localize this estimate, we first need a lemma showing that the process cannot move away from its starting point too quickly.

For any system of (smooth) coordinates $u_1 , \ldots , u_n$ on an open set $U \subset M$ with compact closure, we say that the coordinates are extendable if they can be extended to a neighborhood of the closure of $U$. While a general open, contractible set might not admit an extendable coordinate system on an incomplete manifold, it is clear from the ball-box theorem of sub-Riemannian geometry that any point of $M$ has a neighborhood that admits an extendable coordinate system.

A central feature of our approach is that we take precompact subsets of $M$ and include them in different ambient sub-Riemannian manifolds, for which better estimates are already known. In preparation for this, it is useful to observe how multiplying the vectors fields determining the sub-Riemannian structure on $M$ by a smooth function (eventually a bump function) affects the structure. Thus, for (smooth) vector fields $Z_1,\ldots, Z_k$ and a smooth function $\phi$, we observe that
\begin{equation}\label{Eqn:GluingCom}
\lb \phi Z_i, \phi Z_j\rb = \phi^2 \lb Z_i,  Z_j\rb +\phi \cdot\lp Z_i\phi\rp\cdot Z_j -\phi \cdot\lp Z_j \phi\rp\cdot Z_i .
\end{equation}
So the Lie bracket of $\phi Z_i$ and $\phi Z_j$ differs from (a multiple of) that of $Z_i$ and $Z_i$ only by a vector field in the span of $Z_i$ and $Z_j$ (assuming $\phi\neq 0$ at the point in question). It follows that, on the set where $\phi>c>0$, if the $Z_i$ are bracket-generating, so are the $\phi Z_i$. For completeness, we also see that
\begin{equation}\label{Eqn:GluingOp}
\lp \phi Z_i\rp^2 = \phi^2\cdot Z_i^2 + \phi\cdot \lp Z_i \phi\rp\cdot Z_i .
\end{equation}
Thus, applying this to our $\Delta$, on the set where $\phi>c>0$, the operators $\sum_{i=1}^k \lp \phi \cZ_i\rp^2 +\phi \cZ_0$ and $\sum_{i=1}^k \cZ^2_i + \cZ_0$ have principal symbols that differ only by scaling by $\phi^2$ and sub-symbols that differ by a horizontal vector field plus $(1-\phi)\cZ_0$.

Let $a>0$ be a positive constant. We let $\sigma_a$ be the first time the diffusion $X_t$, as in \eqref{Eqn:Xt}, moves a distance $a$ from its starting point, so that $\sigma_a$ is the first hitting time of the sub-Riemannian sphere of radius $a$ around $X_0$ (and which can be infinite on an incomplete $M$ if the process blows up before traveling distance $a$). Then we have the following result, showing that $X_t$ can't move too far from its starting point too quickly. Here and in what follows, we let
\[
B(x,r)=\lc z: d(x,z)<r\rc
\]
denote the open ball of radius $r$ centered at $x$.

\begin{lemma}\label{Lem:SigA}
Let $K\subset M$ be compact, and fix $a>0$. Then there exists $T>0$ such that
\[
\Prob\lp \sigma_a <T | X_0=x\rp <\frac{1}{2}
\]
for any $x\in K$.
\end{lemma}
\begin{proof}
First suppose that $K$ is contained in a single coordinate chart, so that $z_1,\ldots, z_d$ are coordinates on some neighborhood of $K$. By monotonicity, if the lemma holds for any $a$, it holds for any larger $a$, so we can assume that $a$ is small enough so that for any $x\in K$, $B(x,a)$ is contained in this coordinate patch. By smoothness and compactness, there exists some $c$ such that, for any $(z_1,\ldots,z_d)\in K$,
\[
\lb z_1-c,z_1+c\rb  \times \cdots\times \lb z_d-c,z_d+c\rb \subset B \lp z_1,\ldots,z_d; a\rp .
\]
Let $\tau_i$ be the event that the $z_1$ coordinate moves by $c$ from its starting value, and observe that if $\tau_i<T$ for all $i=1,\ldots, d$, then $\sigma_a<T$. Again by smoothness and compactness, if we write the SDE satisfied by $z_i$ under the diffusion as
\[
dz_i\lp X_t\rp = \alpha_i\lp X_t\rp \, dW_t + \beta_i\lp X_t\rp\, dt ,
\]
for some one-dimensional Brownian motion $W_t$, then there exists $\lambda>0$ such that $|\alpha_i|<\lambda$ and $|\beta_i|<\lambda$ everywhere on $\cup_{x\in K} B_a(x)$, for all $i=1,\ldots, d$. This uniform bound on the coefficients implies uniform bounds on how quickly the bounded variation part of $z_i\lp X_t\rp$ can grow in $t$ and also how quickly the quadratic variation of the martingale part can grow. Hence the probability of $\tau_i<T$ can be made as small as we want by taking $T$ small, uniformly over $X_0=x\in K$ and for all $i=1,\ldots, d$. In particular, by Boole's inequality, we can find $T$ such that $\sigma_a <T$ is less then $1/2$, uniformly for $x\in K$. Since $a$ was arbitrarily small, this proves the lemma when $K$ is contained in a single coordinate patch.

In general, every point of $K$ is contained in some coordinate patch, and by compactness, $K$ can be covered by finitely many coordinate patches for which the result holds, proving it in general.
\end{proof}

\subsection{Localization bounds}\label{Sect:Local2}

Next, we use the results of \cite{OurAIHP} to give the following localized version of \eqref{Eqn:LeandreOG}.

\begin{lemma}\label{Lem:Our7}
For a smooth sub-Riemannian structure $(M,\Delta,\mu)$, suppose that, for some $\eta>0$, $B$, $B'$ and $B''$ are concentric open balls of radii $\eta/2$, $(3/2)\eta$, and $(7/2)\eta$, respectively, and that $B''$ has compact closure. Suppose further that we have an extendable coordinate system on $B''$. Then, for any multi-index $\alpha$,
\begin{equation}\label{Eqn:DerBound}
\limsup_{t\searrow 0} 4t \log \lp \lab \partial_y^{\alpha} p^{B''}_t(x,y)  \rab \rp \leq - d^2(x,y)
\end{equation}
uniformly over $(x,y)\in \overline{B'}\times B$, where the partial derivatives are understood with respect to this extendable coordinate system.
\end{lemma}

\begin{proof}
Our general assumptions on $M$ plus Lemma \ref{Lem:SigA} means that we are in the situation of Section 3 of \cite{OurAIHP}. Then Lemma 7 of that paper says that the conclusion of the lemma holds if, for any multi-index $\alpha$, we have
\begin{equation}\label{Eqn:LeadndreOG2}
\limsup_{t\searrow 0} 4t \log \lp \lab \partial_y^{\alpha}p_t(x,y)  \rab \rp \leq - d^2(x,y) ,
\end{equation}
uniformly for $x$ and $y$ in $\overline{B''}$.

First, suppose that $M$ is diffeomorphic to $\bR^d$ and has a sub-Riemmanian structure of the kind under which L\'endre proved \eqref{Eqn:LeandreOG}. Then \eqref{Eqn:LeadndreOG2} immediately follows, proving the result.

In general, $M$ might not be of this form, but because $p^{B''}_t(x,y)$ depends only on the restriction of the structure to $B''$, we can get around this with a straightforward gluing argument. Let $U$ be a contractible open neighborhood of $\overline{B''}$ such that the extendable coordinate system, which we write as $(z_1,\ldots,z_d)$, extends to $U$. Then $U$ can be included in $\bR^d$ using the coordinates (indeed, by definition $U$ is diffeomorphic to some open subset of $\bR^d$ via the coordinate system). Now let $\phi$ be a smooth bump function, so that $0\leq \phi\leq 1$, $\phi\equiv 1$ on a neighborhood of $\overline{B''}$ and $\phi$ is smooth with support contained in $U$. Then we can use $\phi$ to extend the sub-Riemannian structure to all of $\bR^d$ by taking
\[
\hat{\Delta} = \sum_{i=1}^k \lp \phi \cZ_i\rp ^2 + \phi \cZ_0 + \sum_{i=1}^{d} \lp  (1-\phi) \partial_{z_i} \rp ^2
\quad\text{and}\quad \hat{\mu}= \phi \cdot \mu +(1-\phi)\cdot \mu_{\textrm{Euc}} ,
\]
where the $\cZ_i$ give the original sub-Riemannian structure (via $\Delta$) on $U$ and $\mu_{\textrm{Euc}}$ is the standard Euclidean volume on $\bR^d$. Then it is clear, by \eqref{Eqn:GluingCom} and \eqref{Eqn:GluingOp} (and the surrounding discussion), that $\hat{\Delta}$ and $\hat{\mu}$ determine a smooth sub-Riemannian structure on $\bR^d$, which we call $\hat{M}$, which agrees with that of $M$ on $B''$ and which agrees with the standard Euclidean structure (including the standard Euclidean volume) on $U^c$. By the smoothness of all the objects involved, we see that $\hat{M}$ satisfies the assumptions under which L\'eandre proved \eqref{Eqn:LeandreOG}. Thus, as before, if we let $p_t^{\hat{M}}$ denote the heat kernel on $\hat{M}$, we have \eqref{Eqn:LeadndreOG2} for $p_t^{\hat{M}}$, with $\overline{B''}$ understood as a subset of $\hat{M}$. We can then apply Lemma 7 of  \cite{OurAIHP} to see that, for any multi-index $\alpha$,
\[
\limsup_{t\searrow 0} 4t \log \lp \lab \partial_y^{\alpha} p^{B''}_t(x,y)  \rab \rp \leq - d_{\hat{M}}^2(x,y)
\]
uniformly over $(x,y)\in \overline{B'}\times B$. Here $p^{B''}_t$ is the heat kernel on $\hat{M}$ killed upon leaving $B''$, but the point is that that is the same as the heat kernel on $M$ killed upon leaving $B''$, since it only depends on the restriction of the sub-Riemannian structure to $B''$. Further, the radii of $B$, $B'$, and $B''$ are such that $d_{\hat{M}}(x,y)=d(x,y)$ for all $(x,y)\in \overline{B'}\times B$ (where $d(x,y)$ is understood with respect to $M$). To see this, note that, for $(x,y)\in \overline{B'}\times B$ and any $u$ and $v$ in $\partial B''$, we have $d(x,y)\leq 2\eta$ while $d(x,u)\geq 2\eta$ and $d(v,y)\geq 3\eta$, using the triangle inequality. It follows (on $M$) that all length-minimizing curves from $x$ to $y$ lie within $B''$ (and there is at least one), and any curve from $x$ to $y$ that leaves $B''$, no matter how the metric is extended beyond $B''$, cannot minimize the distance. This verifies the claim. In other words, for the conclusion of the lemma, it doesn't matter whether we consider $B''$ as included in $M$ or $\hat{M}$, and thus we have proven the lemma in general.
\end{proof}

We recall and slightly reformulate one more result from \cite{OurAIHP}, which will be a main tool in what follows.

\begin{lemma}\label{Lem:Our8}
For a sub-Riemannian manifold $M$, suppose that we have sets $K_0\subset U_0\subset K_1\subset U_1\subset M$ where $K_0$ and $K_1$ are compact and $U_0$ and $U_1$ are open with compact closure, and suppose that, for some $\eps>0$, the heat kernel on $M$ satisfies the estimate
\[
\limsup_{t\searrow 0} 4t \log p_t\lp x,U_1^c,y\rp \leq -\lp d(x,y)+\eps\rp ^2
\]
uniformly for $x$ and $y$ in $K_1$. Suppose further that for some $\eta>0$ and $y_0\in K_0$, the concentric balls $B, B', B''$ (which depend on $\eta$) centered at $y_0$ are as in Lemma \ref{Lem:Our7} (including the existence of an extendable coordinate system), and the closure of $B''$ is contained in $U_0$. Then for any multi-index $\alpha$,
\[
\limsup_{t\searrow 0}  4t \log\lp  \lab\partial_y^{\alpha} p_t(x,U_1^c,y)\rab \rp \leq -\lp d(x,y) +\eps-3\eta \rp^2
\]
uniformly over $x\in K_0$ and $y\in B$.
\end{lemma}

\begin{proof}
Our assumptions on $M$ and the results of Lemma \ref{Lem:Our7} mean that we can apply Lemma 8 of \cite{OurAIHP} to the situation described. But this exactly gives the conclusion of the lemma.
\end{proof}

We will need the following basic result about the limsup of the log of a linear combination.
\begin{lemma}\label{Lem:limsups}
Let $f_1(t),\ldots, f_n(t)$ be non-negative functions for $t\in (0,T]$, for some $T>0$, and $a_1,\ldots,a_n$ be positive constants. Then
\[
\limsup_{t\searrow 0} t\log\lp a_1f_1(t)+\cdots +a_nf_n(t)\rp = \max_{i=1,\ldots,n}\lc\limsup_{t\searrow 0}  t\log\lp f_i(t)\rp \rc 
\]
\end{lemma}
\begin{proof}
We sketch the proof. Note that, for any $t$,
\[
\log\lp a_1f_1(t)+\cdots +a_nf_n(t)\rp - \max_{i=1,\ldots,n}\lc  \log\lp a_i f_i(t)\rp \rc \leq \log n.
\]
This implicitly assumes that $a_1f_1(t)+\cdots +a_nf_n(t)$ is not equal to zero. But if it is, we have that $\log\lp a_1f_1(t)+\cdots +a_nf_n(t)\rp$ and  $\max_{i=1,\ldots,n}\lc  \log\lp a_i f_i(t)\rp \rc$ are both $-\infty$, and we understand their difference to be zero and the above to be true. Then multiplying this inequality through by $t$, it follows that
\begin{equation}\label{Eqn:LD-1}
\limsup_{t\searrow 0} t\log\lp a_1f_1(t)+\cdots +a_nf_n(t)\rp = \max_{i=1,\ldots,n}\lc\limsup_{t\searrow 0}  t\log\lp a_if_i(t)\rp \rc .
\end{equation}
In addition, for each $i$, we see
\begin{equation}\label{Eqn:LD-2}
 t\log\lp a_if_i(t) \rp- t\log\lp f_i(t)\rp = t\log a_i \rightarrow 0 \quad\text{as $t\searrow 0$,}
\end{equation}
so that $\limsup_{t\searrow 0} t\log\lp a_if_i(t) \rp=\limsup_{t\searrow 0}  t\log\lp f_i(t)\rp$, and the lemma follows.
\end{proof}
We will also use this in the case when the functions $f_i(t)$ also depend uniformly on some parameters. In particular, suppose that we have a set of smooth vector fields $Z^1,\ldots,Z^k$ and also a set of (smooth) coordinates on a neighborhood of the closure of some open set $U$, then we can write
\[
Z^1\cdots Z^k p_t(x,y) =  \sum_{\alpha:|\alpha|\leq k} c_{\alpha}(y) \partial^{\alpha} p_t(x,y)
\]
where the $c_{\alpha}(y)$ are smooth functions depending on the $Z^i$ and the choice of coordinates. It follows that
\[
4t \log\lp\lab Z^1\cdots Z^k p_t(x,y)\rab\rp \leq 4t \log \sum_{\alpha:|\alpha|\leq k} \lab c_{\alpha}(y)\rab \lab\partial^{\alpha} p_t(x,y)\rab.
\]
Then if we have that 
$\limsup_{t\searrow 0}  4t \log\lp  \lab\partial^{\alpha} p_t(x,y)\rab \rp \leq -d(x,y)^2$
uniformly (for $(x,y)$ in some subset of $M\times M$ such that the projection into the second component is contained in $U$) for all $\alpha$ with $|\alpha|\leq k$ (which will be the case below), we can conclude that
\[
\limsup_{t\searrow 0}  4t \log\lp  \lab Z^1\cdots Z^k p_t(x,y)\rab \rp \leq - d(x,y)^2
\]
uniformly as well. To see this, we note that the previous lemma, with $f_{\alpha}= \lab\partial^{\alpha} p_t(x,y)\rab$ for $|\alpha|\leq k$ and $a_{\alpha}=\lab c_{\alpha}(y)\rab$, applies pointwise in $(x,y)$. Then the uniformity of the limsup over $\alpha$ means that \eqref{Eqn:LD-1} holds uniformly. Moreover, the $c_{\alpha}(y)$ are smooth on a compact set containing $U$, and are therefore bounded on $U$. It follows that the convergence in \eqref{Eqn:LD-2} is also uniform, from which we see that the conclusion of the lemma holds uniformly, and the claim follows.

\subsection{Localized asymptotics}\label{Sect:Local3}

The basic logic of our approach is to establish localization estimates and then control the heat kernel on compacts by localizing estimates for compact manifolds. One minor difficulty is that the derivative estimates we wish to localize, namely \eqref{Eqn:LeandreOG}, were proven for certain structures on $\bR^d$, not for compact manifolds. (In particular, while a precompact subset of some sub-Riemannian manifold $M$ can be included in a compact manifold of the same dimension via a standard smooth doubling argument, as we will do below, in general there are topological obstructions to including it in the Euclidean space, viewed as a smooth manifold, of the same dimension.) Thus we take a slight detour to establish these estimates for compact manifolds.

\begin{theorem}\label{THM:LeandreCompact}
Let $M$ be a compact sub-Riemannian manifold. Then for any multi-index $\alpha$,
\[
 \limsup_{t\searrow 0} 4t \log \lp \lab  Z_y^{\alpha} p_t(x,y)  \rab \rp  \leq - d^2(x,y)
\]
uniformly for $(x,y)\in M\times M$.
\end{theorem}

\begin{proof}
By Whitney's embedding theorem, $M$ can be smoothly embedded in $\bR^{d+n}$ for some positive integer $n$. So identify $M$ as a compact $d$-dimensional submanifold of $\bR^{d+n}$, and let $\Sigma_{s}$ and $\Sigma_{2s}$ be tubular neighborhoods of $M$ of radii $s$ and $2s$ respectively, where here the radius is understood with respect to the Euclidean metric on $\bR^{d+n}$, and where $s$ is sufficiently small so that these tubular neighborhoods exist (in the sense of the tubular neighborhood theorem). It follows that $\Sigma_s$ can be realized as a fixed-radius subset of the normal bundle over $M$, that is, locally $\Sigma_s$ can be written as $V\times B(0,s)$ where $V$ is an open subset of $M$ and $B(0,s)$ is the open ball of Euclidean radius $s$ around the origin in $\bR^{n}$, and similarly for $\Sigma_{2s}$. We start by putting the corresponding product metric on $\Sigma_s$. More precisely, we give $M$ its sub-Riemannian structure determined by $\Delta_M$, we give $B(0,s)$ the standard Euclidean metric determined (in the formalism of this paper) by the usual Laplace operator $\Delta_{\bR^n}$, and we give $\Sigma_s$ the sub-Riemannian structure determined by the product operator $\Delta_M\times\Delta_{\bR^n}$. Next, we rescale the metric on the $B(0,s)$ factor by some positive constant $\lambda$ large enough so that, if we denote the rescaled ball by $B^{\lambda}(0,s)$, the radius of $B^{\lambda}(0,s)$ is greater than the diameter of $M$ (which is finite by compactness). Finally, let $\phi$ be a smooth bump function with $\phi\equiv 1$ on $\Sigma_s$, $0\leq \phi\leq 1$ on $\Sigma_{2s}\setminus\Sigma_s$, and $\phi\equiv 0$ on $\bR^{d+n}\setminus\Sigma_{2s}$. Then just as in the proof of Lemma \ref{Lem:Our7}, we can use $\phi$ to smoothly transition the product sub-Riemannian structure on $\Sigma_s$ to the usual Euclidean structure on $\bR^{d+n}\setminus\Sigma_{2s}$. Using the same $\phi$, we extend the product measure on $\Sigma_s$ (that is, the smooth measure given as a product of the measure on $M$ and the usual Euclidean volume measure on $\bR^n$) to a smooth measure on all of $\bR^{d+n}$.

We now observe several basic properties of the resulting sub-Riemannian structure on $\bR^{d+n}$. Since $M$ is compact and every point as a neighborhood where $\Delta_M$ can be written in the form \eqref{Eqn:Delta}, using a finite partition of unity, we can write $\Delta_M$ using finitely many globally defined vector fields. The operator on $B^{\lambda}(0,s)$ can be written using rescaled versions of the usual coordinate vector fields, which implies that the product operator $\Delta_M\times\Delta_{B^{\lambda}(0,s)}$ can be written using finitely many vector fields defined on $\Sigma_s$. The structure transitions to the usual Euclidean structure outside of $\Sigma_{2s}$, so that we have a sub-Riemannian structure on $\bR^{d+n}$ determined by an operator which can be written globally in the form \eqref{Eqn:Delta} using finitely many vector fields. Moreover, since all of these vector fields except for the standard coordinate vector fields on $\bR^{d+n}$ are compactly supported, they are bounded along with their derivatives of all orders. The strong H\"ormander condition follows immediately from the fact that it holds on $M$, on $B^{\lambda}(0,s)$, and on $\bR^{d+n}$, and thus we have a sub-Riemannian structure of the kind considered by L\'eandre, so that \eqref{Eqn:LeandreOG} holds.

If we write $d'$ for the resulting distance on $\bR^{d+n}$ (which is not the usual Euclidean distance), then we claim that, for any $x,y\in M$, we have
\begin{equation}\label{Eqn:Distances}
d'\lp (x,0),(y,0)\rp = d_M(x,y) ,
\end{equation}
where we identify $M$ with its image in $\bR^{d+n}$ using the natural product coordinates on $M\times B(0,s)=\Sigma_s\subset \bR^{d+n}$. Indeed, because of the product structure on $\Sigma_s$, any curve $\gamma$ in $M$ from $x$ to $y$ has the same length as its image under the inclusion, which we can write as $(\gamma,0)$ going from $(x,0)$ to $(y,0)$. Any curve from $(x,0)$ to $(y,0)$ in $\Sigma_s$ that isn't contained in the image of $M$ is strictly longer than its projection onto $M$; that is, the length of $(\gamma_1,\gamma_2)$ is strictly longer than that of $(\gamma_1,0)$ if $\gamma_2$ is not identically 0. Also, the rescaled metric on $B^{\lambda}(0,s)$ was chosen so that any path from $(x,0)$ to $(y,0)$ that leaves $\Sigma_s$ has length greater than twice the diameter of $M$. These facts establish the claim.

In reference to the notation of Lemma \ref{Lem:Our8}, let $K_0=M$, let $U_0$ be a small enough neighborhood of $M$ (in a sense to be indicated in a moment), let $K_1=\overline{U}_0$, and let $U_1=\Sigma$. Again using that the metric on $B^{\lambda}(0,s)$ was scaled so that $d'\lp(x,0),\Sigma_s^c \rp$ is more than the diameter of $M$, we see that for small enough $U_0$, we can find $\eps>0$ such that
\[
d'\lp (x,0), \Sigma_s^c\rp +d'\lp (y,0),\Sigma_s^c\rp > d'(x,y)+\eps 
\]
for all $x,y\in K_1$. Then by Theorem \ref{THM:StrongCond} with $A=\Sigma^c$ and $K=K_1$, we have that 
\[
\limsup_{t\searrow 0} 4t \log p_t\lp (x,0),\Sigma_s^c,(y,0)\rp \leq -\lp d' \lp (x,0),(y,0)\rp+\eps\rp ^2
\]
uniformly for $x$ and $y$ in $K_1$. Next, for any $y\in M$, we can find an $\eta>0$ and a ball $B$ around $y$ such that we can apply Lemma \ref{Lem:Our8} to see that, for any multi-index $\alpha$,
\[
\limsup_{t\searrow 0}  4t \log\lp  \lab\partial_y^{\alpha} p^{\bR^{d+n}}_t((x,0),\Sigma_s^c,(y,y'))\rab \rp \leq -\lp d' \lp (x,0),(y,y')\rp +\eps-3\eta \rp^2
\]
uniformly over $x\in M$ and $(y,y')\in B$,  where $p^{\bR^{d+n}}_t$ is the heat kernel for the sub-Riemannian structure we've put on $\bR^{d+n}$, and the partial derivatives $\partial_y^{\alpha}$ are understood with respect to the standard coordinate vector fields on $\bR^{d+n}$. Because $M$ is compact, we can find $\delta>0$ such that
\begin{equation}\label{Eqn:SigmaC}
\limsup_{t\searrow 0} 4t \log \lp  \lab\partial_y^{\alpha} p^{\bR^{d+n}}_t \lp(x,0), \Sigma_s^c, (y,0)\rp\rab\rp \leq - \lp d_M(x,y)+\delta \rp^2
\end{equation}
uniformly for $x,y\in M$, where we've freely used \eqref{Eqn:Distances}. 

Since $\Sigma_s$ has compact closure and $p_t^{\bR^{d+n}}$ satisfies  \eqref{Eqn:LeandreOG} uniformly on compacts, we have
\begin{equation}\label{Eqn:Rdn}
\limsup_{t\searrow 0} 4t \log \lp  \lab\partial_y^{\alpha} p_t^{\bR^{d+n}} \lp(x,0),(y,0)\rp\rab\rp \leq - d_M^2(x,y)
\end{equation}
uniformly for $x,y\in M$, again using \eqref{Eqn:Distances}. Applying Lemma \ref{Lem:limsups} to the decomposition
\[
p_t^{\Sigma_s}\lp (x,u),(y,v)\rp = p^{\bR^{d+n}}_t\lp(x,u),(y,v)\rp - p^{\bR^{d+n}}_t\lp (x,u), \Sigma_s^c, (y,v)\rp
\]
(valid for any $x,y\in M$ and $u,v\in B(0,s)$, and thus on all of $\Sigma_s$) and using \eqref{Eqn:SigmaC} and \eqref{Eqn:Rdn}, we conclude that
\begin{equation}\label{Eqn:CoordEstimate}
\limsup_{t\searrow 0} 4t \log \lp  \lab\partial_y^{\alpha} p_t^{\Sigma_s} \lp(x,0),(y,0)\rp\rab\rp \leq - \lp d_M(x,y)+\delta \rp^2
\end{equation}
uniformly for $x,y\in M$.

By the product structure on $\Sigma_s$, we have
\[
p_t^{\Sigma_s}\lp (x,u),(y,v)\rp = p^M_t (x,y) \cdot p^{B^{\lambda}(0,s)}_t(u,v) \\
\]
where $ p^{B^{\lambda}(0,s)}_t$ is the heat kernel on the Euclidean disk in $\bR^n$ of the correct radius with Dirichlet boundary conditions (and where we recall that our heat kernels are written with respect to the product measure on $\Sigma_s$). If $Z^1,\ldots, Z^k$ are smooth vector fields in some neighborhood of $y\in M$, then (choosing an arbitrary extension of these vector fields to a neighborhood of $(y,0)\in\Sigma_s$),
\begin{equation}\label{Eqn:SigmaS}
\log \lp  \lab Z^1\cdots Z^k p_t^{\Sigma_s} \lp(x,0),(y,0)\rp\rab\rp = \log \lp  \lab Z^1\cdots Z^k  p^M_t (x,y) \rab \rp + \log p^{B^{\lambda}(0,s)}_t(0,0)
\end{equation}
since $p^{B^{\lambda}(0,s)}_t(u,v)$ is constant on $M$ (because $u\equiv v\equiv 0$ on $M$) and the $Z^i$ are all tangent to $M$. Note also that the left-hand side is independent of the extension of the $Z^i$ to a neighborhood of $M$ and the first term on the right-hand side is understood in the intrinsic structure on $M$. Observe that
\[
\log p^{B^{\lambda}(0,s)}_t(0,0) \leq \log p^{\bR^n}_t(0,0) =\log \frac{1}{\lp 4\pi t\rp^{n/2}} =-\frac{n}{2}\log \lp 4\pi t\rp,
\]
so that $\limsup_{t\searrow 0} 4t \log  p^{B^{\lambda}(0,s)}_t(0,0) =0$. Combining this with \eqref{Eqn:SigmaS}, Lemma \ref{Lem:limsups} (and the remarks immediately following it), and \eqref{Eqn:CoordEstimate}, we conclude that
\[
\limsup_{t\searrow 0} 4t \log \lp  \lab Z^1\cdots Z^k p_t^M \lp x,y \rp\rab\rp \leq - \lp d_M(x,y)+\delta \rp^2 
\]
uniformly for $x,y\in M$. Since $\delta>0$ was arbitrary, this proves the desired result.
\end{proof}


With these preliminaries taken care of, we can prove the main result of this section.
\begin{proof}[Proof of Theorem \ref{THM:TrueLocal}]
We give the proof in three steps. \\
\emph{Step 1}: We first establish the basic localization estimates when $\KR$ satisfies the strong localization condition, so assume this. For ease of exposition, we temporarily assume also that $M$ is incomplete, so that $d(x,\infty)$ is finite for all $x\in M$. Then $d(x,\infty)+d(y,\infty)-d(x,y)$ is continuous in $(x,y)\in M\times M$ and $\KR$ is compact, so we can find $\delta>0$ such that, for any $(x,y)\in \KR$,
\[
d(x,y)+4\delta < d(x,\infty)+d(y,\infty) .
\]
Let $\pi_1(\KR)$ and $\pi_2(\KR)$ be the projections onto the first and second components, respectively. Note that the triangle inequality implies, under this condition, that $d(x,\infty)$ and $d(x,\infty)$ themselves are each greater than $2\delta$, for any $x\in \pi_1(\KR)$ and $y\in \pi_2(\KR)$. Now we determine $U_1$ as follows
\[
U_1=\lp \bigcup_{x\in \pi_1(\KR)} B\lp x,d(x,\infty)-\frac{\delta}{2}\rp \rp \cup \lp \bigcup_{y\in \pi_2(\KR)} B\lp y,d(y,\infty)-\frac{\delta}{2}\rp \rp .
\]

We claim that $U_1$ is open with compact closure.  Indeed, the openness is clear, because it is written as a union of open sets. Next, for any $x_0\in \pi_1(\KR)$, we take $x\in B\lp x_0, \frac{\delta}{8}\rp$. Then the triangle inequality implies that
\[
B\lp x, d(x,\infty)-\frac{\delta}{2}\rp \subset  B\lp x_0, d(x_0,\infty)-\frac{\delta}{4}\rp ,
\]
where the ball on the right has compact closure. It follows that
\[
\bigcup_{x\in B\lp x_0,\frac{\delta}{8}\rp} B\lp x,d(x,\infty)-\frac{\delta}{2}\rp \subset B\lp x_0,d(x_0,\infty)-\frac{\delta}{4}\rp ,
\]
and thus the union on the left has compact closure (because its closure is contained in a compact). Since $\pi_1(\KR)$ is compact, it can be covered by finitely many open balls $B\lp x_i,\frac{\delta}{8}\rp$ for $i=1,\ldots, n$. Thus we have
\[
\bigcup_{x\in \pi_1(\KR)} B\lp x,d(x,\infty)-\frac{\delta}{2}\rp \subset 
\bigcup_{i= 1,\ldots,n} \bigcup_{x\in B\lp x_i,\frac{\delta}{8}\rp} B\lp x,d(x,\infty)-\frac{\delta}{2}\rp .
\]
Because the closure of a finite union is equal to the union of the closures, the set on the right has compact closure, and thus the set on the left has compact closure. An identical argument shows that
\[
\bigcup_{y\in \pi_2(\KR)} B\lp y, d(y,\infty)-\frac{\delta}{2}\rp
\]
has compact closure, and since the union of two sets with compact closure has compact closure, we have verified our claim that $U_1$ has compact closure.

Continuing, if we consider the open set $\mathcal{U}\subset M\times M$, given by
\[
\mathcal{U}= \lc (x,y): \text{there exists $(x_0,y_0)\in\KR$ such that } x \in B\lp x_0, \frac{\delta}{2}\rp \text{ and } y \in B\lp y_0,\frac{\delta}{2}\rp\rc ,
\]
we have, by repeated use of the triangle inequality,
\begin{equation}\label{Eqn:Tri}
d(x,y)+ 2\delta < d\lp x,U_1^c \rp+d\lp y,U_1^c\rp 
\end{equation}
for any $(x,y)\in\mathcal{U}$. At this point, we observe that we will let $U=U_1$ in the theorem. We have that $U$ is open with compact closure, and for any $(x,y)\in \KR$, both $x$ and $y$ are in $U$ by construction. Then $\KR$ can be taken to be the same in Theorem \ref{THM:StrongCond}, so \eqref{Eqn:1a} is immediate.

Further, in reference to the notation of Theorem \ref{THM:StrongCond}, we can let $A=U_1^c$ and, for any compact subset $D$ of $\mathcal{U}$ let $K=\pi_1(D)\cup \pi_2(D)$. (Also note that $K\subset U^1_c$, by the triangle inequality.) Then by Theorem \ref{THM:StrongCond} and \eqref{Eqn:Tri}, it follows that
\[
\limsup_{t\searrow 0} 4t \log p_t\lp x,U^c,y\rp \leq -\lp d\lp x,U^c \rp+d\lp y,U^c\rp\rp^2 \leq - \lp d(x,y)+2\delta \rp^2 
\]
uniformly for $(x,y)$ in $D$. Then  \eqref{Eqn:1b} follows by taking $D=\KR$.

Recall that the above assumes that $M$ is incomplete. In the case when $M$ is complete, we can find an open set $W$ with compact closure and a $\delta>0$ such that $\KR\subset W\times W$ and for any $(x,y)\in\KR$, 
\[
d(x,y)+4\delta < d\lp x,W^c\rp +d\lp y,W^c\rp .
\]
Then we can define $U_1$ as above, with $d(x,\infty)$ and $d(y,\infty)$ replaced by $d\lp x,W^c\rp$ and $d\lp y,W^c\rp$, and the rest of the preceding remains the same. (From one point of view, we consider $W$ as an incomplete sub-Riemannian manifold in its own right, for which the strong localization holds for $\KR$ as a subset of $W\times W$.)

It remains to consider \eqref{Eqn:2a} and \eqref{Eqn:2b}. We start with \eqref{Eqn:2b}.

Continuing from the above, choose any $\lp x_0,y_0\rp\in \KR$ and some $\delta' \leq \delta$ (which we will put an additional constraint on in a moment). Then we let
\[\begin{split}
K_0 &= \overline{B\lp x_0,\frac{\delta'}{4}\rp} \cup \overline{B\lp y_0,\frac{\delta'}{4}\rp}, \\
U_0 &=  B\lp x_0,\frac{\delta'}{3}\rp \cup B\lp y_0,\frac{\delta'}{3}\rp, \\
\text{and}\quad K_1&= \overline{U_0} .
\end{split}\]
Thus, we have $K_0\subset U_0\subset K_1\subset U_1\subset M$ where $K_0$ and $K_1$ are compact and $U_0$ and $U_1$ are open with compact closure. Further, we find that (for small enough $\delta'$)
\[
\limsup_{t\searrow 0} 4t p_t\lp x,U_1^c,y\rp \leq - \lp d(x,y)+2\delta \rp^2 
\]
uniformly for $x$ and $y$ in $K_1$. In particular, if $x\in \overline{B\lp x_0,\frac{\delta'}{3}\rp}$ and $y\in \overline{B\lp y_0, \frac{\delta'}{3}\rp}$ or vice versa (which are the interesting cases), then this follows by the choice of $\mathcal{U}$ and $\delta$ and the fact that $\delta'\leq \delta$. The other cases are when $x$ and $y$ are both in $\overline{B\lp x_0,\frac{\delta'}{3}\rp}$ or both in $\overline{B\lp y_0,\frac{\delta'}{3}\rp}$. However, by shrinking $\delta'$ if necessary, we can make $d(x,y)$ uniformly close to 0 on these balls, so that the estimate holds in these cases as well (the only need for these cases is to coordinate statements that are written for subsets of $M\times M$ and those written for subsets of $M$).

We have now verified the assumptions of Lemma \ref{Lem:Our8}, so that, for any sufficiently small $\eta>0$, we can find an open ball $B_{x_0}$ around $x_0$ and an open ball $B_{y_0}$ around $y_0$, such that there exists a coordinate system on the closure of $B_{y_0}$, and for any multi-index $\alpha$,
\[
\limsup_{t\searrow 0}  4t \log\lp  \lab\partial_y^{\alpha} p_t(x,U_1^c,y)\rab \rp \leq \lp d(x,y) +2\delta-3\eta \rp^2
\]
uniformly over $x\in \overline{B_{x_0}}$ (and thus for $x\in B_{x_0}$) and $y\in B_{y_0}$, where the partial derivatives act on $y$ and are taken with respect to the aforementioned system of coordinates. Because $\lp x_0,y_0\rp\in \KR$ was arbitrary and we have established this estimate on an open neighborhood of $\lp x_0,y_0\rp$, by compactness we can cover $\KR$ by finitely many such opens. We recall that $U_1$ was fixed above and doesn't depend on $\lp x_0,y_0\rp$ (unlike $K_0$, $U_0$ and $K_1$). By taking $\eta$ small enough so that $3\eta<\delta$, it follows that, for any multi-index $\alpha$,
\[
\limsup_{t\searrow 0}  4t \log\lp  \lab\partial_y^{\alpha} p_t(x,U_1^c,y)\rab \rp \leq \lp d(x,y) +\delta \rp^2
\]
uniformly for $(x,y)\in\KR$, where the partial derivatives are understood with respect to one of the finitely many coordinate patches which cover $\pi_2(\KR)$ in the obvious way. Since we can re-write the differential operator $Z_y^{\alpha}$ in terms of local coordinates on each of these coordinate patches, as described in Lemma \ref{Lem:limsups} and the comments that follow, \eqref{Eqn:2b} follows (again recalling that $U=U_1$).

 \emph{Step 2}: We continue with the situation, and notation, from Step 1, and note that, for the strong localization case, it remains only to establish \eqref{Eqn:2a}. We do this by including $U$ into a compact sub-Riemannian manifold and using \eqref{Eqn:2b} and Theorem \ref{THM:LeandreCompact}. (These gluing constructions are basic tools of smooth manifold geometry, and we refer to \cite{LeeBook} for the details.)

Let $V$, $V'$, $V''$, and $V'''$ be open neighborhoods of $\overline{U}$ in $M$ with compact closure, such that
\[
\overline{V}\subset V' \subset \overline{V'}\subset V''\subset \overline{V''}\subset V'''  .
\]
Then we can find a smooth bump function $\phi$ such that $0\leq \phi\leq 1$, $\phi\equiv 1$ on $\overline{V''}$, and the support of $\phi$ is contained in $V'''$. By Sard's theorem, there is some $a\in (1/4,3/4)$ such that $\phi^{-1}\lp [a,\infty)\rp$ is a smooth, compact  submanifold-with-boundary $S$ of $M$. Thus we can take the smooth double of S set to get a compact smooth manifold $\tilde{M}$, in which $V''$ is naturally included.

Note that $S$ and thus also $\tilde{M}$ need not be connected (indeed, the set $U=U_1$ from Step 1 need not be connected). However, because $S$ is a smooth, compact submanifold-with-boundary, it has only finitely many connected components, and thus the same is true of $\tilde{M}$. The argument that follows doesn't require that $\tilde{M}$ be connected. However, if one prefers, one can of course consider the connected component one at a time, and then use the fact that there are only finitely many to conclude that all uniform bounds on components can be chosen to hold uniformly over all components, and thus over all of $\tilde{M}$.

Continuing, we can again use another smooth bump function, supported in a neighborhood of $\overline{V'}\subset \tilde{M}$, to extend the sub-Riemannian structure from $V'$ to all of $\tilde{M}$. To give more detail, recall that the sub-Riemanniann structure on $V''$ (which is the restriction of that on $M$) is given by the vector fields $\cZ_i$ and the volume $\mu$. Then we can determine a (preliminary) sub-Riemannian structure on $\tilde{M}$ by vector fields $\tilde{\cZ}_i$ for $i=0,\ldots,\tilde{k}$ and smooth volume $\tilde{\mu}$ (indeed, we make no assumption about the rank being constant, we could make this a Riemannian structure).  Then, as in the proof of Lemma \ref{Lem:Our7}, we can let $\phi$ be a smooth bump function (with slight abuse of notation-- this $\phi$ is not the same as the previous $\phi$) with $0\leq \phi\leq 1$, $\phi\equiv 1$ on a neighborhood of $\overline{V'}$, and the support of $\phi$ contained in $V''$. Then, recalling \eqref{Eqn:GluingCom} and \eqref{Eqn:GluingOp}, we see that
\[
\hat{\Delta} = \sum_{i=1}^k \lp \phi \cZ_i\rp ^2 + \phi \cZ_0 + \sum_{i=1}^{\tilde{k}} \lp  (1-\phi) \tilde{\cZ}_i\rp ^2 + ( 1-\phi) \tilde{\cZ}_0
\quad\text{and}\quad \hat{\mu}= \phi \cdot \mu +(1-\phi)\cdot \tilde{\mu} ,
\]
gives a sub-Riemannian structure on $\tilde{M}$ which agrees with that of $M$ on $\overline{V'}$.

We need one further condition on the structure on $\tilde{M}$, namely that the distance between any two points in $U$ is the same for both the original $M$-distance and the $\tilde{M}$-distance (note that this is not automatic from the fact that the restriction of the sub-Riemannian structure to $U$ is the same, because the distance can, in principle, depend on the lengths of curves that exit $V'$). To ensure this, we can take another bump function $\psi$ satisfying $0\leq \psi\leq 1$, $\psi\equiv 0$ on $\overline{U}$, and $\phi\equiv 1$ on $V^c$. Then we further rescale $\phi \cZ_1,\ldots, \phi \cZ_k$ to be 
\[
\frac{\phi}{1+C\psi} \cZ_1,\ldots, \frac{\phi}{1+C\psi} \cZ_k
\]
 for some large enough $C>0$, which we now describe. Because $\tilde{M}$ is compact, we can make the generating vectors on $V'\setminus \overline{V}$ as short as we wish, uniformly, by making $C$ large. In particular, we can choose $C$ so that distance from $\overline{V}$ to $(V')^c$ is at least two times the $M$-distance between any two points of $U$, and we assume we have done so. It is clear that the $\frac{\phi}{1+C\psi} \cZ_i$ give a sub-Riemannian structure on $\tilde{M}$, and this is the structure we now take.
 
Consider any $(x,y)\in \KR$. Every curve contained in $U$ has the same length in either metric, so the $M$-distance minimizing curves (of which there is at least one, and all of which were contained in $U$) stay the same. Thus $d_{\tilde{M}}(x,y)\leq d_M(x,y)$. Now consider any admissible curve $\gamma$ from $x$ to $y$ contained in $V'$. Such a curve is admissible in either the original $M$-structure or in the $\tilde{M}$-structure just defined. By construction, the generating vectors for $\tilde{M}$, in $V'$, are not any longer than they were in $M$, and thus $\ell_{\tilde{M}}(\gamma)\geq \ell_M(\gamma)$, where $\ell_M$ and $\ell_{\tilde{M}}$ denote the length functionals on curves with respect to the two structures. Since $\ell_M(\gamma)\geq d_M(x,y)$, it follows that $\ell_{\tilde{M}}(\gamma)\geq d_M(x,y)$. On the other hand, suppose $\gamma$ is an admissible curve in $\tilde{M}$ from $x$ to $y$ that is not contained in $V'$. Then by our choice of $C$, $\ell_{\tilde{M}}(\gamma)> 2d_M(x,y)$. Since these two cases cover all curves from $x$ to $y$, we conclude that $d_{\tilde{M}}(x,y)\geq d_M(x,y)$.

We have established that
\begin{equation}\label{Eqn:dDef}
d_M(x,y) =d_{\tilde{M}}(x,y)
\end{equation}
for all $(x,y)\in \KR$. Another immediate consequence of the construction of the sub-Riemannian structure on $\tilde{M}$ is that the same argument as in the previous step can be applied to $U^c$ as a subset of $\tilde{M}$, so that
\[
\limsup_{t\searrow 0} 4t \log \lp \lab  Z_y^{\alpha}  p^{\tilde{M}}_t\lp x,U^c ,y\rp  \rab \rp  \leq -\lp d^2(x,y)+\delta\rp 
\]
uniformly for $(x,y)\in\mathcal{U}$, with the same $\delta$, where $d(x,y)$ can be thought of as either $d_M(x,y)$ or $d_{\tilde{M}}(x,y)$. This is because \eqref{Eqn:Tri} still holds, for either distance, because $d(x,U^c)$ depends only on the lengths of curves contained in $U$.

Recalling also that Theorem \ref{THM:LeandreCompact} applies to $\tilde{M}$, we now know that, for any multi-index $\alpha$,
\[\begin{split}
\limsup_{t\searrow 0} 4t \log \lp \lab  Z_y^{\alpha}  p^{\tilde{M}}_t\lp x ,y\rp  \rab \rp & \leq - d^2(x,y) \\
\limsup_{t\searrow 0} 4t \log \lp \lab  Z_y^{\alpha}  p^{M}_t\lp x,U^c ,y\rp  \rab \rp & \leq -\lp d^2(x,y)+\delta\rp \\ 
\limsup_{t\searrow 0} 4t \log \lp \lab  Z_y^{\alpha}  p^{\tilde{M}}_t\lp x,U^c ,y\rp  \rab \rp & \leq -\lp d^2(x,y)+\delta\rp 
\end{split}\]
uniformly for $(x,y)\in \KR$. Here we use the fact that $\KR$ and $U$ can be viewed as subsets of either $M$ or $\tilde{M}$, and that, by \eqref{Eqn:dDef}, $d(x,y)$ is unambiguous, being understood as either $d_M$ or $d_{\tilde{M}}$. From the decompositions
\[
p^{\tilde{M}}_t(x,y) = p^U_t(x,y)+ p^{\tilde{M}}_t\lp x,U^c ,y\rp\quad\text{and}\quad 
p^{M}_t(x,y) = p^U_t(x,y)+ p^{M}_t\lp x,U^c ,y\rp ,
\]
we see that
\[
p^{M}_t(x,y) = p^{\tilde{M}}_t\lp x ,y\rp - p^{\tilde{M}}_t\lp x,U^c ,y\rp + p^{M}_t\lp x,U^c ,y\rp .
\]
Then we can write
\[\begin{split}
\limsup_{t\searrow 0} 4t \log \lp \lab  Z_y^{\alpha}p^{M}_t(x,y)\rab\rp &\leq 
\limsup_{t\searrow 0} 4t \log \lp \lab  Z_y^{\alpha} p^{\tilde{M}}_t\lp x ,y\rp \rab\rp \\
 + &\limsup_{t\searrow 0} 4t \log \lp \lab  Z_y^{\alpha} p^{\tilde{M}}_t\lp x,U^c ,y\rp\rab\rp 
+ \limsup_{t\searrow 0} 4t \log \lp \lab  Z_y^{\alpha} p^{M}_t\lp x,U^c ,y\rp\rab\rp ,
\end{split}\]
and applying Lemma \ref{Lem:limsups} (and the comments following it about uniformity) along with the above estimates for the quantities on the rigth-hand side, we establish \eqref{Eqn:2a}.

This completes the proof under the strong localization condition.

\emph{Step 3}: We move on to the case when $\KR$ satisfies the weak localization condition. The approach is the same as above, except that we need a different choice of $U_1$.

We can take $\delta>0$ such that, for all $(x,y)\in \KR$, the set $\{z:d(x,z)+d(z,y) < d(x,y)+5 \delta\}$ has compact closure. Then we now take
\[
U_1=\bigcup_{(x,y)\in \KR} \lc z:d(x,z)+d(z,y) < d(x,y)+3\delta \rc .
\]
We claim that, as before, $U_1$ is open with compact closure. Again, openness is immediate. Next, for any $\lp x_0,y_0\rp \in \KR$, if we take $(x,y)\in B\lp x_0,\frac{\delta}{4}\rp\times B\lp y_0,\frac{\delta}{4}\rp$, then 
\[
\{ z: d(x,z)+d(z,y) < d(x,y)+4 \delta\} \subset \{z:d(\lp x_0,z\rp +d\lp z,y_0\rp < d\lp x_0,y_0\rp+5 \delta\}
\]
by the triangle inequality, and note that the set on the right has compact closure, by assumption. It follows that
\[
S\lp x_0,y_0\rp = \bigcup_{(x,y)\in B\lp x_0,\frac{\delta}{4}\rp\times B\lp y_0,\frac{\delta}{4}\rp} \lc z:d(x,z)+d(z,y) < d(x,y)+4\delta \rc
\]
is open, and has compact closure because it is contained in a set with compact closure. Because $\KR$ is compact, we can find a finite number of points $\lp x_1,y_1\rp,\ldots,\lp x_n,y_n\rp$ such that the sets $B\lp x_n,\frac{\delta}{4}\rp\times B\lp y_n, \frac{\delta}{4}\rp$ cover $\KR$, and thus $U_1\subset \cup_{i=1}^n S\lp x_i,y_i\rp$. Moreover, since the closure of a finite union is equal to the union of the closures, we have
\[
\overline{U_1}\subset \cup_{i=1}^n \overline{S\lp x_i,y_i\rp} .
\]
Finally, the union on the right-hand side is compact, because it is a finite union of compacts, and thus the closure of $U_1$ is compact, as claimed.

From here, we again define the the open set $\mathcal{U}\subset M\times M$ by
\[
\mathcal{U}= \lc (x,y): \text{there exists $(x_0,y_0)\in\KR$ such that } x \in B\lp x_0,\frac{\delta}{2}\rp \text{ and } y \in B\lp y_0, \frac{\delta}{2}\rp\rc ,
\]
Then the important point is that, by the triangle inequality,
\[
d(x,y)+ 2\delta < d\lp x,U_1^c,y \rp
\]
for any $(x,y)\in\mathcal{U}$, which is the analogue of \eqref{Eqn:Tri} under weak localization. From here, we again take $U=U_1$, and \eqref{Eqn:1a} and  \eqref{Eqn:1b} follow just as before, except that Theorem \ref{THM:WeakCond} should be used in place of Theorem \ref{THM:StrongCond}.

Continuing, \eqref{Eqn:2b} can then be proved just as in Step 1, with the same choices of $K_0$, $U_0$ and $K_1$ and the same use of Lemma \ref{Lem:Our8}.

Finally, to establish \eqref{Eqn:2a}, we include $\overline{U}$ in a compact $\tilde{M}$, exactly as in Step 2. We again have \eqref{Eqn:dDef}. Moreover, we see that
\[
d_{M} \lp x,U^c_1,y\rp  \leq d_{\tilde{M}} \lp x,U^c_1,y\rp 
\]
for any $x,y\in U$, because no curves that come close to realizing the infimum that defines $d_{\tilde{M}} \lp x,U^c_1,y\rp$
can leave $V'$ (because of how we rescaled the metric using $\psi$), and all curves contained in $V'$ are at least as long under the $\tilde{M}$-structure as under the $M$-structure.

The only remaining possible issue in applying the same reasoning as in Step 2 is that in order to conclude that
\begin{equation}\label{Eqn:Needed}
\limsup_{t\searrow 0} 4t \log \lp \lab  Z_y^{\alpha}  p^{\tilde{M}}_t\lp x,U^c ,y\rp  \rab \rp  \leq -\lp d^2(x,y)+\delta\rp ,
\end{equation}
we must know that $\KR$ satisfies the weak localization condition as a subset of $\tilde{M}$ (equipped with $\hat{\Delta}$ and $\hat{\mu}$, of course). More precisely, while we have already discussed the distance to $U^c$, the sector condition is a global assumption. However, the sector condition is easy to arrange. Because the original $\Delta$ on $M$ satisfies the sector condition, we know that $\cZ_0$ is in the span of the $\cZ_i$. Moreover, we're free to assume that $\tilde{\cZ}_0$ lies in the span of the $\tilde{\cZ}_i$, or even to take $\tilde{\cZ}_0\equiv 0$, since having a valid sub-Riemannian structure on $\tilde{M}$ depends only on $\tilde{\cZ}_1,\ldots,\tilde{\cZ}_{\tilde{i}}$, so assume that we do so. In reference to \eqref{Eqn:DivForm}, by the smoothness of all objects involved, we see that for any smooth $f$,
\[
\hat{\Delta} f - \mathrm{div}_{\hat{\mu}}\lp \tilde{\nabla} f\rp = \hat{\cZ}_0(f)
\]
for some smooth $\hat{\cZ}_0$ that can be written as a linear combination of 
\[
\frac{\phi}{1+C\psi} \cZ_1,\ldots,\frac{\phi}{1+C\psi} \cZ_k, (1-\phi)\tilde{\cZ}_1,\ldots, (1-\phi)\tilde{\cZ}_{\tilde{k}}
\]
with smooth coefficients. Then by smoothness and compactness of $\tilde{M}$, the ``sector condition'' \eqref{Eqn:SectorCon} is satisfied (in particular, the coefficient functions in writing $\hat{\cZ}_0$ as a linear combination of the above generating vector fields  are bounded). Thus we can apply Theorem \ref{THM:WeakCond} to $U^c$ as a subset of $\tilde{M}$ in order to get \eqref{Eqn:Needed}.

Having arranged for \eqref{Eqn:Needed} to hold, the rest of the argument is identical to that of Step 2. This completes the proof.
\end{proof}

\begin{remark}
As noted, part of the logic behind our proof of Theorem \ref{THM:TrueLocal} was to localize the heat kernel asymptotics to certain compact sets, via \eqref{Eqn:1b} and \eqref{Eqn:2b}. This goes somewhat beyond the claim in the theorem, which asserts only that there is some open $A$ with compact closure for which the results of the theorem hold. (Indeed, for a compact manifold that's trivial, but in Step 2 of the proof, we needed $A$ to be a particular set $U^c$, not the entire compact manifold.)

Motivated in part by this, one could ask if particular sets $A$ can be given. Fortunately in the course of the proof, we determined such sets. If $\KR$ satisfies the strong localization condition, we can find an open set $W$ with compact closure and a $\delta>0$ such that $\KR\subset W\times W$ and for any $(x,y)\in\KR$, 
\[
d(x,y)+4\delta < d\lp x,W^c\rp +d\lp y,W^c\rp .
\]
(and any such open set works) and then determine $A$ by
\[
A^c=\lp \bigcup_{x\in \pi_1(\KR)} B\lp x,d(x,W^c)-\frac{\delta}{2}\rp \rp \cup \lp \bigcup_{y\in \pi_2(\KR)} B\lp y,d(y,W^c)-\frac{\delta}{2}\rp \rp .
\]
On the other hand, if $\KR$ satisfies the weak localization condition, we can find a $\delta>0$ such that we can determine $A$ by
\[
A^c= \bigcup_{(x,y)\in \KR} \lc z:d(x,z)+d(z,y) < d(x,y)+3\delta \rc .
\]
\end{remark}

At this point, we explain how time derivatives can be incorporated into the bounds of \eqref{Eqn:2a} and \eqref{Eqn:2b} in the case when $\Delta$ is a symmetric operator. If so, the heat kernel is also symmetric, in the sense that $p_t(x,y)=p_t(y,x)$ for any $x,y \in M$. Then we can use the symmetry to move the spatial derivatives in the heat equation onto the $y$-variable, so that
\[
\partial_t p_t(x,y)= \Delta_y p_t(x,y) = \sum_{i=1}^k \cZ_{i,y}^2 p_t(x,y) +\cZ_{0,y} p_t(x,y) .
\]
It follows that, for any positive integer $l$, $\partial^l_t p_t(x,y)$ can be written as a finite linear combination of terms of the form $Z_y^{\alpha_j} p_t(x,y)$, where $|\alpha_j |\leq 2l$ for each $j$ (and the $Z^i$ happen to be drawn from the $\cZ_i$). Then in light of Lemma \ref{Lem:limsups}, under the assumptions of Theorem \ref{THM:TrueLocal} plus the additional assumption that $\Delta$ is symmetric (in which case one can always consider the weak localization condition) \eqref{Eqn:2a} and \eqref{Eqn:2b} can be improved to
\[\begin{split}
\limsup_{t\searrow 0} 4t \log \lp \lab  \partial^l_t Z_y^{\alpha}  p_t(x,y)  \rab \rp  &\leq - d^2(x,y)  \\
\text{and}\quad
\limsup_{t\searrow 0} 4t \log \lp \lab  \partial^l_t Z_y^{\alpha}  p_t\lp x,U^c ,y\rp  \rab \rp  &\leq -\lp d^2(x,y)+\delta\rp 
\end{split}\]
for any non-negative integer $l$. We will use this as necessary in what follows.

Recall
$$
\Gamma_\varepsilon(x,y)=
\left\{
z\in M
\mid  
d(x,z)\leq \frac{d(x,y)+\varepsilon}{2}
\text{ and }
d(y,z)\leq \frac{d(x,y)+\varepsilon}{2}
\right\}.
$$
Now that  Léandre asymptotics are proved, we have enough to show that the heat responsible for $p_t(x,y)$ is located near the midpoint set $\Gamma$ at $t/2$.

\begin{corollary}\label{C:MolchanovV1}
Let $\K$ be a localizable compact subset of $M^2$. 
Let $l$ be any non-negative integer in the symmetric case and 0 otherwise, and $\alpha$ any multi-index.
For any $\varepsilon>0$ small enough, we have uniformly on $\R^+\times \K$, for all $(t,x,y)\in\R^+\times \K$
$$
\partial_t^l
Z_y^{\alpha}
p_t(x,y)
=
\int_{\Gamma_\varepsilon} 
	p_{t/2}(x,z)
	\partial_t^l
Z_y^{\alpha}p_{t/2}(z,y)
\diff\mu( z)
+
O\left(\e^{-\frac{d(x,y)^2+{\varepsilon^2/2}}{4t}}\right).
$$
{(Here and in the rest of the paper, $\partial_t p_{t/2}$ should be understood as $(\partial_\tau p_{\tau})_{|\tau=t/2}$.)}
\end{corollary}

\begin{proof}

Let $\varepsilon>0$ and let $K\subset M$ be the closure of the set of points $z$ for which there exists $(x,y)\in M$ such that either $(x,z)\in \K$, $(z,y)\in \K$ or $z\in \Gamma_\varepsilon(x,y)$ for $(x,y)\in \K$. $K$ is naturally bounded and  compact. Since $\K$ is localizable, so is $K^2$, for $\varepsilon$ small enough. Then we can apply Theorem~\ref{THM:TrueLocal} over $K^2$, with $U$ and $\delta>0$ defined as there.

Let $p^U_t$ be the heat kernel on $U$ with Dirichlet boundary conditions. By Theorem~\ref{THM:TrueLocal}, for any $x,y\in K^2$, and any multi-index $\alpha$,  non-negative integer $l$, $\partial_t^{l}Z_y^{\alpha} p_t(x,y)=\partial_t^{l}Z_y^{\alpha} p_t^U(x,y)+\partial_t^{l}Z_y^{\alpha} p_t(x,U^c,y)$. We get
\begin{equation}\label{E:difference_dirichlet}
\lab \partial_t^{l}Z_y^{\alpha} p_t(x,y) - \partial_t^{l}Z_y^{\alpha} p^{U}_t(x,y)  \rab \leq C  \exp\lp  -\frac{d(x,y)^2+\delta}{4t} \rp.
\end{equation}
(Here $C$ is uniform over pairs in $K^2$.)

Using the fact that $\partial_t p_t(x,y)=\left.\partial_\tau\right|_{\tau=0} p_{t+\tau}(x,y)$ and dividing $t+\tau$ as $t/2+(t/2+\tau)$:
$$
\partial_t^{l}Z_y^{\alpha} p^{U}_t(x,y)
=
\left.\partial_\tau^{l}\right|_{\tau=0}Z_y^{\alpha}\int_{U}\lp p^{U}_{t/2}(x,z)p^{U}_{t/2+\tau}(z,y)\rp \diff \mu(z)
=
\int_{U}
p^{U}_{t/2}(x,z) \partial_t^{l} Z_y^{\alpha}  p^{U}_{t/2}(z,y)  
\diff \mu(z).
$$
We divide the domain of this last integral in order to estimate each part:
\begin{equation}\label{E:integral_division}
\begin{aligned}
\int_{U}
p^{U}_{t/2}(x,z) \partial_t^{l} Z_y^{\alpha}  p^{U}_{t/2}(z,y)  
  \diff \mu(z)=&
\int_{\Gamma_\varepsilon} 
p^{U}_{t/2}(x,z) \partial_t^{l} Z_y^{\alpha}  p^{U}_{t/2}(z,y)  
\diff \mu(z)
\\&+
\int_{U\setminus \Gamma_\varepsilon} 
p^{U}_{t/2}(x,z) \partial_t^{l} Z_y^{\alpha}  p^{U}_{t/2}(z,y)  
\diff \mu(z)
\end{aligned}
\end{equation}
Theorem~\ref{THM:TrueLocal} implies by uniformity over the spatial domain that for any $\eta>0$ to be fixed later, there exists $C>0$ such that for all $(x,y)\in \K$, $z\in K$,
$$
\left| p^{U}_{t/2}(x,z)\right|\leq C \exp\lp-\frac{d(x,z)^2{-\eta}}{2t}\rp
$$
and
$$
\left|\partial_t^{l} Z_y^{\alpha}  p^{U}_{t/2}(z,y)\right|\leq C \exp\lp-\frac{d(z,y)^2{-\eta}}{2t}\rp.
$$
Hence
$$
\left|p^{U}_{t/2}(x,z) \partial_t^{l} Z_y^{\alpha}  p^{U}_{t/2}(z,y)  \right|\leq C^2 \exp\lp-\frac{d(x,z)^2+d(z,y)^2{-2\eta}}{2t}\rp.
$$

Let $\varepsilon\geq 0$. If $d(x,z)\geq\frac{d(x,y)+\varepsilon}{2}$ then by triangular inequality, $d(z,y)\geq \frac{d(x,y)-\varepsilon}{2}$, so that
\begin{equation}\label{E:triangular_inequality}
\frac{1}{2}\lp d(x,z)^2+d(z,y)^2\rp
\geq 
\frac{1}{2}
\lp 
	\lp \frac{d(x,y)+\varepsilon}{2}\rp^2
	+
	\lp\frac{d(x,y)-\varepsilon}{2}\rp^2 
\rp
=\frac{d(x,y)^2+\varepsilon^2}{4}.
\end{equation}
Hence, for the integration over $U\setminus \Gamma_\varepsilon$ in \eqref{E:integral_division} we get the bound
$$
\int_{U\setminus \Gamma_\varepsilon}  p^{U}_{t/2}(x,z)\partial_t^{l}Z_y^{\alpha}p^{U}_{t/2}(z,y)  \diff \mu(z)
\leq 
\mu(U)C^2 \exp\lp-\frac{d(x,y)^2+\varepsilon^2{-4\eta}}{4t}\rp.
$$

The integration over $\Gamma_\varepsilon$ in \eqref{E:integral_division} should instead be compared with the same integral for the true kernel $p_t$.
In order to compare the integral of $p^{U}_{t/2}(x,z) \partial_t^{l} Z_y^{\alpha}  p^{U}_{t/2}(z,y)  $ and $p_{t/2}(x,z) \partial_t^{l} Z_y^{\alpha}  p_{t/2}(z,y) $, we use \eqref{E:difference_dirichlet}.
We are considering  pairs of points $(x,y)\in \K$, so that pairs $(x,z)$ and $(z,y)$, with $z\in \Gamma_\varepsilon(x,y)$, all belong to $K^2$. Then
we have both
$$
\left|(p_{t/2}(x,z) -p^{U}_{t/2}(x,z))\partial_t^{l} Z_y^{\alpha}  p_{t/2}(z,y) \right| \leq  C\exp\left(-\frac{d(x,z)^2+\delta{-\eta}}{2t}\right)\exp\left(-\frac{d(z,y)^2{-\eta}}{2t}\right)
$$
and 
$$
\left|
p^{U}_{t/2}(x,z) (\partial_t^{l} Z_y^{\alpha}  p^{U}_{t/2}(z,y) -\partial_t^{l} Z_y^{\alpha}  p_{t/2}(z,y)  )\right|\leq
C\exp\left(-\frac{d(x,z)^2{-\eta}}{2t}\right)\exp\left(-\frac{d(z,y)^2+\delta{-\eta}}{2t}\right)
$$
Taking \eqref{E:triangular_inequality} with $\varepsilon=0$ yields
$$
\left|p_{t/2}(x,z) \partial_t^{l} Z_y^{\alpha}  p_{t/2}(z,y)  
-
p^{U}_{t/2}(x,z) \partial_t^{l} Z_y^{\alpha}  p^{U}_{t/2}(z,y) \right|
\leq C^2\exp\lp-\frac{d(x,y)^2+4\delta{-8\eta}}{4t} \rp
$$
Then  there exists $C>0$ such that for all $(x,y)\in \K$,
\[\begin{split}
\int_{\Gamma_\varepsilon}
\lp
p_{t/2}(x,z) \partial_t^{l} Z_y^{\alpha}  p_{t/2}(z,y)  
-
p^{U}_{t/2}(x,z) \partial_t^{l} Z_y^{\alpha}  p^{U}_{t/2}(z,y)  
\rp
\diff \mu(z)
& \leq \\
C^2\mu(U) &
\exp\lp-\frac{d(x,y)^2+4\delta{-8\eta}}{4t} \rp
\end{split}\]
(since $\mu(\Gamma_\varepsilon)\leq \mu(U)$).

In conclusion, for all $(x,y)\in \K$,
\begin{multline*}
\left| 
\partial_t^{l}Z_y^{\alpha} p_t(x,y) 
-
\int_{\Gamma_\varepsilon}
p_{t/2}(x,z)\partial_t^{l}Z_y^{\alpha}p_{t/2}(z,y) \diff \mu(z)\right|
\leq \\
\begin{aligned}
&
 \lab \partial_t^{l}Z_y^{\alpha} p_t(x,y) - \partial_t^{l}Z_y^{\alpha} p^{U}_t(x,y)  \rab
\\
&+\int_{U\setminus\Gamma_\varepsilon}   p^{U}_{t/2}(x,z)\partial_t^{l}Z_y^{\alpha}p^{U}_{t/2}(z,y) \diff \mu(z)
\\
&
+
\int_{\Gamma_\varepsilon} \lp p^{U}_{t/2}(x,z)\partial_t^{l}Z_y^{\alpha}p^{U}_{t/2}(z,y)-p_{t/2}(x,z)\partial_t^{l}Z_y^{\alpha}p_{t/2}(z,y)\rp \diff \mu(z),
\end{aligned}
\end{multline*}
so that 
\begin{multline*}
\left| 
\partial_t^{l}Z_y^{\alpha} p_t(x,y) 
-
\int_{\Gamma_\varepsilon}
p_{t/2}(x,z)\partial_t^{l}Z_y^{\alpha}p_{t/2}(z,y)  \diff \mu(z)\right|
\leq
\\ C\e^{-\frac{d(x,y)^2+4\delta-8\eta}{4t}}
+C'\e^{-\frac{d(x,y)^2+\varepsilon^2-4\eta}{4t}}
+C''\e^{-\frac{d(x,y)^2+4\delta-8\eta}{4t}}
\leq C'''\e^{-\frac{d(x,y)^2+\varepsilon^2-4\eta}{4t}},
\end{multline*}
for $\varepsilon,\eta$ small enough.
Taking $\eta=\varepsilon^2/8$ proves the corollary.
\end{proof}
    
\section{Ben Arous expansion theorem}

{

In \cite{BenArous}, Ben Arous gives a full asymptotic expansion of the heat kernel in small time, for pairs of points away from the diagonal, the cut locus, or joined by abnormal minimizers.
In the rest of the paper, for any $x\in M$, $\Cut(x)$ denotes the cut locus of $x$ in $M$. Furthermore,  we recall the critical set $\C\subset M\times M$ from Definition \ref{Def:SN-CritSet}: the set of pairs of points $(x,y)$ such that either $y\in \Cut(x)$,  $x\in \Cut(y)$, $x=y$ in the non-Riemannian case (that is, $\C$ contains the diagonal in the properly sub-Riemannian case), or such that a length-minimizing curve from $x$ to $y$ is not  strongly normal.
In \cite{BenArous}, Ben Arous definition of cut locus includes points connected by an abnormal geodesic, which is not the convention we follow, hence the introduction of $\C$.
We can describe Ben Arous results with the following definition, which will supply convenient terminology for this section and allow us to treat the symmetric and general cases in parallel.

\begin{definition}\label{D:Ben Arous}
We say that the Ben Arous expansion holds uniformly on the compact subset $\K\subset M^2\setminus \C$ if 
for $l$ any non-negative integer in the symmetric case and  $0$ otherwise, and any multi-index $\alpha$, we have the following.

There exists an open neighborhood $\O$ of $\K$ in $M^2\setminus\C$,
there exist sequences of smooth functions $c_k:\O\to \R$, $k\in \N$, $r_{k}: (0,\infty)\times \O\to \R$, such that for all $N\in \N$, for all $(x,y)\in \O$, for all $t$ small enough
$$
\partial_t^l
Z_y^{\alpha}
p_t(x,y)
=
t^{-(|\alpha|+2 l+n/2)} \e^{-\frac{d(x,y)^2}{4t}}
\left(
\sum_{k=0}^{N}c_k(x,y)t^k
+
t^{N+1}r_{N+1}(t,x,y)
\right),
$$
and, for $l'$ any non-negative integer in the symmetric case and 0 otherwise, and any multi-index $\alpha'$, there exists $t_0>0$ such that
$$
\sup_{0<t<t_0}
\sup_{(x,y)\in \K}
\left|
	\partial_t^{l'}		Z_y^{\alpha'}r_{N+1}(t,x,y)
\right|<\infty.
$$

Additionally, if $\alpha=
0$, then $c_0(x,y)>0$ on $\O$.
\end{definition}

In particular, Theorems 3.1-3.4 in \cite{BenArous} imply that the heat kernel satisfies Definition~\ref{D:Ben Arous} with $M=\R^d$ and $\K$  any compact set in $\R^d\times \R^d\setminus \C$.
It has been widely accepted that Ben Arous expansions should hold uniformly on compact sets where no abnormal minimizers exist between two distinct points.  Thanks to localization, we are able to prove this fact using Molchanov's method, and we then apply this result to give uniform universal bounds of the heat kernel on compact sets without abnormal minimizers.

Techniques in this section rely heavily on properties of midpoint sets, the set of points equidistant from two points. Hence we preface our work on Ben Arous expansions with some preliminary remarks and definitions that will appear in proofs throughout the section.

}
\subsection{From localization to compactness near geodesics}
Recall that for any two points $x,y\in M$, we denote by $\Gamma(x,y)$ the midpoint set of $(x,y)$, that is the set of points $z$ that lay at the midpoint of length-minimizing curves between $x$ and $y$:
$$
\Gamma(x,y)=
\left\{
z\in M\mid  d(x,z)=d(z,y)=\frac{d(x,y)}{2}
\right\}
$$
and  for any $\varepsilon\geq 0$, we set
$$
\Gamma_\varepsilon(x,y)=
\left\{
z\in M
\mid  
d(x,z)\leq \frac{d(x,y)+\varepsilon}{2}
\text{ and }
d(y,z)\leq \frac{d(x,y)+\varepsilon}{2}
\right\}.
$$
For any $\eta\geq 0$, we denote by $\D(\eta)$ the subset of $M^2$ 
$$
\D(\eta)=\left\{ (x,y)\in M^2\mid d(x,y)\leq \eta \right\}.
$$
In particular, $\D(0)=\D\subset M\times M$ denotes the diagonal of $M\times M$.

Finally, regarding the localization condition, if $\K$ is a localizable compact subset $M\times M$, it implies in both strong and weak cases that for some $\varepsilon_0>0$, the set  
$$
\Lambda=\left\{z\mid d(x,z)+d(z,y)\leq d(x,y)+\varepsilon_0\right\}
$$
is compact for any $(x,y)\in \K$.

First, let us prove the following lemma.

\begin{lemma}\label{L:GeodesicsExist}
Let $\K$ be a compact subset of $M\times M$. If $\K$ is localizable then for any $(x,y)\in \K$, there exists a length-minimizing curve joining $x$ to $y$.
\end{lemma}

\begin{proof}
Let us consider a sequence of constant speed admissible curves $\gamma_n:[0,1]\to M$ such that $\gamma_n(0)=x$, $\gamma_n(1)=y$ and $\ell(\gamma_n)\to d(x,y)$, where 
$\ell(\gamma)$ denotes the length of an admissible curve $\gamma:[0,1]\to M$.
Recall $\Lambda=\{z\mid d(x,z)+d(z,y)\leq d(x,y)+\varepsilon_0\}$, for some $\varepsilon_0>0$. If for any given $n$, there exists $t_n\in [0,1]$ such that $\gamma_n(t_n)\in \partial \Lambda$, then $\ell(\gamma_n)\geq d(\gamma_n(0),\gamma_n(t_n))+d(\gamma_n(t_n),\gamma_n(1))\geq d(x,y)+\varepsilon_0$. The assumption that $\ell(\gamma_n)\to d(x,y)$ implies that there exists $n_0>0$ such that for all $n\geq n_0$, $\gamma_n([0,1])\subset \Lambda$.
Furthermore, since $\Lambda$ is compact under the localization condition (if  $\varepsilon_0>0$ is small enough), there exists $\eta>0$ such that for any $(x_0,y_0)\in \Lambda$ with $d(x_0,y_0)\leq \eta$, there exists a length-minimizing curve joining $x_0$ to $y_0$. Let $N=\lceil d(x,y)/\eta\rceil+1$, and consider the $N-1$ sequences $(\gamma_n(k/N))_{n\in \N}$, $k\in \{1,\dots,N-1\}$. They can all be assumed to converge, up to extraction, with limits $\lim\gamma_n(k/N)= z_k$, $k\in \{0,\dots,N\}$, so that $x=z_0$ and $y=z_N$. Now since $\gamma_n$ are constant speed admissible curves,
$d(\gamma_n(k/N),\gamma_n((k+1)/N))\to d(x,y)/N=d(z_k,z_{k+1})<\varepsilon_0$, for all $k\in \{0,\dots,N-1\}$. As a consequence for all $k\in \{0,\dots,N-1\}$, there exists a length-minimizing curve of length $d(x,y)/N$ joining $z_k$ to $z_{k+1}$, hence the existence of an admissible curve between $x=z_0$ and $y=z_N$ of length $d(x,y)$.
\end{proof}

The above argument also allows to show that under these assumptions, any point in $\Gamma(x,y)$ indeed belongs to the midpoint of a length-minimizing curve between $x$ and $y$.
If $\K$ avoids the diagonal $\D$ and no abnormal minimizers exist between any pair $(x,y)\in \K$, it follows that no point along a length-minimizing curve, except its endpoints, can be cut or conjugate (see, e.g. \cite[Theorem 8.52]{ABB_2018}) and thus the pairs $(x,z)$ and $(z,y)$, $z\in \Gamma(x,y)$ avoid $\C$ entirely.
This is the heart of
Molchanov's method, which separates heat kernels evaluated at pairs $(x,y)$ in $\K$ into products of heat kernels evaluated at pairs $(x,z)$ and $(z,y)$ with $z$ in the neighborhood of $\Gamma(x,y)$. In particular, even when $(x,y)\in \K\cap \C$, the length-minimizing curves joining them cannot be cut or conjugate at their midpoint and the heat kernel may still be described.
Localization hypotheses allows to say more on the compactness properties of midpoint sets.

\begin{lemma}\label{L:fat_compacts}
Let $\K$  be a localizable compact subset of $M^2$ such that $\K\cap \D=\emptyset$ and no strictly abnormal minimizers exist between any pair $(x,y)\in \K$. Let 
$$
\Gamma_{\eps}^l(\K)=
\left\{
(x,z)\mid \exists y\in M \st (x,y)\in \K, z\in \Gamma_\varepsilon(x,y)
\right\}, 
$$
$$
\Gamma_{\eps}^r(\K)=
\left\{
(z,y)\mid \exists x\in M \st (x,y)\in \K, z\in \Gamma_\varepsilon(x,y)
\right\}.
$$
(As with $\Gamma(x,y)$, we may denote $\Gamma_0^l(\K)$ by $\Gamma^l(\K)$ and $\Gamma_0^r(\K)$ by $\Gamma^r(\K)$.)
There exists $\varepsilon_1>0$ such that $\Gamma^l_\varepsilon(\K)$ and $\Gamma^r_\varepsilon(\K)$ are non-empty compact subsets of $M^2\setminus \C$ for all $\varepsilon\leq \varepsilon_1$.
\end{lemma}

\begin{proof}
By symmetry of the definitions, we prove the statement for $\Gamma^l_\varepsilon(\K)$.

By Lemma~\ref{L:GeodesicsExist} geodesics between pairs of points $(x,y)\in \K$ exist and all remain in a compact set in $M$. As such, for any pair $(x,y)\in \K$, the set $\Gamma(x,y)$ is non-empty. Since we naturally have $\Gamma^l_\varepsilon(\K)\subset \Gamma^l_{\varepsilon'}(\K)$ as soon as $0\leq\varepsilon\leq \varepsilon'$, all sets are non-empty.

Assume $\varepsilon\leq \varepsilon_0$, then we can prove compactness of $\Gamma^l_\varepsilon(\K)$. Indeed, denoting by $\pi:M^2\to M$ the continuous map such that $\pi_1(x,y)=x$, $\Gamma^l(\K)\subset\pi_1(\K)\times \Lambda$. Hence we only need to show that $\Gamma^l_\varepsilon(\K)$ is a closed subset of the compact set $\pi_1(\K)\times \Lambda$. Let $(x_n,z_n)_{n\in \N}\in\Gamma^l_\varepsilon(\K)$, converging towards $(x^*,z^*)\in \pi(\K)\times \Lambda$. For all $n$ there exists $y_n\in M$ such that $(x_n,y_n)\in \K$ and  $z_n\in \Gamma_\varepsilon(x_n,y_n)$. Since $\K$ is compact, $y_n$ can be assumed to converge (up to extraction) towards $y^*$.
By continuity of the sub-Riemannian distance over $M^2$, passing to the limit in $\max(d(x_n,z_n),d(z_n,y_n))\leq d(x_n,y_n)/2+\varepsilon$ implies $\max(d(x^*,z^*),d(z^*,y^*))\leq d(x^*,y^*)/2+\varepsilon$. Hence $(x^*y^*)\in \Gamma^l_\varepsilon(\K)$ and the set is thus compact.

Finally regarding the intersection with $\C$, the nesting property of the sets $\Gamma^l_{\varepsilon}(\K)$ means we can assume by contradiction that for all positive $n\in\N$, there exists $(x_n,z_n)\in \Gamma^l_{\varepsilon_0/n}(\K)\cap \C$. The sequence $(x_n,z_n)$ belongs to the compact $\Gamma^l_{\varepsilon_0}(\K)$, and can be assumed to converge up to extraction to $(x^*,z^*)\in \Gamma^l_{\varepsilon_0}(\K)$. Furthermore, for all $n$, there exists $y_n$ such that $(x_n,y_n)\in \K$ and $z_n\in \Gamma_{\varepsilon_0/n}(x_n,y_n)$. Since $\K$ is compact, $y_n$ can also be assumed to converge (up to extraction) towards $y^*$ such that $(x^*,y^*)\in \K$. Then, passing to the limit in $\max(d(x_n,z_n),d(z_n,y_n))\leq d(x_n,y_n)/2+\varepsilon_0/n$ yields $\max(d(x^*,z^*),d(z^*,y^*))\leq d(x^*,y^*)/2$, hence $z^*\in \Gamma(x^*,y^*)$. On the other hand, $\C\cap \Gamma^l_{\varepsilon_0}(\K)$ is also closed, meaning that $(x^*,z^*)\in \C$, which is in contradiction with the hypothesis that $\K\cap \D=\emptyset$ and there exist no strictly abnormal minimizers exist between any pair $(x,y)\in \K$. Hence the statement.
\end{proof}

Let us introduce a final family by of sets of $M^2\setminus \C$ that will prove useful for the extension of Ben Arous expansion theorem and prove some of their properties relying on the same idea as Lemma~\ref{L:fat_compacts}. For any compact set $\K\in M^2\setminus \C$, for any $\varepsilon>0,\eta>0$, the set $\U_{\varepsilon,\eta}$ is the set of points $(x,y)\in M^2$ such that $d(x,y)\geq \eta$ and such that $x$ and $y$ are both $\varepsilon$-close to length-minimizing curves linking a pair $(x',y')\in \K$ (in a sense to be made precise below).

These sets have the nice property of being compact supersets of $\K$, and, more importantly, that the midpoint sets generated with the sets $\U_{\varepsilon,\eta}$ themselves are contained within another set of the same family (assuming $\varepsilon$ is small enough). This is crucial to be able to apply Molchanov's method repeatedly.

\begin{lemma}\label{L:propU}
Let $\K$ be a localizable compact subset of $M^2\setminus \C$.
For any $\varepsilon\geq 0,\eta>0$, let 
$\U_{\varepsilon,\eta}\subset M^2$ be defined as follows:  $(x,y)\in \U_{\varepsilon,\eta}$ if $d(x,y)\geq \eta$ and  there exists $(x',y')\in \K$, such that 
$d(x',x)+d(x,y')\leq d(x',y')+ \varepsilon$ and  $d(x',y)+d(y,y')\leq d(x',y')+ \varepsilon$.

For every $\eta>0$ and  $0\leq \varepsilon\leq \varepsilon_0$, with $\varepsilon_0$ such that $\Lambda$ is compact, $\U_{\varepsilon,\eta}$ is compact. For every $\eta>0$, there exists $\varepsilon_1$ such that if $\varepsilon\leq \varepsilon_1$, $\U_{\varepsilon,\eta}\cap  \C=\emptyset$. Furthermore, for any $\varepsilon_2>0$, $\eta>0$,  there exists $\varepsilon>0$ such that
\begin{equation}\label{E:GammaU}
\Gamma_\varepsilon^l(\U_{\varepsilon,\eta})\subset  \U_{\varepsilon_2,\eta/3}
\quad\text{ and }\quad
\Gamma_\varepsilon^r(\U_{\varepsilon,\eta})\subset  \U_{\varepsilon_2,\eta/3}.
\end{equation}

\end{lemma}

\begin{proof}
Without loss of generality, we can assume $\eta $ small enough to not have to account for the case  $\U_{\varepsilon,\eta}=\emptyset$.

We prove compactness and non-intersection with $\C$ in a manner similar to Lemma~\ref{L:fat_compacts}.
If $\varepsilon\leq \varepsilon_0$ then $\U_{\varepsilon,\eta}\subset \Lambda^2$, and we only need to check closure of $\U_{\varepsilon,\eta}$ to get compactness.
Let $(x_n,y_n)$ be a sequence in $\U_{\varepsilon,\eta}$ converging to $(x^*,y^*)\in \Lambda^2$. By assumption, there exists $(x_n',y_n')\in \K$ such that 
$\max(d(x'_n,x_n)+d(x_n,y'_n),d(x'_n,y_n)+d(y_n,y'_n))\leq d(x'_n,y'_n)+ \varepsilon$. Since $\K$ is compact, up to extraction we have $(x'_n,y'_n)\to(x^\dagger,y^\dagger)\in \K$. Then by continuity of the sub-Riemannian distance, $\max(d(x^\dagger,x^*)+d(x^*,y^\dagger),d(x^\dagger,y^*)+d(y^*,y^\dagger))\leq d(x^\dagger,y^\dagger)+ \varepsilon$, proving that $(x^*,y^*)\in  \U_{\varepsilon,\eta}$.

Regarding the existence of the stated $\varepsilon_1$, it is sufficient to remark that for fixed $\eta$, the sets $\U_{\varepsilon,\eta}$ are nested with respect to $\varepsilon$ and thus assuming that  $\U_{\varepsilon,\eta}\cap  \C\neq\emptyset$ for all $\varepsilon$ would imply that $\U_{0,\eta}\cap  \bar{\C}\neq\emptyset$. However all points in $\U_{0,\eta}$ are pairs of points belonging to geodesics between pairs $(x,y)\in\K$. The fact that $\K\subset M^2\setminus \C$ implies that it is also also true for pairs of points along geodesics.

Let us prove \eqref{E:GammaU} by considering the existence for all $n\in \N$ of a pair $(x_n,z_n)\in \Gamma_{1/n}^l(\U_{1/n,\eta})\setminus  \U_{\varepsilon_2,\eta/3}$. (Once again, for fixed $\eta$, the sets are nested). It implies the existence of $(y_n)$ in $M$ such that $(x_n,y_n)\in  \U_{1/n,\eta}$, $z_n\in \Gamma_{1/n}(x_n,y_n)$. If $N$ is large enough, the first part of the statement applies to show that $\U_{1/N,\varepsilon}$ is a compact subset of $M^2\setminus \C$, and by definition of the sets $\U_{\varepsilon,\eta}$, it inherits the localizability from $\K$ if $N$ is large enough. Hence Lemma~\ref{L:fat_compacts} applies to show that  $\Gamma_{1/N}^l(\U_{1/N,\eta})$ is compact. Then for all $n\geq N$, $(x_n,z_n)\in \Gamma_{1/N}^l(\U_{1/N,\eta})$. By compactness, $(x_n,y_n,z_n)$ can be assumed to converge towards $(x^*,y^*,z^*)$, where $(x^*,y^*,z^*)$ all belong to a geodesic between a pair $x$ and $y$ such that $(x,y)\in \K$. Distance-wise, $d(x_n,y_n)\geq \eta$, hence $d(x_n,z_n)\geq \eta/2-1/n$ by triangular inequality. On the other hand, $d(x_n,x^*) \to 0$ and $d(z_n,z^*) \to 0$, hence both $x_n$ and $z_n$ become arbitrarily close to a geodesic between a pair in $\K$. By triangular identity, this means that for $n$ large enough, $(x_n,z_n)\in \U_{\varepsilon_2,\eta/3}$, contradicting the existence of the sequence and proving that \eqref{E:GammaU} holds for $\varepsilon$ small enough.

\end{proof}

\subsection{Uniform Ben Arous expansions}

{ Expansions of the heat kernel, as given by Ben Arous in \cite{BenArous}, hold for sub-Riemannian distribution over $\R^d$. This means that we are able to write these expansions for pairs of points  in a manifold as long as the points are close enough to a appear in the domain of the same chart. For pairs of points that are further apart,  Molchanov's method and localization naturally shows that almost all information can be gathered from integration of the heat kernel on small neighborhoods of the midpoints. Using Laplace integral asymptotics to derive Ben Arous expansions from the integral, we are effectively increasing the possible distance between pairs points by a fixed rate (slightly smaller than 2). Repeating this argument, we are able to prove that Ben Arous expansion holds for points arbitrarily far apart. Using compactness arguments, this allows to prove that the expansion holds uniformly on compacts sets of the manifold.

 The announced Theorem~\ref{T:UnifBenArous} can then be expressed as follows.

}

\begin{proposition}\label{P:BenArous}
If $\K$ is a localizable compact set in $M^2\setminus \C$, 
then  Ben Arous expansion holds uniformly on $\K$ (in the sense of Definition~\ref{D:Ben Arous}).
As a consequence, Theorem~\ref{T:UnifBenArous} holds.
\end{proposition}

This statement is obtained as a consequence of three lemmas. First, the Ben Arous expansions hold uniformly for points that are close enough to each other as a consequence of the original Ben Arous expansion theorem. Second, we use Molchanov's technique to show that if the statement holds for pairs of points sufficiently close in a compact, then we can increase this maximal distance by shaving off an arbitrarily small neighborhood of the border.
Finally, we tie things up by completing the statement on the derivatives of the remainders.
The proof of Proposition~\ref{P:BenArous} comes at the end of the section, as a conclusion of this sequence of lemmas.

\begin{lemma}\label{L:BenArousSmall}
Let $\K\subset M^2$ be a compact. There exists $\delta_0>0$ such that Ben Arous expansions hold uniformly on any compact set in {$\K\cap\D(\delta_0)\setminus \C$}.
\end{lemma}

\begin{proof}
Let $K=\pi_1(\K)\cup\pi_2(\K)\subset M$ be the compact set of points such that $x\in K$ if there exists $y\in M$ such that either $(x,y)\in \K$ or $(y,x)\in \K$.
For all $x\in K$, there exist $R_x>0$ and an isometry $\zeta_x:B(x,R_x)\to \R^d$ that maps $B(x,R_x)$ to a neighborhood of 0 in $\R^d$.
The family $\left( B(x,R_x/4)\right)_{x\in K}$ is an open cover of $K$, so we can extract a finite collection $(x_i)_{1\leq i\leq n}$, $R_i=R_{x_i}$, such that 
$K\subset \cup_{i=1}^n B(x_i,R_i/4)$.

Let $\delta_0=\min_{i}R_i/4$. For any $x\in K$ there exists an integer $i$, $1\leq i\leq n$, such that $x\in   B(x_i,R_i/4)$. For any $y\in M$ such that  $d(x,y)\leq \delta_0$, $y\in   B(x_i,R_i/2)$.
Hence for all pairs $(x,y)\in K^2\cap \{d\leq \delta_0\}$, there exists $i$, $1\leq i\leq n$, such that $x,y\in \bar B(x_i,R_i/2)\subset B(x_i,R_i)$.

For all $1\leq i\leq m$, let $\tilde{p}^i_t$ denote the heat kernel on $\zeta_{i}(B(x_i,R_i))$. As a consequence of Theorem~\ref{C:MolchanovV1},
there exists $\varepsilon_i$ such that uniformly
for all $(x,y)\in  B(x_i,R_i/2)\cap \D(\delta_0)$, 
$$
p_t(x,y)=\tilde{p}^i_t(\zeta_{i}(x),\zeta_{i}(y))+O\left(\e^{-\frac{d(x,y)^2+\varepsilon_i}{4t}}\right).
$$
The same holds for all the time and spatial derivatives of $p_t$. Then for all $1\leq i\leq m$, the Ben~Arous expansion  holds uniformly
on
${\bar B(x_i,R_i/2)}^2\cap \D(\delta_0)$ since they classically hold on the compact  ${\zeta_i(\bar B(x_i,R_i/2))}^2$. By taking the maximum of the uniform bounds on each ball, and the shortest time intervals,  we get that the Ben~Arous expansion hold uniformly on any compact subset of 
$\left[\cup_{i=1}^m \bar B(x_i,R_i/2)^2\right]\cap \D(\delta_0)$ that excludes $\C$.
\end{proof}

We now prove that we can expand the domain on which Ben Arous expansion holds. However we only partially prove that fact at first; the bounds on the derivatives of the remainder will be proved in the next lemma. In the remainder of this section, we give the proofs assuming $l$ is any non-negative integer. In the non-symmetric case, the results are given by taking $l=0$. Indeed, this reflects the fact that there is no problem in taking derivatives of the Ben Arous expansion per se, the difficulties only arise due to the lack of a sufficient version of the localization results and L\'eandre asymptotics, as manifested in the proof of Corollary \ref{C:MolchanovV1}.

With $\K$ be a localizable compact subset of $M^2\setminus \C$, for any $\eta>0$, $\varepsilon>0$ small enough,  we let $\U_{\varepsilon,\eta}$ be as defined in Lemma~\ref{L:propU}. In particular $\K\subset \U_{\varepsilon,\eta}\subset M^2\setminus\C$.
We introduce a partial statement of Ben Arous expansions, $\mathsf{P}(\varepsilon,\eta,\delta)$.
\\
\noindent
$\mathsf{P}(\varepsilon,\eta,\delta)$: 
\begin{minipage}[t]{.88\linewidth}
Let $\U=\U_{\varepsilon,\eta}\cap \D(\delta)$. There exists $\O\subset M^2 \setminus \C$, open neighborhood of $\U$ on which the following holds.
For all non-negative integer $l$ and  multi-index $\alpha$, 
there exist sequences of smooth functions $c_k^{l,\alpha }:\O\to \R$, $k\in \N$, $r_{k}^{l,\alpha }: \R^+\times \O\to \R$, such that for all $n\in \N$, for all $(x,y)\in \O$, for all $t$ small enough
$$
\partial_t^l
Z_y^{\alpha}
p_t(x,y)
=
t^{-(|\alpha|+2 l+d/2)} \e^{-\frac{d(x,y)^2}{4t}}
\left(
\sum_{k=0}^{n}c_k^{l,\alpha}(x,y)t^k
+
t^{n+1}r_{n+1}^{l,\alpha}(t,x,y)
\right),
$$
and, 
furthermore,
 there exists $t_0$ such that
$$
\sup_{0<t<t_0}
\sup_{(x,y)\in \U}
\left|
	r_{n+1}^{l,\alpha}(t,x,y)
\right|<\infty.
$$
\end{minipage}

In this definition, $\delta$ is the upper bound of distance between pairs of points, and $\eta$ the lower bound. In order to prove Proposition~\ref{P:BenArous}, we need $\mathsf{P}(\varepsilon,\eta,\delta)$ to hold true for $\delta\geq \max_{(x,y)\in \K} d(x,y)$ and  $\eta\leq \min_{(x,y)\in \K}$.
Below we prove that we can increase $\delta$ at the price of increasing $\eta$, but in the end this only means that we need to start with $\eta$ small enough.

\begin{lemma}\label{L:Molchanov2BenArous}
Let $\K$ be a localizable compact subset of $M^2\setminus \C$. 
If there exists $\delta>0$, $\eta_0>0$, $\varepsilon_0>0$  such that $\mathsf{P}(\varepsilon_0,\eta_0,\delta)$ holds true, then there exists $\varepsilon>0$ such that $\mathsf{P}(\varepsilon,3\eta_0,3\delta/2)$ also holds.

\end{lemma}

\begin{proof}

\noindent{\it Step 1: localization.}
Let $l$ be a non-negative integer, $\alpha$ be a multi-index.
Let $(t,x,y)\in \R^+\times \U_{\varepsilon_0,\eta_0}$. As an application of Corollary~\ref{C:MolchanovV1}, for any $\varepsilon>0$ small enough, we have uniformly on $ \U_{\varepsilon_0,\eta_0}$
$$
\partial_t^l
Z_y^{\alpha}
p_t(x,y)
=\int_{\Gamma_\varepsilon} p_{t/2}(x,z)Z_y^{\alpha}\partial_t^lp_{t/2}(z,y)\diff\mu( z)+O\left(\e^{-\frac{d(x,y)^2+\varepsilon^2/2}{4t}}\right).
$$

\noindent{\it Step 2: Ben Arous expansions on the midpoint set.} 
By application of Lemma~\ref{L:propU}, there exists $\varepsilon>0$ such that 
$$
\Gamma_\varepsilon^l(\U_{\varepsilon,3\eta_0})\subset  \U_{\varepsilon_0,\eta_0}
\quad\text{ and }\quad
\Gamma_\varepsilon^r(\U_{\varepsilon,3\eta_0})\subset  \U_{\varepsilon_0,\eta_0}.
$$
In the following, we denote $\U=\U_{\varepsilon,3\eta_0}\cap \D(3\delta/2)$ and $\U'=\U_{\varepsilon_0,\eta_0}\cap \D(\delta)$.  As long as $\varepsilon<\delta/4$, we still have
$$
\Gamma_\varepsilon^l(\U)\subset \U'
\quad\text{ and }\quad
\Gamma_\varepsilon^r(\U)\subset \U',
$$
and we assume that $\mathsf{P}(\varepsilon_0,\eta_0,\delta)$ holds on $\U'$. Furthermore, with $\O$ the interior of $\U_{\varepsilon+\zeta,3\eta_0-\zeta}\cap \D(3\delta/2+\zeta)$, we have for $\zeta$ small enough that $\O\subset M^2\setminus\C$, $\U\subset \O$ and $\Gamma^l_\varepsilon(\O)\subset \U'$. We will now prove that $\mathsf{P}(\varepsilon,3\eta_0,3\delta/2)$ holds for this choice of $\U$ and $\O$.

Since  $\mathsf{P}(\varepsilon_0,\eta_0,\delta)$ holds, there exists an open set $\O'\subset M^2\setminus \C$, $\U'\subset \O'$, and smooth functions $c_k^{l,\alpha}:\O'\to \R$, $k\in \N$, $r_{k}^{l,\alpha}: \R^+\times \O'\to \R$,  satisfying $\mathsf{P}(\varepsilon_0,\eta_0,\delta)$.
For all $(x,y)\in \O'$, for all $t\in \R^+$, we denote
$$
\Sigma_t^{l,\alpha}(x,y)
=
\sum_{k=0}^{n}c_k^{l,\alpha}(x,y)t^k
+
t^{n+1}r_{n+1}^{l,\alpha}(t,x,y).
$$
There also exists $t_0$ such that
$$
\sup_{0<t<t_0}
\sup_{(x,y)\in \U'}
\left|
	r_{n+1}^{l,\alpha}(t,x,y)
\right|<\infty.
$$

Then, by construction, it uniformly holds for all $(x,y)\in \O$ that
\begin{multline}\label{E:dervative_sum}
{t^{d/2+2l+|\alpha|}}\e^{\frac{d(x,y)^2}{4t}}
\partial_t^l
\partial_y^{\alpha}
p_t(x,y)
=
\\
\frac{2^{d+2l+|\alpha|}}{t^{d/2}}
\int_{\Gamma_\varepsilon}
		 \e^{-\frac{h_{x,y}(z)-d(x,y)^2/4}{t}}
		 \Sigma_{t/2}^{0,0}(x,z)
		 \Sigma_{t/2}^{l,\alpha}(z,y)
\diff\mu( z)
+
O\left(\e^{-\frac{\varepsilon^2}{8t}}\right)
\end{multline}
with $h_{x,y}(z)=(d(x,z)^2+d(z,y)^2)/2$.

\noindent{\it Step 3: Cauchy product rearrangement of expansions.}
To alleviate the notations, we write, for integers $0\leq i\leq n$,
$$
a_i=c_i^{0,0},\qquad b_i=c_i^{l,\alpha}, \qquad r^a=r^{0,0}_{n+1},  \qquad r^b=r^{l,\alpha}_{n+1}.
$$

By rearranging terms in the sums, we have
\begin{equation}\label{E:rearrangement}
 \Sigma_{t/2}^{0,0}( x,z) \Sigma_{t/2}^{l,\alpha}( z,y)
	=
\sum_{k=0}^{n}\left(\frac{t}{2}\right)^k
\left(
	\sum_{i=0}^k a_i(x,z)b_{k-i}(z,y)
\right)
+
\left(\frac{t}{2}\right)^{n+1}
\Phi_{n+1}(t,x,y,z),
\end{equation}
with the explicit remainder
\begin{multline*}
\Phi_{n+1}(t,x,y,z)=
r^{a}(t/2,x,z)r^{b}(t/2,z,y)
+
\sum_{k=1}^{n+1}\left(\frac{t}{2}\right)^k
\Bigg[
	r^{a}(t/2,x,z)b_{n+1-k}(z,y)
\\+
	\sum_{i=1}^{k-1} 
		\bigg(
			a_{n+1-i}(x,z)
			b_{n+1-k+i}(z,y)
		\bigg)
+
	a_{n+1-k}(x,z)r^{b}(t/2,z,y)
\Bigg].
\end{multline*}

\noindent

{\it Step 4: Normal form of the hinged energy functional in charts.}
We wish to cover the set $\O$ with a finite number of charts allowing to put $h_{x,y}$ in normal form.
For pairs $(x,y)\in M^2\setminus\C$, $\Gamma(x,y)$ is reduced to a single point that we denote $z_0$. The hinged energy functional $h_{x,y}$ reaches a unique minimum at $z_0$, $h_{x,y}(z_0)=d(x,y)^2/4$,  and $(x,y)\mapsto z_0(x,y)$ is smooth on the open set $M^2\setminus\C$.

For any pair $(x,y)\in M^2\setminus\C$, the Hessian of $h_{x,y}$ is always positive definite at $z_0$. Then for any $(x,y)\in M^2\setminus\C$, we can apply the Morse--Bott Lemma to $h:(x,y,z)\mapsto h_{x,y}(z)-h_{x,y}(z_0)$ near the point $(x,y,z_0)$ (see, for instance, \cite[Theorem 2]{BanyagaHurtubise}). This implies the existence of a neighborhood $U_{x,y}$ of $(x,y,z_0)$ and a chart $\xi:U_{x,y}\to \R^{3d}$, with $\xi(x',y',z')=(u_1,\dots, u_{3d})$, such that $\xi (x,y,z_0(x,y))=0$, $\xi(x',y',z_0(x',y'))=(0,\dots,0,u_{d+1},\dots, u_{3d})$ for all $(x',y',z')\in U_{x,y}$ and $h(\xi^{-1}(u))=u_1^2+\dots +u_d^2$.
In particular,  for $(x',y')$ in a small enough neighborhood of $(x,y)$ so that $(x',y',z_0(x',y'))\in U_{x,y}$, the map $\xi_{x',y'}=\xi(x',y',\cdot)$ charts $\Gamma_\varepsilon(x',y')$ for $\varepsilon$ small enough and $h_{x',y'}(\xi_{x',y'}^{-1}(u_1,\dots,u_d))=u_1^2+\dots +u_d^2$ for all $(u_1,\dots,u_d)\in \xi_{x',y'}(\Gamma_\varepsilon(x',y'))$.

Since the closure of $\O$ is a compact subset of $M^2\setminus\C$, there exists a finite collection $(x_1,y_1),\dots,$ $(x_N,y_N)$ such that the union $\cup_{i=1}^N U_{x_i,y_i}$ covers $\{(x,y,z_0(x,y)) \mid (x,y)\in \bar{\O}\}$. By compactness, up to reducing $\varepsilon$, we can assume that for any $(x,y)\in \bar\O$, there exists $i\in \{1,\dots, N\}$ such that $\{(x,y)\}\times \Gamma_{\varepsilon}(x,y)\subset U_i$. 
With $\mathcal{V}_i=\{(x,y)\in \O\mid \{(x,y)\}\times \Gamma_{\varepsilon}(x,y)\subset U_i\}$,
this allows to set for all $(x,y)\in \mathcal{V}_i$, $\xi_{x,y}:\Gamma_{\varepsilon}(x,y)\to \R^d$, smoothly varying with respect to $(x,y)$ in $\mathcal{V}_i$, such that 
$$
h_{x,y}\circ \xi_{x,y}^{-1}(u_1,\dots, u_d)=h_{x,y}(z_0)+u_1^2+\dots +u_d^2, \qquad \forall u\in \xi_{x,y}(\Gamma_\varepsilon(x,y)), \forall (x,y)\in \mathcal{V}_i.
$$

We now prove the existence and properties of the expansion on each of the open sets $\mathcal{V}_i$. The precise expression we obtain necessarily depends on the set $\mathcal{V}_i$, but all properties in the intersections $\mathcal{V}_i\cap \mathcal{V}_j$ follow from the chain rule between different charts.

{\it Step 5: Laplace integrals asymptotics in charts.}
We now compute asymptotic expansions of Laplace integrals, following \cite{EAndK}.  We focus on a specific $\mathcal{V}_i$ and its associated map $\xi_{x,y}$. Let us denote by $\nu_{x,y}$ the density of $(\xi_{x,y})_*\mu$ with respect to the Lebesgue measure on $\R^d$. The density function $(x,y,u)\mapsto \nu_{x,y}$ is smooth and non-vanishing.
For any smooth  function $\varphi:M\to \R$,
$$
\int_{\Gamma_\varepsilon}
\e^{-\frac{h_{x,y}(z)-h_{x,y}(z_0)}{t}}
\varphi(z)\diff\mu( z)
=
\int_{\xi_{x,y}(\Gamma_\varepsilon)}
\e^{-|u|^2/t}
\varphi\circ\xi_{x,y}^{-1}(u)\nu_{x,y}(u)\diff u.
$$

We can recognize a Laplace integral, and we follow \cite{EAndK} for its asymptotic study at $t=0$. In particular, for $f:\R\to\R$, we borrow the notation $
f(t)\simeq \sum_{n=0}^\infty t^n f_n
$
when
$f(t)=\sum_{n=0}^{N} t^n f_n+O(t^{N+1})$ for all $N>0$. 
Note that  if  a map $f:\R^+\times\R^d\to \R$ is such that 
$
f(t,x)\simeq \sum_{n=0}^\infty t^n f_n(x)
$ for all $x$ in an open domain,
and $f_n(x)$ is smooth for all $n$ then $f$ is actually smooth at $(0,x)$ (on the right in $t$), implying uniformity on compacts of the remainders.

From \cite[Equation~(4.36)]{EAndK}, we have  for any smooth map $\phi:\R^d\to \R$ the Laplace integral asymptotic expansion
\begin{equation}\label{E:Laplace_asymptotics_original}
\int_{\R^d}
\e^{-|u|^2/t}
\phi(u)\diff u
\simeq
\sum_{N=0}^{\infty}\frac{(\pi t)^{d/2} t^N}{2^{2N}}
\sum_{|\omega|=N}
\frac{1}{\omega!} \partial^{2\omega} \phi(0),
\end{equation}
where 
$\omega\in \N^{d}$ is a multi-index $(\omega_1,\dots, \omega_d)$ such that $|\omega|=\sum_{i=1}^d\omega_i$, $2\omega=(2\omega_1,\dots, 2\omega_d)$ and 
$\omega!=\prod_{i=1}^{d}\omega_i!$.

Equation~\eqref{E:Laplace_asymptotics_original} holds for smooth compactly supported functions on $\R^d$, only derivatives at $0$ appear in the expansion. Notice that the compactness assumptions imply the existence of $R>0$ such that $B_{\R^d}(0,R)\subset \xi_{x,y}(\Gamma_\varepsilon)$, for all $(x,y)\in \mathcal{V}_i$, thus the derivatives at $0$ are well defined even when we restrict the integral to $\Gamma_\varepsilon$, and we get the same expansion for any smooth compactly supported continuation
of $\varphi\circ\xi_{x,y}^{-1}(u)\nu_{x,y}(u)$ outside of $\xi_{x,y}(\Gamma_\varepsilon)$. In other terms this implies,
\begin{equation}\label{E:Laplace_asymptotics}
\int_{\Gamma_\varepsilon}
\e^{-\frac{h_{x,y}(z)-h_{x,y}(z_0)}{t}}
\varphi(z)\diff\mu( z)
\simeq
\sum_{N=0}^{\infty}\frac{(\pi t)^{d/2} t^N}{2^{2N}}
\sum_{|\omega|=N}
\frac{1}{\omega!} {\left.\partial^{2\omega}\right|}_{u=0}\left(\varphi\circ \xi_{x,y}^{-1}(u)\nu_{x,y}(u)\right)
\end{equation}

To conclude the proof, we apply this expansion on the elements we exhibited in \eqref{E:rearrangement}.

\noindent
{\it Step 6: remainder.}
First, we consider the remainder 
$$
\Psi_{n+1}(t,x,y)=
t^{-d/2}\int_{\Gamma_{\varepsilon}}
\e^{-\frac{h_{x,y}(z)-h_{x,y}(z_0)}{t}}
\Phi_{n+1}(t,x,y,z)\diff \mu(z).
$$

It is a smooth function on $\R^+\times\O$. Let us prove that it is uniformly bounded on $\U$.
As a consequence of the discussion in step 2, if $z\in \Gamma_\varepsilon(x,y)$, then  $(x,z)\in  \Gamma^l_\varepsilon(\U)$ and  $(z,y)\in  \Gamma^r_\varepsilon(\U)$, both  subsets of $\U'$. Hence, as a consequence of $\mathsf{P}(\varepsilon_0,\eta_0,\delta)$, 
there exists $t_{n+1}$ such that 
$$
\sup_{0<t<t_{n+1}}
\sup_{(x,z)\in \overline{\Gamma^l_\varepsilon(\U)}}
\big|
	r^{a}(t,x,y)
\big|<\infty
\qquad
\text{ and }
\qquad
\sup_{0<t<t_{n+1}}
\sup_{(z,y)\in \overline{\Gamma^r_\varepsilon(\U)}}
\big|
	r^{b}(t,x,y)
\big|<\infty.
$$
This implies that 
$$
\sup
\left\{
|\Phi_{n+1}(t,x,y,z)|
\mid
t\in(0,t_{n+1}),(x,y)\in \U,z\in \Gamma_\varepsilon(x,y)
\right\}
< A<\infty.
$$
Then, applying \eqref{E:Laplace_asymptotics}, we get that for all  $(x,y)\in \U$, for all $0<t< t_{n+1}$,
$$
|\Psi_{n+1}(t,x,y)|
\leq 
t^{-d/2}\int_{\Gamma_{\varepsilon}}
\e^{-\frac{h_{x,y}(z)-h_{x,y}(z_0)}{t}}
A\diff \mu(z)=\pi^{d/2} A +O(t)
$$
Hence the boundedness of $\Psi_{n+1}$.

\noindent
{\it Step 7: summands.}
For $k\in \{0,\dots,n\}$, for $N\in\{k,\dots, n\}$, we denote by $\psi_N^k$ the smooth function on $\mathcal{V}_i$ such that
$$
\psi_N^k(x,y)=
\frac{\pi^{d/2} }{2^{2(N-k)}}
\sum_{|\omega|=N-k}
\frac{2^{d/2}	}{\omega!} {\left.\partial^{2\omega}\right|}_{u=0}
\left( \nu_{x,y}(u)
\sum_{i=0}^k 
a_i(x,\xi_{x,y}^{-1}(u))b_{k-i}(\xi_{x,y}^{-1}(u),y)
\right).
$$
Following Equation~\eqref{E:Laplace_asymptotics}, for all
$
(t,x,y)\in \R^+\times \mathcal{V}_i
$,
$$
t^{k-d/2}\int_{\Gamma_\varepsilon}
\e^{-\frac{h_{x,y}(z)-h_{x,y}(z_0)}{t}}
\left(
\sum_{i=0}^k a_i(x,z)b_{k-i}(z,y)
\right)\diff\mu( z)=
\\
\sum_{N=k}^{n}t^N
\psi_N^k(x,y)
+
t^{n+1}\Psi_k(t,x,y)
$$
where $\Psi_k(t,x,y)$ is a smooth function  on 
$
\R^+\times \mathcal{V}_i
$
and  there exists $t_k>0$ such that
$$
\sup_{0<t<t_k}
\sup_{(x,y)\in \U}
\left|
	\Psi_{k}(t,x,y)
\right|<\infty.
$$
Then, plugging these sums in \eqref{E:rearrangement} yields
$$
\frac{1}{t^{d/2}}
\int_{\Gamma_\varepsilon}
		 \e^{-\frac{h_{x,y}(z)-d(x,y)^2/4}{t}}
		  \Sigma_{t/2}^{0,0}(x,z)
		  \Sigma_{t/2}^{l,\alpha}(z,y)
\diff \mu(z)
=
\sum_{N=0}^n t^N\left(\sum_{k=0}^N \psi_{N}^k(x,y)\right)
+
 t^{n+1}\sum_{k=0}^{n+1}\Psi_k(t,x,y).
$$
By construction, the remainder $\sum_{k=0}^{n+1}\Psi_k(t,x,y)$ is uniform. Building this expansion on each of the open sets $\mathcal{V}_i$ introduced in step 4, in conjunction with \eqref{E:dervative_sum}, yields that $\mathsf{P}(\varepsilon,3\eta_0,3\delta/2)$ holds.
\end{proof}

\begin{lemma}\label{L:BenArousDerivatives}
If $\mathsf{P}(\varepsilon,\eta,\delta)$  holds then all the derivatives in $(t,y)$ of the remainders are also uniformly  bounded.
\end{lemma}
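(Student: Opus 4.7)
The plan is to show that for any nonnegative integer $l'$ and multi-index $\alpha'$, and any $n$, the derivative $\partial_t^{l'} Z_y^{\alpha'} r_{n+1}^{l,\alpha}$ is uniformly bounded on $(0,t_0)\times\U$. Since
\[
r_{n+1}^{l,\alpha}(t,x,y) = \frac{1}{t^{n+1}}\left[\Sigma_t^{l,\alpha}(x,y) - \sum_{k=0}^{n} c_k^{l,\alpha}(x,y)\,t^k\right],
\]
and the coefficients $c_k^{l,\alpha}$ are smooth on $\O$, it suffices to show that $\partial_t^{l'} Z_y^{\alpha'}\Sigma_t^{l,\alpha}$ itself admits a Ben Arous-type Taylor expansion in $t$ with a uniformly bounded remainder. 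I would prove this by induction on $N = l' + |\alpha'|$, the base case $N=0$ being precisely $\mathsf{P}(\varepsilon,\eta,\delta)$.

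The inductive step relies on two algebraic identities obtained from the defining relation $\partial_t^l Z_y^\alpha p_t = t^{-(|\alpha|+2l+d/2)} e^{-d^2/(4t)}\,\Sigma_t^{l,\alpha}$ by applying $\partial_t$ and a single vector field $Z_j$ (acting on $y$), and comparing with the analogous expressions for $\partial_t^{l+1} Z_y^\alpha p_t$ and $\partial_t^l Z_y^{\alpha+(j)} p_t$:
\[
\partial_t \Sigma_t^{l,\alpha} = \frac{1}{t^{2}}\left[\Sigma_t^{l+1,\alpha} - \frac{d^2(x,y)}{4}\Sigma_t^{l,\alpha}\right] + \frac{|\alpha|+2l+d/2}{t}\,\Sigma_t^{l,\alpha},
\]
\[
Z_j \Sigma_t^{l,\alpha} = \frac{1}{t}\left[\Sigma_t^{l,\alpha+(j)} + \frac{Z_j d^2(x,y)}{4}\Sigma_t^{l,\alpha}\right].
\]
Substituting the Ben Arous expansions at the shifted indices (available from $\mathsf{P}$) into the right-hand sides produces, a priori, a Laurent polynomial in $t$ plus a uniformly bounded remainder. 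Since the left-hand sides are smooth for $t>0$, the negative-power terms in $t$ must cancel, which corresponds to consistency relations among the $c_k^{l,\alpha}$ at different indices -- for instance, the leading-order relations $c_0^{l+1,\alpha} = \tfrac{d^2}{4}c_0^{l,\alpha}$ and $c_0^{l,\alpha+(j)} = -\tfrac{Z_j d^2}{4}c_0^{l,\alpha}$, which are readily verified from the leading-order asymptotics of $p_t$ itself and propagate to higher-order coefficients by matching the two formal expansions. After these cancellations, the identities yield Ben Arous-type expansions with uniformly bounded remainders for $\partial_t \Sigma_t^{l,\alpha}$ and $Z_j \Sigma_t^{l,\alpha}$.

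For $N > 1$, I would iterate the two identities, applying the inductive hypothesis to control the lower-order $t$- and $y$-derivatives of $\Sigma_t^{l+1,\alpha}$, $\Sigma_t^{l,\alpha+(j)}$, and $\Sigma_t^{l,\alpha}$ that appear via the Leibniz rule. Cascading the singular-term cancellations yields the desired expansion for $\partial_t^{l'} Z_y^{\alpha'}\Sigma_t^{l,\alpha}$, and hence the required uniform bound on $\partial_t^{l'} Z_y^{\alpha'} r_{n+1}^{l,\alpha}$. The main technical obstacle is the bookkeeping of the singular-in-$t$ cancellations across the induction: each application of $\partial_t$ introduces a factor of $t^{-2}$ and each $Z_j$ a factor of $t^{-1}$, so to keep a bounded remainder one must invoke, at step $N$, the Ben Arous expansions of all relevant $\Sigma$'s to an order at least $n + 2N$. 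The cancellations at each order, while automatic by consistency, should be made explicit through the algebraic relations satisfied by the $c_k^{l,\alpha}$ that are obtained by matching coefficients in the two expansions; the heart of the proof is verifying that these relations survive each iteration, ensuring that the accumulated singular factors are absorbed by the shifted-index expansions without spoiling uniformity on $\U$.
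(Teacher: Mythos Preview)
Your approach is correct and essentially the same as the paper's: both rest on the identities relating $Z_j\Sigma_t^{l,\alpha}$ and $\partial_t\Sigma_t^{l,\alpha}$ to $\Sigma$ at shifted indices, which (after pushing the expansions one order further on the right) yield the clean formula $Z_j r_{n+1}^{l,\alpha}=r_{n+2}^{l,\alpha+(j)}+\tfrac{Z_j d^2}{4}\,r_{n+2}^{l,\alpha}$ (and its $t$-analogue), expressing derivatives of remainders as combinations of undifferentiated higher-order remainders already controlled by $\mathsf{P}$. The ``bookkeeping of singular cancellations'' you flag as the main obstacle is in fact automatic---you are equating two expansions of the \emph{same} function, so the negative-power coefficients match identically and need not be verified separately; the paper's slightly different packaging (differentiating $t^{d/2}e^{d^2/4t}\psi$ two ways) makes this transparency immediate.
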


\begin{proof}
Let $\U=\U_{\varepsilon,\eta}\cap \D(\delta)$, $\O\subset M^2\setminus \C$ be an open neighborhood of $\U$  and let $\psi(t,x,y):\R^+\times \O\to \R$ be such that there exist sequences of smooth functions $a_k:\O\to \R$, $k\in \N$, $\rho_{k}: \R^+\times \O\to \R$,  such that 
$$
\psi(t,x,y)
=
{t^{-d/2}}\e^{-\frac{d(x,y)^2}{4t}}
\left(
\sum_{k=0}^{n}a_k(x,y)t^k
+
t^{n+1}\rho_{n+1}(t,x,y)
\right),
$$
and there exists $t_0>0$ such that $n\in \N$,
$$
\sup_{0<t<t_0}
\sup_{(x,y)\in \U}
\left|
	\rho_{n}(t,x,y)
\right|<\infty.
$$

Assume there also exist sequences of smooth functions $b_k:\O\to \R$, $k\in \N$, $\bar{\rho}_{k}: \R^+\times \O\to \R$,  such that 
$$
Z_y^i\psi(t,x,y)
=
{t^{-d/2-1}}\e^{-\frac{d(x,y)^2}{4t}}
\left(
\sum_{k=0}^{n}b_k(x,y)t^k
+
t^{n+1}\bar{\rho}_{n+1}(t,x,y)
\right)
$$
and for all $n\in \N$,
$$
\sup_{0<t<t_0}
\sup_{(x,y)\in \U}
\left|
	\bar\rho_{n}(t,x,y)
\right|<\infty.
$$

Then
$$
\sup_{0<t<t_0}
\sup_{(x,y)\in \U}
\left|
	Z^i_y\rho_{n}(t,x,y)
\right|<\infty.
$$
Indeed, we have 
$$
Z^i_y\left({t^{d/2}}\e^{\frac{d(x,y)^2}{4t}}\psi(t,x,y)\right)
=
\sum_{k=0}^{n} t^k Z^i_y a_k(x,y)
+
t^{n+1} Z^i_y\rho_{n+1}(t,x,y)
.
$$
On the other hand, by pushing the expansion to one more order,
$$
\begin{aligned}
Z^i_y\left({t^{d/2}}\e^{\frac{d(x,y)^2}{4t}}\psi(t,x,y)\right)
=&
t^{d/2}\e^{\frac{d(x,y)^2}{4t}}
\left(
\frac{Z^i_yd(x,y)^2}{4t}
\psi(t,x,y)
+
Z^i_y\psi(t,x,y)
\right)
\\
=&
\sum_{k=0}^{n+1}t^{k-1}\left(\frac{Z^i_y d(x,y)^2 }{4}a_k(x,y)+b_k(x,y)\right)
\\
&+
t^{n+1}
\left(
	\frac{Z^i_y d(x,y)^2 }{4}\rho_{n+2}(t,x,y)
	+
	\bar{\rho}_{n+2}(t,x,y)
\right)
\end{aligned}
$$

Other than compatibility conditions such as $a_0(x,y)Z^i_yd(x,y)^2=-4b_0(x,y)$, we have 
$$
Z^i_y\rho_{n+1}(t,x,y)=\frac{Z^i_y d(x,y)^2 }{4}\rho_{n+2}(t,x,y)
	+
	\bar{\rho}_{n+2}(t,x,y)
$$
Similar expressions can be derived for derivatives with respect to $t$ following the same reasoning.
Chaining these arguments for both variables in all orders implies the statement. 
\end{proof}

As a conclusion to the section, we can finally prove Proposition~\ref{P:BenArous}.
\begin{proof}[Proof of Proposition~\ref{P:BenArous}]
Let $\bar{\eta}=1/2\min_{x,y\in \K}d(x,y)$, and let $\bar{\delta}=2\max_{x,y\in \K}d(x,y)$. We prove that there exists $\varepsilon>0$ such that 
such that $\mathsf{P}(\varepsilon,\bar\eta,\bar\delta)$ holds, with $\mathsf{P}$ introduced in Lemma~\ref{L:Molchanov2BenArous}. Once this is proved, Lemma~\ref{L:BenArousDerivatives} then implies Proposition~\ref{P:BenArous}.

Consider the set $\U_{1,0}\subset M^2$, the set of pairs $(x,y)$ such that there exists a strongly normal length-minimizing curve $\gamma:[0,1]\to M$, with $(\gamma(0),\gamma(1))\in \K$, and $d(\gamma,x)\leq 1$, $d(\gamma,y)\leq 1$. It is a compact set, hence Lemma~\ref{L:BenArousSmall} applies: there exists $\delta_0>0$ such that for any compact set contained in $\U_{1,0}\cap \C$, Ben Arous expansions hold uniformly. As a consequence, for any $\eta$, $\mathsf{P}(1,\eta,\delta_0)$ holds.

Let $m\in \N$ be such that $(3/2)^m\delta_0\geq \bar\delta$, and let $\eta_0=\bar\eta/3^{m}$. We have that $\mathsf{P}(1,\eta_0,\delta_0)$ holds. Applying Lemma~\ref{L:Molchanov2BenArous} $m$ times yields that there exists $\varepsilon>0$ such that $\mathsf{P}(\varepsilon, 3^m\eta_0,(3/2)^m\delta_0)$ holds. Consequently, $\mathsf{P}(\varepsilon, \bar \eta,\bar \delta)$ also holds and we have proved the statement.

If Ben Arous expansion holds uniformly for any $\K\in M^2\setminus \C$, then Theorem~\ref{T:UnifBenArous} follows. The issue is the existence and smoothness of the functions $c_k^{l,\alpha}$, $r_k^{l,\alpha}$ on the full set $M^2\setminus \C$, for all $k,l\in \N$, $\alpha$ multi-index. However by covering $M^2\setminus \C$ with compacts, since the functions give an expansion of the heat kernel, we  finally get the statement.
\end{proof}

\subsection{Uniform universal bounds on the heat kernel}

{A natural application of Ben Arous expansions are a priori universal bounds on the heat kernel, that come as a direct consequence of Molchanov method in form of a Laplace integral. These are stated in Proposition \ref{P:universal_bounds}, which we prove in a moment.

As a first step, we can give a proof of Corollary~\ref{C:MolchanovLaplace} of Ben Arous expansions. This is a refinement of our statement of Molchanov method, where we take into account the existence of Ben Arous expansions. This result is the basis for the estimates that follow.

\begin{proof}[Proof of Corollary~\ref{C:MolchanovLaplace}]
Starting from Corollary~\ref{C:MolchanovV1}, we have on the compact $\K\subset M^2\setminus \D$ that, for $\eps>0$ small enough, 
$$
\partial_t^lZ_y^\alpha p_t(x,y)=\int_{\Gamma_\varepsilon} 
	p_{t/2}(x,z)
	\partial_t^l
Z_y^{\alpha}p_{t/2}(z,y)
\diff\mu( z)
+
O\left(\e^{-\frac{d(x,y)^2+{\varepsilon^2/2}}{4t}}\right).
$$
As stated in Lemma~\ref{L:fat_compacts}, $\Gamma_\varepsilon$ avoids the cut loci of $x$ and $y$ entirely for $\varepsilon$ small enough (with $\varepsilon$ uniform over $\K$). Using notations from Lemma~\ref{L:fat_compacts}, this implies that uniform Ben Arous extensions hold on $\Gamma_\varepsilon^l(\K)$ and $\Gamma_\varepsilon^l(\K)$. As a consequence, for any $(x,y)\in \K$, $z\in \Gamma_\varepsilon(x,y)$, 
$$
p_{t/2}(x,z)
=
\left(\frac{2}{t}\right)^{d/2}\e^{-\frac{d(x,z)^2}{2 t}}\Sigma_t^{0,0}(x,z) 
$$
and
$$
\partial_t^l
Z_y^{\alpha}p_{t/2}(z,y)
=
\left(\frac{2}{t}\right)^{d/2+2l+|\alpha|}\e^{-\frac{d(z,y)^2}{2 t}}\Sigma_t^{l,\alpha}(z,y).
$$
With $h_{x,y}(z)=\frac{1}{2}\left(d(x,z)^2+d(y,z)^2\right)$, we obtain the stated formula.
\end{proof}

In the case of sub-Riemannian manifolds, these estimates were initially proved in \cite{BBN-JDG}. The approach for the proof is similar, however  we extend this result by showing the existence of uniform bounds on compact subsets where no two distinct points are joined by abnormal minimizers. This mostly requires a careful setup of coordinates for uniform comparison of the hinged energy functional with simple polynomial functions. Lower bounds do not hold for spatial derivatives due to the necessity of non-vanishing terms in the Ben Arous expansion, which is only guaranteed for time derivatives of the kernel.

}

\begin{proof}[Proof of Proposition \ref{P:universal_bounds}]

We start from Corollary~\ref{C:MolchanovLaplace}, where we have uniformly for $(t,x,y)\in\R^+\times \K$, and for $\varepsilon>0$ small enough that
$$
\partial_t^l
Z_y^{\alpha}
p_t(x,y)
=
\left(\frac{2}{t}\right)^{|\alpha|+2 l+d}
\int_{\Gamma_\varepsilon} 
\e^{-\frac{h_{x,y}(z)}{t}}
\Sigma_{t/2}^{0,0}(x,z)
\Sigma_{t/2}^{l,\alpha}(z,y)
\diff\mu( z)
+
O\left(\e^{-\frac{d(x,y)^2+\varepsilon^2/2}{4t}}\right).
$$
By  Theorem~\ref{T:UnifBenArous} and Lemma~\ref{L:fat_compacts}, $\Sigma_{t/2}^{0,0}$ and $\Sigma_{t/2}^{l,\alpha}$ are upper bounded on the compacts $\Gamma_\varepsilon^l(\K)$ and $\Gamma_\varepsilon^r(\K)$ respectively. Likewise, $\Sigma_{t/2}^{0,0}$ and $\Sigma_{t/2}^{l,0}$ are positively lower bounded on the same compacts. Hence there exists $\bar{c},\ubar{c}>0$ such that, for $t>0$ small enough,
$$
\partial_t^l
\Sigma_{t/2}^{0,0}(x,z)
\Sigma_{t/2}^{l,\alpha}(z,y)
\leq \bar{c} 
\quad 
\text{ and }
\quad 
\partial_t^l
\Sigma_{t/2}^{0,0}(x,z)
\Sigma_{t/2}^{l,0}(z,y)
\geq \ubar{c}.
$$
What remains to show to prove the statement is that there exist $\bar{m},\ubar{m}>0$ such that for all $(x,y)\in \K$ and $t$ small enough
\begin{equation}\label{E:a_priori_Laplace}
	\ubar{m} t^{d/2}
\leq 
	\int_{\Gamma_\varepsilon} 
	\e^{-\frac{h_{x,y}(z)-d(x,y)^2/4}{t}}
	\diff\mu( z)
\leq 
	\bar{m} t^{1/2}.
\end{equation}

We only need to  show that \eqref{E:a_priori_Laplace} holds for each of the elements of a finite cover of the compact $\K$.

For all $x\in M$, let us denote by $\sre_x:T^*_xM\to M$ the sub-Riemannian exponential (at time 1).  Let  $(x_0,y_0)\in \K$. There exists $\eta>0$ such that $TM$ can be trivialized on $B(x_0,2\eta)$, that is $T^*M\simeq  M\times \R^d$. We affix on $\R^d$ a Euclidean structure $|\cdot|$.
We now pull back the set $\Gamma^l_\varepsilon(\K)$ through the exponential. Let
$$
V_{x_0}^\eta=\left\{
	(x,p)\in \bar{B}(x_0,\eta)\times \R^d 
	\mid
	\exists y\in M\st (x,y)\in\K, \sre_x(p)\in \Gamma_\varepsilon(x,y)
\right\}.
$$
For all $x\in \bar{B}(x_0,\eta)$, we also denote by $V_x$ be the set of coverctors $p$ such that $(x,p)\in V_{x_0}^\eta$. 

Once again, $\varepsilon$ has been chosen small enough so that $\Gamma_\varepsilon$ avoids the cut loci of $x$ and $y$. This implies that $\sre_x$ is a diffeomorphism when restricted to $V_x$. Then $V_{x_0}^\eta$ is also the intersection of the closed set $\bar{B}(x_0,\eta)\times \R^d$ with the image of $\Gamma_\varepsilon^l(\K)$ by the smooth map $(x,z)\mapsto (x,\sre_x^{-1}(z))$.
This shows that $V_{x_0}^\eta$ is compact.

Let us denote by $\lambda_{\R ^d}$ the Lebesgue measure on the trivialized fibers of $T^*M$.
The map $\sre_x$ is a diffeomorphism from $V_x$ onto its image, hence there exists a function $\nu_x$ that is the density of $(\sre_x^{-1})_*\mu$ with respect to the measure $\lambda_{\R^d}$ on $V_x$.
Furthermore, 
since $(x,p)\mapsto \sre_x(p)$ is uniformly continuous in $(x,p)$ over $V_{x_0}^\eta$, we also have
$$
\ubar{\nu}=\inf_{(x,p)\in V_{x_0}^\eta} \nu_x(p)>0, \qquad \text{ and } \qquad \bar{\nu}=\sup_{(x,p)\in V_{x_0}^\eta} \nu_x(p)<\infty.
$$

Then for any smooth function $\varphi:M\to \R$, any $(x,y)\in \K$, $x\in \bar{B}(x_0,\eta)$,
\begin{equation}\label{E:BoundIntegrals}
\ubar{\nu}\int_{V_x}\varphi(\sre_x(p)) \diff  p
\leq
 \int_{\Gamma_\varepsilon}\varphi(z)\diff \mu( z) 
\leq 
\bar{\nu}\int_{V_x}\varphi(\sre_x(p)) \diff  p.
\end{equation}

We use \eqref{E:BoundIntegrals} to prove \eqref{E:a_priori_Laplace} by providing uniform upper and lower polynomial bounds of $h_{x,y}(z)-d(x,y)^2/4$. (Recall that for all points $z\in M$, $h_{x,y}(z)\geq \frac{d(x,y)^2}{4}$.)
First the upper bound.

By compactness of $V_{x_0}^\eta$, there exists $\rho>0$ such that for any $(x,y)\in \K$ with $x\in \bar{B}(x_0,\eta)$, and $p_0\in \sre_x^{-1}(\Gamma(x,y))$, $\bar B(p_0,\rho)\subset V_x$.
On $\bar B(p_0,\rho)$, we use a $d$-dimensional Taylor expansion:
$$
\left|
	h_{x,y}\circ\sre_{x}(p)
	- 
	h_{x,y}\circ\sre_{x}(p_0)
\right|
 \leq 
2\sup_{p_1\in \bar B(p_0,\rho)} \left\| \mathrm{Hess} (h_{x,y} \circ\sre_{x})(p_1)\right\| |p-p_0|^2.
$$
Again compactness implies that 
$\kappa=2\sup\left\{\left\| \mathrm{Hess} (h_{x,y} \circ\sre_{x})(p)\right\| \mid (x,p)\in V_{x_0}^\eta \right\}$ is finite, and with $h_{x,y}\circ\sre_{x}(p_0)=\frac{d(x,y)^2}{4}$,
$$
	h_{x,y}(p)-\frac{d(x,y)^2}{4}\leq \kappa| p-p_0|^2, \qquad \forall p\in \bar B(p_0,\rho).
$$
Then 
$$
\int_{\Gamma_\varepsilon} 
	\e^{-\frac{h_{x,y}(z)-d(x,y)^2/4}{t}}
	\diff\mu( z)
 \geq
 \ubar{\nu}\int_{B(p_0,\rho)} \e^{- \kappa\frac{ | p-p_0|^2}{t}}\diff p .
$$
As a classical application of Laplace integrals asymptotics, as $t$ goes to $0$
$$
\int_{B(p_0,\rho)} \e^{-\kappa \frac{|p-p_0|^2}{t}}\diff p
\sim 
\frac{{(2 \pi t)}^{d/2}}{2\kappa^{d/2}}.
$$
Hence the left-hand side of \eqref{E:a_priori_Laplace}.

Now let us give a lower bound of $h_{x,y}(z)-d(x,y)^2/4$.
Following \cite{BBN-JDG}, the triangular inequality 
implies
$$
h_{x,y}\left(z\right)-\frac{d(x,y)^2}{4}\geq \left(d(x,z)-\frac{d(x,y)}{2}\right)^2.
$$
Then we use polar-type coordinates to describe $d(x,z)$. Let $H:TM\to \R$ be the sub-Riemannian Hamiltonian. Since $\Gamma_\varepsilon$ avoids the cut locus, for all $(x,p)\in  V_{x_0}^\eta$, we have $d(x,\sre_x(p))=\sqrt{2 H(x,p)}$. In particular, $ H(x,p)\neq 0$. Furthermore, for any $s>0$ such that $(x,sp)\in  V_{x_0}^\eta$, $d(x,\sre_x(s p))=s d(x,\sre_x( p))$.  Hence we represent the set $\{H\neq 0\}$ in the fibers with $\Phi_x:\R^+\times \{H=1/2\}\to \R^d $ such that $\Phi_x(s,q)=s q$.
Using again that $V_{x_0}^\eta$ is a compact set, we have
$$
\ubar{s}=\inf_{V_{x_0}^\eta} \sqrt{2H(x,p)}>0 \qquad\text{ and }\qquad\bar{s}=\sup_{V_{x_0}^\eta} \sqrt{2H(x,p)}<\infty.
$$
Then there exists $C>0$ such that 
$$
\int_{\Gamma_\varepsilon} \e^{-\frac{h_{x,y}(z)-d(x,y)^2/4}{t}} \diff \mu( z)
\leq
\int_{\Gamma_\varepsilon} \e^{-\frac{ \left(d(x,z)-d(x,y)/2\right)^2}{t}} \diff \mu( z)
\leq 
C\bar{\nu}  \int_{\ubar{s}}^{\bar{s}}\e^{-\frac{1}{t}\left(s-\frac{d(x,y)}{2}\right)^2}\diff s .
$$
Again, as a classical application of Laplace integrals asymptotics, as $t$ goes to $0$,
$$
\int_{\ubar{s}}^{\bar{s}} \e^{-\frac{1}{t}\left(s-\frac{d(x,y)}{2}\right)^2}\diff s
\sim
\frac{{(2 \pi t)}^{1/2}}{2}.
$$
Hence the right-hand side of \eqref{E:a_priori_Laplace}, which concludes the proof.
\end{proof}

\section{Complete asymptotic expansions at the cut locus}
\label{S:Complete expansions}

{

As we aim to illustrate in this section, applying the Molchanov method allows to translate information on the jets of the hinged energy functional on the midpoint set to complete expansions of the heat kernel and its derivatives, while simultaneously sidestepping  heavier methods.
Here we are able to give proofs for complete expansions for some well known singular cases: conjugate minimizing curves of type $A$, and Morse-Bott conjugacy.

One critical point is that the complete expansions draw information from jets of the hinged energy functional. We show in Theorem~\ref{THM:LocalPrescription} that basically any smooth non-negative function can be realized as a hinged energy functional between two points of a Riemannian manifold. This  points towards the idea that full expansions should not always be accessible.

}

\subsection{A-type singularities}\label{Sect:A_n}
{
For some points in the cut locus, it is still possible to give a precise enough expansion of the heat kernel. In particular, we consider here the case where a pair of points $x,y\in M$ are connected by a unique geodesic that is conjugate.  

If we assume that $y$ is a singular value of $\sre_x$, the sub-Riemannian exponential at $x$, and, furthermore, that $\sre_x$ has a $A_n$ singularity, with $n>0$, at a preimage of $y$, then $n$ has to be odd for the normal extremal joining $x$ to $y$ to be minimizing (see Figure~\ref{F:A_p}). Indeed in that case (see, e.g., \cite{BBCN-IMRN}), the hinged energy functional has the normal form
\begin{equation}\label{Eqn:hDiag}
h_{x,y}(z_1,\dots,z_d)=\frac{d^2(x,y)}{4}+z_d^{n+1}+\sum_{i=1}^{d-1}z_i^{2}.
\end{equation}
This fact yields the following expansion.
}

\begin{center}
\begin{figure}[hbt]
\setlength{\unitlength}{.3\linewidth}
  \begin{picture}(1,1.6)%
    \put(0,0){\includegraphics[width=\unitlength,page=1]{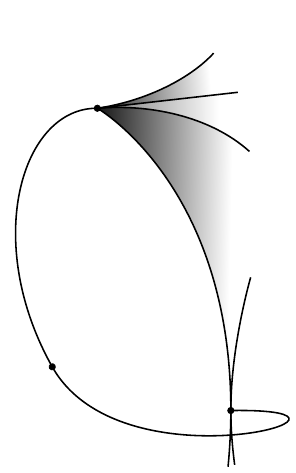}}%
    \put(0.2,0.38531532){$\Gamma$}%
    \put(0.3,1.28){$y$}
    \put(0.75,1.19){$\Cut(x)$}
    \put(0.78,0.83){$\mathrm{Conj}(x)$}
    \put(0.7,0.19){$x$}
  \end{picture}%
  \caption{A minimizing conjugate curve typically appears at the boundary of the cut, where the sub-Riemannian exponential degenerates. A particular example where such a situation occurs correspond to points on a non-degenerate caustic of 3D contact sub-Riemannian manifolds. Indeed for generic $x$ in such a manifold and  $y$ at the boundary of the cut (and at least sufficiently near $x$), the point $y$ belongs to a cuspidal fold of the conjugate locus corresponding to an $A_3$ singularity (see, e.g., \cite{agrachev_1996_exponential,Gauthier_1996_small_SR_balls}). The geodesic linking $x$ and $y$ is unique and $\Gamma$ reduced to a point. }
  \label{F:A_p}
\end{figure}
\end{center}

\begin{proposition}\label{Prop:A_nBasic}
Let $x$ and $y$ be two localizable points of a sub-Riemannian manifold such that the unique length minimizing curve joining $x$ to $y$ is strongly normal and a conjugate curve of type $A_{2p-1}$, $p\in\N$, $p\geq 1$.
Then if $l$ is any non-negative integer in the symmetric case and 0 otherwise, and $\alpha$ is any multi-index, there exists a sequence of real numbers $(c_k)_{k\in \N}$ and a sequence of functions $(\rho_k)_{k\in \N}$ such that for all $n\in \N$,
\begin{equation}\label{Eqn:A_nBasic}
\partial_t^l
Z_y^{\alpha}
p_t(x,y)
=
\frac{\e^{-\frac{d(x,y)^2}{4t}}}{t^{|\alpha|+2 l+\frac{d+1}{2}-\frac{1}{2p}} }
\left(
\sum_{k=0}^{n}c_k t^{k/p}
+
t^{\frac{n+1}{p}}\rho_{n+1}(t)
\right),
\end{equation}
and there exists $t_0>0$ such that 
$$
\sup_{(0,t_0)}|\rho_{n+1}(t)|<\infty.
$$

\end{proposition}

\begin{proof}
The pair $(x,y)$ is a compact subset of $M ^2\setminus\D$, hence by Corollary~\ref{C:MolchanovLaplace}, we have
$$
\partial_t^l
Z_y^{\alpha}
p_t(x,y)
=
\left(\frac{2}{t}\right)^{|\alpha|+2 l+d}
\int_{\Gamma_\varepsilon} 
\e^{-\frac{h_{x,y}(z)}{t}}
\Sigma_{t/2}^{0,0}(x,z)
\Sigma_{t/2}^{l,\alpha}(x,z)
\diff\mu( z)
+
O\left(\e^{-\frac{d(x,y)^2+\varepsilon^2/2}{4t}}\right).
$$
Under the assumptions of the theorem, the normal geodesic joining $x$ and $y$ is such that the hinged energy near the midpoint of the geodesic $z_0$ can be expressed in coordinates  by $h_{x,y}(z)=\frac{d(x,y)}{4}+\Phi\circ \xi(z)$, with $\Phi:\R^d\to \R$ the normal form
$
\Phi(u)=u_d^{2p}+\sum_{i=1}^{d-1}u_i^{2}
$
and $\xi:\Gamma_\varepsilon\to \R^d$ a diffeomorphism centered at $z_0$ (assuming $\varepsilon$ is small enough).

For a smooth function $\phi:\R^d\to \R$, we need to express asymptotics of 
$\int_{\R^d}\e^{-\Phi(u)/t}\phi(u)\diff u$ as $t$ goes to $0$.
We can follow \cite[Equation~(4.61)]{EAndK} and compute the Cauchy product of expansions of singular Laplace integrals of different degrees ($2p$ in the $z_d$ direction and $2$ in the $(u_1,\dots,u_{d-1})$ directions). We then have
\begin{multline*}
\int_{\R^d}\e^{-\Phi(u)/t}\phi(u)\diff u
\simeq\\
\sum_{m=0}^\infty
	\sum_{k=0}^m
	\left(
		\frac{\Gamma\left(\frac{2k+1}{2p}\right)}{p (2k)!}t^{\frac{2k+1}{2p}}\partial_{u_d}^{2k}
	\right)
	\left(
		\frac{\pi^{\frac{d-1}{2}}}{2^{2(m-k)}}t^{\frac{d-1}{2}+m-k}	
		\sum_{\substack{\omega\in \N^{d-1}\\|\omega|=m-k }}\frac{\partial_{(u_1,\dots,u_{d-1})}^{2\omega}}{\omega !}
	\right)\phi(0).
\end{multline*}
As in the proof of Lemma~\ref{L:Molchanov2BenArous}, 
for $f:\R\to\R$, we borrow from \cite{EAndK} the notation $
f(t)\simeq \sum_{m=0}^\infty t^m f_m
$
when
$f(t)=\sum_{m=0}^{N} t^m f_m+O(t^{N+1})$ for all $N>0$ integer. 
We rearrange the terms so that the index $k$ completes the multi-index $\omega\in \N^{d-1}$ (and such that $|\omega|=m-k$), into $\omega'\in \N^{d}$, a multi-index such that $|\omega'|=m$ with $k=\omega'_{d}$. Hence (dropping $\omega'$ in favor of $\omega$)
\begin{equation}\label{E:expansionLaplaceAp}
\int_{\R^d}\e^{-\Phi(u)/t}\phi(u)\diff u
\simeq
\sum_{m=0}^\infty
	\sum_{\substack{\omega\in \N^{d}\\|\omega|=m}}
	\left(
		\frac{
			\pi^{\frac{d-1}{2}} \Gamma\left(\frac{2\omega_d+1}{2p}\right)\omega_d! 
			}{
			2^{2(m-\omega_d)} p (2\omega_d)!
			}
	\right)
		t^{\frac{2\omega_d+1}{2p}+\frac{d-1}{2}+m-\omega_d}
		\frac{\partial_{u}^{2\omega}\phi(0)}{\omega !}.
\end{equation}

Now to obtain Equation~\eqref{Eqn:A_nBasic}, we pick $N=\lfloor n/p \rfloor$.
As a consequence of Theorem~\ref{T:UnifBenArous}, by multiplying together Ben Arous expansions, there exist a sequence $(d_k)$ of smooth functions over $\Gamma_\varepsilon$, and $t_0>0$, such that
$$
\sup_{(0,t_0)}\sup_{z\in\Gamma_\varepsilon}
\frac{1}{t^{N+1}}
\left|
\Sigma_{t/2}^{0,0}(x,z)
\Sigma_{t/2}^{l,\alpha}(z,y)
-
\sum_{k=0}^N d_k(z)t^k
\right|<\infty .
$$
Denoting by $\nu$ the smooth density of $\xi_*\mu$ with respect to the Lebesgue measure on $\R^d$, we apply expansion~\eqref{E:expansionLaplaceAp} with $\varphi(u)=d_k(\xi^{-1}(u))\nu(u)$ to get
$$
t^{|\alpha|+2l+\frac{d+1}{2}-\frac{1}{2p}}\e^{\frac{d(x,y)^2}{4t}}
p_t(x,y)
=
\sum_{k=0}^{N}
\sum_{m=0}^{n-kp}
\sum_{\substack{\omega\in \N^{d}\\|\omega|=m}}
	\sigma_{k,m,\omega}
		t^{k+\frac{\omega_d}{p}+m-\omega_d}
+
t^{\frac{n+1}{p}}\tilde\rho_{n+1}(t)
$$
where
$$
\sigma_{k,m,\omega}=
\left(
		\frac{
			\pi^{\frac{d-1}{2}} \Gamma\left(\frac{2\omega_d+1}{2p}\right)\omega_d! 
			}{
			2^{2(m-\omega_d)+k-d} p (2\omega_d)! \omega !
			}
	\right)
		{\partial_{u}^{2\omega}}_{|u=0}\left[d_k(\xi^{-1}(u))\nu(u)\right]
$$
and 
$\sup_{(0,t_0)}\big|\tilde \rho_{n+1}(t,x,y)\big|<\infty$. By rearranging the terms by increasing powers, and pushing into the remainder terms such that $(p-1)\omega_d< ((k+m)p-n)$ (which implies that $(k+m-\omega_d)p +\omega_d>n$), we obtain the stated result.
\end{proof}

\begin{corollary}
Let $x$ and $y$ be two localizable points of a sub-Riemannian manifold such that each length-minimizing curve joining $x$ to $y$ is strongly normal and a conjugate curve of type $A_{2p-1}$, $p\in\N$, $p\geq 1$ (where $p$ may be different for each curve, and $p=1$ corresponds to a non-conjugate geodesic). Then there are finitely many length-minimizing curves joining $x$ and $y$, and if $l$ is any non-negative integer in the symmetric case and 0 otherwise, and $\alpha$ is any multi-index, $\partial_t^l Z_y^{\alpha} p_t(x,y)$ has an expansion given by a finite sum of sequences, one for each length-minimizing curve, of the type on the right-hand side of \eqref{Eqn:A_nBasic}.
\end{corollary}

\begin{proof}
As illustrated by the normal form \eqref{Eqn:hDiag}, for each point in $z\in \Gamma$, there exists a small neighborhood such that, except at $z$, $h_{x,y}$ is strictly larger than its minimum $d(x,y)^2/4=h_{x,y}(z)$. This illustrates that each element of $\Gamma$ must be isolated. Furthermore, since $\Gamma$ is also compact, this proves that $\Gamma$ is a finite collection of points in $M$, and that there are finitely many length-minimizing curves joining $x$ and $y$. Then if we denote $\Gamma=\cup_{i=1}^m\{z_i\}$, $z_i\neq z_j$ if $i\neq j$, there exists $\varepsilon>0$ small enough so that $\Gamma_{\varepsilon}$ is the union of $m$ disconnected compact sets that we denote $\Gamma_{\varepsilon}^i$, so that $z_i\in\Gamma_{\varepsilon}^i$. By Lemma~\ref{C:MolchanovLaplace}, up to further reducing $\varepsilon$,
$$
\partial_t^l
Z_y^{\alpha}
p_t(x,y)
=
\left(\frac{2}{t}\right)^{|\alpha|+2 l+d}
\sum_{i=1}^m
\int_{\Gamma_\varepsilon^i} 
\e^{-\frac{h_{x,y}(z)}{t}}
\Sigma_{t/2}^{0,0}(x,z)
\Sigma_{t/2}^{l,\alpha}(z,y)
\diff\mu( z)
+
O\left(\e^{-\frac{d(x,y)^2+\varepsilon^2/2}{4t}}\right).
$$
Hence the statement by applying the proof of Proposition~\ref{Prop:A_nBasic} to each integral over $\Gamma_\varepsilon^i$.
\end{proof}

\subsection{Morse-Bott case}\label{Sect:MB}
{  An interesting example of a point in the cut locus is the Morse-Bott case (so called because it corresponds to $h_{x,y}$ being a Morse-Bott function), where the set of geodesics between two points $x$ and $y$ is a continuous family, such that the midpoint set becomes a submanifold in $M$ (of constant dimension) and the Hessian is non-degenerate in the normal directions.
We follow \cite{BBN-JDG,BBCN-IMRN} for the definition of such a pair of points. All the elements necessary to give a full expansion of the heat kernel in that situation are present in \cite{BBN-JDG} but that particular goal was not pursued. Here  we apply Molchanov's method to obtain the full expansion. It should be noted that original methods for obtaining full expansions have been developed in \cite{Inahama} and \cite{Ludewig2,Ludewig1}, where this particular example is treated. One observation that can be made from our technique is that, although these new approaches offer promising steps towards the construction of expansions of heat kernels in various situations, it doesn't appear necessary to introduce an original theory to treat this particular case in our sub-Riemannian situation.

Denoting} $\Lambda_x=\{p\in T^*_xM \mid H(p,x)=1/2\}$ and 
$$
L=\{p\in \Lambda_x\mid \sre_x(p,d(x,y))=y\},
$$
we assume that for a specific pair $(x,y)\in M^2$:
\begin{itemize}
\item the pair $x$ and $y$ are localizable,
\item all minimizers from $x$ to $y$ are strongly normal, hence given by the exponential map applies to elements of $L$,
\item $L$ is a dimension $r$ submanifold of $\Lambda_x$,
\item for every $p\in L$, we have $\dim \ker D_{(p,d(x,y))}\sre_x=r$.
\end{itemize}
(See, for instance, Figure~\ref{F:Pansu_sphere} for an example of a such situation. See also \cite{BBN-BiHeis} for another example where this type of cut points play an essential role.)
Under these assumptions, $\Gamma$ is a compact submanifold of $M$ and it is proved in \cite{BBN-JDG} that  there exists a collection $(U_i)_{1\leq i\leq N}$ of open sets such that for $\varepsilon$ small enough,
$\Gamma_\varepsilon\subset \cup_{i=1}^N U_i$, and on each $U_i$, there exists a set of coordinates $\xi:M\to \R^d$ such that for all $z\in U_i$
$$
\Gamma\cap U_i=\xi^{-1}(\{u_{r+1}=\dotsc=u_d=0\})
$$
and
\begin{equation}\label{E:NF_MorseBott}
h_{x,y}\circ \xi^{-1} (u)=\frac{d(x,y)^2}{4}+\sum_{i=r+1}^du_i^2.
\end{equation}
Furthermore, there exists a partition of unity $(\varphi_i)_{1\leq i\leq N}$, such that ${\varphi_i}_{|U_i^c}=0$ and for all $z\in \Gamma_\varepsilon$,
$$
\sum_{i=1}^N\varphi_i(z)=1,
$$
and on each $U_i$,
$$
\varphi_i\circ \xi^{-1}(u_1,\dots,u_d)=\varphi_i\circ \xi^{-1}(u_1,\dots,u_r,0,\dots,0 ) .
$$

\begin{figure}
\includegraphics[width=.3\linewidth,clip,trim=2.5cm 3cm 2.5cm 3cm]{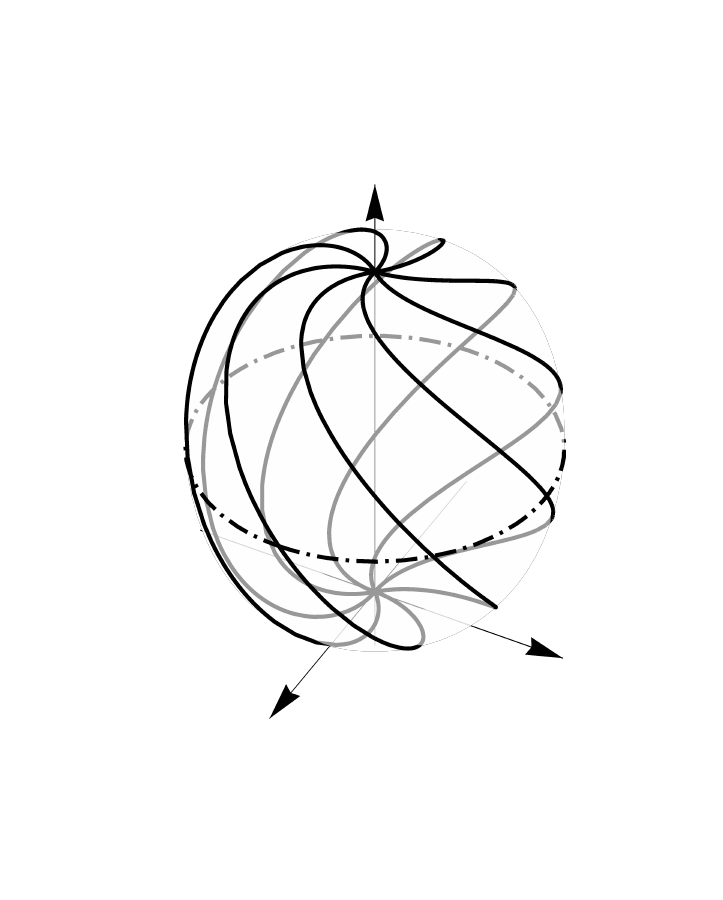}

\caption{ Any point of the cut locus in the Heisenberg group satisfies the definition of the Morse-Bott case. The set of all geodesics emanating from $(0,0,0)$ and becoming cut at some point $(0,0,h)$, $h>0$ form a sphere-like shape with rotational symmetry around the $z$-axis. When considering the pair $(0,0,0),(0,0,h)$, the midpoint set is the equator of the sphere, a circle sitting at altitude $h/2$. Other examples include the Riemannian spheres of dimension at least $2$.}
\label{F:Pansu_sphere}
\end{figure}

In the described situation, we have the following.

\begin{proposition}\label{P:morse-bott}
For any $l$ which is a non-negative integer in the symmetric case and 0 otherwise, and any multi-index $\alpha$, there exists a sequence of real numbers $(c_k)_{k\in \N}$ and a sequence of functions $(\rho_k)_{k\in \N}$ such that for all $n\in \N$,
$$
\partial_t^l
Z_y^{\alpha}
p_t(x,y)
=
\frac{\e^{-\frac{d(x,y)^2}{4t}}}{t^{|\alpha|+2 l+\frac{d+r}{2}} }
\left(
\sum_{k=0}^{n}c_k t^{k}
+
t^{ n+1 }\rho_{n+1}(t)
\right),
$$
and there exists $t_0>0$ such that 
$$
\sup_{(0,t_0)}|\rho_{n+1}(t)|<\infty.
$$

\end{proposition}

\begin{proof}
The proof follows the same classical reasoning of multiplication of series, along with a Fubini theorem argument. We start by applying Corollary~\ref{C:MolchanovLaplace} to get for $\varepsilon>0$ small enough
$$
\partial_t^l
Z_y^{\alpha}
p_t(x,y)
=
\left(\frac{2}{t}\right)^{|\alpha|+2 l+d}
\int_{\Gamma_\varepsilon} 
\e^{-\frac{h_{x,y}(z)}{t}}
\Sigma_{t/2}^{0,0}(x,z)
\Sigma_{t/2}^{l,\alpha}(z,y)
\diff\mu( z)
+
O\left(\e^{-\frac{d(x,y)^2+\varepsilon^2/2}{4t}}\right).
$$
Then, applying our assumptions,
$$
\begin{aligned}
\int_{\Gamma_\varepsilon} \e^{-\frac{h_{x,y}(z)}{t}}
\Sigma_{t/2}^{0,0}(x,z)
\Sigma_{t/2}^{l,\alpha}(z,y)\diff\mu( z)
&=
\int_{\Gamma_\varepsilon}\left(\sum_{i=1}^N \varphi_i(z) \right)\e^{-\frac{h_{x,y}(z)}{t}}
\Sigma_{t/2}^{0,0}(x,z)
\Sigma_{t/2}^{l,\alpha}(z,y) \diff\mu( z)
\\
&=
\sum_{i=1}^N\int_{\Gamma_\varepsilon \cap U_i} \varphi_i(z) \e^{-\frac{h_{x,y}(z)}{t}}
\Sigma_{t/2}^{0,0}(x,z)
\Sigma_{t/2}^{l,\alpha}(z,y) \diff\mu( z) .
\end{aligned}
$$
Now the hypothesis on the shape of $\Gamma$, implies that, up to a rescaling of $\xi$ in the direction transverse to $\Gamma$,
$$
\xi(\Gamma_\varepsilon\cap U_i)\supset\{v+w \mid v\in \xi(\Gamma\cap U_i), w\in \{0_{\R^r}\}\times (-\eps,\eps)^{d-r}\}.
$$
As described earlier, $\xi(\Gamma\cap U_i)\subset\{u_{r+1}=\dotsc=u_d=0\}$. We denote by $\tilde\Gamma_i$ the projection of $\xi(\Gamma\cap U_i)$ onto its first $r$ coordinates, so that $\{v+w \mid v\in \xi(\Gamma\cap U_i), w\in \{0_{\R^r}\}\times (-\eps,\eps)^{d-r}\}=\tilde\Gamma_i\times (-\eps,\eps)^{d-r}$.
By assumption, 
the normal form \eqref{E:NF_MorseBott} implies that for any smooth map $\phi:\R^d\to\R$,
$$
\int_{\xi(\Gamma_\varepsilon\cap U_i)} 
\e^{-\frac{h_{x,y}\circ\xi(u)}{t}}\phi(u)
\diff u
=
\int_{\tilde \Gamma_i\times (-\eps,\eps)^{d-r}} 
\e^{-\frac{h_{x,y}\circ\xi(u)}{t}}\phi(u)
\diff u
+
O\left(\e^{-\frac{d(x,y)^2+\varepsilon^2}{4t}}\right).
$$
Furthermore, the integral can be distributed by Fubini's theorem ($\phi$ is smooth) as
$$
\int_{\tilde \Gamma_i\times (-\eps,\eps)^{d-r}} 
\e^{-\frac{h_{x,y}\circ\xi(u)}{t}}\phi(u)
\diff u
=
\int_{\tilde \Gamma_i}
\left(
\int_{(-\varepsilon,\varepsilon)^{d-r}} \e^{-\frac{h_{x,y}\circ\xi(u)}{t}}\phi(u)
 \diff u_{r+1}\cdots\diff u_d
\right)
\diff u_1\cdots\diff u_r .
$$
As before, we follow \cite{EAndK} to get the expansion (smoothness of $\phi$ allows to then distribute the outer integral)
\begin{multline}\label{E:expansionLaplaceMB}
\int_{\R^{d-r}}
\e^{\frac{1}{t}\sum_{i=r+1}^du_i^2}
\phi(u)
\diff u_{r+1}\cdots\diff u_{d}
\simeq\\
\sum_{j=0}^\infty
	t^{\frac{d-r}{2}+j}	
	\left(
		\frac{\pi^{\frac{d-r}{2}}}{2^{2j}}
		\sum_{\substack{\omega\in \N^{d-r}\\|\omega|=j }}\frac{\partial_{(u_{r+1},\dots,u_{d})}^{2\omega}\phi(u_1,\dots,u_r,0,\dots, 0)}{\omega !}
	\right).
\end{multline}

As a consequence of Theorem~\ref{T:UnifBenArous}, by multiplying together Ben Arous expansions, there exist a sequence $(d_k)$ of smooth functions over $\Gamma_\varepsilon$, and $t_0>0$, such that
$$
\sup_{(0,t_0)}\sup_{z\in\Gamma_\varepsilon}
\frac{1}{t^{N+1}}
\left|
\Sigma_{t/2}^{0,0}(x,z)
\Sigma_{t/2}^{l,\alpha}(z,y)
-
\sum_{k=0}^N d_k(z)t^k
\right|<\infty.
$$
Denoting by $\nu$ the smooth density of $\xi_*\mu$ with respect to the Lebesgue measure on $\R^d$, we apply expansion~\eqref{E:expansionLaplaceMB} with $\varphi(u)=d_k(\xi^{-1}(u))\nu(u)$ to get
$$
t^{|\alpha|+2l+\frac{d+r}{2}}\e^{\frac{d(x,y)^2}{4t}}
p_t(x,y)
=
\sum_{k=0}^{n}
\sum_{j=0}^{n-k}
\sum_{\substack{\omega\in \N^{d-r}\\|\omega|=j}}
	\sigma_{k,j,\omega}
		t^{k+j}
+
t^{n+1}\tilde\rho_{n+1}(t)
$$
where
$$
\sigma_{k,j,\omega}=
\frac{\pi^{\frac{d-r}{2}}}{2^{2j}\omega!}
\int_{\tilde \Gamma_i}
\varphi_i(u_1,\dots,u_r,0,\dots 0)
		{\partial_{(u_{r+1},\dots,u_{d})}^{2\omega}}_{|u=0}\left[d_k(\xi^{-1}(u))\nu(u)\right]
\diff u_1\cdots\diff u_r
$$
and 
$\sup_{(0,t_0)}\big|\tilde \rho_{n+1}(t,x,y)\big|<\infty$. By rearranging the terms by increasing powers, we obtain the stated result.
\end{proof}

\subsection{Prescribed singularities}\label{Sect:Prescribed}
{
We wish to discuss situations where an explicit expansion of the Laplace integral for the small-time asymptotics of the heat kernel appears not be known. The first step is to show that essentially any possible phase function $h$ for a Laplace integral can be realized as the hinged energy functional between two points of a manifold. We first restrict our attention to Riemannian metrics. As already noted, the Molchanov method doesn't distinguish between Riemannian and (properly) sub-Riemannian metrics, and it is simpler to give explicit constructions of Riemannian metrics.

Since the pair of points we consider will be fixed, in this section we use $(q_1,q_2)$ rather than $(x,y)$ to free the notation.
}

\begin{theorem}\label{THM:LocalPrescription}
Let $M$ be a smooth manifold of dimension $d$ (with $d\geq 2$) , $q_1,q_2$ in $M$ such that $q_1\neq q_2$, and $a$ and $\sigma$ be positive real numbers. Let $h$ be a smooth, real-valued function in a neighborhood of $\overline{B^{d-1}(0,a)}\subset \R^{d-1}$ such that $h(0,\ldots,0)=0$, non-negative on $\overline{B^{d-1}(0,a)}$, and positive on $\partial B^{d-1}(0,a)$. Then there exists a (complete) Riemannian metric $g$ on $M$ such that $\Gamma=\Gamma(x,y)$ is contained in a coordinate patch
\[
(u_1,\ldots,u_d): U \rightarrow B^{d-1}(0,a) \times(-\delta,\delta)
\]
such that
\[
h_{q_1,q_2}|_N =  \frac{\sigma^2}{4} + h(u_1,\ldots,u_{n-1})+u_d^2 
\]
for some neighborhood $N$ of $\Gamma$ (thus $\Gamma$ is given by the zero level set of $h$ in the hyperplane $\{u_d=0\}$, and $d(q_1,q_2)=\sigma$). Further, we have the heat kernel representation
\begin{equation}\label{Eqn:PrescribedExp}
 p_t\lp q_1,q_2\rp = \frac{1}{t^d} e^{-\frac{d^2(q_1,q_2)}{4t}}
 \int_{B^{d-1}_{0}(a) \times(-\varepsilon,\varepsilon)} \Phi(t,u)\,\,  e^{-\frac{h(u_1,\ldots,u_{d-1})+u_d^2}{t}} \,\diff u_1\cdots \diff u_d +O\lp e^{-\frac{d^2(q_1,q_2)+c}{4t}} \rp.
\end{equation}
for some positive $\varepsilon$ and a  smooth prefactor function $\Phi$ over $\R^+\times B^{d-1}(0,a) \times(-\varepsilon,\varepsilon) $, smoothly extendable and positive at $t=0$.
\end{theorem}

By rescaling, there is no loss of generality in assuming that the distance between $q_1$ and $q_2$ is prescribed to be 2, which is the same as prescribing $\sigma=1$.

Let $(z_1,\ldots,z_d)$ be the standard Euclidean coordinates on $\R^n$, and let $g_E$ be the Euclidean metric. We will identify $q_1$ with $(0,\ldots,0,1)$, $q_2$ with $(0,\ldots,0,-1)$, and $B_0^{d-1}(a)$ with the corresponding subset of the hyperplane $\{z_d=0\}$. (In particular, this will end up being compatible with the notation used in the theorem). We will use $V^+$ (respectively $V^-$) to denote a neighborhood of $\overline{B^{d-1}_{0}(a)}$ large enough to contain $q_1$ (respectively $q_2$), with further properties of $V^-$ and $V^+$ to be specified later. If $V=V^-\cup V^+$, the main work of the proof is to construct a metric on $V$ which gives a distance function with the desired properties.

\begin{lemma}\label{L:metric_construction}
Let $\xi$ be a smooth non-negative function on a neighborhood of $\overline{B^{d-1}(0,a)}$, $a>0$, everywhere less than $1/8$, with all of its derivatives bounded, and $\xi$ bounded from below by a positive constant outside of $B^{d-1}(0,a)$.
Under assumptions of Theorem~\ref{THM:LocalPrescription}, 
there exist  $V^+$ a neighborhood of $\overline{B^{d-1}(0,a)}\times\{0\}\cup\{q_1\}$ and
 a (smooth) metric on
 $V^+$ 
 such that the graph of $\xi $ in $B^{d-1}(0,a)\times [0,1/8]$ is a subset of the sphere of radius 1 around $q_1$, with none of the minimal geodesics from $q_1$ to this graph conjugate, and such that the metric agrees with $g_{E}$ on a neighborhood of 
$$
\{z\in B^{d-1}(0,a)\times [0,1/8]\mid  0\leq z_d\leq \xi(z_1,\dots ,z_{d-1})\}.
$$

\end{lemma}

\begin{proof}

Let 
$$
G=
\{z\in B^{d-1}(0,a+1)\times \R \mid    z_d = \xi(z_1,\dots ,z_{d-1})\}
$$
denote the portion of the graph of $\xi$ on a neighborhood of $B^{d-1}(0,a)\times [0,1/8]$.
The graph is a smooth hypersurface, and as a consequence of the bounded derivatives property,
 there is a tubular neighborhood $U$ (of diameter $\eta>0$) of $G$ on which its normal lines do not develop focal (or conjugate) points, and any (smooth) coordinates on $G$ extend to smooth coordinates on $U$ by including the signed distance to $G$ as the first coordinate.  In what follows, we use the phrases ``above $G$'' and ``below $G$'' in the natural way to describe the regions on which $z_d$ is larger or smaller than $\xi$, respectively, and similarly for other sets in place of $G$, when it makes sense.

Let $(\theta_1,\ldots, \theta_{d-1})$ be coordinates on the (open) lower hemisphere of the unit tangent sphere at $q_1$. They are assumed to be centered at the south pole.
The lower hemisphere of the unit tangent sphere maps diffeomorphically to the hyperplane $\{z_d=0\}$
by following the Euclidean rays from $q_1$.
Furthermore, lifting $(z_1,\ldots,z_{d-1})$  to the graph  gives coordinates on $G$. Thus, by composition, $(\theta_1,\ldots, \theta_{d-1})$ gives coordinates on $G$, centered at the origin.
Next, let $\rho$ be the signed distance from $G$, with the sign chosen so that $\rho$ is positive below $G$; then $(\rho,\theta_1,\ldots,\theta_{d-1})$ gives coordinates on $U$. For future use, let $\Theta$ denote the open subset of $\S^{d-1}$ for which $(\theta_1,\ldots,\theta_{d-1}$) gives coordinates on $G\cap \lp   B^{d-1}(0,a)\times (-1/4,1/4) \rp$. Now if we write the Euclidean metric on $U$ in these coordinates, it is given by the matrix
\[
\begin{bmatrix}
1 & 0 & \cdots& 0 \\
0& \ip{\partial_{\theta_1}}{\partial_{\theta_1}}_{g_E} &\cdots& \ip{\partial_{\theta_{d-1}}}{\partial_{\theta_{1}}}_{g_E} \\
\vdots & \vdots & \ddots& \vdots \\
0& \ip{\partial_{\theta_{1}}}{\partial_{\theta_{d-1}}}_{g_E} &\cdots& \ip{\partial_{\theta_{d-1}}}{\partial_{\theta_{d-1}}}_{g_E} 
\end{bmatrix}
=
\begin{bmatrix}
1  & 0 \\
0 & \left[ \ip{\partial_{\theta_i}}{\partial_{\theta_j}}_{g_E} \right]_{1\leq i,j\leq d-1} 
\end{bmatrix} ,
\]
where the last expression is understood in terms of the $1\times1$ and $(d-1)\times(d-1)$ diagonal block decomposition of this matrix. Note that the $\ip{\partial_{\theta_i}}{\partial_{\theta_j}}_{g_E} $ are smooth, positive functions on $U$ for all $1\leq i,j\leq d-1$.

We can now describe the metric $g$ on $V^+$ that we're looking for. We will give $g$ on part of $V^+$ including $U$ and the region above $U$ in polar coordinates around $q_1$; that is, we will specify the $\ip{\partial_{\theta_i}}{\partial_{\theta_j}}_{g_E}$, which determines the metric (since all inner products with $\partial_r$ are determined by the condition of being polar coordinates). In a ball around $q_1$, of Euclidean radius $1/8$, let $g$ agree with the Euclidean metric, so that the coordinate singularity at $q_1$, which is $r=0$, is the usual one from polar coordinates on $\R^n$ and the metric is in fact smooth there. For $(\theta_1,\ldots, \theta_{d-1})\in \Theta$ and $r\in (1-\eta,1+\eta)$, we let 
\[
\ip{\partial_{\theta_i}}{\partial_{\theta_j}}_g (r,\theta_1,\ldots,\theta_{d-1}) = \ip{\partial_{\theta_i}}{\partial_{\theta_j}}_{g_E} (\rho=r-1,\theta_1,\ldots,\theta_{d-1}) .
\]
For $r\in (1/8,1-\eta)$ and $(\theta_1,\ldots, \theta_{d-1})\in \Theta$, we let $\ip{\partial_{\theta_i}}{\partial_{\theta_j}}_g$ be some smooth, positive interpolation between the values we just fixed. This gives a metric on the part of $V^+$ including $U$ and the region above $U$, but these coordinates may not extend to a neighborhood of $B^{d-1}(0,a)$ (because $U$ might lie above $\{z_d=0\}$ away from the zeroes of $u$). Nonetheless, if we put the Euclidean metric on the region below $U$, then the metric extends, because the transition map from the polar coordinates $(r,\theta_1,\ldots,\theta_{d-1})$ to the Cartesian coordinates $(z_1,\ldots,z_d)$ is an isometry by construction.

In particular, we have given a metric $g$ on a neighborhood of the origin in $T_{q_1}M$ (which includes $(0,1+\eta)\times\Theta)$) along with an isometry from a subset of that neighborhood to a neighborhood of $B^{d-1}(0,a)\subset\R^n$ (with the Euclidean metric) that includes $U$, such that $G\cap \lp   B^{d-1}(0,a)\times (-1/4,1/4) \rp$ is an open subset of the $g$-sphere of radius 1 around $q_1$. Moreover, we did this by deforming the metric in between a small ball around $q_1$ and $U$ so that Euclidean rays from $q_1$ ``matched up'' to the normal lines to $G$ after passing through this ``in between'' region.
\end{proof}

\begin{figure}
\begin{center}
\setlength{\unitlength}{8cm}
  \begin{picture}(1,0.59383466)%
    \put(0,0){\includegraphics[width=\unitlength,page=1]{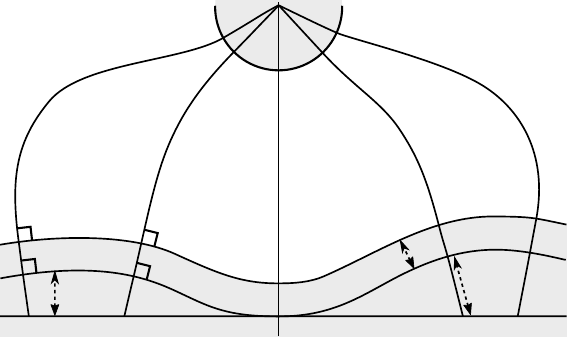}}%
    \put(0.805,0.007){\makebox(0,0)[lb]{\smash{$x$}}}%
    \put(0.82,0.09){\makebox(0,0)[lb]{\smash{$\zeta(x)$}}}%
    \put(0.73,0.145){\makebox(0,0)[lb]{\smash{$\eta$}}}%
    \put(0.52,0.58){\makebox(0,0)[lb]{\smash{$q_1$}}}%
    \put(0.11,0.065){\makebox(0,0)[lb]{\smash{$\xi(y)$}}}%
    \put(0.085,0.007){\makebox(0,0)[lb]{\smash{$y$}}}%
  \end{picture}%
\caption{Schematic representation of the constructed geodesic front radiating from $q_1$ at time $1$. Sections of the picture in grey correspond to regions where $g$ agrees with $g_E$.}
\label{F:front_time_1}
\end{center}
\end{figure}

For the given hinged energy functional in Theorem~\ref{THM:LocalPrescription}, we now extrapolate a possible front at time $1$ emanating from $q_1$ in accordance with the construction of Lemma~\ref{L:metric_construction}.

Let $\tilde{\Gamma}=\{h=0\}\subset B^{d-1}(0,a)$. For all $R\geq 0$, we denote by $N_0(R)\subset \R^{d-1}$ the set
$$
N_0(R)=\tilde{\Gamma}+B^{d-1}(0,R).
$$
Let $\zeta$ be the non-negative real valued smooth function in a bounded neighborhood $\mathcal{V}$ of $\overline{B^{d-1}(0,a)}\subset \R^{d-1}$ such that
$$
\zeta(x)=\sqrt{1+h(x)}-1.
$$
The function $\zeta$ is introduced to allow the construction of $h_{q_1,q_2}$ on the plane $z_d=0$.
\begin{lemma}\label{L:distance_to_q_1}
There exist neighborhoods $N_0\subset  B^{d-1}(0,a)$ and $\tilde{N}_0\subset  B^{d-1}(0,a)$ of $\tilde{\Gamma}$ and a function $\xi:\R^{d-1}\to \R$ 
that satisfy the following geometric property. In Cartesian coordinates $(z_1,\dots,z_d)$,
for all $y\in \tilde{N}_0$, the normal line to the graph of $\xi$ at $(y,\xi(y))$ crosses the plane $\{z_d=0\}$ at a point $(x,0)$, $x\in N_0$,  such that the Euclidean distance between $(y,\xi(y))$ and $(x,0)$ is $\zeta(x)$.

Furthermore, $\xi$ is a smooth non-negative function, everywhere less than $1/8$,
with all of its derivatives bounded, and bounded from below by a positive constant outside of $B^{d-1}(0,a)$
\end{lemma}

\begin{proof}

Let $\psi:\mathcal{V}\to \R^{d-1}$ be defined by $\psi(x)=x-\zeta(x)\nabla \zeta(x)$. 
 For all $x\in\mathcal{V}$, 
$$
D\psi(x)=\mathrm{id}_{\R^{d-1}}-\nabla \zeta(x)\cdot\nabla \zeta(x)^*- \zeta(x) \mathrm{Hess} \zeta (x).
$$
Since $h\geq 0$ and $h$ vanishes of $\tilde{\Gamma}$, $\nabla h$ and $\nabla \zeta$ also vanish on $\tilde{\Gamma}$. Thus on $\tilde{\Gamma}$, $D\psi(x)=\mathrm{id}_{\R^{d-1}}$. Since $\psi$ is smooth, $D\psi(x)$ is uniformly continuous on the compact $\overline{B^{d-1}(0,a)}$ and there exists $R_0$ small enough such that $D\psi$ is invertible on $N_0(R_0)\subset \overline{B^{d-1}(0,a)} $.

In addition, there exists $R_1\in(0,R_0)$ such that $\psi$ is a diffeomorphism from $N_0(R_1)$ onto its image. This is shown by contradiction: assume for any $R>0$ there exists a pair $(x,y)\in N_0(R)$ such that $x\neq y$ but $f(x)=f(y)$. Then let's define for each integer $n>0$ such a pair $(x_n,y_n)\in N_0(1/n)$. Since the sequences evolve in the compact set  $\overline{B^{d-1}(0,a)}$, they are convergent up to extraction. Furthermore, since $\tilde{\Gamma}$ is a compact set, the only possible attractors for $x_n$ and $y_n$ belong to  $\tilde{\Gamma}$. Hence there exists $\tilde{x},\tilde{y}\in \tilde{\Gamma}$ such that $x_n\to \tilde{x}$, $y_n\to \tilde{y}$. Since $\psi$ is continuous, $\psi(x_n)=\psi(y_n)$, we conclude that $\tilde{x}=\psi(\tilde{x})=\psi(\tilde{y})=\tilde{y}$. Hence $x_n-y_n\to 0$. This allows to conclude: indeed $\psi$ is a local diffeomorphism, hence the compact set  $\overline{N_0(R_0)}$ can be finitely covered with open balls on which the restriction of $\psi$ is a diffeomorphism onto its image. There must exist one such open ball containing both $x_n$ and $y_n$ for $n$ large enough (since $x_n,y_n\to \tilde{x}$). This imposes that $x_n=y_n$, which is a contradiction.

Let us pick $R_2\in(0,R_1)$ small enough so that we also have that  $\psi( N_0(R_2) )\subset  B^{d-1}(0,a)$, and $|\nabla \zeta|<1$, $\zeta<1/16$ on $N_0(R_2)$.

Let $r\in(0,R_2)$.  
We can set the map
$\xi_0:\psi\left(\overline{N_0(r)}\right)\to \R$ such that 
$$
\xi_0(y)=\zeta(\psi^{-1}(y))\sqrt{1-|\nabla \zeta(\psi^{-1}(y))|^2}.
$$

By definition, $\xi_0$ is the restriction of a $C^\infty$ map on $\psi\left( N_0(R_2)\right)$ to the closed set $\psi\left(\overline{N_0(r)}\right)$.  Hence $\xi_0$ automatically satisfies the Whitney compatibility condition from Whitney extension theorem and as a result can be extended to a smooth function with domain $\R^{d-1}$. Using smooth cut-off functions, we can ensure the existence of such an extension that has all of its derivatives bounded, and is bounded from below by a positive constant outside of $B^{d-1}(0,a)$, and since $\zeta<1/16$ on $N_0(R_2)$, is strictly smaller than $1/8$. We pick one such extension as  function $\xi$, $N_0=N_0(r)$ and $\tilde{N}_0=\psi(N_0(r))$.

Now let us check that the exhibited function $\xi$ satisfies the stated geometric property.
This statement is equivalent to 
$$
(\psi(x),\xi(\psi(x)))-(x,0)=\zeta(x) \frac{\left(-\nabla \xi(\psi(x)),1\right)}{\sqrt{1+|\nabla \xi(\psi(x))|^2}}, \qquad \forall x\in N_0.
$$
This is translated to the pair of equations
\begin{align}
\zeta(x)&=\xi(\psi(x)) \sqrt{1+|\nabla \xi(\psi(x))|^2},\label{E:cond_1}
\\
x&=\psi(x)+\frac{\zeta(x)  \nabla \xi(\psi(x))}{\sqrt{1+|\nabla \xi(\psi(x))|^2}}.\label{E:cond_2}
\end{align}

From the definition of $\xi$, we have for all $x\in N_0$
\begin{equation}\label{E:def_g}
\xi(x-\zeta(x)\nabla \zeta(x))=\zeta(x)\sqrt{1-|\nabla \zeta(x)|^2}.
\end{equation}
Differentiating the left-hand side,
$$
\nabla( \xi(x-\zeta(x)\nabla \zeta(x)))
=
\left(
\mathrm{id}-\nabla \zeta(x)\cdot\nabla \zeta(x)^*- \zeta(x) \mathrm{Hess} \zeta (x)
\right)
\cdot \nabla \xi(\psi(x)).
$$
Differentiating the right-hand side,
$$
\nabla\left(\zeta(x)\sqrt{1-|\nabla \zeta(x)|^2}\right)=\frac{\nabla \zeta(x)}{\sqrt{1-|\nabla \zeta(x)|^2}}(1-|\nabla \zeta(x)|^2)-\zeta(x)\frac{\mathrm{Hess}\zeta(x)\cdot \nabla \zeta(x)}{\sqrt{1-|\nabla \zeta(x)|^2}}.
$$
For any vector $v\in \R^n$, denoting $v^*$ its transpose, we have the identity
$
v|v|^2=v\cdot( v^*\cdot v)=(v\cdot v^*)\cdot v.
$
Hence 
$$
\nabla\left(\zeta(x)\sqrt{1-|\nabla \zeta(x)|^2}\right)=\left(
\mathrm{id}-\nabla \zeta(x)\cdot\nabla \zeta(x)^*- \zeta(x) \mathrm{Hess} \zeta (x)
\right)
\cdot\frac{\nabla \zeta(x)}{\sqrt{1-|\nabla \zeta(x)|^2}}.
$$

The radius $r$ has been chosen so that $\mathrm{id}-\nabla \zeta(x)\cdot\nabla \zeta(x)^*- \zeta(x) \mathrm{Hess} \zeta (x)$ is invertible.
Therefore \eqref{E:def_g} implies after differentiation
$$
\nabla \xi(\psi(x))=\frac{\nabla \zeta(x)}{\sqrt{1-|\nabla \zeta(x)|^2}}
$$
and, equivalently, 
$$
\nabla \zeta(x)=\frac{\nabla \xi(\psi(x))}{\sqrt{1+|\nabla \xi(\psi(x))|^2}}.
$$
Since $x=\psi(x)+\zeta(x)\nabla \zeta(x)$ by definition of $\psi$, we then have \eqref{E:cond_2}.

Likewise,
$$
\xi(\psi(x))=\zeta(x)\sqrt{1-|\nabla \zeta(x)|^2}
=
\zeta(x)\sqrt{1-\frac{|\nabla \xi(\psi(x))|^2}{1+|\nabla \xi(\psi(x))|^2}}
=
\frac{\zeta(x)}{\sqrt{1+|\nabla \xi(\psi(x))|^2}},
$$
which implies \eqref{E:cond_1}, and concludes the proof of the lemma.
\end{proof}

We are now able to give a proof of Theorem~\ref{THM:LocalPrescription}.

\begin{proof}[Proof of Theorem~\ref{THM:LocalPrescription}]
Let $\xi:\R^{d-1}\to \R$ be as in the statement of Lemma~\ref{L:distance_to_q_1}. By application of Lemma~\ref{L:metric_construction},
there exist  $V^+$ a neighborhood of $\overline{B^{d-1}(0,a)}\times\{0\}\cup\{q_1\}$ and
 a (smooth) metric on
 $V^+$ 
 such that the graph of $\xi $ in $B^{d-1}(0,a)\times [0,1/8]$ is a subset of the sphere of radius 1 around $q_1$. Furthermore, the metric agrees with $g_{E}$ on a neighborhood of 
$$
\{z\in B^{d-1}(0,a)\times [0,1/8]\mid  0\leq z_d\leq \xi(z_1,\dots ,z_{d-1})\}.
$$

Now reflect $V^+$ around $\{z_d=0\}$ to get $V^-$ (and note that $q_2$ is the image of $q_1$) and reflect $g$ to get a metric on $V^-$ such that $-G$ (which denotes the graph of $-\xi$) is an open subset of the sphere of radius 1 around $q_2$ in this metric. Note that this metric on $V^-$ is compatible with $g$ because, on a neighborhood of $B^{d-1}(0,a)$, they are both isometric to the Euclidean metric (and thus reflection induces a valid transition function), and thus we can extend $g$ to $V^-$ via reflection. 

In summary, we have built a metric $g$ on a neighborhood $V$ of $\overline{B^{d-1}(0,a)}\times\{0\}\cup\{q_1\}\cup\{q_2\}$ such that 
$$
d_g\left(q_1,(y,\xi(y))\right)=d_g(q_2,(y,-\xi(y)))=1 \qquad \forall y\in B^{d-1}(0,a),
$$
and $g$ coincides with $g_E$ on a neighborhood of 
$$
\{z\in B^{n}(0,a) \mid  |z_d|\leq \xi(z_1,\dots ,z_{d-1})\}.
$$

Let $N_0$ be as in the statement of Lemma~\ref{L:distance_to_q_1}. Then for any $x\in N_0$, we have 
$$
d_g(q_1,(x,0))=1+\zeta(x).
$$
Indeed this is a consequence of the geometric property in Lemma~\ref{L:distance_to_q_1}. Since $g$ is flat on a neighborhood of $\{z\in B^{d-1}(0,a) \mid  |z_d|\leq \xi(z_1,\dots ,z_{d-1})\}$,
$$
d_g(q_1,(x,0))=d_g(q_1,(y,\xi(y)))+d_g((y,\xi(y)),(x,0)).
$$
(See Figure~\ref{F:front_time_1}.)
Likewise
$$
d_g(q_2,(x,0))=1+\zeta(x).
$$
Hence for all $x\in N_0$,
$$
h_{q_1,q_2}(x,0)=\frac{1}{2} \left(d(q_1,(x,0))^2+d(q_2,(x,0))^2\right)=\left(1+\zeta(x)\right)^2=1+h(x).
$$

We can now work on extending $h_{q_1,q_2}$ to a neighborhood of $\Gamma$ in $\R^d$.
Let $\pi:\R^{d}\to \R^{d-1}$ such that $\pi(z_1,\dots,z_d)=(z_1,\dots,z_{d-1})$.
To prove the statement, we show that there exists $r>0,\varepsilon>0$ such that on $N_0(r)\times (-\varepsilon,\varepsilon)$, 
the map defined by
$$
v(z)=
\begin{cases}
+\sqrt{
	h_{q_1,q_2}(z)-h_{q_1,q_2}(\pi(z),0)
	}
	& \text{ if } z_{d}\geq 0, 
\\
-\sqrt{
	h_{q_1,q_2}(z)-h_{q_1,q_2}(\pi(z),0)
	}
	& \text{ if } z_d< 0 
\end{cases}
$$
is smooth and $z\mapsto v(x,z)$ is a diffeomorphism for each $z\in N_0(r)\times (-\varepsilon,\varepsilon)$. As a result, $\Phi:z\mapsto (\pi(z),v(z))$ is a diffeomorphism and $h_{q_1,q_2}$ is right equivalent to 
$$
u\mapsto 1 + h(u_1,\ldots,u_{d-1})+u_d^2 ,\qquad \forall u\in \Phi(N_0(r)\times (-\varepsilon,\varepsilon)).
$$

By symmetry of the metric with respect to the hyperplane $\{z=0\}$,
$$
h_{q_1,q_2}(z)=\frac{1}{2} \left(d_g(q_1,(\pi(z),z_d))^2+d_g(q_1,(\pi(z),-z_d))^2\right), \qquad \forall z\in V.
$$
Thus by symmetry, on $B_0^{d-1}(a)$,
$$
\left.\frac{\partial h_{q_1,q_2}}{\partial z_n}\right|_{z_d=0}=0.
$$
Furthermore,
$$
\left.\frac{\partial^2 h_{q_1,q_2}}{\partial z_d^2}\right|_{z_d=0}
=
2\left.\left(\frac{\partial d_g(q_1,\cdot)}{\partial z_d}\right)^2\right|_{z_d=0}
+
2 d_g(q_1,\cdot )\left. \frac{\partial^2 d_g(q_1,\cdot)}{\partial z_n^2}\right|_{z_d=0}.
$$

Notice that if $x_0\in \tilde{\Gamma}$, then the geodesic joining  $q_1$ to $q_2$ passing through $(x_0,0)$  
is supported near $(x_0,0)$ by the straight line $\{x=x_0\}$. This implies that
$\left.\dfrac{\partial d_g(q_1,\cdot)}{\partial z_n}\right|_{z_n=0}=1$ and $\left. \dfrac{\partial^2 d_g(q_1,\cdot)}{\partial z_n^2}\right|_{z_n=0}=0$.
Hence
$$
\frac{\partial^2 h_{q_1,q_2}}{\partial z_n^2}(x_0,z)=2.
$$
This allows to apply Malgrange preparation theorem: there exists $\alpha:\R^{d} \to \R$, smooth, such that 
$$
h_{q_1,q_2}(z)=\alpha(z)z_n^2+h_{q_1,q_2}(\pi(z),0)
$$
and $\alpha(x_0,0)=1$ for all $x_0\in \tilde{\Gamma}$.

The function $\alpha$ admits a uniform positive lower bound on a sufficiently small neighborhood of $\tilde{\Gamma}\times \{0\}$, hence, up to reducing $r$ and $\varepsilon$,  $\sqrt{\alpha}$ is a smooth function on this neighborhood.
As a consequence,
$$
v(z)=z_d\sqrt{\alpha(z)}
$$
and is a smooth function. This implies furthermore that $z\mapsto (\pi(z),v(z))$ is a diffeomorphism on $N_0(r)\times (-\varepsilon,\varepsilon)$ since $\partial_{z_d} u(x_0,0)=\sqrt{\alpha(x_0)}>0$ for all $x_0\in\tilde{\Gamma}$.

Notice that for any  $z\in B^d(0,a)$ such that $0<|z_d|<\zeta(\pi(z))$, $h_{q_1,q_2}(z)>2$.
Hence 
$\Gamma=\tilde{\Gamma}\times\{0\}$ and  $N=N_0(r)\times (-\varepsilon,\varepsilon)$ is a neighborhood of $\Gamma$.
Furthermore, we have proved that 
$$
h_{q_1,q_2}\left(\Phi^{-1}(u_1,\dots, u_n)\right)=1+h(u_1,\dots ,u_{d-1})+u_d^2,
$$
for all $u\in\Phi(N)$.
Once this fact is proved, what remains to be shown is the shape of the heat kernel. However this is a direct application of Corollary~\ref{C:MolchanovLaplace}, hence the statement.
\end{proof}

\begin{remark}

Our treatment of prescribing singularities for the hinged energy function in the Riemannian case appears local; for example, the case of antipodal points on the standard sphere goes beyond the framework of Theorem \ref{THM:LocalPrescription}. However, that is essentially the only situation not included in the theorem. To be more precise, for fixed points $q_1$ and $q_2$, $\Gamma$ can be identified as a subset of the sphere of radius $\dist(q_1,q_2)/2$ in $T_{q_1}M$. If $\Gamma$ is the entire sphere, then necessarily we have the Morse-Bott case as covered by Proposition~\ref{P:morse-bott}. Otherwise, since $\Gamma$ is closed, for a point $q$ on the sphere not in $\Gamma$, it has a neighborhood which is not in $\Gamma$, and stereographic projection around $q$ maps $\Gamma$ to a subset of $B^{d-1}(0,a)$ for some $a>0$. Thus every case in which $\Gamma$ is not the entire sphere of radius $\dist(q_1,q_2)/2$ in $T_{q_1}M$ can be realized as in Theorem \ref{THM:LocalPrescription}.
\end{remark}

We follow on the preceding proof by showing it can be extended to construct prescribed singularities for the hinged energy function also on contact sub-Riemannian structures, and we consider this to be sufficient for this line of inquiry.

\begin{theorem}
Let $M$ be a $2d+1$-dimensional contact manifold, let $a$ and $\sigma$ be positive real numbers, and let $h$ be a smooth, real-valued function in a neighborhood of $\overline{B^{2d-1}(0,a)}\subset \R^{d-1}$ such that $h(0,\ldots,0)=0$, $h$ is non-negative on $\overline{B^{2d-1}(0,a)}$, and $h$ is positive on $\partial B^{2d-1}(0,a)$. Then there exists a sub-Riemannian metric on $M$ (compatible with the contact structure), and some points $q_1$ and $q_2$ such that $\Gamma=\Gamma(q_1,q_2)$ is contained in a coordinate patch
\[
(u_1,\ldots,u_{2d+1}): U \rightarrow B^{2d-1}(0,a) \times(-\delta,\delta)\times(-\delta,\delta)
\]
such that
\[
h_{q_1,q_2}|_N= \frac{\sigma^2}{4} + h(u_1,\ldots,u_{2n-1}) + u^2_{2n}+u^2_{2n+1} 
\]
and the analogue of \eqref{Eqn:PrescribedExp} holds.
\end{theorem}

\begin{proof}

By the Darboux theorem, any point has a neighborhood that is contactomorphic to the standard contact structure. Thus we may take $N$ to be a neighborhood for the origin in $\bR^{2d+1}$ with the standard contact structure. Moreover, by rescaling, we can take $q_1=(-1,\ldots,0)$, $q_2=(1,0,\ldots,0)$, and $N$ a ball around the origin of Euclidean radius 3. Also recall that contact sub-Riemannian structures don't admit non-trivial abnormals, so we don't need to worry that the metric we construct will have any.

It's convenient to use more standard notation for our coordinates, so let $(v_1,w_1,\ldots, v_d, w_d, u)$ be coordinates on $\lp \bR^2 \rp^d \times \bR$. Then every admissible curve is given as the lift of a curve in $\lp \bR^2 \rp^d$. In particular, let $\tilde{\gamma}(t)=\lp v_1(t),w_1(t),\ldots, v_d(t), w_d(t) \rp$ be a curve in $\lp \bR^2 \rp^d$, let 
$$A_i(t)=\int_0^t v_i(t)\, \diff w_i -w_t(t)\, \diff v_i$$ 
be twice the enclosed signed area of the projection to the $i$th $\bR^2$ factor, and let $u(t) = \sum_{i=1}^d A_i(t)$. Then $\gamma(t) = \lp \tilde{\gamma}(t), u(t)\rp=\lp v_1(t),w_1(t),\ldots, v_d(t), w_d(t), u(t) \rp$ is the lift of $\tilde{\gamma}(t)$.

Moreover, given a Riemannian metric $\tilde{g}$ on $\lp \bR^2 \rp^d$, it lifts to a sub-Riemannian metric $g$ on the contact structure, which is invariant under translation in the $u$-direction and which has the property that the length of any admissible curve $\gamma$ is the Riemannian length of its projection $\tilde{\gamma}$ (with respect to $\tilde{g}$, of course). By the previous theorem, we can choose $\tilde{g}$ such that, if $\tilde{h}_{x,y}$ is the hinged energy function on $\lp \bR^2 \rp^d$ with respect to $\tilde{g}$, then $\tilde{h}_{q_1,q_2}$ has normal form 
$$ \frac{\sigma^2}{4} + h(u_1,\ldots,u_{2d-1}) + u^2_{2d}.$$ 
Also, recall that the metric is symmetric under reflection in the $v_1$ axis ($v_1 \mapsto -v_1$), and thus the midpoint set $\tilde{\Gamma}$ is contained in the hyperplane $\{ v_1=0 \}$.

Now consider the corresponding sub-Riemannian lifted metric $g$ and associated hinged energy function $h_{q_1,q_2}$--- we claim that $h_{q_1,q_2}$ has the desired normal form. First, consider a point
\[
z=(v_1,w_1,\ldots, v_d, w_d, u)\in \lp \bR^2 \rp^d \times \bR,
\]
and let $\pi(z)= (v_1,w_1,\ldots, v_d, w_d)\in \lp \bR^2 \rp^d$ be the projection. Then letting $d$ and $\tilde{d}$ denote the distance functions on $\lp \bR^2 \rp^d \times \bR$ and $\lp \bR^2 \rp^d$, respectively, we see that $d(q_1,z)\geq \tilde{d}(q_1,\pi(z))$, with equality if and only if there is a minimizing geodesic $\tilde{\gamma}$ from $q_1$ to $\pi(z)$ such that the endpoint of the lift $\gamma$ is $z$ (that is, if and only if there is a minimizing geodesic that encloses the ``right'' signed area).

Take $\tilde{z}\in\tilde{\Gamma}$. We know that there is a unique (and non-conjugate) minimizing geodesic $\tilde{\gamma}$ from $q_1$ to $\tilde{z}$, and thus there is a unique $z$ such that $\pi(z)=\tilde{z}$ and $h(q_1,z)=\tilde{h}(q_1,\tilde{z})$; we write this $z$ as $(\tilde{z},\overline{u}(\tilde{z}))$. Further, by the reflection symmetry of the metric, the minimal geodesic from $q_2$ to $\tilde{z}$ is given by the reflection of $\tilde{\gamma}$ (under the map $v_1\mapsto -v_1$), and thus $(\tilde{z},\overline{u}(\tilde{z}))$ is also the unique $z$ such that $\pi(z)=\tilde(z)$ and $h(q_2,z)=\tilde{h}(q_2,\tilde{z})$. It follows that $\Gamma$ is the lift of $\tilde{\Gamma}$ under the map $\tilde{z}\mapsto (\tilde{z},\overline{u}(\tilde{z}))$ (which is well defined on $\tilde{\Gamma}$), and that $h(\Gamma)=\tilde{h}(\tilde{\Gamma})$. Moreover, we know that there is a neighborhood of $\tilde{\Gamma}$ such that every point is joined to $q_1$ by a unique, non-conjugate minimizing geodesic. If we let $U$ be the intersection of this neighborhood with the hyperplane $\{ v_1=0 \}\subset \lp \bR^2 \rp^d $, then the map $\tilde{z}\mapsto (\tilde{z},\overline{u}(\tilde{z}))$ extends to $U$, by the same argument. Further, by the smoothness of the exponential map (and of the enclosed area as a function of the curve) this is a smooth embedding of $U$ into the hyperplane $\{ v_1=0 \}\subset \lp \bR^2 \rp^d \times \bR$ such that $h((\tilde{z},\overline{u}(\tilde{z}))=\tilde{h}(\tilde{z})$, for any $\tilde{z}\in U$. Denote this embedding by $\overline{U}$.

We are now in a position to show that $h_{q_1,q_2}$ has the desired normal form. First, restricting our attention to $\overline{U}$, it follows from the above that there exist coordinates on $\overline{U}$ such that 
$$
h_{q_1,q_2}|_{\overline{U}}= \frac{\sigma^2}{4} + h(u_1,\ldots,u_{2d-1})
$$ 
(that is, $h|_{\overline{U}}$ has the same ``normal form'' as $\tilde{h}|_U$). If we show that the Hessian of $h$ on the normal bundle of $\overline{U}$ is non-degenerate, which is 2-dimensional and spanned by $\partial_{v_1}$ and $\partial_u$, then the Malgrange preparation theorem (or parametrized Morse lemma) for smooth functions implies that we can find coordinates in which $h$ has the desired expression on all of $N$. Consider the Hessian (as a quadratic form), at a point $z_0\in\Gamma$, along a vector $\alpha \partial_u + \beta \partial_{v_1}$. If $\beta\neq 0$, then because $d(x,z)\geq \tilde{d}(x,\pi(z))$ and the Hessian of $\tilde{d}(x,\pi(z_0))$ along $\beta \partial_{v_1}$ is positive, the Hessian of $h$ is also positive. So it remains only to show that the Hessian along $\partial_u$ is positive. 

Again consider $z_0=(v_1,w_1,\ldots, v_d, w_d,u_0)\in\Gamma$, and let $z_s=(v_1,w_1,\ldots, v_d, w_d,u_0+s)$. Let $\tilde{\gamma}$ be the unique, non-conjugate minimal geodesic from $q_1$ to $\pi(z_0)$ and $\gamma_0$ its lift. Now let $\gamma_s$ be the unique, non-conjugate sub-Riemannian minimal geodesic from $q_1$ to $z_s$, and let $\tilde{\gamma}_s$ be its projection (this is well defined for $s$ near 0 because the complement of the cut locus is open, in both Riemann and sub-Riemannain geometry, and in particular, the relevant exponential maps are local diffeomorphisms). Then $\tilde{\gamma}_s$ is the unique shortest curve from $q_1$ to $\pi(z_0)$ (with respect to the Riemannian metric on $\lp \bR^2 \rp^d$), subject to the constraint that the endpoint lifts to $z_0$ (that is, subject to the constraint that it encloses the right area, making it the solution to the appropriate Dido problem). Further, $\tilde{\gamma}_s$ is a one-parameter family of proper deformations of $\tilde{\gamma}$ (meaning the endpoints are kept fixed), and the variation field at $s=0$ is non-trivial because the enclosed area in changing to first-order. But, by the classical theory of the second variation of energy near a minimizing, non-conjugate Riemannain geodesic, this means that the second derivative of the length of $\tilde{\gamma}_s$ is positive at $s=0$. Since this length is also $d(q_1,z_s)$, and since $d(q_2,z_s)=d(q_1,z,s)$ by symmetry, it follows that $\frac{\partial^2}{\partial s^2}h(z_s)>0$. Recalling the definition of $z_s$, this completes the construction of the metric.

From here, the heat kernel representation follows as before.
\end{proof}

One virtue of Theorem \ref{THM:LocalPrescription} is that many of the real-analytic normal forms appearing in \cite{Arnold_singularities_diff_maps2} and corresponding to local minima can be realized as $h_{q_1,q_2}$
on some Riemannian manifold $M$ of a high enough dimension to support the normal form in question. The only restriction is that one needs the geodesic direction to be separate from the others.
The corresponding Laplace asymptotic expansions can be realized as heat kernel asymptotics on such manifolds, which means that there are cases when the heat kernel asymptotics contain powers of $\log t$, for instance. Indeed, we have the following corollary.

\begin{corollary}
 For any integers $d\geq 2$, $p\geq 1$, and $0\leq k\leq d-2$, there exists a smooth Riemannian manifold $M$ of dimension $d$, and $q_1,q_2$ in $M$, $q_1\neq q_2$,
such that for some $C\neq 0$,
$$
 p_t\lp q_1,q_2\rp
 =
 e^{-\frac{d^2(q_1,q_2)}{4t}} t^{\frac{1}{2}+\frac{1}{2p}-d}\log(t)^k\big( C+o(1)\big).
$$
\end{corollary}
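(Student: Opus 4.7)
The plan is to invoke Theorem~\ref{THM:LocalPrescription} with a prescribed hinged energy $h$ engineered so that the Laplace integral in \eqref{Eqn:PrescribedExp} produces exactly the asymptotic $t^{1/(2p)}(\log t)^k$, so that combined with the prefactor $1/t^d$, the $\sqrt{\pi t}$ from the $u_d$-direction, and $\e^{-d(q_1,q_2)^2/(4t)}$ one recovers the claimed formula. On a small neighborhood of the origin in $\R^{d-1}$ set
$$
h_0(u_1,\dots,u_{d-1}) = (u_1 u_2 \cdots u_{k+1})^{2p},
$$
which uses $k+1 \leq d-1$ coordinates, permissible since $k \leq d-2$, and extend $h_0$ to a smooth non-negative $h$ on $\overline{B^{d-1}_0(a)}$ by adding a smooth bump supported in $B^{d-1}_0(a)\setminus\overline{B^{d-1}_0(a/2)}$ that is strictly positive on $\partial B^{d-1}_0(a)$. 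Then $h$ satisfies the hypotheses of Theorem~\ref{THM:LocalPrescription}, producing $(M,g)$, $q_1 \neq q_2$ and the representation \eqref{Eqn:PrescribedExp}.

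With this in hand, analyze the Laplace integral. The contribution from the region $\{|(u_1,\dots,u_{d-1})|\geq a/2+\delta\}$, where $h\geq c>0$, is $O(\e^{-c/t})$ and is absorbed in the error. On the remaining neighborhood of the origin $h\equiv h_0$ is independent of $u_{k+2},\dots,u_{d-1}$. Performing the standard Gaussian Laplace asymptotic in $u_d$ contributes $\sqrt{\pi t}\bigl(\Phi(0,\dots,0)+o(1)\bigr)$, and integrating the "free" variables $u_{k+2},\dots,u_{d-1}$ (which do not appear in the phase) contributes a nonzero constant factor, using $\Phi(0,\dots,0)>0$ (positivity of the Ben~Arous leading coefficient $c_0$ for the heat kernel itself, Proposition~\ref{P:universal_bounds}). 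The problem reduces to computing the leading-order behavior of
$$
J_k(t) = \int_{[-b,b]^{k+1}} \e^{-(u_1 \cdots u_{k+1})^{2p}/t}\,\diff u_1 \cdots \diff u_{k+1}.
$$

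The core claim is $J_k(t)\sim A_{k,p}\, t^{1/(2p)}(-\log t)^{k}$ with $A_{k,p}=2\Gamma(1+\tfrac{1}{2p})/(p^k k!)\neq 0$, proved by induction on $k$. The base case $k=0$ is the $A_{2p-1}$-singularity asymptotic $\int_{-b}^b \e^{-u^{2p}/t}\,\diff u \sim 2\Gamma(1+\tfrac{1}{2p})\, t^{1/(2p)}$. For the inductive step, fix $u_{k+1}\neq 0$ and apply the hypothesis to the inner integral with rescaled parameter $t'=t/|u_{k+1}|^{2p}$, yielding the inner asymptotic $A_{k-1,p}\,t^{1/(2p)}|u_{k+1}|^{-1}\bigl(-\log t + 2p\log|u_{k+1}|\bigr)^{k-1}$. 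Splitting the outer range at the transition scale $|u_{k+1}|\sim t^{1/(2p)}$ (where the inner asymptotic saturates to a $t$-independent bound and contributes only $O(t^{1/(2p)})$), substituting $w=\log|u_{k+1}|$, and integrating $u^{k-1}$ against $\diff u/(2p)$ yields the next factor $(-\log t)^k/k$. Combining everything gives the statement with $C=\Phi(0,\dots,0)\cdot\sqrt{\pi}\cdot(2a)^{d-k-2}\cdot A_{k,p}\neq 0$.

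The main obstacle is the inductive Laplace computation: the logarithmic factors arise precisely from the non-uniformity of the inner asymptotic near the coordinate hyperplanes $\{u_i=0\}$, where the integrand fails to be exponentially suppressed and instead saturates. Careful range-splitting at $|u_{k+1}|\sim t^{1/(2p)}$ and bookkeeping of the resulting polynomial-in-$\log t$ factor are required to confirm both the correct power of $\log t$ and the nonvanishing (in fact positivity) of the constant $A_{k,p}$, since every inductive step introduces an additional positive multiplicative factor.
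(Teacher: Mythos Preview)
Your overall route is the paper's: prescribe $h_0=(u_1\cdots u_{k+1})^{2p}$ (cut off by a bump), invoke Theorem~\ref{THM:LocalPrescription}, and read off the Laplace asymptotic of the resulting integral. The only substantive difference is that the paper obtains the key asymptotic
\[
\int \phi(u)\,\e^{-(u_1\cdots u_{k+1})^{2p}/t}\,\diff u_1\cdots\diff u_{k+1}
= t^{1/(2p)}\log(t)^k\bigl(C+o(1)\bigr)
\]
by citing Arnold--Gusein-Zade--Varchenko \cite[Theorems 7.3--7.4]{Arnold_singularities_diff_maps2} (for a general smooth prefactor $\phi$ with $\phi(0)\neq 0$), whereas you supply an elementary induction on $k$. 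Your inductive scheme is correct and is a pleasant self-contained substitute for the singularity-theory reference; the splitting at the scale $|u_{k+1}|\sim t^{1/(2p)}$ is exactly what is needed, since below that scale the inner integral is uniformly bounded by $(2b)^k$ and contributes only $O(t^{1/(2p)})$, while above it the inner asymptotic is valid and the substitution produces the extra factor $(-\log t)/k$.

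There is one genuine gap, however: you only compute $J_k(t)$ for the constant prefactor $1$, but $\Phi$ is not constant. The Gaussian step in $u_d$ yields $\sqrt{\pi t}\,\Phi(t,u_1,\dots,u_{d-1},0)$, not $\sqrt{\pi t}\,\Phi(0,\dots,0)$, and integrating out the free variables $u_{k+2},\dots,u_{d-1}$ produces a smooth positive function $\psi(u_1,\dots,u_{k+1})$, not a scalar. To close the argument you must show that the Laplace integral with prefactor $\psi$ has leading term $\psi(0)\cdot A_{k,p}\,t^{1/(2p)}(-\log t)^k$: write $\psi=\psi(0)+O(|u|)$ and observe, via the same induction, that an extra factor of $|u_j|$ kills one logarithm (it removes the $1/|u_j|$ that generates the divergent $\log$-integral), so the remainder is $O\bigl(t^{1/(2p)}(-\log t)^{k-1}\bigr)$. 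Once this is said the proof is complete; note that your explicit value of $C$ is then not quite right (it should involve $\int\Phi(0;0,\dots,0,u_{k+2},\dots,u_{d-1},0)\,\diff u_{k+2}\cdots\diff u_{d-1}$ rather than $\Phi(0,\dots,0)\cdot(2a)^{d-k-2}$), but since the corollary only asserts $C\neq 0$ this does not matter. Also, the positivity of $\Phi$ at $t=0$ is already part of the statement of Theorem~\ref{THM:LocalPrescription}; Proposition~\ref{P:universal_bounds} is not the right citation.
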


\begin{proof}
This is a matter of applying Theorem~\ref{THM:LocalPrescription} to the right function $h$.

From \cite[Theorems 7.3-7.4]{Arnold_singularities_diff_maps2}, we have that for any smooth non-negative function $\phi:\R^{k+1}\to \R$, positive at $0$, we have the following Laplace integral asymptotics near $t=0$:
\begin{equation}\label{E:normal_form_Arnold}
\int_{\R^k}
\exp\lp\frac{u_1^{2p}\cdots u_{k+1}^{2p}}{t}\rp
\phi(u_1,\dots u_{k+1})\diff u_1 \cdots\diff u_{k+1}=
t^{1/2p}\log(t)^k \big(C+o(1)\big).
\end{equation}
(With $C$ a non-zero constant on the only condition that $\phi(0)\neq 0$.)

Let $h:\R^{d-1}\to \R$ be defined by
$$
h(u_1,\dots ,u_{d-1})=u_1^{2p}\cdots u_{k+1}^{2p}+\chi(u_1,\dots , u_{d-1}).
$$
Here $\chi:\R^{d-1}\to [0,1]$ is a smooth function, equal to $0$ on $B^{d-1}(0,a)$, and equal to $1$ on the complement of $B^{d-1}(0,a+1)$, for some $a>0$.

Thus there exists a smooth Riemannian manifold $M$ of dimension $d$, $q_1,q_2$ in $M$ such that $q_1\neq q_2$, and 
\begin{equation}\label{E:Molchanov_prescribed_1st_form}
 p_t\lp q_1,q_2\rp = \frac{1}{t^d} e^{-\frac{d^2(q_1,q_2)}{4t}} 
 \int_{(-\varepsilon,\varepsilon)^d} \Phi(t,u)\,\,  e^{-\frac{h(u_1,\ldots,u_{d-1})+u_d^2}{t}} \,\diff u_1\cdots \diff u_d +O\lp e^{-\frac{d^2(q_1,q_2)+c}{4t}} \rp.
\end{equation}
for some positive $\varepsilon$ and a  smooth prefactor function $\Phi$ over $\R^+\times (-\varepsilon,\varepsilon)^d $, smoothly extendable and postitive at $t=0$.

For $\varepsilon$ small enough, if $\sum_{i=1}^{d-1}u_i^2<\varepsilon^2$, $h(u_1,\dots, u_{d-1})=u_1^{2p}\cdots u_{k+1}^{2p}$.
Thus equation~\eqref{E:Molchanov_prescribed_1st_form} implies
\begin{equation}\begin{split}\label{E:Molchanov_prescribed_2nd_form}
 & p_t\lp q_1,q_2\rp  = \\  &\quad  \frac{1}{t^d} e^{-\frac{d^2(q_1,q_2)}{4t}} 
 \int_{(-\varepsilon,\varepsilon)^d} (\psi_0(u)+t \psi_1(t,u))\,\,  e^{-\frac{u_1^{2p}\cdots u_{k+1}^{2p}+u_d^2}{t}} \,\diff u_1\cdots \diff u_d +O\lp e^{-\frac{d^2(q_1,q_2)+c}{4t}} \rp
\end{split}\end{equation}
with $\psi_0:(-\varepsilon,\varepsilon)^d\to \R$, positive at $0$ and smooth, and  $\psi_1: \R^+\times(-\varepsilon,\varepsilon)^d\to \R$, smoothly extendable at $t=0$.

Then
$$
\left|\int_{(-\varepsilon,\varepsilon)^d}t \psi_1(t,u)\,\,  e^{-\frac{u_1^{2p}\cdots u_{k+1}^{2p}+u_d^2}{t}} \,\diff u_1\cdots \diff u_d\right|\leq Ct
\int_{(-\varepsilon,\varepsilon)^d}e^{-\frac{u_1^{2p}\cdots u_{k+1}^{2p}+u_d^2}{t}} \,\diff u_1\cdots \diff u_d.
$$
From the formula~\eqref{E:normal_form_Arnold}, we have that for some $C\neq 0$,
$$
\int_{(-\varepsilon,\varepsilon)^d}e^{-\frac{u_1^{2p}\cdots u_{k+1}^{2p}+u_d^2}{t}}\diff u_1\cdots \diff u_{k+1}\diff u_d
=
t^{1/2}t^{1/2p}\log(t)^k \big(C+o(1)\big).
$$
Likewise
\begin{multline*}
 \int_{(-\varepsilon,\varepsilon)^d} \psi_0(u)e^{-\frac{u_1^{2p}\cdots u_{k+1}^{2p}+u_d^2}{t}} \diff u_1\cdots \diff u_d
 =\\
 \int_{(-\varepsilon,\varepsilon)^{k+2}} \left(\int_{(-\varepsilon,\varepsilon)^{d-k-2}} \psi_0(u)\diff u_{k+2}\cdots \diff u_{d-1}\right)e^{-\frac{u_1^{2p}\cdots u_{k+1}^{2p}+u_d^2}{t}} \diff u_1\cdots \diff u_{k+1}\diff u_d.
\end{multline*}
Then $\Psi_0:(-\varepsilon,\varepsilon)^{k+2}\to \R$ given by $\Psi_0(u_1,\dots,u_{k+1},u_d)=\int_{(-\varepsilon,\varepsilon)^{d-k-2}} \psi_0(u)\diff u_{k+2}\cdots \diff u_{d-1}$ is a smooth positive function and 
$$
 \int_{(-\varepsilon,\varepsilon)^{k+2}} \Psi_0(u)e^{-\frac{u_1^{2p}\cdots u_{k+1}^{2p}+u_d^2}{t}} \diff u_1\cdots \diff u_{k+1}\diff u_d
 =
 t^{1/2}t^{1/2p}\log(t)^k \big(C+o(1)\big).
$$

Putting all three parts of \eqref{E:Molchanov_prescribed_2nd_form} together, we get the asymptotic expansion first term:
$$
 p_t\lp q_1,q_2\rp
 =
 e^{-\frac{d^2(q_1,q_2)}{4t}} t^{\frac{1}{2}+\frac{1}{2p}-d}\log(t)^k\big( C+o(1)\big).
$$
\end{proof}

Theorem~\ref{THM:LocalPrescription} also allows to go beyond functions admitting an analytic normal form, such as present in \cite{Arnold_singularities_diff_maps2}. In that case, an asymptotic expansion of the Laplace integral in the theorem is not accessible by the methods of \cite{Arnold_singularities_diff_maps2}, 
and moreover, appear not to be known. To illustrate, we offer the following examples.

\begin{example} Let
\[
h(u_1)=\begin{cases}
e^{-1/u_1^2} & \text{ for $u_1\neq0$} \\
0 & \text{ for $u_1=0$} 
\end{cases}
\]
on $(-\varepsilon,\varepsilon)\subset \R$. Then it's well known that $h$ satisfies the hypotheses of Theorem \ref{THM:LocalPrescription}.

\end{example}

\begin{example}
Let $g(\theta)$ be a smooth function on $\S^1$ which is equal to $\theta^2$ near $\theta=0$ and is strictly positive elsewhere. Then
in polar coordinates on $\R^2$, let
\[
h(r,\theta)=g(\theta)(r-1)^2 + (r-1)^4
\]
near the circle $\{r=1\}$ in $\R^2$, and extended to be greater than some $\eps>0$ elsewhere. This gives a situation where $\Gamma=\S^1$ and where $h_{q_1,q_2}$ is locally Morse-Bott away from $\theta=0$, but where the Hessian in the normal direction degenerates as $\theta$ approaches 0. Thus, the usual Morse-Bott expansion of Section \ref{Sect:MB} does not apply. Of course, the $(r-1)^4$ can be replaced by $(r-1)^{2k}$ for any positive integer $k$, or even by
\[
\begin{cases}
e^{-1/(r-1)^2} & \text{ for $r\neq1$} \\
0 & \text{ for $r=1$} 
\end{cases}
\]
to produce other examples in a similar vein, and similarly, $g(\theta)$ can have behavior near $\theta=0$ modeled on any even power of $\theta$ or on $e^{-1/\theta^2}$.
\end{example}

\begin{example}
Let $h(u_1)$ be a smooth, non-negative function with zeroes at $\pm\frac{1}{n}$ for all positive integers $n$ and at 0. The existence of such functions is well-known, and while the Hessian can be made non-degenerate at all of the $\pm\frac{1}{n}$ (although it need not be), $h$ necessarily vanishes to all order at 0. Moreover, in this case, $\Gamma$ is not a union of smooth submanifolds (the condition to respect the submanifold topology is not satisfied at 0).

\end{example}

\section{Logarithmic derivatives}

\subsection{Molchanov-type expansions of logarithmic derivatives}
{
We start by introducing an alternative representation of Molchanov method that will be useful in following computations. This is a direct consequence of L\'eandre estimates coupled with Ben Arous expansions on compact sets with no abnormal geodesics (Proposition~\ref{P:BenArous}).

\begin{lemma}[Folding the remainder]\label{L:Molchanov_no_remainder}

Let $\Sigma:\R^+\times M^2\setminus \C\to \R$ denote the smooth function such that 
$$
\Sigma_t(x,y)=
t^{d/2} \e^{\frac{d(x,y)^2}{4t}}
p_t(x,y).
$$

Let $\K$ be a localizable compact subset of $M^2\setminus \D$ such that all minimizers between pairs $(x,y)\in\K$ are strongly normal.

Let $\mathcal{V}$ be an open subset of $M^2$ containing $\K$, such that the closure of $\mathcal{V}$ is a compact localizable subset of $M^2\setminus \D$.
For $\varepsilon,t_0>0$, we set
$\Omega$ to be the open set
$$
\left\{
(t,x,y,z)\in (0,t_0)\times \mathcal{V}\times M
\mid 
d(x,z)< \frac{d(x,y)}{2}+\varepsilon \text{ and }  d(z,y)< \frac{d(x,y)}{2}+\varepsilon
\right\},
$$
where $\varepsilon$ is assumed small enough so that $(x,z)$ and $(z,y)$ avoid $\C$. (By definition, $(t,x,y,z)\in\Omega$ for all $t\in (0,t_0)$, $(x,y)\in \K$, $z\in \Gamma_\varepsilon$.)

Suppose we are in the symmetric case. Then there exists a  continuous  map $\bar{\Sigma}:\Omega \to \R$, smooth as a map of $(t,y)$,
such that for all $(x,y)\in \K$, for all $t<t_0$,
\begin{equation}\label{Eqn:Folded}
p_{t}(x,y)=\int_{\Gamma_{\varepsilon}}
\lp\frac{2}{t}\rp^{d}
\e^{-\frac{h_{x,y}(z)}{4t}}
\Sigma_{t/2}(x,z)
\Sigmax_{t/2}(z,y)
\diff \mu(z)
\end{equation}
and  for all $l\in \N$ and  $\alpha$ multi-index , there exists $C>0$ such that for all $(x,y)\in \K$,
$$
\partial^l_t Z^\alpha_y
\lb
	\Sigmax_t(z,y)-\Sigma_t(z,y)
\rb
\leq 
C\e^{-\frac{\varepsilon^2}{8t}}.
$$
In particular, for all $l\in \N$ and  $\alpha$ multi-index, for all $t,x,y,z\in \Omega$,
$$
{\partial^l_t Z^\alpha_y}_{|t=0}
	\Sigmax_t(z,y)=
{\partial^l_t Z^\alpha_y}_{|t=0}	
	\Sigma_t(z,y).
$$
In the general (non-symmetric) case, all of the above holds with $l=0$.
\end{lemma}

Before proving this statement, we detach the intermediate step of proving that some $\mathcal{V}$ must exist for both possible localization conditions.

\begin{lemma}\label{L:existsV}
Let $\K\subset M^2$ be a localizable compact. For $\rho>0$, let $\K'(\rho)$ be defined by
$$
\K'(\rho)=\left\{
(\xi,\zeta)\in M^2\mid \exists(x,y)\in \K \text{ s.t. } d(\xi,x)\leq \rho,d(\zeta,y)\leq \rho
\right\}.
$$
There exists $\rho_0>0$ such that $\K'(\rho)$ is localizable for all $0\leq \rho\leq \rho_0$.
\end{lemma}

\begin{proof}
Assuming the strong localization condition holds for $\K$. If $M$ is complete, all compacts are localizable and the results follows ($K'(\rho)$ is compact), so we assume incompleteness.
Regarding distance to infinity, observe that it still satisfies a form of triangular inequality, in the sense that for any two $x,y\in M$, $d(x,\infty)\leq d(x,y)+d(y,\infty)$. This implies that the map $\Phi(x,y)=d(x,\infty)+d(y,\infty)-d(x,y)$ is (uniformly) continuous on $M\times M$ and positively lower-bounded on a  compact $\K$ if and only if it satisfies the strong localization condition. By triangular inequality, for $(\xi,\zeta)\in \K'(\rho)$ and $(x,y)$ as in the definition of $\K'(\rho)$,
$$
\begin{aligned}
d(\xi,\infty)+d(\zeta,\infty)-d(\xi,\zeta)
&\geq
d(x,\infty)-d(x,\xi)+d(y,\infty)-d(y,\zeta)-(d(\xi,x)+d(x,y)+d(y,\zeta))
\\
&\geq
\Phi(x,y)-4\rho
\end{aligned}
$$
By picking $\rho_0=\inf_\K\Phi/8$, we get that $\K'(\rho)$ satisfies the strong localization condition for all $0\leq \rho\leq \rho_0$.

Assuming the weak localization condition holds for $\K$. The sector condition on $\Delta$ holds for all $M$, there exists $\varepsilon$ such that $\mathcal{E}(x,y,\varepsilon):=\{z\mid d(x,z)+d(z,y)<d(x,y)+\varepsilon\}$ has compact closure for all $(x,y)\in \K$. Let $(\xi,\zeta)\in \K'(\rho)$ and $(x,y)$ be as in the definition of $\K'(\rho)$. For any $z\in M$,
$$
d(x,z)+d(z,y)\leq d(x,\xi)+d(\xi,z)+d(z,\zeta)+d(\zeta,y)\leq d(\xi,z)+d(z,\zeta)+2\rho
$$
If $z\in \mathcal{E}(\xi,\zeta,\rho )$ then
$$
d(\xi,z)+d(z,\zeta)\leq d(\xi,\zeta)+\rho\leq d(x,y)+3\rho.
$$ 
As a consequence,
$$
d(x,z)+d(z,y)\leq d(x,y)+5\rho.
$$ 
Pick $\rho_0=\varepsilon/8$ and we get $\mathcal{E}(\xi,\zeta,\rho )\subset \mathcal{E}(x,y,\varepsilon)$ for all $0<\rho<\rho_0$, implying that $\mathcal{E}(\xi,\zeta,\rho)$ also has compact closure for all $(\xi,\zeta)\in \K'(\rho)$. Hence $\K'(\rho)$ satisfies the weak localization condition for all $0\leq \rho\leq \rho_0$.
\end{proof}

This lemma implies that some $\mathcal{V}$ as in the statement of Lemma~\ref{L:Molchanov_no_remainder} must exist, as we can pick $\mathcal{V}$ to be the interior of $\K'(\rho)$, with $0<\rho<\rho_0$ small enough that $\K'(\rho)$ avoids the diagonal.

\begin{proof}[Proof of Lemma~\ref{L:Molchanov_no_remainder}]
We write the proof of the symmetric case; setting $l=0$ gives the proof in the general case. 
We set 
$$
\zeta_t^x(y)=\int_{\Gamma_\varepsilon}\e^{-\frac{h_{x,y}(z)}{t}}\Sigma_{t/2}(x,z) \diff \mu(z).
$$
By definition, $\zeta$ is continuous on  $\Omega$  and smooth with respect to $(t,y)$. Furthermore, following Laplace integrals asymptotics, the strategy given in the proof of Proposition~\ref{P:universal_bounds} yields for all $l\in \N$ and $\alpha$ multi-index, the existence of a constant $C>0$ such that on $\Omega$
\begin{equation}\label{E:der_zeta}
\partial^l_t Z^\alpha_y \zeta_t^x(y)\leq \frac{C\e^{-\frac{d(x,y)}{4t}}}{t^{2l+|\alpha|+d-1/2}}
\end{equation}
Furthermore, when $l=0$, $\alpha=0$,
\begin{equation}\label{E:der_zeta2}
\frac{\e^{-\frac{d(x,y)}{4t}}}{C t^{d/2}}\leq  \zeta_t^x(y)\leq \frac{C \e^{-\frac{d(x,y)}{4t}}}{t^{d-1/2}}.
\end{equation}

Now set
$$
R_t^x(y)=\lp\frac{t}{2}\rp^{d} \int_{M\setminus \Gamma_\varepsilon}p_{t/2}(x,z)p_{t/2}(z,y)\diff \mu (y).
$$
By Corollary~\ref{C:MolchanovV1}, for all $l\in \N$ and $\alpha$ multi-index, 
 there exists $C$ such that for all $(t,x,y,z)\in \Omega$, 
\begin{equation}\label{E:der_R}
\partial_t Z^\alpha_y R_t^x(y)\leq C\e^{-\frac{d(x,y)^2}{4t}}\e^{-\frac{\varepsilon^2}{4t}}
\end{equation}
Furthermore,  $R$ is continuous on $\Omega$ and smooth as a function of $(t,y)$.

We now pick $\psi$ to be
$$
\Sigmax_t(z,y)=\Sigma_t(z,y)+\frac{R_t^x(y)}{\zeta_t^x(y)}.
$$
Then
\begin{multline*}
\int_{\Gamma_{\varepsilon}}
\lp\frac{2}{t}\rp^{d}
\e^{-\frac{h_{x,y}(z)}{4t}}
\Sigma_{t/2}(x,z)
\Sigmax_{t/2}(z,y)
\diff \mu(z)=
\int_{\Gamma_{\varepsilon}}
p_{t/2}(x,z)p_{t/2}(z,y)
\diff \mu(z)
\\+
\underbrace{
	\int_{\Gamma_\varepsilon}
		\e^{-\frac{h_{x,y}(z)^2}{t}}\Sigma_{t/2}(x,z)
	\diff \mu (z)
}_{=\zeta_t^x(y)} 
 \frac{R_t^x(y)}{\zeta_t^x(y)}\lp\frac{2}{t}\rp^{d}.
\end{multline*}
By construction, this equation simplifies to
\begin{multline*}
\int_{\Gamma_{\varepsilon}}
\lp\frac{2}{t}\rp^{d}
\e^{-\frac{h_{x,y}(z)}{4t}}
\Sigma_{t/2}(x,z)
\Sigmax_{t/2}(z,y)
\diff \mu(z)=\\
\int_{\Gamma_{\varepsilon}}
p_{t/2}(x,z)p_{t/2}(z,y)
\diff \mu(z)
+
\int_{M\setminus \Gamma_{\varepsilon}}
p_{t/2}(x,z)p_{t/2}(z,y)
\diff \mu(z)
\\
=\int_{M}
p_{t/2}(x,z)p_{t/2}(z,y)
\diff \mu(z)
=p_t(x,y).
\end{multline*}

To conclude the proof, we only have to combine \eqref{E:der_zeta}, \eqref{E:der_zeta2} and \eqref{E:der_R} to check that for all $l\in \N$ and $\alpha$ multi-index, there exists $C>0$, and $m\in M$ such that
$$
\partial_t^lZ_y^\alpha\frac{R_t^x(y)}{\bar\zeta_t^x(y)}\leq  C\frac{e^{-\frac{\varepsilon^2}{4t}}}{t^m}\leq  C'e^{-\frac{\varepsilon^2}{8t}}.
$$
\end{proof}

}

We now explain how Lemma \ref{L:Molchanov_no_remainder} can be used to apply Molchanov's method to logarithmic derivatives. Suppose that $x$ and $y$ are localizable and all minimizers between them are strongly normal.

We can rewrite \eqref{Eqn:Folded}, mimicking the Chapman-Kolmogorov equation, as 
\[
p_{t}(x,y)=\lp\frac{2}{t}\rp^{d} \int_{\Gamma_{\varepsilon}} \lb \e^{-\frac{\dist^2(x,z)}{2t}} \Sigma_{t/2}(x,z) \rb\cdot
\lb  \e^{-\frac{\dist^2(z,y)}{2t}} \Sigmax_{t/2}(z,y) \rb
\diff \mu(z) .
\]
If $Z$ is a smooth vector field in a neighborhood of $y$, we have
\begin{equation}\label{Eqn:ProdRule}\begin{split}
& Z_y \lb  \e^{-\frac{\dist^2(z,y)}{2t}} \Sigmax_{t/2}(z,y) \rb = \\
& \qquad -\frac{d(z,y)}{t}\cdot Z_y d(z,y)\cdot \lb  \e^{-\frac{\dist^2(z,y)}{2t}} \Sigmax_{t/2}(z,y) \rb +Z_y\lp \log \Sigmax_{t/2}(z,y)\rp \cdot \lb  \e^{-\frac{\dist^2(z,y)}{2t}} \Sigmax_{t/2}(z,y) \rb ,
\end{split}\end{equation}
so that (since it's clear we can differentiate under the integral sign)
\begin{multline*}
 Z_y \log p_t(x,y) = \frac{Z_y p_t(x,y)}{p_t(x,y)}
 \\
= \frac{\int_{\Gamma_{\eps}} \lb -\frac{d(z,y)}{t}Z_y d(z,y)+ Z_y\lp \log \Sigmax_{t/2}(z,y)\rp \rb \Sigma_{t/2}(x,z)\Sigmax_{t/2}(z,y) e^{-\frac{h_{x,y}(z)}{t}} \, \diff\mu(z)}{\int_{\Gamma_{\eps}} \Sigma_{t/2}(x,z)\Sigmax_{t/2}(z,y) e^{-\frac{h_{x,y}(z)}{t}} \, \diff\mu(z)}
\end{multline*}

 To better understand the right-hand side of the above, note that 
\begin{equation}\label{Eqn:M_tDef}
 \frac{\Sigma_{t/2}(x,z)\Sigmax_{t/2}(z,y) e^{-\frac{h_{x,y}(z)}{t}} }{\int_{\Gamma_{\eps}} \Sigma_{t/2}(x,z)\Sigmax_{t/2}(z,y) e^{-\frac{h_{x,y}(z)}{t}}\, \diff\mu(z)} \cdot \Ind_{\Gamma_{\eps}}(z)
\end{equation}
 is the density, with respect to $\mu$, of a probability measure supported on $\Gamma_{\eps}$. We call this probability measure $m_t$, for $t>0$. Since $\Gamma_{\eps}$ is a compact subset of a smooth manifold, $m_t$ is determined by its integrals against smooth functions (on $M$), and also weak convergence of probability measures on $\Gamma_{\eps}$ can be characterized by the convergence of their integrals against smooth functions. The upshot of this is that we have
\begin{equation}\label{Eqn:1stLogDer}
  Z_y \log p_t(x,y) =  \E^{m_t}\lb  -\frac{d(\cdot,y)}{t}Z_y d(\cdot,y)+ Z_y\lp \log \Sigmax_{t/2}(\cdot,y)\rp  \rb .
\end{equation}
Further, while $m_t$ (and in particular $\Sigmax$) is defined so as to make this an equality, we are interested in asymptotic behavior. To this end, observe that we can write $\Sigma_{t/2}(x,z)$ and $\Sigmax_{t/2}(z,y)$ as $c_0(x,z)+O(t)$ and $c_0(z,y)+O(t)$, and recall that the $c_0$ are smooth and strictly positive. We see that, if $m_{t_n}\rightarrow m_0$ for some sequence of times $t_n\searrow 0$ and some probability measure $m_0$, then, for any smooth $f$, we have
\begin{equation}\label{Eqn:M_tLim}
 \frac{\int_{\Gamma_{\eps}} f(z) c_0(x,z)c_0(z,y) e^{-\frac{d^2(x,z)+d^2(z,y)}{2t_n}} \, \diff\mu(z)}{\int_{\Gamma_{\eps}} c_0(x,z)c_0(z,y) e^{-\frac{d^2(x,z)+d^2(z,y)}{2t_n}} \, \diff\mu(z)} \rightarrow  \E^{m_0}\lb f\rb ,
\end{equation}
and conversely, if there is some $m_0$ and some sequence of times $t_n$ such that \eqref{Eqn:M_tLim} holds for all smooth $f$, then $m_{t_n}\rightarrow m_0$.

The derivation of \eqref{Eqn:1stLogDer} naturally extends to higher-order derivatives.  If $Z'$ is another smooth vector field in a neighborhood of $y$, we see that
\[
 Z'_yZ_y\lp \log p_t(z,y)\rp = \frac{Z'_yZ_y p_t(z,y)}{p_t(z,y)} - Z'_y\lp \log p_t(z,y)\rp\cdot Z_y\lp \log p_t(z,y)\rp .
\]
Then applying $Z'_y$ to both sides of \eqref{Eqn:ProdRule} to get an expression for $Z'Z \lp \log p_t(x,y)\rp$, we see (writing $ Z'=Z'_y$ and $ Z=Z_y$ to unburden the notation) that
\begin{equation}\label{Eqn:2ndLogDer} \begin{split}
& Z'Z \lp \log p_t(x,y)\rp = \E^{m_t}\lb -\frac{1}{t}Z'd(\cdot,y)\cdot Zd(\cdot,y)-\frac{d(\cdot,y)}{t}Z'Zd(\cdot,y)+\frac{Z'Z\lp\Sigmax_{t/2}(\cdot,y)\rp}{\Sigmax_{t/2}(\cdot,y)}\rb \\
&+ \E^{m_t} \lb \lb -\frac{d(\cdot,y)}{t}Zd(\cdot,y)+ Z\lp \log \Sigmax_{t/2}(\cdot,y)\rp\rb \cdot\lb -\frac{d(\cdot,y)}{t}Z'd(\cdot,y)+ Z'\lp \log \Sigmax_{t/2}(\cdot,y)\rp\rb \rb \\
& -  \E^{m_t}\lb  -\frac{d(\cdot,y)}{t}Z d(\cdot,y)+ Z\lp \log \Sigmax_{t/2}(\cdot,y)\rp  \rb
\cdot  \E^{m_t}\lb  -\frac{d(\cdot,y)}{t}Z' d(\cdot,y)+ Z'\lp \log \Sigmax_{t/2}(\cdot,y)\rp  \rb
\end{split}
\end{equation}

We note that while the above is valid whenever $x$ and $y$ are localizable and all minimizers between them are strongly normal, it is of interest primarily when, in addition, $y$ is in the cut locus of $x$. That's because otherwise the asymptotics of the log-derivatives of $p_t(x,y)$ are fairly straightforward, as we now show.

\begin{theorem}\label{THM:CleanAsymptotics}
For any localizable compact $\K\subset M^2\setminus\C$, and any finite family $\lc Z_1,\dots Z_m\rc$ of smooth vector fields on  $\K$, we have that for any multi-index $\alpha$,
\begin{equation}\label{Eqn:DistMollified}
 \lim_{t\searrow 0} t Z^{\alpha}_y \log p_t(x,y) = -\frac{1}{4} Z^{\alpha}_y d^2(x,y),
\end{equation}
uniformly for $(x,y)\in \K$.
\end{theorem}

\begin{proof}
On such a $\K$, Theorem \ref{T:UnifBenArous} gives that
\[
t\log p_t(x,y) = -\frac{d}{2}t\log t - \frac{d^2(x,y)}{4} +t\log \lp c_0(x,y)+tR(t,x,y) \rp
\]
where $R$ is some remainder function which is bounded (uniformly) along with all its derivatives as $t\rightarrow 0$. Since $c_0(x,y)$ is bounded above and below by (strictly) positive constants, taking spatial derivatives gives
\begin{equation}\label{Eqn:BADer}
t Z^{\alpha}_y \log p_t(x,y) = -\frac{1}{4} Z^{\alpha}_y d^2(x,y) + tR'(t,x,y)
\end{equation}
where $R'$ is (uniformly) bounded along with all its derivatives as $t\rightarrow 0$. The result follows.
\end{proof}

On the other hand, on the non-abnormal cut locus, \eqref{Eqn:1stLogDer} and \eqref{Eqn:2ndLogDer} give, to leading order,
\begin{equation}\label{E:First2moments}
\begin{aligned}
t \cdot Z_y \log p_t(x,y) =& \E^{m_t}\lb -d(\cdot,y) Z_yd(\cdot,y) \rb +O(t) \\
\text{and}\quad t\cdot Z'_yZ_y \log p_t(x,y) 
=& \frac{1}{t}\Big\{ \E^{m_t} \lb d^2(\cdot,y) Z_yd(\cdot,y)Z'_yd(\cdot,y) \rb \\
&\qquad  - \E^{m_t} \lb d(\cdot,y) Z_yd(\cdot,y) \rb \E^{m_t} \lb d(\cdot,y) Z'_yd(\cdot,y)\rb \Big\}   +O(1) \\
=& \frac{1}{t} \Cov^{m_t} \Big( d(\cdot,y) Z_yd(\cdot,y), d(\cdot,y) Z'_yd(\cdot,y) \Big) +O(1) .
\end{aligned}
\end{equation}
Extending this to higher-order derivatives is the content of Theorem \ref{THM:Cumulants}, which we now prove.

\begin{proof}[Proof of Theorem \ref{THM:Cumulants}]
Fa\`a di Bruno's formula implies that
\begin{equation}\label{Eqn:Faa}
Z^N\cdots Z^1 \log p_t(x,y)= \sum_{\pi\in\Pi}\lp \frac{(-1)^{|\pi|-1}\lp|\pi|-1\rp !}{p_t^{|\pi|}(x,y)}\prod_{B\in\pi}Z^Bp_t(x,y)\rp
\end{equation}
where the sum is over all partitions $\pi$ of $\{N,N-1,\ldots, 2,1\}$, $|\pi|$ denotes the number of blocks in the partition $\pi$, the product is over all blocks $B$ in $\pi$, and $Z^Bp_t(x,y)$ means $Z^{k_m}\cdots Z^{k_1}p_y(x,y)$ where $k_m>\cdots>k_1$ are the elements of $B$. As above, we can use Molchanov's method to write derivatives of $p_t$ as
\[
Z^B p_t(x,y) = \int_{\Gamma_{\eps}} \lp \frac{2}{t}\rp^d \Sigma_{t/2}(x,z)e^{-\frac{d^2(x,z)}{2t}} \cdot Z^B\lb e^{-\frac{d^2(z,y)}{2t}}\Sigmax_{t/2}(z,y)\rb \, \diff\mu(z) .
\]
Further, we see that
\[
Z^B\lb e^{-\frac{d^2(z,y)}{2t}}\Sigmax_{t/2}(z,y)\rb = \sum_{I\subset B} Z^I\lb e^{-\frac{d^2(z,y)}{2t}}\rb \cdot Z^{I^c} \lb\Sigmax_{t/2}(z,y)\rb ,
\]
where the sum is over all subsets $I$ of $B$ and $I^c$ is the complement of $I$ relative to $B$ (if $I$ is empty, we understand $Z^I\lb e^{-\frac{d^2(z,y)}{2t}}\rb$ to be $e^{-\frac{d^2(z,y)}{2t}}$ and similarly if $I^c$ is empty). Finally, another application of Fa\`a di Bruno's formula shows that
\[
Z^I\lb e^{-\frac{d^2(z,y)}{2t}}\rb = e^{-\frac{d^2(z,y)}{2t}} \sum_{\pi'\in\Pi'} \lp-\frac{1}{2t}\rp^{|\pi'|} \prod_{B'\in\pi'} Z^{B'} \lb d^2(z,y) \rb
\]
where the sum is over all partitions of $I$ (and the vector fields are applied ``in order'' as above).

Combining the above is a bit messy. Nonetheless, we let $\pi(1),\ldots,\pi(i),\ldots$ enumerate the partitions of $\{N,N-1,\ldots, 2,1\}$, $B(j,i)$ enumerate the blocks of $\pi(i)$, $I(k,j,i)$ enumerate the subsets of $B(j,i)$, $\pi'(\ell,k,j,i)$ enumerate the partitions of $I(k,j,i)$, and $B'(m,\ell,k,j,i)$ enumerate the blocks of $\pi'(\ell,k,j,i)$. Then if $c_i=(-1)^{|\pi_i|-1}\lp|\pi_i|-1\rp !$, we have
\begin{multline}
\label{Eqn:Combos} 
 Z^N\cdots Z^1 \log p_t(x,y)= \\ 
 \left(\sum_i c_i  \right)
\prod_j \sum_k \E^{m_t}\lb \frac{Z^{I^c(k,j,i)}\Sigmax_{t/2}(z,y)}{\Sigmax_{t/2}(z,y)} \cdot \sum_{\ell} \lp -\frac{1}{2t}\rp^{|\pi'_{\ell}|}
\prod_m Z^{B'(m,\ell,k,j,i)} d^2(z,y) \rb
\end{multline}
where if $I(k,j,i)=\emptyset$, the expectation is understood as simply $ \E^{m_t}\lb Z^{I^c(k,j,i)}\Sigmax_{t/2}(z,y)/\Sigmax_{t/2}(z,y)\rb$ while if $I^c(k,j,i)=\emptyset$, we have $Z^{I^c(k,j,i)}\Sigmax_{t/2}(z,y)= Z^{\emptyset}\Sigmax_{t/2}(z,y) =\Sigmax_{t/2}(z,y)$.

Explicitly expanding this and collecting terms based on the power of $-1/(2t)$ is, fortunately, unnecessary. First, note that $\Sigmax_{t/2}(z,y)$ and $d^2(z,y)$ are both smooth on a neighborhood of $\Gamma_{\eps}$ (and $\Sigmax_{t/2}(z,y)$ is bounded from below by a positive constant), so that, after factoring out the $\lp -\frac{1}{2t}\rp^{|\pi'_{\ell}|}$, all of the remaining expectations are finite, and moreover, bounded for all sufficiently small $t$ solely in terms of bounds on $\Sigmax_{t/2}(z,y)$ and $d^2(z,y)$ and their first $N$ derivatives (with respect to the $Z^i$). Further, we see that the largest power of $-1/(2t)$ we get in the expansion of the right-hand side of \eqref{Eqn:Combos} is $\lp-\frac{1}{2t}\rp^N$, which, for any given partition $\pi(i)$, occurs exactly when each $I(k,j,i)=B(j,i)$ and each $I(k,j,i)$ is partitioned into singletons. This gives
\[\begin{split}
 Z^N\cdots Z^1 \log p_t(x,y) &=  \lp-\frac{1}{2t}\rp^N \sum_i c_i \prod_j    \E^{m_t}\lb \prod_{k\in B(i,j)} Z^k d^2(z,y) \rb
 +O\lp \lp\frac{1}{2t}\rp^{N-1} \rp \\
&=   \lp-\frac{1}{t}\rp^N \sum_i c_i \prod_j    \E^{m_t}\lb \prod_{k\in B(i,j)} d(z,y) Z^k d(z,y) \rb
 +O\lp \lp\frac{1}{2t}\rp^{N-1} \rp .
\end{split}\]
Then the theorem follows after noting that the coefficient of $(-1/t)^N$ in this expression is exactly the formula for the joint cumulant of $d(z,y) Z^1 d(z,y), \ldots, d(z,y) Z^N d(z,y)$ in terms of their joint (raw) moments.
\end{proof}

Recall that the (first) cumulant of a single random variable is its expectation, while the cumulant of two random variables is their covariance (that is, $\kappa(X,Y)=\Cov(X,Y)$), so that this generalizes the above results for $N=1,2$.

\begin{remark}
Following up on Theorem \ref{THM:CleanAsymptotics}, we note that we have previously shown, in Theorem \ref{THM:TrueLocal},  that $\lim_{t\searrow 0} t \log p_t(x,y) = -\frac{1}{4}d^2(x,y)$ holds uniformly on any localizable compact. And away from the cut locus, this limit commutes with derivatives on the $y$-variable. However, this is no longer the case when $y$ is in the cut locus of $x$; see Theorem 11.8 of \cite{ABB_2018} which shows that $d^2(x,y)$ is smooth at $y$ if and only if there is a unique length-minimizing curve from $x$ to $y$ which is strictly normal and non-conjugate. So the asymptotic behavior of the log-derivatives of $ t \log p_t(x,y)$ at the cut locus is more complicated, and reflects the geometry of the minimizing geodesics between $x$ and $y$. While Theorem \ref{THM:Cumulants} addresses the (potential) leading term in the expansion, in principle one could consider \eqref{Eqn:1stLogDer} and \eqref{Eqn:2ndLogDer}, or the analogues for higher-order derivatives, and use the expansion of $\Sigmax_{t/2}$ to try to understand further terms in an asymptotic expansion.
\end{remark}

To continue, we need to better understand the asymptotic behavior of $m_t$.

\begin{theorem}\label{THM:Tightness}
Let $x$ and $y$ be localizable and such that all minimal geodesics from $x$ to $y$ are strictly normal, and let $\{m_t : t\in(0,1]\}$ be the family of probability measures defined by \eqref{Eqn:M_tDef}. Then this family is precompact in the topology of weak convergence, and in particular, for any sequence of times $t_n\rightarrow 0$, there is a subsequence $t_{n(i)}$ such that $m_{t_{n(i)}}$ converges weakly to a probability measure $m_0$ supported on $\Gamma$.
\end{theorem}

\begin{proof}
By definition, the $m_t$ are supported on $\bar{\Gamma}_{\eps}$, which is compact (and which is thus a compact, separable metric space, when equipped with the metric inherited from $M$), so $\{m_t : t\in(0,1]\}$ is tight. Thus the pre-compactness of the $m_t$ (and resulting sequential compactness) follows from Prokhorov's theorem. Now let $U_n$ be the subset of $\bar{\Gamma}_{\eps}$ consisting of points $x$ such that $d(\Gamma,x)>1/n$. It's clear from Laplace asymptotics and \eqref{Eqn:M_tLim}  that, for any $n$, $m_t(U_n)\rightarrow 0$ as $t\rightarrow 0$ (indeed, this is implicit in the fact that all of the heat kernel asymptotics we've been considering are valid with respect to $\Gamma_{\eps}$ for any sufficiently small $\eps$). Since $U_n$ is open as a subset of $\bar{\Gamma}_{\eps}$, the portmanteau theorem implies that $m_0(U_n)=0$ for any limiting measure $m_0$. Since $n$ is arbitrary, this shows that $m_0(\Gamma)=1$ for any limiting measure $m_0$.
\end{proof}

In the real-analytic case, one can say more, including that $m_t$ converges. However, such results are most naturally discussed in connection with the bridge process, and we refer the reader to Theorem \ref{THM:RA-LLN} (proven in Section \ref{Sect:RA-Methods}) for the convergence, and Sections \ref{Sect:A_n-LLN} and \ref{Sect:MB-LLN} for the determination of $m_0$ in the $A$-type and Morse-Bott cases.

Observe that if $z\in\Gamma$, we have $d(z,y)=\frac{1}{2}d(x,y)$.  Also, both $d(\cdot,y)$ and $Z_y d(\cdot,y)$, for any smooth vector field $Z$ near $y$, are continuous, bounded functions on $\Gamma_{\eps}$. Thus, since the cumulant can be written as a polynomial in products of such functions, if $m_0$ is a limiting measure and $t_n\searrow 0$ is a sequence of times corresponding to this $m_0$, we have
\begin{equation}\begin{split} \label{Eqn:Moments}
\lim_{n\rightarrow \infty} t_n Z_y \log p_{t_n}(x,y) &= -\frac{1}{2}d(x,y) \E^{m_0}\lb Z_yd(\cdot,y) \rb , \\
\lim_{n\rightarrow \infty} t^2_n Z'_yZ_y \log p_{t_n}(x,y) &=
\frac{d^2(x,y)}{4}\Cov^{m_0} \Big(Z_yd(\cdot,y), Z'_yd(\cdot,y) \Big) , \\
\text{and}\quad \lim_{n\rightarrow \infty} t^N_n Z_y^N \cdots Z^1_y \log p_{t_n}(x,y) &=
\lp -\frac{d(x,y)}{2} \rp^N  \kappa^{m_0} \Big(Z^1_yd(\cdot,y),\ldots,  Z^N_yd(\cdot,y) \Big) .
\end{split}
\end{equation}
Of course, if $m_0$ is a point mass, which is always the case if $y\not\in\Cut(x)$ and is possible also when $y\in \Cut(x)$, then all of the cumulants after the first (the expectation) are zero. Nonetheless, the rate at which the variance goes to zero distinguishes the cut locus, as we now discuss.

\subsection{Characterizing the cut locus}\label{Sect:CutChar}

We know that if $x$ and $y$ are not cut, $t Z'_yZ_y \log p_t(x,y)$ converges as $t\searrow 0$. Our goal here is to prove that, conversely, if $x$ and $y$ are in the ``non-abnormal'' cut locus, then for any sequence of times going to zero, there is a subsequence $t_n$ and a vector $Z\in T_yM$ such that $t_n Z^2_y \log p_{t_n}(x,y)$ blows up at rate at least $t^{-1/2d}$. We begin with two preliminary lemmas.

\begin{lemma}\label{Lem:Diffeo}
Let $x$ and $y$ be localizable and such that all minimal geodesics from $x$ to $y$ are strictly normal. Then the map $\Gamma\rightarrow T^*_yM$ that takes $z\in \Gamma$ to $d_y d(z,y)$ (that is, the differential of $d(z,\cdot)$ at $y$) is a diffeomorphism onto its image.
\end{lemma}
\begin{proof}
Recall that normal geodesics are given as the projections of curves in $T^*M$ under the Hamiltonian flow. In particular, let $e^{sH}:T^*M\rightarrow T^*M$ denote the time $s$ Hamiltonian flow. Recall that $d(z,y)$ is constant for $z\in\Gamma$, so we can write this distance as $d(\Gamma,y)$. Since no point in $\Gamma$ is in $\Cut(y)$, for each $z\in\Gamma$ there is a unique $\lambda_z\in T^*_yM$ such that (the projection of) $e^{sH}\lambda_z$ for $s\in[0,d(\Gamma,y)]$ is the (unique) unit-speed minimal geodesic from $y$ to $z$, and moreover, the map taking $z\in\Gamma$ to $\lambda_z$ is a diffeomorphism onto its image. Since the Hamiltonian flow is reversible, for $z\in\Gamma$, the unique unit-speed minimal geodesic from $z$ to $y$ has terminal co-vector $-\lambda_z$. It follows from  \cite[Corollary 8.43]{ABB_2018} that for $z\in\Gamma$ we have $d_y d(z,y)=-\lambda_z$. This proves the lemma.
\end{proof}

\begin{lemma}\label{Lem:PointMass}
Let $x$ and $y$ be localizable and such that $y\in\Cut(x)$ and all minimal geodesics from $x$ to $y$ are strongly normal. Suppose there is a sequence of times $t_n\searrow 0$ such that $m_{t_n}$, converges to a point mass $m_0=\delta_{z_0}$, for some $z_0\in\Gamma$. Then $x$ and $y$ are conjugate along $\gamma_{z_0}$.
\end{lemma}
\begin{proof}
Recalling that $\Gamma$ parametrizes the minimal geodesics from $x$ to $y$, \cite[Theorem 8.72]{ABB_2018} implies that if $z_0$ in the only point in $\Gamma$, then $\gamma_{z_0}$ is conjugate. Thus, we are left with the situation when there is at least one other point, which we denote $w_0$, in $\Gamma$. Further, it is enough to show that if $\gamma_{z_0}$ is not conjugate, then there is no sequence $t_n$ such that $m_{t_n}$ converges to $\delta_{z_0}$. So assume that $\gamma_{z_0}$ is not conjugate. Then there exist coordinates $z_1,\ldots,z_d$ defined on a neighborhood $U$ of $z_0$ such that $h(z)=\sum_{i=1}^dz_i^2$ on $U$. Also, there exist coordinates $w_1,\ldots,w_d$ defined on a neighborhood $V$ of $w_0$ such that $h(w)\leq\sum_{i=1}^d w_i^2$ on $V$, and we can assume that $U$ and $V$ are disjoint. Then we have that
\[
\frac{m_t(V)}{m_t(U)} \geq \frac{\int_V \lp c_0(x,w)c_0(w,y)+O(t) \rp\frac{\diff\mu}{\diff w}(w) e^{-\sum_{i=1}^d w_i^2/t} \, \diff w}{\int_U \lp c_0(x,z)c_0(z,y)+O(t) \rp\frac{\diff\mu}{\diff z}(z) e^{-\sum_{i=1}^d z_i^2/t} \, \diff z}
\]
and it follows from the basic Laplace asymptotics of \cite{EAndK} that the right-hand side is bounded from below by a positive constant as $t\rightarrow 0$. It follows that there is no sequence $t_n$ such that $m_{t_n}$ converges to $\delta_{z_0}$, as desired.
\end{proof}

We can now establish the basic estimate for the variance in \eqref{Eqn:Moments} on the cut locus.

\begin{theorem}\label{THM:VarRate}
Let $x$ and $y$ be localizable and such that all minimal geodesics from $x$ to $y$ are strongly normal, and $y\in \Cut(x)$. For any sequence of times going to 0, there is a subsequence $t_n$ and a vector $Z\in T_yM$ such that, for any smooth extension of $Z$ to a neighborhood of $y$,
\[
\liminf_{n\rightarrow \infty} t_n^{1-\frac{1}{2d}} \lb t_n Z_y Z_y \log p_{t_n}(x,y) \rb >0 ,
\]
and the value on the left-hand side depends only on $Z$ and not on the choice of extension.
\end{theorem}

\begin{proof}
By Theorem \ref{THM:Tightness}, we know that for any sequence of times, after perhaps passing to a subsequence, the family $m_{t_n}$ converges to a limiting probability measure, which we denote by $m_0$, supported on $\Gamma$. Then, in light of \eqref{E:First2moments}, in order to prove Theorem \ref{THM:VarRate}, it is sufficient to show that there is some $Z\in T_yM$ such that
\begin{equation}\label{Eqn:BlowUp}
\liminf_{n\rightarrow \infty} t_n^{-\frac{1}{2d}} \Var^{m_{t_n}}\lp Z_y d(\cdot,y) \rp >0
\end{equation}
(noticing also that the quantity on the left only depends on $Z$, and not the extension).
There are two cases, depending on whether or not $m_0$ is a point mass, which we now treat. Further, in order to simplify the notation, we will simply write $m_t$ and $t\rightarrow 0$ in place of $m_{t_n}$ and $n\rightarrow \infty$, with the understanding that we always let $t$ go to zero along an appropriate sequence of times, corresponding to $m_0$.

Suppose $m_0$ is not a point mass (that is, it is not deterministic). Then \ref{Lem:Diffeo} implies that the pushforward under the map $z\mapsto d_y d(z,y)\in T^*_yM$ is also not a point mass. Thus, because of the perfect pairing between $T_yM$ and $T^*_yM$, there exists some $Z\in T_yM$ such that the random variable $Z_y d(\cdot,y)$ is, under $m_0$, not a.s./ constant, and thus, for this sequence of times and this $Z$, 
\[
\liminf_{n\rightarrow \infty} \Var^{m_{t_n}}\lp Z_y d(\cdot,y) \rp >0 ,
\]
which certainly implies \eqref{Eqn:BlowUp}.

The more interesting case is when $m_0$ is a point mass, which we now assume. In particular, we let $z_0\in \Gamma$ be such that $m_0=\delta_{z_0}$. By Lemma \ref{Lem:PointMass}, we know that the minimal geodesic from $x$ to $y$ through $z_0$ is conjugate.  Thus,  by \cite{BBCN-IMRN}, we can choose coordinates $(z_1,\ldots,z_d)$ around $z_0$ so that $h(z)=h_{x,y}(z_1,\ldots,z_d)\leq z_1^4+\sum_{i=2}^d z_i^2$ on $U$, where  $U\in\R^d$ is a neighborhood of the origin contained in (the image of) $\Gamma_{\eps}$. If we let $u(z) = c_0(x,z)c_0(z,y)\frac{\diff\mu}{\diff z}(z)$ on $U$, then $u$ is a smooth, positive function on $\overline{U}$, so that it is bounded above and below by positive constants, say $C$ and $1/C$ for some $C>0$, and we have that 
\[\begin{aligned}
\phi_t(z) &= \frac{1}{\zeta(t)}\Ind_{U}(z) u(z) e^{-h(z)/t} \,\diff z \\
\text{and}\quad \zeta(t) &= \int_U u(z) e^{-h(z)/t} \,\diff z 
\end{aligned}\]
is a family of probability densities (for $t>0$) on $\R^d$ supported on $U$. Let $\tilde{m}_t$ be the probability measures determined by the densities $\phi_t(z)$ (and note that $\tilde{m}_t$ is $m_t$ conditioned to be in $U$).

We now show that we can restrict our attention to $\tilde{m}_t$. For fixed $Z$, for ease of notation, we temporarily let $f=Zd(\cdot,y)$ and $\alpha=\E^{m_t}\lb Zd(\cdot,y) \rb$. Then we have
\[\begin{split}
\Var^{m_t}\lp f \rp &= \E^{m_t}\lb \lp f -\alpha \rp^2\rb \\ 
&\geq \E^{m_t}\lb \Ind_U \lp f -\alpha \rp^2\rb \\ 
&= m_t(U)\E^{\tilde{m}_t}\lb \lp f -\alpha \rp^2\rb \\ 
& \geq m_t(U) \Var^{\tilde{m}_t}\lp f \rp ,
\end{split}\]
where we've used that $(f-\alpha)^2$ is non-negative and the variance of a random variable is the best $L^2$-approximation by a constant. Since $m_t(U)\rightarrow 1$ by assumption, it is enough to show that  \eqref{Eqn:BlowUp} holds for $\tilde{m}_t$, rather than for $m_t$ itself.

We now recall some basic facts about entropy. If $\varphi$ is a probability density function on $\R^d$, we let
\[
H(\varphi) = -\int_{\R^d} \varphi(x)\log\varphi(x)\, \diff x = \E^{\varphi}\lb -\log\varphi\rb
\]
be the (differential) entropy. Further, let $Q(\varphi)$ be the covariance matrix of $\varphi$ (or equivalently, the covariance of the identity function under the probability measure on $\R^d$ determined by $\varphi$). Then we have the entropy inequality
\[\begin{split}
H(\varphi) &\leq \frac{1}{2}\log\lb (2\pi e)^d \det Q(\varphi)\rb \\
\text{which implies}\quad \det Q(\varphi) &\geq \frac{1}{(2\pi e)^d}e^{2 H(\varphi)} .
\end{split}\]
(See \cite{Rioul} for instance.)

Returning to the case at hand, we have a one-parameter family of probability densities $\phi_t$, given by the above, where we view $U$ as a subset of $\R^d$. We can estimate the entropy of $\phi_t$ by 
\[\begin{split}
H(\phi_t) &=  \E^{\phi_t}\lb -\log\phi_t\rb \\
&= \E^{\phi_t}\lb \frac{h(z)}{t}- \log\lp u(z) \rp \rb + \E^{\phi_t} \lb \log \zeta(t)\rb  \\
&\geq \log \zeta(t) +\log C ,
\end{split}\]
where we've used that $h(z)$ is positive. Next, we have that
\[
\zeta(t) \geq  \int_U u(z) e^{-(z_1^4+z_2^2+\cdots+z_d^2)/t} \,\diff z
\]
and so Laplace integral asymptotics (see \cite{EAndK}) show that there is a positive constant $C'$ such that
\[
\log \zeta(t) \geq \log \lp C' \lp t^{1/4} + \prod_{i=2}^d t^{1/2}\rp \rp=\log \lp C' t^{\frac{d}{2}-\frac{1}{4}}\rp
\]
for all sufficiently small $t$ (so $\log \zeta(t)$ can go to $-\infty$, but at a controlled rate). Using this in the entropy inequality, we find that, for some constant $C''>0$,
\[
\det Q(\phi_t) > C''  t^{d-\frac{1}{2}} .
\]

Now  $\det Q(\phi_t)$ is the product of the $d$ eigenvalues of the covariance of $\phi_t$, and it follows that, for all sufficiently small $t$, there is at least one eigenvalue which is greater than $C''' t^{1-\frac{1}{2d}}$. Since we can choose the corresponding eigenvector to be a unit vector in the $z_1,\ldots,z_d$ coordinates, by compactness, there exists a linear random variable $v=c_1 z_1+\cdots + c_d z_d \in \R^d \simeq T_{z_0}M$ with $c_1^2+\cdots+z_d^2=1$ such that $\liminf_{t\rightarrow 0} t^{1-\frac{1}{2d}}\Var^{\tilde{m}_{t}}(v)>0$. By Lemma \ref{Lem:PointMass}, this implies that there is a vector $Z\in T_yM$ such that
\[
\liminf_{n\rightarrow \infty} t_n^{-\frac{1}{2d}} \Var^{m_{t_n}}\lp Z_y d(\cdot,y) \rp >0 ,
\]
which concludes the proof.
\end{proof}

If we wish to consider sets of vector fields, then because we use Lie derivatives, we need to control the size of their derivatives as well as the size of the vectors themselves (this will be especially true in the next section). With this in mind, we say that a set $\SZ$ of (smooth) vector fields defined on a neighborhood of a compact set $K$ is $C^m$-bounded on $K$ if any $z\in K$ has a neighborhood $U$ such that, for any system of coordinates on $U$, the $C^m$-norm of $Z\in\SZ$, restricted to $U$ and with respect to this system of coordinate, is uniformly bounded over $\SZ$. Note that if the $C^m$-norm of $Z\in\SZ$ restricted $U$ is uniformly bounded with respect to one system of coordinates on $U$, then it is also uniformly bounded with respect to any other system of coordinates on $U$ which extends to a neighborhood of $\overline{U}$. 

We now have the natural context in which to state and prove the characterization of the (non-abnormal) sub-Riemannian cut locus, which was given as Corollary \ref{Cor:CutLocus}.

\begin{proof}[Proof of Corollary \ref{Cor:CutLocus}]
If $y\not\in \Cut(x)$, then the $C^1$-boundedness implies that $Z_yZ_yd^2(x,y)$ is uniformly bounded for $Z\in\SZ$, which in light of \eqref{Eqn:DistMollified}, completes the proof in that case. If $y\in \Cut(x)$, let $t_n$ be any sequence of times going to 0. After possibly passing to a subsequence, the fact that $\SZ|_{T_yM}$ contains a neighborhood of the origin means that there is some $c>0$ (possibly 1) such that $cZ\in\SZ$, where $Z$ is the vector (field) from Theorem \ref{THM:VarRate}. By linearity of differentiation, we see that any sequence of times going to zero has a subsequence $t_n$ such that
\[
\lim_{n\rightarrow\infty} \lb \sup_{Z\in \Z} t_n Z_y Z_y \log p_{t_n}(x,y) \rb =\infty ,
\]
and this gives the desired result.
\end{proof}

Finally, in the Riemannian case, there is no need to avoid abnormal minimizers, and we can work with covariant derivatives.

\begin{corollary}
Let $M$ be a (possibly-incomplete) Riemannian manifold, and $x$ and $y$ any two localizable points in $M$. Then $y\not\in \Cut(x)$ if and only if
\[
\limsup_{t\searrow 0} \lb \sup_{\substack{Z\in T_yM \\ \|Z\|=1}} t\lab \nabla^2_{Z,Z} \log p_t(x,y)\rab \rb <\infty ,
\]
and $y\in \Cut(x)$ if and only if 
\[
\lim_{t\searrow 0} \lb \sup_{\substack{Z\in T_yM \\ \|Z\|=1}} t \nabla^2_{Z,Z} \log p_t(x,y) \rb =\infty ,
\]
where $\nabla^2$ is the (covariant) Hessian, acting on the $y$-variable.
\end{corollary}

\subsection{Sheu-Hsu-Stroock-Turetsky type bounds}

For a compact Riemannian manifold $M$, with the Riemannian volume and Laplace-Beltrami operator, a result of Stroock-Turetsky and Hsu, improving an earlier result of Sheu, is that, for each $N$, there exists a constant $C_N$ depending on $M$ and $N$ such that, for all $(t,x,y)\in (0,1]\times M\times M$,
\begin{equation}\label{Eqn:Sheu}
| \nabla^N \log p_t(x,y) |\leq C_N \lp \frac{d(x,y)}{t}+\frac{1}{\sqrt{t}}\rp ^N
\end{equation}
which then implies that, for each $N$, there exists a constant $D_N$ depending on $M$ and $N$ such that, for all $(t,x,y)\in (0,1]\times M\times M$,
\[
| \nabla^N p_t(x,y) |\leq D_N \lp \frac{d(x,y)}{t}+\frac{1}{\sqrt{t}}\rp ^N p_t(x,y) .
\]
(In both cases, the differentiation is in the $y$-variable.)

Note that the $\frac{1}{\sqrt{t}}$ term is only relevant near the diagonal, since on any set where $d(x,y)$ is bounded from below by a positive constant, the $\frac{d(x,y)}{t}$ term dominates. On a (strictly) sub-Riemannian manifold, the diagonal is abnormal, and uniform bounds even for the heat kernel itself appear not to be generally known.

In light of this, we see that the natural generalization of  to sub-Riemannian manifolds, using the Molchanov approach as above, is the following.

\begin{theorem}\label{THM:SRBounds}
Let $\K\in M^2\setminus \D$ be a compact localizable subset such that  all length minimizers between pairs $(x,y)\in \K$ are strongly normal.
Let $N$ be a positive integer and  let $\SZ$ be a set of vector fields on a neighborhood of $\pi_2\lp \K\rp$ which is $C^{N-1}$-bounded. Then there exist constants $C_N$ and $D_N$, depending on $M$, $\K$, $\SZ$, and $N$, such that, for all $t\in (0,1]$ and $(x,y)\in \K$, and for all $Z^1,\ldots, Z^N \in \SZ$, we have
\[\begin{aligned}
\lab Z^N\cdots Z^1 \log p_t(x,y) \rab &\leq \frac{C_N}{t^N}  \\
\text{and}\qquad  \lab Z^N\cdots Z^1 p_t(x,y) \rab & \leq \frac{D_N}{t^N}p_t(x,y) ,
\end{aligned}\]
where, as usual, the derivatives act on the $y$-variable.
\end{theorem}

Note that, on a set $\K$ as in the theorem, $d(x,y)$  is bounded from above and below by positive constants, and thus, compared to the Riemannian analogues above, $d(x,y)$ does not appear, instead being absorbed into the $C_N$ and $D_N$.

\begin{proof}
As already noted in the proof of Theorem  \ref{THM:Cumulants}, equation \eqref{Eqn:Combos} shows that $Z^N\cdots Z^1 \log p_t(x,y)$ can be expanded in powers of $1/t$, up to the $N$th power, with coefficients given in terms of (products of) the expectations of $\Sigmax_{t/2}(z,y)$ and $d^2(z,y)$ and their first $N$ derivatives (with respect to the $Z^i$). Further, $\K$ is chosen so that, for small enough $\eps$, $\Sigmax_{t/2}(z,y)$ and $d^2(z,y)$ are smooth on a neighborhood of the compact set
$$
\Gamma^r(\K)=
\left\{
(z,y)\mid \exists x\in M \st (x,y)\in \K, z\in \Gamma_\varepsilon(x,y)
\right\}.
$$ 
(See Lemma~\ref{L:fat_compacts}.) 
This, plus the definition of $C^{N-1}$-boundedness, implies that these expectations can be uniformly bounded over $\K$ (recall that for each $(x,y)$, the corresponding probability measure is supported on $\Gamma_{\eps}(x,y)$). This gives the result for $\log p_t(x,y)$. Then the result for $p_t$ itself follows from this and Fa\`a di Bruno's formula for the exponential.
\end{proof}

The bounds on derivatives of $p_t$ itself should be compared to those of Proposition \ref{P:universal_bounds}. Indeed, using the upper bound on $p_t$ from Proposition \ref{P:universal_bounds} in Theorem \ref{THM:SRBounds} implies the spatial derivative bounds in Proposition \ref{P:universal_bounds}. At a single pair $(x,y)$, one should compare with Corollary \ref{C:MolchanovLaplace}.

\section{Law of large numbers}\label{Sect:LLN}

We finally turn to a more stochastic topic, the law of large numbers (LLN) for the bridge process associated to the diffusion $X_t$, which is essentially the ``leading term'' of the small-time asymptotics of the bridge process. We begin with the basic definitions needed to state and prove the results.

Let $\Omega^{[0,t]}_M$ be the space of continuous paths $\omega_{\tau}: [0,t]\rightarrow M$, for some $t\in(0,\infty)$. We define a metric
\[
d_{\Omega^{[0,t]}_M}\lp \omega,\tilde{\omega}\rp = \frac{\sup_{0\leq \tau\leq t}d(\omega_{\tau},\tilde{\omega}_{\tau})}{1+\sup_{0\leq \tau\leq t}d(\omega_{\tau},\tilde{\omega}_{\tau})} 
\]
on $\Omega^{[0,t]}_M$. This metric gives $\Omega^{[0,t]}_M$ the topology of uniform convergence. (Note also that this topology makes $\Omega^{[0,t]}_M$ into a Polish space, though realized with a different, but equivalent, metric if $M$ is incomplete.) We let $\Omega_M= \Omega^{[0,1]}_M$ and note that the map $\Omega^{[0,t]}_M\rightarrow \Omega_M$, $\omega_{\tau}\mapsto \omega_{\tau/t}$ is an isometry.

For $x$ and $y$ in $M$ and $t\in(0,\infty)$, we can consider the bridge process $X_{\tau}^{x,y,t}$, which is the diffusion started from $x$, conditioned to be at $y$ at time $t$. More concretely, this process is determined by its finite-dimensional distributions, given in terms of $p_t$ by
\[
\frac{1}{p_t(x,y)}\lb p_{s_1}(x,z_1)\cdot p_{s_2-s_1}(z_1,z_2)\cdot \ldots \cdot p_{s_k-s_{k-}1}(z_{k-1},z_k)\cdot p_{t-s_k}(z_k,y) \rb 
\]
for any finite collection of times $0<s_1<\cdots<s_k<t$ and points $z_1,\ldots,z_k\in M$. (We see that this distribution is smooth in the $s_i$ and $z_i$, which also implies the strong Markov property for  $X_{\tau}^{x,y,t}$.)
The law of $X_{\tau}^{x,y,t}$ is a probability measure $\hat\mu^{x,y,t}$ on $\Omega^{[0,t]}_M$. We let $\mu^{x,y,t}$ be the pushforward of $\hat{\mu}^{x,y,t}$ to $\Omega_M$, that is, $\mu^{x,y,t}$ is the law of the bridge process rescaled to take unit time. For fixed $x$ and $y$, this gives a family of probability measures on $\Omega_M$, and we are interested in the weak convergence of these measures as $t\searrow 0$. A result determining such convergence is generally called a law of large numbers for the bridge process.

Again for fixed $x$ and $y$, recall that $\Gamma$ parametrizes the set of minimal geodesics from $x$ to $y$. More precisely, for any $z\in \Gamma$, let $\gamma^z_{\tau}$  be the constant speed geodesic going from $x$ to $y$ in unit time, through $z$ (so that $\gamma^z_{1/2}=z$). Then this gives an embedding of $\Gamma$ into $\Omega_M$, and we write the image as $\tilde{\Gamma}$.

\subsection{Extension to the cut locus}
Bailleul and Norris \cite{NorrisB} proved the law of large numbers in the case when there is a single minimizer from $x$ to $y$. In that case, if $z$ is the unique point in $\Gamma$, $\mu^{x,y,t}$ converges (weakly) to a point mass at $\gamma^z$ as $t\searrow 0$. The idea of the Molchanov technique in this context is to determine a law of large numbers on the (non-abnormal) cut locus by conditioning on the midpoint of the bridge and ``gluing together'' the result of  Bailleul and Norris for the first and second halves of the path.

The connection with the previous heat kernel asymptotics is as follows. For each $t$, let $\nu_t$ be the pushforward of $\mu^{x,y,t}$ under the map $\Omega_M\rightarrow M$, $\omega_{\tau} \mapsto \omega_{1/2}$, so that $\nu_t$ is the distribution of the midpoint of the bridge process.

\begin{lemma}\label{Lem:Nu}
Let $x$ and $y$ be points of $M$, satisfying either localization condition, such that all minimizers from $x$ to $y$ are strongly normal, let $\nu_t$ be as above, and let $m_t$ be defined by \eqref{Eqn:M_tDef}. Then for a sequence of times $t_n$ going to 0, $\nu_{t_n}$ converges if and only if $m_{t_n}$ does, in which case they have the same limit.
\end{lemma}
\begin{proof}
In terms of $p_t$, the density of $X_{\tau}^{x,y,t}$ at time $t/2$ is given by
\[
\frac{\diff \nu_t}{\diff\mu}(z) = \frac{p_{t/2}(x,z)p_{t/2}(z,y)}{p_t(x,y)} .
\]
Let $f:M\rightarrow \bR$ be any continuous, bounded function. Then from Theorem \ref{THM:TrueLocal} and Corollary \ref{C:MolchanovV1}, we have
\[\begin{split}
p_t(x,y) &= e^{-\frac{d(x,y)^2+o(1)}{4t}} \\
\text{and}\quad p_t(x,y) &= \int_{\Gamma_\varepsilon} p_{t/2}(x,z) p_{t/2}(z,y)\diff\mu(z)+O\lp e^{-\frac{d(x,y)^2+{\frac{\eps^2}{2}}}{4t}}\rp
\end{split}\]
which imply
\[\begin{split}
 \int_{\Gamma_\varepsilon} p_{t/2}(x,z) p_{t/2}(z,y)\diff\mu(z) &= e^{-\frac{d(x,y)^2+o(1)}{4t}} \\
 \text{and then}\quad \frac{1}{p_t(x,y)} & = \frac{1}{\int_{\Gamma_\varepsilon} p_{t/2}(x,z) p_{t/2}(z,y)\diff\mu(z)}
 \lb 1+ O\lp e^{-\frac{\eps^2/4}{4t}}  \rp\rb .
\end{split}\]
From this and another application of Corollary \ref{C:MolchanovV1}, we see that (recalling that $f$ is bounded)
\[\begin{split}
\E^{\nu_t}\lb f\rb &= \E^{\nu_t}\lb f\Ind_{\Gamma{\eps}}\rb + \E^{\nu_t}\lb f\Ind_{M\setminus \Gamma{\eps}}\rb  \\
 &= \frac{\int_{\Gamma_\varepsilon} f(z) p_{t/2}(x,z) p_{t/2}(z,y)\diff\mu(z)}{\int_{\Gamma_\varepsilon} p_{t/2}(x,z) p_{t/2}(z,y)\diff\mu(z)}\lb 1+ O\lp e^{-\frac{\eps^2/4}{4t}}  \rp \rb +O\lp e^{-\frac{\eps^2/4}{4t}}\rp
\end{split}\]
Comparing with Equation \eqref{Eqn:M_tLim}, we see that $\E^{m_t}\lb f\rb-\E^{\nu_t}\lb f\rb\rightarrow 0$ as $t\rightarrow 0$. But this proves the result.
\end{proof}

Suppose that for some sequence of times $t_n\searrow 0$, $m_{t_n}\rightarrow m_0$ for some $m_0$ supported on $\Gamma$ (recall that $\{m_t : t\in(0,1]\}$ is subsequentially compact). Then $m_0$ maps to a probability measure $\tilde{m}_0$ on $\tilde{\Gamma}$ under the inclusion of $\Gamma$ into $\Omega_M$. (Of course, $m_0$ can be recovered from $\tilde{m}_0$ by the inverse map $\tilde{\Gamma}\rightarrow \Gamma$.)

As noted, we wish to ``glue together'' the LLN on the two halves of paths from $x$ to $y$. Obviously, this requires the LLN for the two halves, which we now give a version of. In particular, if there is a unique minimal geodesic from $x$ to $z$, we let $g^{x,z}=g^{x,z}_{\tau}$ be that unique geodesic traveling from $x$ to $z$ in unit time.

\begin{lemma}\label{Lem:UniformBN}
Let $x$ and $y$ be points of $M$, satisfying either localization condition, such that all minimizers from $x$ to $y$ are strongly normal. Then there exists $\eps>0$ such that $\KR_1=\lc (x,z): z\in \Gamma_{\eps}\rc$ and $\KR_2=\lc (z,y): z\in \Gamma_{\eps}\rc$ are both compact and localizable, and such that there is a unique, strongly normal minimizer from $x$ to $z$ and from $z$ to $y$ for all $z$ in a neighborhood of $\Gamma_{\eps}$. Further, we have that $\mu^{x,z,t}$ converges to the point mass at $g^{x,z}$ and $\mu^{z,y,t}$ converges to the point mass at $g^{z,y}$ as $t\searrow 0$, uniformly over $z\in \Gamma_{\eps}$.
\end{lemma}

Here the uniformity is understood with respect to the L\'evy-Prokhorov metric on probability measures on $\Omega_M$, which metrizes weak convergence. But since the limiting measure is a point mass, this simplifies to saying that for every $\delta$, there exists $t_0>0$ such that
\[
\mu^{x,z,t}\lp \dist_M\lp \omega_{\tau}, g^{x,z}_{\tau}\rp<\delta \text{ for all $\tau\in[0,1]$} \rp >1-\delta
\]
for all $t<t_0$ and all $z \in\Gamma_{\eps}$, and analogously for $\mu^{z,y,t}$ and $g^{z,y}$.

That this holds pointwise for each $z$, under either localization condition, follows from Theorem 1.3 of Bailleul and Norris \cite{NorrisB}, under the assumption that $Z_0$ is in the span of $Z_1,\ldots, Z_k$. The uniformity is then a consequence of the smoothness of the heat kernel and compactness. The result, without this restriction on $Z_0$, also follows from what is essentially a space-time version of an argument from \cite{HsuIncomplete}. The situation is similar, as are the techniques, to that of Theorem \ref{THM:StrongCond}, and we again relegate a brief proof to Appendix \ref{App:Local}.

We can now prove our law of large numbers for the bridge process, as given in Theorem \ref{THM:LLN}.

\begin{proof}[Proof of Theorem \ref{THM:LLN}]

Assume that $m_{t_n}$ converges to $m_0$. For simplicity of notation, we will assume that $m_t\rightarrow m_0$, with the general case following by restricting to a subsequence. We know that, under $\mu^{x,y,t}$, $\omega|_{[0,t/2]}$ and $\omega|_{[t/2,t]}$ are conditionally independent given $\omega_{t/2}$, by the Markov property. Thus, we can decompose (or disintegrate) $\mu^{x,y,t}$ by first drawing $z$ from $\nu_t$ and then drawing $\omega|_{[0,t/2]}$ and $\omega|_{[t/2,t]}$ independently from $\mu^{x,z,t/2}$ and $\mu^{z,y,t/2}$, respectively.

Let $F:\Omega_M\rightarrow \bR$ be Lipschitz continuous and bounded. Let $f:M\rightarrow \bR$ be as follows. For any $z\in \Gamma_{\eps}$, let $\gamma^z$ be the (possibly) broken geodesic which travels the minimal geodesic from $x$ to $z$ at constant speed in time $1/2$, and then travels the minimal geodesic from $z$ to $y$ at constant speed in time $1/2$. Note that this is well defined and agrees with our earlier definition when $z\in\Gamma$. On $\Gamma_{\eps}$, let $f(z)=F(\gamma^z)$; since this is bounded and continuous (recall that $\gamma^z$ is continuous in $z$ by the smoothness of the exponential map), we can extend it to a bounded and continuous function on $M$ in an arbitrary way. Now choose $\delta>0$. By weak convergence of $m_t$ and Lemma \ref{Lem:Nu}, for all small enough $t$, we have
\[
\lab\E^{\nu_t}\lb f\rb - \E^{m_0}\lb f\rb \rab < \delta .
\]
Next, by Lemma \ref{Lem:UniformBN}, applied to the ``two halves'' of $\gamma^z$ (that is, for $\mu^{x,z,t/2}$ and $\mu^{z,y,t/2}$), and the fact that $F$ is Lipschitz and bounded, we have that for all small enough $t$,
\[
\lab \E^{\mu^{x,z,t/2} \otimes \mu^{z,y,t/2}}\lb F \rb- f(z)\rab <\delta
\]
 uniformly for all $z\in \Gamma_{\eps}$. But in light of the above decomposition of $\mu^{x,y,t}$, these inequalities imply that, for all sufficiently small $t$,
 \[
\lab \E^{\mu^{x,y,t}}\lb F \rb- \E^{m_0}\lb f\rb\rab <2\delta .
 \]
 Recalling the definition of $f$ and that $\delta$ is arbitrary, it follows from the portmanteau theorem that $\mu^{x,y,t}\rightarrow \tilde{m}_0$.
 
For the other direction, assume that $\mu^{x,y,t_n}$ converges to some $\mu_0$. Then $\nu_{t_n}$ converges to the pushforward of $\mu_0$ under the evaluation at $\tau=1/2$, and in particular, $m_{t_n}$ also converges, by Lemma \ref{Lem:Nu}. Then applying the part of the theorem (just proven) when $m_{t_n}$ converges to some $m_0$ shows that $\mu_0=\tilde{m}_0$. This completes the proof.
\end{proof}

\subsection{Real analytic methods}\label{Sect:RA-Methods}
We can give more precise results in the real-analytic case. By this, we mean the case when $h_{x,y}$ has a real-analytic normal form in a neighborhood of any point of $\Gamma$. This includes the case when $M$ (and its sub-Riemannian structure) are real-analytic, as we show below, but does not require this. For example, if a minimizing geodesic $\gamma$ from $x$ to $y$ is non-conjugate, then we have already seen (essentially via the Morse lemma) that if $z\in\Gamma$ is the midpoint of $\gamma$, then there are coordinates $(u_1,\ldots,u_d)$ in a neighborhood of $z$ such that $h_{x,y}=d^2(x,y)/4 +\sum_{i=1}^d u_i^2$ on this neighborhood. Certainly, $h_{x,y}$ is real-analytic in this coordinate system, without any assumptions that the sub-Riemannian structure itself is real-analytic.

Our approach is  essentially a direct application of results from Laplace asymptotics to the behavior of $m_t$. Hsu \cite{HsuBridge} already gave this application for Brownian motion on real-analytic Riemannian manifolds (after having established Theorem \ref{THM:LLN} in the Riemannian case via large deviations), so we simply summarize his results, for completeness and to emphasize that they hold in the present sub-Riemannian context as well.

Suppose that every $z\in\Gamma$ is contained in a coordinate patch such that $h_{x,y}$ is real-analytic (in the coordinates--- that is, there exists a local real-analytic stiffening of the structure with this property). Then for any $z\in \Gamma$, there is a rational $\alpha(z)\in \lb d/2,d-(1/2)\rb$, a non-negative integer $\beta(z)$, and an $r_0>0$ such that, for any open ball $B(z,r)$ around $z$ with radius $r\in (0,r_0)$, we have
\begin{equation}\label{Eqn:RA-Leading}
\int_{B(z,r)}e^{-\frac{h_{x,y}(u)-h_{x,y}(z)}{4t}}\diff \mu(u) \sim \frac{C}{t^{\alpha}}\lp \log \frac{1}{t}\rp^{\beta}
\end{equation}
where $C$ is some positive constant depending on $z$ and $r$. If we put the lexigraphical order on $\mathbb{Q}\times \mathbb{Z}$, so that $(\alpha_1,\beta_1)< (\alpha_2,\beta_2)$ if either $\alpha_1<\alpha_2$ or $\alpha_1=\alpha_2$ and $\beta_1<\beta_2$, then we see that $(\alpha(z_1),\beta(z_1))< (\alpha(z_2),\beta(z_2))$ means that the integral \eqref{Eqn:RA-Leading} around $z_2$ dominates the integral around $z_1$ as $t\searrow 0$. Moreover, the resulting map $\Gamma \rightarrow \mathbb{Q} \times \mathbb{Z}$ is upper semi-continuous, and since $\Gamma$ is compact, this means that $(\alpha(z),\beta(z))$ attains its maximum, which we denote $(\alpha_m,\beta_m)$. We let
\[
\Gamma^m_{x,y}=\Gamma^m = \lc z\in \Gamma: (\alpha(z),\beta(z))=(\alpha_m,\beta_m) \rc
\]
and note that $\Gamma^m$ is a non-empty, closed subset of $\Gamma$ (corresponding to geodesics of ``maximal degeneracy'').

The significance of these considerations is given by Theorem \ref{THM:RA-LLN}, and we indicate its proof.

\begin{proof}[Proof of Theorem \ref{THM:RA-LLN}]
The convergence of $m_t$ to a measure with support $\Gamma^m$ follows from the definition of $m_t$ and the expansions \eqref{Eqn:RA-Leading}; the details are given in the proofs of Theorems 4.1 and 4.2 in \cite{HsuBridge}. Once we have that, the limit of the log-derivatives of $p_t(x,y)$ was already derived in \eqref{Eqn:Moments}.

It only remains to justify that $h_{x,y}$ is real-analytic in a neighborhood of any $z\in \Gamma$ when $M$ and $\Delta$ are themselves real-analytic. So assume that $M$ and $\Delta$ are real-analytic and $z\in\Gamma$. We already know that $d(x,\cdot)$ and $d(\cdot, y)$ are smooth in a neighborhood $U$ of $z$. Then Corollary 1 of \cite{Agrachev-RA} says that $d(x,\cdot)$ and $d(\cdot, y)$ are in fact real-analytic on $U$. Then it is immediate from the definition that that $h_{x,y}$ is also real-analytic on $U$.
\end{proof}

Finally, we can identify $\Gamma^m$ and $m_0$ more explicitly if we have more information on the normal form of $h_{x,y}$. We illustrate this with the two most important special cases.


\subsection{LLN for $A$-type singularities}\label{Sect:A_n-LLN}
In parallel to Section \ref{S:Complete expansions}, we give an explicit treatment of the asymptotics in two cases-- the case when each minimal geodesics is $A_n$-conjugate (in this section) and the Morse-Bott case (in the next).

As usual, let $x$ and $y$ be distinct points such that every minimal geodesic from $x$ to $y$ is strongly normal. Further, we assume that there is some $\ell\in\{1,3,5,\ldots\}$ such that for every $z\in\Gamma$, $\gamma^z$ is $A_m$-conjugate for $1\leq m\leq \ell$ and that there is at least one $z\in\Gamma$ for which $\gamma_z$ is $A_{\ell}$-conjugate. We refer to Section \ref{Sect:A_n} for the relevant results about the normal form of $h_{x,y}$ and the resulting leading term in the expansion coming from each geodesic. In particular, let $z_1,\ldots,z_N$ be the points of $\Gamma$ corresponding to $A_{\ell}$-conjugate geodesics, and around each $z_i$, let $(u_{i,1},\ldots, u_{i,d})$ be local coordinates diagonalizing $h_{x,y}$ as in Equation \eqref{Eqn:hDiag}. Then $m_t$ converges, and the limit is given by
\[
m_0=  \frac{\sum_{i=1}^N c_0(x,z_i)c_0(z_i,y)\frac{\diff\mu}{\diff (u_{i,1},\ldots, u_{i,d})}(z_i) \cdot\delta_{z_i}}{\sum_{i=1}^N c_0(x,z_i)c_0(z_i,y)\frac{\diff\mu}{\diff (u_{i,1},\ldots, u_{i,d})}(z_i)}  .
\]
(To see this, integrate any smooth $f$ against $m_t$ and take the leading term of the resulting Laplace asymptotics.) In particular, $\Gamma^m=\{z_1,\ldots,z_N\}$, and it may certainly be a proper subset of $\Gamma$.

However, note that if none of the minimal geodesics from $x$ to $y$ is conjugate (which in this terminology means being ``$A_1$-conjugate'' and implies that $h_{x,y}-d^2(x,y)/4$ can be written as a sum-of-squares around each $z_i$), then $\Gamma^m=\Gamma$. In particular, if $M$ is a Riemannian manifold of non-positive sectional curvature, this is the only possibility.  Indeed, in this case, the asymptotics of $p_t(x,y)$ can be written directly in terms of the Ben Arous expansion applied to the universal cover, and a slightly simpler formula for $m_0$ can be given; see Example 3.7 of \cite{HsuBridge} where the case of only non-conjugate geodesics is treated for a compact Riemannian manifold.

\subsection{LLN for the Morse-Bott case}\label{Sect:MB-LLN}
Again, we let $x$ and $y$ be distinct points such that every minimal geodesic from $x$ to $y$ is strongly normal, but now, as in Section \ref{Sect:MB}, we assume that $\Gamma$ is an $r$-dimensional submanifold (where necessarily we have $r<k$ and we recall that $\Gamma$ is compact) and that the kernel of the differential of the exponential map has dimension $r$ at $\gamma^z$ for any $z\in\Gamma$. Then around any point of $\Gamma$ we can find local coordinates $(u_1,\ldots,u_k)$ such that $\Gamma$ is (locally) given by $u_{r+1}=\cdots=u_k=0$, $(u_1,\ldots,u_r)$ gives (local) coordinates on $\Gamma$, and 
\[
h_{x,y}=\frac{d^2(x,y)}{4}+u_{r+1}^2+\cdots+u_d^2 .
\]

In this case, another use of Laplace asymptotics to integrate any smooth $f$ against $m_t$ shows that $m_t$ converges and $m_0$ has a smooth, non-vanishing density on $\Gamma$ with respect to any local coordinates. Hence $\Gamma^m=\Gamma$. Moreover, the density of $m_0$ with respect to $du_1\ldots du_r$ as above can be written in terms of the density of $\mu$, the Hessian of $h_{x,y}$ along the normal bundle over $\Gamma$, and the $c_0$ (see Section 3 of \cite{BBN-BiHeis} for the basic framework of the computation), but the expression is messy and unenlightening, so we omit it. Instead, we note that the Morse-Bott case typically arises when $M$ possess some rotational symmetry, in which case $m_0$ can be deduced via symmetry arguments. That is, let $\Iso_{x,y}$ be the subgroup of the isometry group of $M$ that fixes $x$ and $y$, where isometries must also preserve $\mu$ and the sub-Laplacian. Suppose that $\Iso_{x,y}$ acts transitively on $\Gamma$. Then $m_0$ must be the uniform probability measure on $\Gamma$, in the sense that $m_0$ is the unique probability measure on $\Gamma$ invariant under the action of $\Iso_{x,y}$.

For example, if $M$ is the standard Riemannian sphere with the Laplace-Beltrami operator and Riemannian volume, and $x$ and $y$ are antipodal points, $m_0$ is the uniform probability measure on the equator, as observed in Example 3.6 of \cite{HsuBridge}.

The natural sub-Riemannian analogue is the Heisenberg group. By symmetry, we can take $x$ to be the origin (using $\bR^3$ to give global coordinates, in the usual way). Then $y$ is in the (non-abnormal) cut locus exactly when $y=(0,0,h)$ for some $h\neq 0$, in which case $\Gamma$ is a circle, invariant under rotation around the vertical axis (see Figure \ref{F:Pansu_sphere} again). We see that $m_0$ is the uniform probability measure on $\Gamma$.

\appendix

\section{Strong localization and pathspace concentration}\label{App:Local}

Our goal in this section is to provide a (fairly) brief proof of Theorem \ref{THM:StrongCond} and Lemma \ref{Lem:UniformBN}. The common theme of both proofs is that the process ``pays a cost of $e^{-d^2/4t}$'' to move a distance $d$ in time $t$, uniformly on compacts.

We recall that Theorem \ref{THM:StrongCond} contains two related assertions. First, there is the localization estimate, namely that if $A\subset M$ is closed such that $M\setminus A$ has compact closure, then for any compact subset $K$ of $M\setminus A$
\[
\limsup_{t\searrow 0} 4t \log p_t(x,A,y) \leq - \lp d(x,A)+d(y,A)\rp^2 
\]
uniformly for $x\in K$ and $y\in K$. Second, there is the Varadhan asymptotics, namely that if $\KR$ is a compact subset of $\{(x,y)\in M\times M: d(x,y)< d(x,\infty) +d(y,\infty)\}$, then we have $4t\log p_t(x,y)\rightarrow -d^2(x,y)$ uniformly for $(x,y)\in \KR$.


Unsurprisingly, the argument uses many of the same ideas as in Section \ref{Sect:Local}. We note that while Theorem \ref{THM:StrongCond} is stated in Section \ref{Sect:Local1}, it is not used in any of the other material in Sections \ref{Sect:Local1} and \ref{Sect:Local2}. Thus we are free to use the results of those two sections without risk of circular reasoning.

 The first step is a more precise bound on $\sigma_a$, which we recall is the first time the process travels a distance $a$ from its starting point.

\begin{lemma}\label{App:ExitTime}
Let $K \subset M$ be compact. Then there exists $\rho>0$ such that, for any $a\in(0,\rho)$,
\[
\limsup_{t\searrow0} 4t\log\lp\Prob^x\lp \sigma_a \leq t\rp\rp \leq -a^2 , 
\]
uniformly over $x\in K$.
\end{lemma}

\begin{proof}
First note that there exists $\rho>0$ such that the $2\rho$-neighborhood of $K$ has compact closure. Let $K_{\rho}$ denote the closure of the $\rho$-neighborhood of $K$. Then we see that for any $\eps\in(0,\rho)$, there is $T>0$ such that
\begin{equation}\label{Eqn:App-HeatKernel}
\int_{B_{\eps}(z)} p_t(z,y) \mu(dy) >1/2 \quad \text{for any $z\in K_{\rho}$ and any $t\in(0,T)$.}
\end{equation}
This follows directly from Lemma \ref{Lem:SigA} (with $a=\eps$) and the monotonicity of the integral with respect to $t$ coming from fact that $\{ \sigma_{\eps}<t \}\subset\{\sigma_{\eps}<T  \}$.

Now choose $x\in K$ and $a<\rho$, and consider the heat kernel at time $t$ on the annulus $A_{\eps}(x;a)=\{\dist(z,x)\in [a-\eps,a+\eps]\}$ for some $\eps\in(0,a)$ and $t\leq T$ as above. If we consider the diffusion $X_t$ started from $x$, then by the strong Markov property and the estimate \eqref{Eqn:App-HeatKernel}, we have
\[
\Prob^x\lp X_t\in A_{\eps}(x;a)\rp = \int_{A_{\eps}(x;a)} p_t(x,z) \, d\mu(z) > \frac{1}{2}\Prob^x\lp \sigma_{a} \leq t\rp .
\]
This integral can be estimated uniformly in $x$ by the L\'eandre estimate on the heat kernel. More precisely, for $\delta>0$, after possibly making $T$ smaller,
\[
\int_{A_{\eps}(x;a)} p_t(x,z) \, d\mu(z) \leq \mu\lp A_{\eps}(x;a)\rp \exp\lb -\frac{(a-\eps)^2-\delta}{4t}\rb
\]
 for any $t\leq t_0$ and for any $x\in K$. Since the measure of $A_{\eps}(x;a)$ is bounded from below, uniformly in $x$ (by smoothness and compactness), we conclude that
\[
\Prob^x\lp \sigma_{a}\leq t\rp \leq C \exp\lb -\frac{(a-\eps)^2-\delta}{4t}\rb
\]
for some $C>0$ independent of $x$, for all $t\leq T$. Since $\eps$ and $\delta$ are (small and) arbitrary, standard algebraic manipulations then give
\[
\limsup_{t\searrow0} 4t\log\lp\Prob^x\lp   \sigma_{x,a}     \leq t\rp\rp \leq -\rho^2 , 
\]
uniformly for $x\in K$.
\end{proof}

Let $A$ and $K$ be as in the theorem, and to simplify notation, let $U=M\setminus A$. Then $\overline{U}$ is compact. Further, we can find $\eps>0$ such that the closed $\eps$-neighborhood of $A$, has complement $U'$, such that $U'$ is open with compact closure. By taking $\eps$ small enough, we have that $K\subset U'$, so that
\[\begin{split}
& K\subset U' \subset \overline{U'}\subset U= M\setminus A \\
& \quad \text{with $\dist\lp K,\lp U'\rp^c \rp > \eps$ and $\dist\lp \overline{U'},A \rp =\eps$.}
\end{split}\]

Considering $X_t$ started from $x\in K$, we let $\tau$ be the first hitting time of $A$. This is motivated by the fact that $p_t(x,A,y) = \Prob^x\lp X_t\in dy \text{ and } \tau<t\rp$. 

For clarity in the following arguments, we note that the asymptotic relation $\limsup_{t\searrow0} 4t\log f(t)=-d^2$, for some $d\in\bR$, means that
\begin{equation}\label{Eqn:Leandre-App}
f(t) = \exp\lb -\frac{d^2+o(1)}{4t}\rb 
\end{equation}
where $o(1)$, as usual, denotes some function that goes to 0 with $t$. If $f(t)$ and $d$ are also functions of $x$ in some $S\subset M$, then this asymptotic relation is uniform in $x$ if the $o(1)$ goes to zero uniformly for all $x\in S$. The corresponding notion for inequalities or when $f(t)$ depends on $(x,y)\in M\times M$ are obvious modifications.

When taking convolutions of functions satisfying \eqref{Eqn:Leandre-App}, the following will be useful (the proof is an exercise in calculus, so we omit it). 
\begin{lemma}\label{Lem:Calc}
For some $n$, consider the function $\sum_{i=1}^n \frac{d_i^2}{4t_i}$ on the ``double simplex'' $d_i\geq 0$, $\sum d_i = D$ and $t_i\geq 0$, $\sum t_i =T$. The minimum of this function is $\frac{D^2}{4T}$, achieved on the the set
\[
\lc \frac{d_i}{t_i}=\frac{d_j}{t_j} \text{ for all $1\leq i,j\leq n$}\rc .
\]
In particular, if we fix the $t_i$, there is a unique choice of the $d_i$ minimizing this function, and vice versa. Also, note the minimum does not depend on $n$.
\end{lemma}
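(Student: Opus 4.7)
The plan is to reduce the minimization to a single application of the Cauchy–Schwarz inequality. Specifically, I would apply Cauchy–Schwarz in the form
\[
\left(\sum_{i=1}^n d_i\right)^2 = \left(\sum_{i=1}^n \frac{d_i}{\sqrt{t_i}}\cdot \sqrt{t_i}\right)^2 \leq \left(\sum_{i=1}^n \frac{d_i^2}{t_i}\right)\left(\sum_{i=1}^n t_i\right),
\]
which immediately gives $\sum_i d_i^2/(4t_i) \geq D^2/(4T)$ after using the constraints $\sum d_i = D$ and $\sum t_i = T$ and dividing by $4$. The equality case of Cauchy–Schwarz is exactly when the vectors $(d_i/\sqrt{t_i})$ and $(\sqrt{t_i})$ are proportional, i.e.\ when $d_i/t_i$ is independent of $i$, which matches the claimed minimizing set. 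To see that this minimum is actually attained within the double simplex, I would take the common ratio to be $D/T$, i.e.\ $d_i = (D/T)\, t_i$, which is a feasible point satisfying both constraints and yielding the value $D^2/(4T)$.

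For the two uniqueness statements, once the $t_i$ are fixed the constraint $d_i/t_i$ constant together with $\sum d_i = D$ forces $d_i = (D/T)\,t_i$, and symmetrically when the $d_i$ are fixed (with the convention that $d_i = 0$ forces $t_i = 0$ in the degenerate case, interpreting $0/0$ by continuity to avoid any issue at the boundary of the simplex). The final observation, that the minimum value $D^2/(4T)$ is independent of $n$, is then immediate from the formula. The only minor care needed concerns the boundary points where some $t_i$ vanishes: either one restricts to the open part of the simplex where all $t_i>0$ and extends by continuity, or one adopts the natural convention $d_i^2/t_i = +\infty$ when $t_i=0<d_i$ and $=0$ when both vanish, either of which is harmless for the statement. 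I do not expect any real obstacle here; the lemma is a one-line consequence of Cauchy–Schwarz together with its equality case.
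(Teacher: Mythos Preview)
Your proof is correct. The paper omits the proof entirely, saying only ``The proof is an exercise in calculus, so we omit it,'' which suggests the authors had Lagrange multipliers or a direct convexity argument in mind. Your Cauchy--Schwarz approach is cleaner: it gives the lower bound, the equality condition, and the attained minimum in one stroke, with no need to check second-order conditions or handle the constraint via multipliers. Both routes are elementary, but yours is the more efficient one here.
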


Next, we show how to extend this type of estimate to $\tau$.

\begin{lemma}\label{Lem:SigmaBound}
We have
\[
\limsup_{t\searrow0} 4t\log\lp\Prob^x\lp \tau  \leq t\rp\rp \leq -\dist(x,A)^2 , 
\]
uniformly for $x\in K$.
\end{lemma}

\begin{proof}
Choose $\rho>0$ small enough so that the $\rho$-neighborhood of $\overline{U}$ has compact closure, Lemma \ref{App:ExitTime} holds with this $\rho$ for all $x\in \overline{U}$, and $\rho < \eps = \dist\lp \overline{U'},A \rp $. Now choose $a\in (0,\rho)$ and let (with slight abuse of notation) $\sigma_1$ be the first time $X_t$ moves a distance $a$ from its starting point, $\sigma_2$ the first time after $\sigma_1$ that $X_t$ moves a distance $a$ from $X_{\sigma_1}$, and so on for $\sigma_i$ with $i=3,4,\ldots$. If $m_x$ is the largest integer such that $am_x\leq \dist\lp x,A \rp$, we see that, if $X_0=x$, then $\tau \geq \sigma_{m_x}$.

Note that $\rho$, and hence $a$, is such that $m_x\geq 1$ for all $x\in K$, and further, the set of $x$ with $m_x=1$ is a compact subset of $K$. Then, from Lemma\ref{App:ExitTime} and the definition of $m_x$, we have that, for any (small) $\delta>0$, there exists some $t_0>0$, such that
\[
\Prob^x\lp \sigma_1  \leq t\rp \leq  \exp\lb -\frac{a^2-\delta}{4t}\rb 
\]
for all $t<t_0$ and all $x$ with $m_x=1$.

Now the set of $x\in K$ with $m_x=2$ is also compact, and by the strong Markov property, $\sigma_2-\sigma_1$ satisfies the same estimate as $\sigma_1$, and further, if $F_1$ is the cdf of $\sigma_1$, then taking convolution gives
\[
\Prob^x\lb \sigma_2\leq t \rb \leq \int_0^t \exp\lb -\frac{a^2-\delta}{4(t-s)} \rb\diff F_1(s)
\]
for all small enough $t$, uniformly in $x$ with $m_x=2$, where the integral is understood as a Lebesgue-Stieljes integral. (Of course, the $F_1$ depends on $X_{\sigma_1}$, but the bound we use is uniform over $X_{\sigma_1}$, so we don't emphasize this.) Now since $F_1$ is non-decreasing and has bounded variation and the integrand is non-increasing and continuous, has bounded variation, and is differentiable on $s\in(0,t)$, we have an integration by parts formula for the integral. The boundary terms vanish, since the integrand goes to zero as $s\nearrow t$ and $F_1(s)$ goes to zero as $s\searrow 0$, and we find
\[
\Prob^x\lb \sigma_2\leq t \rb \leq \int_0^t F_1(s) \frac{a^2-\delta}{4(t-s)^2} \exp\lb -\frac{a^2-\delta}{4(t-s)} \rb\diff s .
\]
As long as $t$ is small enough, we can absorb the $\frac{a^2-\delta}{4(t-s)^2}$ factor into the exponential at the cost of replacing $\delta$ with $2\delta$. This plus the previous estimate for $F_1$ gives
\[
\Prob^x\lb \sigma_2\leq t \rb \leq \int_0^t \exp\lb -\frac{a^2-\delta}{4s}-\frac{a^2-2\delta}{4(t-s)} \rb\diff s
\]
for all small enough $t$ and all $x$ with $m_x=2$. Using Lemma \ref{Lem:Calc} and the fact that $\delta$ is arbitrary (so we can reduce it as necessary), a naive estimate for the integral gives that
\[
\Prob^x\lb \sigma_2\leq t \rb \leq \exp\lb -\frac{(2a)^2-\delta}{4t} \rb
\]
for all $x$ with $m_x=2$ and all small enough $t$.

Because $K$ has finite diameter, $m_x$ is bounded on $K$, and thus, iterating the above argument a finite number of times, we have that, for $\delta>0$, there exists $t_0>0$ such that
\[
\Prob^x\lb \sigma_{m_x}\leq t \rb \leq \exp\lb -\frac{(m_xa)^2-\delta}{4t} \rb
\]
for all $x\in K$ and all $t<t_0$. By construction, we have $m_xa\leq \dist\lp x,A \rp< (m_x+1)a$ and  $\tau \geq \sigma_{m_x}$, and it follows that
\[
\Prob^x\lb \tau \leq t\rb \leq \Prob^x\lb \sigma_{m_x}\leq t \rb \leq  \exp\lb -\frac{(d(x,A)-a)^2-\delta}{4t}\rb ,
\]
again for all $x\in K$ and all $t<t_0$. Since $a$ and $\delta$ are arbitrary, the lemma follows.
\end{proof}

Choose any $y_0\in K$. Then in reference to Lemma \ref{Lem:Our7}, let $\eta>0$ be small enough so that the ball of radius $(7/2)\eta$ around $y_0$, which we denote $B''$, has its closure contained in $U'$, and let $B$ and $B'$ be as in Lemma \ref{Lem:Our7}. Let $\tilde{\tau}$ be the first hitting time of $B'$ after $\tau$. Further, let $\tau'_1$ be the first hitting time of $\overline{U'}$ after $\tau$, let $\tau_1$ be the first hitting time of $A$ after $\tau'_1$, and then recursively let $\tau'_i$ be the first hitting time of $\overline{U'}$ after $\tau_{i-1}$, let $\tau_i$ be the first hitting time of $A$ after $\tau'_i$, for $i=2,3,\ldots$. By construction, the process has to travel distance $\eps$ between $\tau_{i-1}$ and $\tau'_i$ and between $\tau'_i$ and $\tau_i$, and thus for any path, only finitely many of the $\tau_i$ and $\tau'_i$ can be less than any given $t$.

\begin{lemma}\label{Lem:TTBound}
For $y_0\in K$, consider the notation above. Then we have
\[
\limsup_{t\searrow0} 4t\log\lp\Prob^x\lp \tau  \leq \tilde{\tau} \leq t\rp\rp \leq -\lp \dist(x,A) +\dist(y_0,A)-\eps-(3/2)\eta\rp^2, 
\]
uniformly for $x\in K$.
\end{lemma}
\begin{proof}
For small enough $t$, Lemma \ref{Lem:SigA} (and the definition and discussion of $\tau_i$ and $\tau'_i$ above) implies that the probability that $\tau-\tau'_1$ is less than $t$ is less than than $1/2$, and more generally, the probability that $\tau'_{i+1}-\tau'_i$ is less than $t$ is less than $1/2$. This plus the strong Markov property implies that
\[\begin{split}
\Prob^x\lp \tilde{\tau}-\tau \leq t \rp  &\leq \sum_{i=1}^{\infty}  \Prob^{X_{\tau}}\lp \tau'_i \leq \tilde{\tau} \leq t< \tau'_{i+1}  \rp \\
& \leq \sum_{i=1}^{\infty} \lp\frac{1}{2}\rp^i \Prob^{X_{\tau'_i}}\lp \tilde{\tau} \leq t< \tau  \rp \\
& \leq \sup_{z\in \partial U'} \Prob^{z}\lp \tilde{\tau} \leq t< \tau  \rp .
\end{split}\]
Now since the distance from $\partial U'$ to $B'$ is $\dist(y_0,A)-\eps-(3/2)\eta$, we can argue just as in the proof of Lemma \ref{Lem:SigmaBound} (namely, the process has to exit some number of balls of radius $\delta$ contained in $U$, for all small enough $\delta$) to see that, for any $\delta>0$, there is $t_0>0$ such that, for any $x\in K$ and $t< t_0$,
\[
\Prob^x\lp \tilde{\tau}-\tau \leq t \rp \leq \sup_{z\in \partial U'} \Prob^{z}\lp \tilde{\tau} \leq t< \tau  \rp
\leq  \exp\lb -\frac{(\dist(y_0,A)-\eps-(3/2)\eta)^2-\delta}{4t}\rb .
\]

Since $\tau$ and $\tilde{\tau}-\tau$ are conditionally independent given $X_{\tau}$, and we have uniform upper bounds on the cdfs of both, taking the convolution gives, for all small enough $t$,
\[\begin{split}
& \Prob^x\lp \tau  \leq \tilde{\tau} \leq t\rp \leq 
 \int_0^t \exp\lb -\frac{d(x,A)^2-\delta}{4(t-s)} \rb\diff F(s) \\
&\quad \text{where} \quad F(t) = \exp\lb -\frac{(\dist(y_0,A)-\eps-(3/2)\eta)^2-\delta}{4t}\rb .
\end{split}\]
Again as in the proof of Lemma \ref{Lem:SigmaBound}, we can use Lemma \ref{Lem:Calc} to see that, for any $\delta>0$, there exists $t_0>0$ such that
\[
\Prob^x\lp \tau  \leq \tilde{\tau} \leq t\rp
\leq  \exp\lb -\frac{(\dist(x,A)+\dist(y_0,A)-\eps-(3/2)\eta)^2-\delta}{4t}\rb 
\]
for all $t<t_0$.
The conclusion of the lemma follows.
\end{proof}

The next step is to include the contribution to $p_t(x,A,y)$ from the piece of the path in $B''$.

\begin{lemma}\label{Lem:LastStep}
For any $y_0\in K$ and $\delta>0$, and any (small enough, so that the stopping times above are well defined) $\eps>0$ and $\eta>0$, there exists $t_0>0$ such that
\[
4t \log p_t(x,A,y) \leq - \lp d(x,A)+d(y,A)-\eps-3\eta-\delta \rp^2
\]
for any $t\in (0,t_0)$, $x\in K$, and $y$ with $\dist(y,y_0)<\eta/2$.
\end{lemma}

\begin{proof}
We introduce one more family of interlaced stopping times. Let $\theta'_1$ be the first hitting time of $B''$ after $\tilde{\tau}$, let $\theta_1$ be the first hitting time of $B'$ after $\theta'_i$, and then let $\theta'_i$ be the first hitting time of $B''$ after $\theta_{i-1}$ and $\theta_i$ the first hitting time of $B'$ after $\theta'_i$. Also, let $\tilde{\tau}=\theta_0$, for convenience. Then we have the path decomposition
\[
p_t(x,A,y)= \sum_{i=0}^{\infty} \Prob^x\lp \tau<\theta_i< t < \theta'_{i+1} \text{ and } X_t\in dy\rp
\]
for $x\in K$ and $y$ with $\dist(y,y_0)<\eta/2$. Because the process travels a distance $2\eta$ between $\theta_i$ and $\theta'_{i+1}$, just as above, we know that, for small enough $t$, the probability that $\theta_i$ is less than $t$ is less than $(1/2)^i$. Now for $t\in(0,\infty)$ and $z\in \partial B'$, let $\mu^i(t,z)$ be the spacetime hitting measure of $(\theta_i,X_{\theta_i})$. Then we use the strong Markov property to see that
\[\begin{split}
p_t(x,A,y) &\leq \sum_{i=0}^{\infty} \int_{s,z} p^{B''}_{t-s}(z,y)  \diff \mu^i(s,z) \\
&\leq 2 \int_{s=0}^t \sup_{z\in \partial B'} p^{B''}_{t-s}(z,y) \diff \Prob^x\lp \tau<\tilde{\tau}<s\rp ,
\end{split}\]
where the final integral is understood as a Lebesgue-Stieltjes integral (with $s$ the variable of integration). Using Lemma \ref{Lem:Our7} (with $\alpha$ the empty multinomial) and the triangle inequality, we have that, for any $\delta>0$, there exists $t_0>0$ such that
\[
\sup_{z\in \partial B'} p^{B''}_{t-s}(z,y) \leq \exp\lb -\frac{\eta^2-\delta}{4(t-s)}\rb 
\]
as long as $t-s<t_0$ and $\dist(y,y_0)<\eta/2$. Since we also have, after possibly shrinking $t_0$,
\[
\Prob^x\lp \tau  \leq \tilde{\tau} \leq s\rp
\leq  \exp\lb -\frac{(\dist(x,A)+\dist(y_0,A)-\eps-(3/2)\eta)^2-\delta}{4s}\rb
\]
by Lemma \ref{Lem:TTBound}, we again can use integration by parts and standard estimates on the integral to see that, for any $\delta>0$, there exists $t_0>0$ such that
\[
p_t(x,A,y) \leq \exp\lb -\frac{(\dist(x,A)+\dist(y_0,A)-\eps-(5/2)\eta)^2-\delta}{4t}\rb
\]
for any $x\in K$, $t< t_0$, and $y$ with $\dist(y,y_0)<\eta/2$. Finally, we can use the triangle inequality to replace $\dist(y_0,A)$ with $d(y,A)-\eta/2$, which proves the lemma.
\end{proof}

From here, we can finish the proof.

\begin{proof}[Proof of Theorem \ref{THM:StrongCond}]
From Lemma \ref{Lem:LastStep}, we know that, for all small enough $\eps$, $\eta$, and $\delta$, for any $y_0\in K$, there exists $t_0>0$ such that
\[
4t \log p_t(x,A,y) \leq - \lp d(x,A)+d(y,A)-\eps-3\eta-\delta \rp^2
\]
for any $t\in (0,t_0)$, $x\in K$, and $y$ with $\dist(y,y_0)<\eta/2$. By compactness of $K$, there exist finitely many such $y_0$ such that the balls of radius $\eta/2$ around them cover $K$, and thus we can find $t_0$ so that this estimate holds for all $y\in K$. Since 
$\eps$, $\eta$, and $\delta$ can be chosen arbitrarily small, the localization estimate on $p_t(x,A,y)$ follows.

Once we have the localization condition, the exact Varadhan asymptotics, namely $4t \log p_t(x,y)\rightarrow -\dist^2(x,y)$ uniformly for $(x,y)\in \KR$, follow from including an appropriate neighborhood of $\pi_1(\KR)$ and $\pi_2(\KR)$ in a compact manifold, assuming that we have this estimate uniformly on compact manifolds. To establish this for compact manifolds, we use the same argument as in the proof of Theorem \ref{THM:LeandreCompact}. In particular, we note that the proof only uses the localization estimate from Theorem \ref{THM:StrongCond}, which we just proved. Thus, we can follow the argument exactly (and using the same notation) until \eqref{Eqn:SigmaC}, which we replace with
\[
\limsup_{t\searrow 0} 4t \log \lp  p^{\bR^{d+n}}_t \lp(x,0), \Sigma_s^c, (y,0)\rp \rp \leq - \lp d_M(x,y)+\delta \rp^2 ,
\]
uniformly for $x,y\in M$, using the localization bound just proven. Then since L\'eandre showed the Varadhan asymptotics are valid uniformly on compact subsets of $\bR^{d+n}\times \bR^{d+n}$, in place of \eqref{Eqn:Rdn}, we have
\[
\limsup_{t\searrow 0} 4t \log \lp  p_t^{\bR^{d+n}} \lp(x,0),(y,0)\rp\rp = - d_M^2(x,y)
\]
uniformly for $x,y\in M$. But then the usual decomposition implies that the same holds with $p_t^{\bR^{d+n}} \lp(x,0),(y,0)\rp$ replaced by $p_t^{\Sigma_s} \lp(x,0),(y,0)\rp$. From the product structure on $\Sigma_s$, we then see that
\[
\limsup_{t\searrow 0} 4t \log \lp  p_t^{M} \lp x,y\rp\rp + \limsup_{t\searrow 0} 4t \log \lp  p_t^{B(0,s)} \lp 0,0\rp\rp = - d_M^2(x,y)
\]
uniformly for $x,y\in M$. But, as noted in the proof of Theorem  \ref{THM:LeandreCompact}, we have
\[
\limsup_{t\searrow 0} 4t \log \lp  p_t^{B(0,s)} \lp 0,0\rp\rp =0
\]
and the result, namely the Varadhan asymptotics for compact $M$, follows.

Once we have the Varadhan asymptotics on compact manifolds and the localization estimate, the remainder of Theorem  \ref{THM:StrongCond} follows by gluing an appropriate neighborhood of $\pi_1(\KR)\cup \pi_2(\KR)$ into a compact manifold, given by a smooth doubling construction, just as in Step 2 of the proof of Theorem \ref{THM:TrueLocal}. Namely, we can find an open set $U\subset M$ with compact closure such that $\pi_1(\KR)\cup \pi_2(\KR)\subset U$ and such that (using the localization estimate), for some $\delta>0$,
\[\begin{split}
p^{M}_t(x,y) = p^U_t(x,y)+ p^{M}_t\lp x,U^c ,y\rp , \\
\text{with}\quad 
\limsup_{t\searrow 0} 4t \log \lp p^{M}_t\lp x,U^c ,y\rp  \rp & \leq -\lp d^2(x,y)+\delta\rp \quad\text{uniformly for $(x,y)\in \KR$.}
\end{split}\]
Moreover, $U$ can be chosen such that it can be included in a compact $\tilde{M}$ such that $p_t^U(x,y)$ is the same whether $U$ is understood as a subset of $M$ or of $\tilde{M}$, $d_M(x,y)=d_{\tilde{M}}(x,y)$ for all $(x,y)\in\KR$, and (again using the localization estimate)
\[\begin{split}
p^{\tilde{M}}_t(x,y) = p^U_t(x,y)+ p^{\tilde{M}}_t\lp x,U^c ,y\rp , \\
\text{with}\quad 
\limsup_{t\searrow 0} 4t \log \lp p^{\tilde{M}}_t\lp x,U^c ,y\rp  \rp & \leq -\lp d^2(x,y)+\delta\rp \quad\text{uniformly for $(x,y)\in \KR$.}
\end{split}\]
Since $\tilde{M}$ is compact, we know that
\[
\lim_{t\searrow 0} 4t \log \lp p^{\tilde{M}}_t\lp x,y\rp  \rp  = - d^2(x,y)
\]
uniformly on all of $\tilde{M}$, and in particular, for $(x,y)\in\KR$, in which case the $d^2(x,y)$ on the right-hand side is unambiguous. Then combining all of this, just as in Step 2 of the proof of Theorem \ref{THM:TrueLocal}, we conclude that
\[
\lim_{t\searrow 0} 4t \log \lp p_t\lp x,y\rp  \rp \lim_{t\searrow 0} 4t \log \lp p^{\tilde{M}}_t\lp x,y\rp  \rp  = - d^2(x,y) ,
\]
uniformly for $(x,y)\in \KR$.

\end{proof}

Finally, the same ideas can be used to prove Lemma \ref{Lem:UniformBN}.

\begin{proof}[Proof of Lemma \ref{Lem:UniformBN}]

Assuming $x$ and $y$ are as in the Lemma, we first show the existence of $\eps>0$ as claimed, under either localization condition. We already know that for small enough $\eps$, $\Gamma_{\eps}$ is compact and there is a unique, strongly normal minimizer from $x$ to $z$ and from $z$ to $y$ for all $z$ in a neighborhood of $\Gamma_{\eps}$.

Next, we consider the localizability of $\KR_1$ and $\KR_2$. In fact, in the proof, we will need larger compact sets, so let
\[
\widehat{\KR}_1=\lc (q,z): z\in \Gamma_{\eps} \text { and } d(x,q)+d(q,z)\leq d(x,z)+\eps' \rc
 \quad\text{and}\quad \widehat{\KR}_2=\lc (z,y): z\in \Gamma_{\eps}\rc .
\]
We claim that for small enough $\eps$ and $\eps'$, each of these is compact and satisfies the same localization condition as $(x,y)$.

First, suppose that $(x,y)$ satisfies the strong localization condition, which exactly means that there exists $\alpha>0$ such that 
\[
d(x,y)+\alpha < d(x,\infty)+d(y,\infty) .
\]
By the triangle inequality, we see that
\[
d(q,\infty)\geq d(x,\infty)-d(q,x) \quad\text{and} \quad  d(z,\infty)\geq d(y,\infty)-d(z,y) ,
\]
and from the definition of $\widehat{\KR}_1$, we have $d(q,z)\leq d(x,z)-d(x,q)+\eps'$. Using this, we have
\[
d(q,\infty)+d(z,\infty)-d(q,z) \geq d(x,\infty)+d(y,\infty)-d(x,z)-d(z,y)-\eps' .
\]
By the definition of $\Gamma_{\eps}$, we have $d(x,z)+d(z,y)\leq d(x,y)+\eps$. Using this plus the strong localization condition in the above gives
\[
d(q,\infty)+d(z,\infty)-d(q,z) \geq \alpha-\eps-\eps' .
\]
Choosing $\eps$ and $\eps'$ small enough so that the right-hand side is positive, and observing that this inequality holds for all $(q,z)\in\widehat{\KR}_1$, means that $\widehat{\KR}_1$ satisfies the strong localization, and then the compactness is clear.

On the other hand, suppose that $(x,y)$ satisfies the weak localization condition, so that for some $\alpha>0$
\[
U=\lc p : d(x,p)+d(p,y)< d(x,y)+\alpha\rc
\]
has compact closure. We wish to show that if $\eps$ and $\eps'$ are small enough, there will exist $\eps''>0$ such that, for any $(q,z)\in \widehat{\KR}_1$, the set
\[
V = \lc p : d(q,p)+d(p,z)< d(q,z)+\eps''\rc
\]
will be a subset of $U$. We take any such $q$ and $z$, and start by using the triangle inequality to write
\[
d(x,p)+d(p,y) \leq d(x,q)+d(q,p)+d(p,z)+d(z,y) .
\]
Using, on the right-hand side, that $p\in V$, then that $q\in \widehat{\KR}_1$, and then that $z\in \Gamma_{\eps}$, we find
\[
d(x,p)+d(p,y) \leq \eps+\eps'+\eps'' .
\]
This shows that if $\eps$ and $\eps'$ are small enough, we can find $\eps''>0$ so that $V\subset U$. But this means that $V$ has compact closure, since $U$ does. Since the sector condition is global, this shows that the resulting $\widehat{\KR}_1$ satisfies the weak localization condition, and again it is also compact.

Now $\KR_1$ is a closed subset of the compact $\widehat{\KR}_1$, so $\KR_1$ is compact and localizable. Since the distance function as well as the distance inequalities in both localization conditions are symmetric, the argument for $\hat{\KR}_2$ and $\KR_2$ is the same. Thus, in what follows, we assume $\eps$ and $\eps'$ are chosen to make $\widehat{\KR}_1$ and $\widehat{\KR}_2$ compact and localizable. 

We move on to establishing the weak convergence of $\mu^{x,z,t}$.
For $\delta>0$ and any $z\in \Gamma_{\eps}$, let $\sigma^{\delta,z,t}=\sigma$ be the first time $d(X_s,g^{x,z}_{s/t})$ hits $\delta$. For small enough $\delta$, the triangle inequality implies that $X_s$ is contained in $\hat{\KR}_1$ for $s\in[0,\sigma]$, for any $z\in\Gamma_{\eps}$, and we assume $\delta$ is sufficiently small to satisfy this condition. It follows from the finite-dimensional distributions of the bridge process and the rescaling between $t$ and $\tau$ that 
\begin{equation}\label{Eqn:BasicTube}
\mu^{x,z,t}\Big( \dist_M\lp \omega_{\tau}, g^{x,z}_{\tau}\rp<\delta \text{ for all $\tau\in[0,1]$} \Big) =
1- \frac{\Prob^x\lp X_t\in \diff z \text{ and } \sigma<t   \rp}{p_t(x,z)} .
\end{equation}
Because $(x,z)\in \widehat{\KR}_1$, we know that $p_t(x,z)= e^{-\frac{d^2(x,z)+o(1)}{4t}}$ uniformly in $z$. So the point is to estimate $\Prob^x\lp X_t\in \diff z \text{ and } \sigma<t   \rp$ to be asymptotically smaller than this, uniformly in $z$.

Consider $\frac{d^2(x,q)}{4s}+\frac{d^2(q,z)}{4(t-s)}$ as a function of $q\in M$ and $s\in[0,t]$. By Lemma \ref{Lem:Calc}, this has minimum of $\frac{d^2(x,y)}{4t}$, achieved exactly when $q=g^{x,z}_{s/t}$ for each $s$. But by the definition of $\sigma$, the points $s=\sigma$ and $q=X_{\sigma}$ avoid this minimum. This, plus smoothness and compactness and the scaling in $t$, implies that there exists $\eta>0$, depending on $\delta$ but not on $z$ or $t$, such that 
\[
\frac{d^2\lp x,X_{\sigma}\rp}{4\sigma}+\frac{d^2\lp X_{\sigma},z\rp}{4(t-\sigma)}  > \frac{d^2(x,z)+4\eta}{4t} .
\]
Below, we will need to discretize $d(x,X_{\sigma})$. To this end, for some $\rho>0$ (and smaller than $\delta$), let $k=k(d(x,X_{\sigma}))$  be the largest integer such that $k\rho \leq d(x,X_{\sigma}) < (k+1)\rho$. We see that
\[
\rho k\lp d\lp x,X_{\sigma}\rp\rp+ d\lp X_{\sigma},z\rp > d\lp x,X_{\sigma}\rp+ d\lp X_{\sigma},z\rp -\rho 
\]
Thus by continuity (and Lemma \ref{Lem:Calc} and the scaling in $t$, again), we can choose $\rho$ small enough relative to $\delta$ and $\eta$, so that we have the discretized version of the above, namely,
\[
\frac{\lb \rho k\lp d\lp x,X_{\sigma}\rp\rp\rb^2}{4\sigma}+\frac{d^2\lp X_{\sigma},z\rp}{4(t-\sigma)}  > \frac{d^2(x,z)+3\eta}{4t} .
\]
Moreover, because $\lp X_{\sigma},z\rp\in \widehat{\KR}_1$, we see that, for small enough $t$,
\begin{equation}\label{Eqn:ptFromSigma}
p_{t-\sigma}\lp X_{\sigma},z\rp  < \exp\lb -\lp  \frac{d^2(x,z)+3\eta}{4t} - \frac{\lb \rho k\lp d\lp x,X_{\sigma}\rp\rp\rb^2}{4\sigma} \rp \rb
\end{equation}
for all $z\in \Gamma_{\eps}$.

As usual, we decompose $X_t$ according to $\sigma$, so that
\[
\Prob^x\lp X_t\in \diff z \text{ and } \sigma<t  \rp = \int\limits_{\substack{s\in [0,t] \\ q\in M}} p_{t-s}(q,z) \diff \mu^{\sigma^{\delta,z,t}}(s,q)
\]
where $\mu^{\sigma^{\delta,z,t}}(s,q)$ is the joint distribution of $s=\sigma^{\delta,z,t}$ and $q=X_{\sigma^{\delta,z,t}}$ under $\Prob^x$ (of course, this is a sub-probability distribution). We now partition the integral according to $k(d(x,X_{\sigma}))$. Since $\rho$ is fixed (given $\delta$ and $\eta$) and $\widehat{\KR}_1$ has finite diameter, we have an a priori bound on $k$, say $N$. Now let $F^{\sigma, k}$ be the (defective) cdf of $\sigma$ on the event $\{k\rho \leq d(x,X_{\sigma}) < (k+1)\rho\}$. Then, using \eqref{Eqn:ptFromSigma}, we have, for small enough $t$,
\[
\Prob^x\lp X_t\in \diff z \text{ and } \sigma<t  \rp < \sum_{k=0}^N
\int_{s=0}^t  \exp\lb -\lp  \frac{d^2(x,z)+3\eta}{4t} - \frac{\lp \rho k\rp^2}{4s} \rp \rb \diff F^{\sigma,k}(s)
\]
for all $z\in \Gamma_{\eps}$.

Next, we need a uniform estimate on $F^{\sigma,k}$. The point is that $F^{\sigma,k}(s)$ is less than or equal to the probability that $X_t$ travels a distance at least $k\rho$ from its starting point, in time less than or equal to $s$, all while staying inside $\widehat{\KR}_1$ (recall that $\delta$ is small enough that $X_t$ is contained in $\hat{\KR}_1$ for $t\in[0,\sigma]$ for all $z\in\Gamma_{\eps}$). And because $\hat{\KR}_1$ is compact, by Lemma \ref{App:ExitTime}, the probability of leaving small balls in small time is uniformly bounded.  Then, just as in the proof of Lemma \ref{Lem:SigmaBound}, it follows that, for small enough $t$,
\[
F^{\sigma,k}(s) <  \exp\lb -\frac{(k\rho)^2-\eta}{4s}  \rb 
\]
for all $s\in(0,t]$ and $z\in\Gamma_{\eps}$. Then we can again use integration by parts (and absorb all of the sub-exponential factors at the cost of losing ``one more $\eta$'') to see that, for small enough $t$,
\[\begin{split}
\Prob^x\lp X_t\in \diff z \text{ and } \sigma<t  \rp &< \sum_{k=0}^N \exp\lb -  \frac{d^2(x,z)+\eta}{4t} \rb \\
&= (N+1)  \exp\lb -  \frac{d^2(x,z)+\eta}{4t} \rb
\end{split}\]
for all $z\in \Gamma_{\eps}$.

Combining this with \eqref{Eqn:BasicTube} and the uniform asymptotics of $p_t(x,z)$, we see that for all small enough $t$, we can make
\[
\mu^{x,z,t}\Big( \dist_M\lp \omega_{\tau}, g^{x,z}_{\tau}\rp<\delta \text{ for all $\tau\in[0,1]$} \Big)
\]
as close to 1 as desired, for all $z\in \Gamma_{\eps}$. In particular, it can be made greater than $1-\delta$. Since $\delta$ was arbitrarily small, in light of the characterization of weak convergence to the point mass at $g^{x,z}$ just after the statement of Lemma \ref{Lem:UniformBN}, this proves the desired convergence of $\mu^{x,z,t}$. The argument for $\mu^{z,y,t}$ is completely analogous, completing the proof of the lemma.

\end{proof}


\providecommand{\bysame}{\leavevmode\hbox to3em{\hrulefill}\thinspace}
\providecommand{\MR}{\relax\ifhmode\unskip\space\fi MR }
\providecommand{\MRhref}[2]{%
  \href{http://www.ams.org/mathscinet-getitem?mr=#1}{#2}
}
\providecommand{\href}[2]{#2}


\begin{thebibliography}{10}

\bibitem{agrachev_1996_exponential}
A.~A. Agrachev, \emph{Exponential mappings for contact sub-{R}iemannian
  structures}, J. Dynam. Control Systems \textbf{2} (1996), no.~3, 321--358.
  \MR{1403262}

\bibitem{Agrachev-RA}
A.~A. Agrach\"ev, \emph{Any sub-{R}iemannian metric has points of smoothness},
  Dokl. Akad. Nauk \textbf{424} (2009), no.~3, 295--298. \MR{2513150}

\bibitem{ABB_2018}
Andrei Agrachev, Davide Barilari, and Ugo Boscain, \emph{A comprehensive
  introduction to sub-{R}iemannian geometry}, Cambridge University Press, 2018.

\bibitem{Arnold_singularities_diff_maps1}
V.~I. Arnold, S.~M. Guse\u\i~n Zade, and A.~N. Varchenko, \emph{Singularities
  of differentiable maps. {V}ol. {I}}, Monographs in Mathematics, vol.~82,
  Birkh\"auser Boston, Inc., Boston, MA, 1985, The classification of critical
  points, caustics and wave fronts, Translated from the Russian by Ian Porteous
  and Mark Reynolds. \MR{777682}

\bibitem{Arnold_singularities_diff_maps2}
\bysame, \emph{Singularities of differentiable maps. {V}ol. {II}}, Monographs
  in Mathematics, vol.~82, Birkh\"auser Boston, Inc., Boston, MA, 1985, The
  classification of critical points, caustics and wave fronts, Translated from
  the Russian by Ian Porteous and Mark Reynolds. \MR{777682}

\bibitem{MashaMalva}
Malva Asaad and Maria Gordina, \emph{Hypoelliptic heat kernels on nilpotent
  {L}ie groups}, Potential Anal. \textbf{45} (2016), no.~2, 355--386.
  \MR{3518678}

\bibitem{Azencott}
Robert Azencott, \emph{Un probl\`eme pos\'e par le passage des estim\'ees
  locales aux estim\'ees globales pour la densit\'e d'une diffusion},
  Asterisque (1981), no.~84, 131--150.

\bibitem{Bailleul-Bridge}
Isma\"{e}l Bailleul, \emph{Large deviation principle for bridges of
  sub-{R}iemannian diffusion processes}, S\'{e}minaire de {P}robabilit\'{e}s
  {XLVIII}, Lecture Notes in Math., vol. 2168, Springer, Cham, 2016,
  pp.~189--198. \MR{3618130}

\bibitem{BMN}
Isma\"el Bailleul, Laurent Mesnager, and James Norris, \emph{Small-time
  fluctuations for the bridge of a sub-{R}iemannian diffusion}, Ann. Sci. \'Ec.
  Norm. Sup\'er. (4) \textbf{54} (2021), no.~3, 549--586. \MR{4311094}

\bibitem{NorrisB}
Ismael Bailleul and James Norris, \emph{Diffusion in small time in incomplete
  sub-{R}iemannian manifolds}, Anal. PDE \textbf{15} (2022), no.~1, 63--84.
  \MR{4395153}

\bibitem{BanyagaHurtubise}
Augustin Banyaga and David~E. Hurtubise, \emph{A proof of the {M}orse-{B}ott
  lemma}, Expo. Math. \textbf{22} (2004), no.~4, 365--373. \MR{2075744}

\bibitem{DavideTrace}
D.~Barilari, \emph{Trace heat kernel asymptotics in 3{D} contact
  sub-{R}iemannian geometry}, J. Math. Sci. (N.Y.) \textbf{195} (2013), no.~3,
  391--411, Translation of Sovrem. Mat. Prilozh. No. 82 (2012). \MR{3207127}

\bibitem{BBCN-IMRN}
Davide Barilari, Ugo Boscain, Gr{\'e}goire Charlot, and Robert~W Neel, \emph{On
  the heat diffusion for generic {R}iemannian and sub-{R}iemannian structures},
  International Mathematics Research Notices \textbf{2017} (2016), no.~15,
  4639--4672.

\bibitem{BBN-JDG}
Davide Barilari, Ugo Boscain, and Robert~W. Neel, \emph{Small-time heat kernel
  asymptotics at the sub-{R}iemannian cut locus}, J. Differential Geom.
  \textbf{92} (2012), no.~3, 373--416. \MR{3005058}

\bibitem{BBN-BiHeis}
\bysame, \emph{Heat kernel asymptotics on sub-{R}iemannian manifolds with
  symmetries and applications to the bi-{H}eisenberg group}, Ann. Fac. Sci.
  Toulouse Math. (6) \textbf{28} (2019), no.~4, 707--732. \MR{4045424}

\bibitem{LucaDavide}
Davide Barilari and Luca Rizzi, \emph{Sub-{R}iemannian interpolation
  inequalities}, Invent. Math. \textbf{215} (2019), no.~3, 977--1038.
  \MR{3935035}

\bibitem{FabriceMichel}
Fabrice Baudoin and Michel Bonnefont, \emph{The subelliptic heat kernel on
  {${\rm SU}(2)$}: representations, asymptotics and gradient bounds}, Math. Z.
  \textbf{263} (2009), no.~3, 647--672. \MR{2545862}

\bibitem{FabriceJing}
Fabrice Baudoin and Jing Wang, \emph{The subelliptic heat kernel on the {CR}
  sphere}, Math. Z. \textbf{275} (2013), no.~1-2, 135--150. \MR{3101801}

\bibitem{ABell}
Andre Bellaiche, \emph{Propri\'et\'es extr\'emales des g\'eod\'esiques},
  Asterisque (1981), no.~84, 83--130.

\bibitem{CBell}
Catherine Bellaiche, \emph{Comportement asymptotique de $p(t,x,y)$ quand $t
  \rightarrow0$ (points \'eloign\'es)}, Asterisque (1981), no.~84, 151--188.

\bibitem{BenArous}
G.~Ben~Arous, \emph{D\'eveloppement asymptotique du noyau de la chaleur
  hypoelliptique hors du cut-locus}, Ann. Sci. \'Ecole Norm. Sup. (4)
  \textbf{21} (1988), no.~3, 307--331. \MR{MR974408 (89k:60087)}

\bibitem{BN-Grushin}
Ugo Boscain and Robert~W. Neel, \emph{Extensions of {B}rownian motion to a
  family of {G}rushin-type singularities}, Electron. Commun. Probab.
  \textbf{25} (2020), Paper No. 29, 12. \MR{4089736}

\bibitem{UgoDario}
Ugo Boscain and Dario Prandi, \emph{Self-adjoint extensions and stochastic
  completeness of the {L}aplace-{B}eltrami operator on conic and anticonic
  surfaces}, J. Differential Equations \textbf{260} (2016), no.~4, 3234--3269.
  \MR{3434398}

\bibitem{XM-GenComplete}
Xin Chen, Xue-Mei Li, and Bo~Wu, \emph{Logarithmic heat kernel estimates
  without curvature restrictions}, Ann. Probab. \textbf{51} (2023), no.~2,
  442--477. \MR{4546623}

\bibitem{Gauthier_1996_small_SR_balls}
El-H.~Ch. El-Alaoui, J.P.A. Gauthier, and I.~Kupka, \emph{Small
  sub-{R}iemannian balls on {$\mathbf{R}^3$}}, J. Dynam. Control Systems
  \textbf{2} (1996), no.~3, 359--421. \MR{1403263}

\bibitem{Nate}
Nathaniel Eldredge, \emph{Precise estimates for the subelliptic heat kernel on
  {$H$}-type groups}, J. Math. Pures Appl. (9) \textbf{92} (2009), no.~1,
  52--85. \MR{2541147}

\bibitem{EAndK}
Ricardo Estrada and Ram~P. Kanwal, \emph{A distributional approach to
  asymptotics}, second ed., Birkh\"auser Advanced Texts: Basler Lehrb\"ucher.
  [Birkh\"auser Advanced Texts: Basel Textbooks], Birkh\"auser Boston Inc.,
  Boston, MA, 2002, Theory and applications. \MR{2002k:46096}

\bibitem{Gallone2}
Matteo Gallone and Alessandro Michelangeli, \emph{Quantum particle across
  {G}rushin singularity}, J. Phys. A \textbf{54} (2021), no.~21, Paper No.
  215201, 42. \MR{4271283}

\bibitem{Gallone1}
Matteo Gallone, Alessandro Michelangeli, and Eugenio Pozzoli, \emph{Quantum
  geometric confinement and dynamical transmission in {G}rushin cylinder}, Rev.
  Math. Phys. \textbf{34} (2022), no.~7, Paper No. 2250018, 91. \MR{4471194}

\bibitem{Habermann}
Karen Habermann, \emph{Small-time fluctuations for sub-{R}iemannian diffusion
  loops}, Probab. Theory Related Fields \textbf{171} (2018), no.~3-4, 617--652.
  \MR{3827218}

\bibitem{HsuLocal}
Elton~P. Hsu, \emph{On the principle of not feeling the boundary for diffusion
  processes}, J. London Math. Soc. (2) \textbf{51} (1995), no.~2, 373--382.
  \MR{1325580}

\bibitem{HsuLogDer}
\bysame, \emph{Estimates of derivatives of the heat kernel on a compact
  {R}iemannian manifold}, Proc. Amer. Math. Soc. \textbf{127} (1999), no.~12,
  3739--3744. \MR{1618694}

\bibitem{Hsu}
\bysame, \emph{Stochastic analysis on manifolds}, Graduate Studies in
  Mathematics, vol.~38, American Mathematical Society, Providence, RI, 2002.
  \MR{2003c:58026}

\bibitem{HsuBridge}
Pei Hsu, \emph{Brownian bridges on {R}iemannian manifolds}, Probab. Theory
  Related Fields \textbf{84} (1990), no.~1, 103--118. \MR{1027823}

\bibitem{HsuIncomplete}
\bysame, \emph{Heat kernel on noncomplete manifolds}, Indiana Univ. Math. J.
  \textbf{39} (1990), no.~2, 431--442. \MR{1089046}

\bibitem{Inahama-LargeDeviations}
Yuzuru Inahama, \emph{Large deviations for rough path lifts of {W}atanabe's
  pullbacks of delta functions}, Int. Math. Res. Not. IMRN (2016), no.~20,
  6378--6414. \MR{3579967}

\bibitem{Inahama}
Yuzuru Inahama and Setsuo Taniguchi, \emph{Short time full asymptotic expansion
  of hypoelliptic heat kernel at the cut locus}, Forum Math. Sigma \textbf{5}
  (2017), Paper No. e16, 74. \MR{3669328}

\bibitem{StroockKusuoka}
Shigeo Kusuoka and Daniel~W. Stroock, \emph{Asymptotics of certain {W}iener
  functionals with degenerate extrema}, Comm. Pure Appl. Math. \textbf{47}
  (1994), no.~4, 477--501. \MR{1272385}

\bibitem{leandremaj}
R{\'e}mi L{\'e}andre, \emph{Majoration en temps petit de la densit\'e d'une
  diffusion d\'eg\'en\'er\'ee}, Probab. Theory Related Fields \textbf{74}
  (1987), no.~2, 289--294. \MR{871256 (88c:60144)}

\bibitem{leandreminoration}
\bysame, \emph{Minoration en temps petit de la densit\'e d'une diffusion
  d\'eg\'en\'er\'ee}, J. Funct. Anal. \textbf{74} (1987), no.~2, 399--414.
  \MR{904825 (88k:60147)}

\bibitem{LeeBook}
John~M. Lee, \emph{Introduction to smooth manifolds}, second ed., Graduate
  Texts in Mathematics, vol. 218, Springer, New York, 2013. \MR{2954043}

\bibitem{HQLi}
Hong-Quan Li, \emph{Estimations asymptotiques du noyau de la chaleur sur les
  groupes de {H}eisenberg}, C. R. Math. Acad. Sci. Paris \textbf{344} (2007),
  no.~8, 497--502. \MR{2324485}

\bibitem{Ludewig2}
Matthias Ludewig, \emph{Heat kernel asymptotics, path integrals and
  infinite-dimensional determinants}, J. Geom. Phys. \textbf{131} (2018),
  66--88. \MR{3815228}

\bibitem{Ludewig1}
\bysame, \emph{Strong short-time asymptotics and convolution approximation of
  the heat kernel}, Ann. Global Anal. Geom. \textbf{55} (2019), no.~2,
  371--394. \MR{3923544}

\bibitem{Molchanov}
S.~A. Mol{\v{c}}anov, \emph{Diffusion processes, and {R}iemannian geometry},
  Uspehi Mat. Nauk \textbf{30} (1975), no.~1(181), 3--59. \MR{MR0413289 (54
  \#1404)}

\bibitem{OurAIHP}
R.~Neel and L.~Sacchelli, \emph{Localized bounds on log-derivatives of the heat
  kernel on incomplete {R}iemannian manifolds}, arXiv:2212.09559, to appear in
  Ann.\ Inst.\ Henri Poincar\'e Probab.\ Stat. (2024).

\bibitem{Neel}
Robert Neel, \emph{The small-time asymptotics of the heat kernel at the cut
  locus}, Comm. Anal. Geom. \textbf{15} (2007), no.~4, 845--890. \MR{MR2395259}

\bibitem{Rioul}
O.~{Rioul}, \emph{Information theoretic proofs of entropy power inequalities},
  IEEE Transactions on Information Theory \textbf{57} (2011), no.~1, 33--55.

\bibitem{StroockTuretsky}
Daniel~W. Stroock and James Turetsky, \emph{Upper bounds on derivatives of the
  logarithm of the heat kernel}, Comm. Anal. Geom. \textbf{6} (1998), no.~4,
  669--685. \MR{1664888}

\end{thebibliography}
\end{document}